%% file: main.tex
\documentclass[10pt]{article}

\usepackage[margin=1in]{geometry}
\usepackage{amsmath,amssymb,amsthm,mathtools}
\usepackage{bm}
\usepackage{enumitem}
\usepackage{microtype}
\usepackage{xcolor}
\usepackage{authblk} 
\usepackage{natbib}
\usepackage{ying}
\usepackage{ulem}
\usepackage{mathtools}
\usepackage{placeins}

\mathtoolsset{showonlyrefs}

\usepackage[colorlinks,
            linkcolor=red,
            anchorcolor=blue,
            citecolor=blue
            ]{hyperref}
\usepackage[nameinlink,capitalise,noabbrev]{cleveref}
\allowdisplaybreaks
\setlist[itemize]{leftmargin=1.5em,itemsep=0.25em,topsep=0.4em}
\setlist[enumerate]{leftmargin=1.8em,itemsep=0.25em,topsep=0.4em}

\newcommand{\cgrid}{{c_{\textnormal{grid}}}}
\newcommand{\RI}{{\textnormal{RI}}}
\newcommand{\att}{{\textnormal{ATT}}}

\title{When Does Inexact Matching Ensure \\  Balance and Inference without Adjustment?}
\author{Ying Jin \\  \texttt{yjinstat@wharton.upenn.edu}}
\affil{Department of Statistics and Data Science,  University of Pennsylvania}
\date{}

\begin{document}
\maketitle

\begin{abstract}
    One-to-one matching without replacement is a classical approach to constructing comparable treated and control samples in the design of observational studies. It pairs each treated unit with a distinct control while minimizing a covariate distance objective. With continuous covariates, the matched pairs generally remain inexact, which contributes to bias in downstream analysis. Its key theoretical properties, such as the resulting   imbalance between matched pairs and when it is negligible to support valid inference, remain unclear.
    In this paper, we analyze one-to-one matching based on $d$-dimensional, continuous covariates with a quadratic covariate-distance objective. 
    First, we find that when $d\leq 3$, under standard conditions on the propensity score ensuring abundant control samples near each treated sample, the imbalance (difference between within-group averages) is root-$n$ negligible uniformly over the family of smooth functions with a common first- and second-order derivative bound. However, such balance is subject to a dimension restriction, as we construct examples in which the imbalance is root-$n$ non-negligible when $d=4$ and dominates root-$n$ rate when $d>4$. 
    Second, we show that when $d\leq 3$, the matched design allows valid Wald-type and bootstrap inference for the average treatment effect on the treated, distributional treatment effects, and quantile treatment effects. Thus, the same outcome-blind matched design supports various downstream inferences without having to tailor the design to the targets. Finally, paired randomization inference based on the matched design is asymptotically valid in the super-population sense for $d\leq 3$ but can fail when $d=4$. We corroborate the theoretical results with numerical experiments.
\end{abstract}

\input{01_intro}

\input{03_balance}

\input{04_wald}

\input{05_randomized}

\input{06_simulations}

\input{02_literature}

\section*{Discussion}

In this paper, we study when inexact one-to-one matching without replacement, on its
own, yields balance sufficient for valid inference. For the quadratic objective
with $d \le 3$ continuous covariates and abundant local control samples, the empirical 
imbalance between the treated and matched control samples is $o_P(n^{-1/2})$
uniformly over smooth functions, even though individual pairs may remain at distance
of order $n^{-1/d}$. 
As a consequence, a single outcome-blind matched design supports Wald-type and bootstrap inference for the ATT, distributional and quantile treatment effects, and paired randomization tests, without the need to tailor the design to the target or to fit outcome models. We also show that the dimension restriction is sharp: for $d = 4$ the imbalance of a fixed smooth function is non-negligible at the root-$n$ scale,
and for $d > 4$ it dominates the root-$n$ scale, even under the same favorable conditions. 
The practical implication of this restriction is that we may rely on matching alone when few continuous covariates are matched inexactly. 
When there are discrete features, the matching could be performed exactly in these discrete features while being inexact in the continuous ones. 
Otherwise, our analysis suggests that one may need to reduce the dimension of the
matching variables or combine matching with other adjustments such as outcome regression.
Finally, our analysis relies on the quadratic structure of the objective, and whether the same results hold for the ordinary Mahalanobis distance, as conjectured by \citet{GR2023}, remains open as future work.

\section*{Acknowledgments}

Generative AI tools were used for language editing, code assistance, and checking mathematical derivations and numerical implementations. All theoretical arguments, numerical results, and final manuscript content were reviewed and verified by the author, who takes full responsibility for the work.

\bibliographystyle{apalike}
\bibliography{references}

\newpage 
\appendix 
\input{10_appendix}

\end{document}

%% file: 01_intro.tex

\section{Introduction}

Matching is widely used in the observational studies to construct comparable treated and control groups~\citep{rubin1973matching,rosenbaum1985constructing,rosenbaum2020design}. 
One canonical form is one-to-one matching without replacement, which pairs each treated unit with a distinct control using their pre-treatment covariates~\citep{rosenbaum1989optimal} while minimizing certain objective for the distance between matched pairs. 
In this process, the pairs are formed before the outcomes are examined, thereby separating the design and analysis of observational studies and promoting transparency and objectivity. 

One criterion of a good match is the matched pairs be close enough---and ideally identical---in the pre-treatment covariates to help reduce bias from differences in observed covariates~\citep{rubin1973matching}. However, with continuous covariates, the matching generally remains inexact, and the differences in the covariates can leave bias in the subsequent analysis. 
A second criterion is balance between the treated and matched control groups \citep{rosenbaum1989optimal}. Even when individual pairs are inexact, their differences may cancel across pairs, making the overall bias relevant in downstream inference smaller than the individual discrepancies suggest. 
The key question is thus whether such bias is negligible relative to the sampling uncertainty so valid inference is possible. 
Establishing this connection is challenging: because assigning a control to a treated unit can change the choices available to the others, the matching assignments need to be analyzed jointly across all samples. Although matching is intuitive, transparent, and simple to describe, the implications of inexact matching for the balance of the selected groups and the validity of downstream inference remain unclear. 

Existing theory provides important results for matching estimators, including matching without replacement and inference after matching~\citep{AI2012,AS2022}. There, a central requirement for proving inferential guarantees is that the approximation error (pairwise distance) be sufficiently small. This requirement typically requires a large amount of control samples, yet does not necessarily follow from that alone: \cite{Savje2022} shows that the shortage of controls in a local region of the covariate space can result in inconsistency. 
Even when the controls are abundant locally in the entire sample space, the magnitude of the matching bias still matters. 
For optimal pair matching based on ordinary Mahalanobis distance, \cite{GR2023} establish the validity of an unadjusted paired randomization test with one continuous covariate and conjecture an extension to a dimension of three.  

In this paper, we study optimal one-to-one matching with a quadratic covariate-distance objective commonly used in the literature~\citep{rubin1980bias,rosenbaum1985constructing,bind2019bridging,zhang2023matching}. We work under an i.i.d.~super-population model where the covariates  are $d$-dimensional, bounded, continuously valued, and the dimension $d$ is viewed as fixed as $n\to \infty$. 
We assume a favorable setting for matching in which the covariates admit a density with respect to the Lebesgue measure on $[0,1]^d$ that is bounded above and away from zero. The propensity score is smooth, bounded away from zero and strictly below $1/2$ to ensure that control samples are abundant around every treated sample, thereby separating the key analysis from the complications pointed out by~\cite{Savje2022}. 
In this regime, the treated and control sample sizes grow at the same order, so we do not require the control reservoir to grow extremely large~\citep{AI2012,AS2022}. 

\subsection{Preview of results}

Our main result concerns the imbalance of the matched sample over smooth functions. Let $\{T_i\}_{i=1}^{n_1}$ and $\{C_j\}_{j=1}^{n_0}$ be the treated and control covariates, respectively, both generated under a super-population model (Section~\ref{subsec:setup}), and let the learned matching be represented by an injection $\hat\sigma_n \colon \{1,\dots,n_1\}\to \{1,\dots,n_0\}$ which matches treated sample $i$ to control sample $\hat\sigma_n(i)$, $i=1,\dots,n_1$. In Section~\ref{sec:balance}, we show that, when $d\leq 3$, the minimizer $\hat\sigma_n$ of the quadratic matching objective obeys 
\@\label{eq:intro_balance}
\sup_{g\in \cF_L} \bigg| \frac{1}{n_1}\sum_{i=1}^{n_1} \big\{g(T_i) - g(C_{\hat\sigma_n(i)}) \big\} \bigg| = o_P(n^{-1/2}),
\@
where $\cF_L$ is the class of twice-continuously-differentiable functions with uniformly bounded first- and second-order derivatives. Notably, the functions in this class are not fitted or included as constraints in the matching problem: the procedure simply minimizes the total quadratic distance. The result also allows the matrix defining the distance (such as that in the Mahalanobis distance) to be estimated in root-$n$ rate, and allows $\hat\sigma_n$ to be an approximate solution with a small  optimization gap. 

The uniform balance~\eqref{eq:intro_balance} for $d\leq 3$ appears to respect a sharp dimension restriction. We further construct an example in which the imbalance in one function $g$ is non-negligible at the root-$n$ scale for $d=4$, and dominates the root-$n$ scale when $d>4$, even though all the stated favorable conditions hold. 
This result partially echoes the conjecture in~\cite{GR2023} albeit using the quadratic objective. 

This balance result provides a common basis for various downstream inferences based on one matched design. 
We study a variety of population-level inference tasks in Section~\ref{sec:wald_inference}. 
Under suitable outcome conditions, for $d\leq 3$, we establish asymptotic normality and consistent variance estimation for the population average treatment effect on the treated (ATT), as well as valid pairwise bootstrap inference without recomputing the matching. To demonstrate the breadth of inferential guarantees, we show the matched design can analogously support pairwise bootstrap inference for  joint confidence bands for the counterfactual distribution functions,  distributional treatment effects, and  quantile treatment effects. 

In Section~\ref{sec:rand_inference}, we turn to the randomization-based inference commonly used after a matched design. 
We find that for $d\leq 3$, the paired difference-in-means randomization test is asymptotically valid under the sharp and weak null in the super-population sampling sense. 
A dimension boundary is present, as we show that for $d=4$, the randomization test for the weak null  can violate the type-I error at any nominal level $1-\alpha$. 

Together, these results justify the use of one outcome-blind matched design across several analyses for low-dimensional problems. Under the stated conditions, the matching and the inferential procedures studied in this paper do not require outcome regression or propensity-score estimation. While each inferential target requires distinct comparisons of the matched pairs (such as the paired averages in mean effect estimation and the empirical CDFs for distributional effect estimation), the uniform balance alone automatically controls the discrepancies in the variety of outcome-related functions relevant in each analysis, without knowing these functions in advance or involving them in constructing the matching. It remains in the design stage of the observational study but retains strong inferential guarantees when the feature dimension is small. 

Finally, we corroborate our theory with numerical demonstrations in Section~\ref{sec:simu}. In simulations across ATT, distributional and quantile treatment effects, and randomization tests, we observe robust coverage of confidence intervals/bands when $d\leq 3$, systematic undercoverage at $d=4$, and deteriorating coverage for $d>4$. We include a detailed discussion on the related literature in Section~\ref{sec:literature}.

\subsection{Problem setup and notations}
\label{subsec:setup}

We assume a super-population setting where the full sample $\{(X_i,Z_i,Y_i(1),Y_i(0))\}_{i=1}^n$ are i.i.d.~from a common distribution $\PP$, and we observe the i.i.d.~triplets $\{(X_i,Y_i,Z_i)\}_{i=1}^n$, with binary treatment $Z_i\in \{0,1\}$, pre-treatment covariates $X_i\in [0,1]^d$, and outcome $Y_i\in \RR$.  
Throughout the paper, we assume standard SUTVA and unconfoundedness, so that $\{Y(1),Y(0)\}\indep Z\given X$ under $\PP$, and $Y_i=Y_i(Z_i)$. 
The propensity score is denoted as $e(x)=\PP(Z_i=1\given X_i=x)$. 
In addition, we make the following assumptions on the covariate density and strictly upper bounded propensity scores to ensure a clean and favorable setting for matching.

\begin{assumption}\label{assump:obs-sample}
    Assume the i.i.d.~super-population model with SUTVA and unconfoundedness, so that $\{Y(1),Y(0)\}\indep Z\given X$ under $\PP$, and $Y_i=Y_i(Z_i)$. 
    In addition, suppose $X$ has marginal density $f$ with respect to the Lebesgue measure on $\Omega=[0,1]^d$. Suppose $f(\cdot)$ and $e(\cdot)$ have $C^1$ extensions to
a neighborhood of $\Omega$. For fixed 
constants $\underline f,\overline f,\eta_0,\eta_1$, we have 
$0<\underline f\le f(x)\le\overline f<\infty$ and  
$\eta_0\le e(x)\le\tfrac12-\eta_1$,
for all $x\in\Omega$. 
\end{assumption}

The condition $\eta_0\le e(x)\le\tfrac12-\eta_1$ is also assumed in \cite{GR2023}, which ensures sufficiently abundant control samples in every local region to rule out the complication mentioned in~\cite{Savje2022}, so that it is eventually possible to find a control for
each treated observation. 
Under this assumption, the event $\cI_n=\{2\le n_1\le n_0\}$ happens with probability tending to one. 

\begin{remark}
    The super-population model used in this paper follows that of~\cite{GR2023} but differs from some common alternatives in the literature, such as the design-based inference where $\{(X_i,Y_i(1),Y_i(0))\}_{i=1}^n$ are viewed as fixed and the sole source of randomness is the treatment assignment. In that regime, the randomness in the matching design arises from the treatment assignment only, and the fixed covariate values makes the analysis even more challenging. In contrast, the super-population model enables the analysis of large-sample behavior via concentration inequalities under transparent conditions. Our setting ensures $n_0$ and $n_1$ are of the same order, in contrast to the requirements in~\cite{AI2012} that $n_1^r/n_0=O(1)$ for some $r>d$ so the control reservoir grows much faster than the $n_1$. 
\end{remark}

For notational convenience, we re-index the treated and control covariates as $\{X_i\}_{Z_i=1}  =\{T_i\}_{i=1}^{n_1}$ and $\{X_i\}_{Z_i=0} = \{C_j\}_{j=1}^{n_0}$, where $n_1=\sum_{i=1}^n Z_i$ and $n_0=n-n_1$ are the treated and control group sizes.

We focus on the one-to-one matching problem with a quadratic objective. 
Formally, consider an injection $\sigma\colon \{1,\dots,n_1\}\to \{1,\dots,n_0\}$, and let $\cA_{n_1,n_0}$ be the collection of all such injections. For a fixed symmetric positive-definite matrix $A_0\in \RR^{d\times d}$, we define the matching objective 
\@\label{eq:def_matching_obj}
{Q}_n(\sigma;A_0) = \frac{1}{2n_1}\sum_{i=1}^{n_1} (T_i-C_{\sigma(i)})^\top A_0 (T_i-C_{\sigma(i)}),\quad \sigma\in \cA_{n_1,n_0},
\@
and the global optimal solution 
\$
\sigma_n^\star \in \argmin_{\sigma\in \cA_{n_1,n_0}}Q_n(\sigma;A_0).
\$

Later on, we shall also extend the results so $A_0$ can be replaced by random, data-dependent matrices such as the inverse covariance matrix. 
Write $Q_n^\star = \min_\sigma Q_n(\sigma;A_0)$ as the minimized matching objective. 
In practice one may settle at an approximate solution $\hat\sigma_n$ with an objective gap $\Delta_n = Q_n(\hat\sigma_n;A_0) - Q_n^\star \geq 0$.  

\begin{remark}
    When $A_0=I$, the objective~\eqref{eq:def_matching_obj} is the total squared Euclidean distance. When $A_0$ is the (empirical) inverse covariance matrix, the objective is the total squared Mahalanobis distance. This differs from the Mahalanobis norm objective in \cite{GR2023}, but enables principled theoretical analysis and is broadly used in practice~\citep{bind2019bridging,zhang2023matching}.
\end{remark}

\paragraph{Notations.} We close the section by stating the notations used throughout this paper. 
For a vector $v$, we denote it Euclidean norm by $\|v\|$. For a matrix
$M$, let
$\|M\|_{\mathrm{op}}=\sup_{\|v\|=1}\|Mv\|$
be its operator norm; when $M$ is symmetric,
$\lambda_{\min}(M)$ and $\lambda_{\max}(M)$ denote its smallest
and largest eigenvalues. We write $I_d$ for the $d\times d$
identity matrix. For a scalar function $g$ on a set $S$, let
$\|g\|_S=\sup_{x\in S}|g(x)|$, which is also written as $\|g\|_\infty$ when no confusion arises. The gradient and Hessian of $g$ are denoted by
$\nabla g$ and $\nabla^2 g$, respectively. For a square-integrable
real random variable $W$, we denote its $L_2$-norm as 
$\|W\|_2=\{\EE(W^2)\}^{1/2}$.
We use $\ind\{\cdot\}$ for an indicator and
$x_+=\max\{x,0\}$ for the positive part.
For positive deterministic sequences $a_n,b_n$, write
$a_n\lesssim b_n$, or equivalently $a_n=O(b_n)$, if
$a_n\le Cb_n$ for some constant $C<\infty$ and all sufficiently
large $n$. Write $a_n\asymp b_n$ if both $a_n\lesssim b_n$ and
$b_n\lesssim a_n$, and $a_n=o(b_n)$ if $a_n/b_n\to0$.
For a positive sequence $a_n$, the notation
$U_n=O_P(a_n)$ means that $\|U_n\|/a_n$ is bounded in probability,
whereas $U_n=o_P(a_n)$ means that $\|U_n\|/a_n\to 0$ in probability;
the norm is chosen according to whether $U_n$ is a scalar,
vector, matrix, or function. We use $\to_P$ for convergence
in probability and $\Rightarrow$ for weak
convergence. Unless stated otherwise, all limits are taken as
$n\to\infty$ with $d$ fixed. Constants denoted by $c$ or $C$
may change from line to line and do not depend on the sample sizes;
their dependence on other quantities is specified where needed.

%% file: 03_balance.tex

\section{Uniform balance from optimal quadratic matching}
\label{sec:balance}

We are interested in the balance between the matched groups. 
For a function $g\colon \cX\to \RR$, define 
\@\label{eq:def_imbalance_g}
B_{n,\sigma}(g) = \frac{1}{n_1}\sum_{i=1}^{n_1} \big\{ g(T_i) - g(C_{\sigma(i)})\big\}.
\@
For a fixed $L\in (0,\infty)$, we define $\cF_L$ as the set of functions with a $C^2$ extension on $\Omega$ and whose function values and all first-order and second-order partial derivatives are bounded in absolute value by $L$ on $\Omega$. 

\subsection{Uniform balance for \texorpdfstring{$d\leq 3$}{d <=3}}
\label{subsec:uniform_balance}

Our first result is the uniform balance ensured by the matched design in Theorem~\ref{thm:observational}, whose proof is in Appendix~\ref{app:sec_proof_uniform_balance}. 
For completeness, we define the corresponding optimization gap and balance measure to zero on $\cI_n^c$, which does not affect the high-probability result.  

\begin{theorem}[Uniform balance]
\label{thm:observational}
Suppose  $d\le3$, and Assumption~\ref{assump:obs-sample} holds. 
Let $\hat{A}$ be a data-dependent symmetric matrix such that $\norm{\hat{A}-A_0}_{\textnormal{op}} = O_P(1/\sqrt{n})$ for some fixed symmetric positive-definite matrix $A_0$. 
On the event $\cI_n$, let
$\hat\sigma_n\in\mathcal A_{n_1,n_0}$ be a measurable injection with optimization gap  
$\xi_n = Q_n(\hat\sigma_n;\hat{A})
-\min_{\sigma\in\mathcal A_{n_1,n_0}}Q_n(\sigma;\hat{A}) = o_P(1/n)$. 
Then for every
fixed $L\in(0,\infty)$, as $n\to \infty$, 
\@\label{eq:observational-balance}
\sqrt n\sup_{g\in\mathcal F_L}
|B_{n,\hat\sigma_n}(g)| = o_P(1).
\@
\end{theorem}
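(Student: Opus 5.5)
The plan is a Taylor expansion followed by an envelope (perturbation) argument that uses the optimality of the quadratic objective. Fix $g\in\cF_L$ and write $D_i=T_i-C_{\hat\sigma_n(i)}$. A second-order Taylor expansion around $T_i$ gives
\begin{equation*}
B_{n,\hat\sigma_n}(g)=\frac1{n_1}\sum_{i=1}^{n_1}\nabla g(T_i)^\top D_i+R_n(g),\qquad |R_n(g)|\le \frac{C L}{n_1}\sum_{i=1}^{n_1}\|D_i\|^2 .
\end{equation*}
The remainder bound holds uniformly over $\cF_L$. So the first step is to control the matching cost. I would show $Q_n^\star=O_P(n^{-2/d})$ by exhibiting a feasible matching. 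Partition $\Omega$ into cubes of side $\asymp n^{-1/d}$, or use a dyadic multiscale greedy construction. Because $e\le \tfrac12-\eta_1$, each cube contains, with high probability, many more controls than treated units, up to an overflow that is handled at coarser scales. The overflow costs are bounded as in the standard random-matching (Ajtai–Komlós–Tusnády-type) estimates, which for $d\le 3$ with a strict surplus of controls give the order $n^{-2/d}$.

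Next I would transfer this to $\hat\sigma_n$. Since $\|\hat A-A_0\|_{\mathrm{op}}=O_P(n^{-1/2})$ and $A_0\succ0$, we have $Q_n(\sigma;\hat A)=Q_n(\sigma;A_0)\{1+O_P(n^{-1/2})\}$ uniformly in $\sigma$. Hence $\hat\sigma_n$ has gap $\Delta_n\le \xi_n+O_P(n^{-1/2-2/d})$ with respect to $A_0$, and $n_1^{-1}\sum_i\|D_i\|^2=O_P(n^{-2/d})$. Because $2/d>1/2$ exactly when $d\le3$, this gives $\sup_g|R_n(g)|=o_P(n^{-1/2})$.

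The main term $\Lambda_n(v):=n_1^{-1}\sum_i v(T_i)^\top A_0 D_i$, with $v=A_0^{-1}\nabla g$, is handled by perturbation. Replace each treated point by $T_i^\varepsilon=T_i+\varepsilon v(T_i)$ and let $Q_n^\varepsilon(\sigma)$ be the objective with $T_i^\varepsilon$ in place of $T_i$. Expanding the square gives
\begin{equation*}
Q_n^\varepsilon(\sigma)=Q_n(\sigma;A_0)+\varepsilon\Lambda_n^{\sigma}(v)+\frac{\varepsilon^2}{2n_1}\sum_i v(T_i)^\top A_0 v(T_i),
\end{equation*}
and the last term does not depend on $\sigma$. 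Minimizing over $\sigma$ and using near-optimality of $\hat\sigma_n$ yields, for $\varepsilon>0$,
\begin{equation*}
\Lambda_n(v)\ \ge\ \frac{\min_\sigma Q_n^\varepsilon(\sigma)-\varepsilon^2 c_n-Q_n^\star-\Delta_n}{\varepsilon},
\end{equation*}
where $c_n=(2n_1)^{-1}\sum_i v(T_i)^\top A_0v(T_i)$ is the $\sigma$-free quadratic term. The case $-\varepsilon$ gives the matching upper bound. So it suffices to show $|\min_\sigma Q_n^{\pm\varepsilon}(\sigma)-\varepsilon^2c_n-Q_n^\star|+\Delta_n=o_P(\varepsilon n^{-1/2})$. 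I would take $\varepsilon=n^{-1/2}$ times a slowly diverging factor. Then the requirement on the gap, $\Delta_n/\varepsilon=o_P(n^{-1/2})$, follows from $\xi_n=o_P(1/n)$ together with the $O_P(n^{-1/2-2/d})$ contribution from $\hat A$.

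The main obstacle is the stability estimate for the optimal cost under a small smooth deformation of the treated cloud. Let $\Phi(\varepsilon)=\min_\sigma Q_n^\varepsilon(\sigma)-\varepsilon^2c_n$. I would split $\Phi(\varepsilon)-\Phi(0)$ into a deterministic part and a fluctuation part.

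For the deterministic part, $\EE\,\Phi(\varepsilon)-\EE\,\Phi(0)$, the deformed treated points are i.i.d.\ from a smoothly perturbed density. Using the local-density representation of the optimal cost at scale $n^{-1/d}$ (the cost is $n^{-2/d}$ times a smooth functional of the local treated and control densities, again available because controls are in strict surplus), I would show this difference is $O(\varepsilon n^{-2/d})$.

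For the fluctuation part, $\Phi(\varepsilon)-\Phi(0)-\EE\{\cdot\}$, I would use a bounded-differences or Efron–Stein argument. Resampling one unit changes the optimal cost by $O(n^{-1}\cdot n^{-2/d})$ typically, up to logarithmic factors from augmenting-path lengths. Applied to the increment, this should give fluctuations of order $\varepsilon n^{-1/2-2/d}$ up to logs.

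Dividing by $\varepsilon$ yields $\Lambda_n(v)=O_P(n^{-2/d}\,\mathrm{polylog}\,n)=o_P(n^{-1/2})$ for $d\le3$. Finally, uniformity over $\cF_L$ comes from a finite $\delta$-net of the gradient fields $\{A_0^{-1}\nabla g\}$ in sup-norm, combined with the exponential tails of the concentration step and a union bound. The approximation error of the net is controlled by Cauchy–Schwarz, $|\Lambda_n(v)-\Lambda_n(v')|\le \|v-v'\|_\infty\, O_P(n^{-1/d})$. This is the step I expect to require the most care, both the $O(\varepsilon n^{-2/d})$ smoothness of the expected cost and the entropy bookkeeping.
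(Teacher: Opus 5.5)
You have a genuine gap, in two places.

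\textbf{The envelope step.} Your Taylor reduction and the transfer from $\hat A$ to $A_0$ are fine. Tilting the cost by $\varepsilon\Lambda_n^\sigma(v)$ is also a legitimate analogue of the paper's device. The envelope step fails as you set it up, however.

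Subtracting $\varepsilon^2c_n$ does not remove the curvature of $\Phi(\varepsilon)=\min_\sigma\{Q_n(\sigma;A_0)+\varepsilon\Lambda_n^\sigma(v)\}$, because the minimization over $\sigma$ puts it back. The quantity $\min_\sigma Q_n^\varepsilon(\sigma)$ is just the optimal cost for the deformed treated cloud. By your own smoothness heuristic its mean is within $O(\varepsilon n^{-2/d})$ of $\mathbb{E}Q_n^\star$. Hence $\mathbb{E}\{\Phi(\varepsilon)-\Phi(0)\}=-\varepsilon^2\mathbb{E}c_n+O(\varepsilon n^{-2/d})$, not $O(\varepsilon n^{-2/d})$. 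This is the $tJ_g$ term in Lemma~\ref{lem:IBP}, and it is why Lemma~\ref{lem:slope} carries $|t|$ on the right.

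The secant slopes therefore have a bias of order $\varepsilon$. With $\varepsilon=n^{-1/2}\omega_n$ and $\omega_n\to\infty$, your sandwich only gives $|\Lambda_n(v)|\lesssim\varepsilon c_n\gg n^{-1/2}$. You need $\varepsilon=o(n^{-1/2})$, and then three terms compete:
\begin{itemize}
\item the curvature bias, of order $\varepsilon$;
\item the gap term $\Delta_n/\varepsilon$;
\item the fluctuation $\mathrm{sd}\{\Phi(\pm\varepsilon)\}/\varepsilon\lesssim\Gamma_n/(\sqrt n\,\varepsilon)$, which is what Efron--Stein gives on each value separately, with no factor $\varepsilon$.
\end{itemize}
The paper balances these with $h_n=(\Gamma_n/\sqrt n+\epsilon_n)^{1/2}$ and $n\epsilon_n\to0$. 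The window is nonempty exactly because $\sqrt n\,\Gamma_n\to0$ for $d\le3$.

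Separately, a ``local-density representation'' cannot supply even the $O(\varepsilon n^{-2/d})$ comparison. Such a formula has error $o(n^{-2/d})$, not $O(\varepsilon n^{-2/d})$ with $\varepsilon$ polynomially small, and the deformed points also leave $\Omega$. What you need is a derivative bound, and this is the paper's key idea. The integration-by-parts identity (Lemma~\ref{lem:IBP}) writes the derivative of the expected optimal value along $T_i\mapsto T_i+s\,v(T_i)$ as $n_1$ times boundary and divergence integrals of $\mu_t(x)-\mathbb{E}V_D(t)$ against $a=p_1v$. Then $\sup_x|\mu_t(x)-\mathbb{E}V_D(t)|=O(H_n(t)/n)$ follows from one-point replacement stability of the optimal value on the cell-count event (Lemmas~\ref{lem:stab_opt_value} and~\ref{lem:value-bounds}). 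You invoke that stability only for fluctuations; it is also what controls the bias.

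\textbf{The uniformity step at $d=3$.} Your net error $\|v-v'\|_\infty O_P(n^{-1/3})$ forces resolution $\delta=o(n^{-1/6})$, so $\log N(\delta)\gtrsim\delta^{-3}\gg n^{1/2}$ for the gradient fields. The bounded differences you can actually justify are $O(H_n/n)$ for each value $\Phi_v(\pm\varepsilon)$. At deviation $\eta\varepsilon n^{-1/2}$ these give a tail exponent of at best order $\varepsilon^2/H_n^2=o(n^{1/3})$ up to logarithms once $\varepsilon=o(n^{-1/2})$. So the union bound does not close.

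Your claimed $\varepsilon$-proportional fluctuation of $\Phi(\varepsilon)-\Phi(0)$ would rescue it, but it does not follow from stability of values. The derivative $\Phi'(s)=\Lambda_n^{\sigma_s}(v)$ depends on the entire pairing. After a replacement, the two optimal matchings differ along one alternating path or cycle whose length and edge lengths you have not controlled.

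The paper avoids this by tilting with $g$ itself, $V_{D,g}(t)=\min_\sigma\{Q_n(\sigma;A_0)+tB_{n,\sigma}(g)\}$, whose slope at $t=0$ is the full imbalance rather than its linearization. Since $B_{n,\sigma}(g)$ depends on $\sigma$ only through the selected control set, it changes by at most one swap under a replacement (Lemma~\ref{lem:control_set_stability}). The interpolation $g_u=f+u(g-f)$ then gives one-point differences at most $4\|g-f\|_\infty/n_1$ for $S_{D,t}(g)-S_{D,t}(f)$, almost surely and uniformly in $t$ (Lemma~\ref{lem:unif:replacement}). These sub-Gaussian increments are chained against the sup-norm entropy $\delta^{-d/2}\log(1/\delta)$ of $\mathcal{F}_L$, whose Dudley integral converges precisely when $d<4$.
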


The convergence in probability stated in~\eqref{eq:observational-balance} concerns the random imbalance as a function of the covariates. Such randomness arises from the super-population model, where i.i.d.~sampling of the covariates provides the probabilistic basis and needed structure for controlling the imbalance with high probability. 

The condition $\|\hat{A}-A_0\|_{\textnormal{op}}$ allows the use of Mahalanobis distance where $\hat{A}= \hat\Sigma^{-1}$ for the empirical covariance matrix $\hat\Sigma$ which converges to the population covariance matrix in the required rate if tha latter has lower-bounded eigenvalues. 

Theorem~\ref{thm:observational} implies that the matched control sample approximates the treated group in the sense of empirical measure. Namely, on $\mathcal I_n=\{2\le n_1\le n_0\}$,  consider the empirical measures $\hat{P}_1 = n_1^{-1}\sum_{i=1}^{n_1} \delta_{T_i}$ and $\hat{Q}_0 = n_1^{-1} \sum_{i=1}^{n_1} \delta_{C_{\hat\sigma_n(i)}}$. It states that $\sqrt{n}\|\hat{P}_1-\hat{Q}_0\|_{\cF_L} = o_P(1)$, where we define $\|\mu\|_{\cF_L} = \sup_{g\in \cF_L} |\int g \ud \mu|$.

\begin{remark}[Fixed-function moment bounds]
The imbalance control in~\eqref{eq:observational-balance} is stated in probability. The proof of Theorem~\ref{thm:observational} (especially the intermediate result Theorem~\ref{thm:balance}) also yields stronger moment bounds for any fixed function $g\in \cF_L$ 
and exact optimizer under a fixed symmetric positive definite matrix $A_0$. 
For all sufficiently large $n$, recalling that $\sigma_n^\star$ is the exact optimizer, we have 
\[
\left|\EE [B_{n,\sigma_n^\star}(g)]\right|
\le C_g\Gamma_n,
\qquad
\EE\!\left[B_{n,\sigma_n^\star}(g)^2\right]
\le C_g\Big(\Gamma_n^2+\frac{\Gamma_n}{\sqrt n}\Big),
\qquad
\Gamma_n\asymp n^{-2/d}(\log n)^{1+2/d},
\]
where the constant $C_g$ does not depend on $n$. 
Thus, for $d\leq 3$, the imbalance $B_{n,\hat\sigma_n}(g)$ is root-$n$ negligible in the $L^2$-norm. See Proposition~\ref{prop:observational-moments} for a formal statement.  
\end{remark}

We now discuss our analysis techniques. A common way in the literature to control the approximation error left by matching is
to bound each pair's contribution in absolute value. For a smooth
function $g$,
\@\label{eq:first_order}
|B_{n,\sigma}(g)|
\le
\frac1{n_1}\sum_{i=1}^{n_1}
|g(T_i)-g(C_{\sigma(i)})|
\le
\bigg(\sup_{x\in\Omega}\|\nabla g(x)\|\bigg)
\frac1{n_1}\sum_{i=1}^{n_1}\|T_i-C_{\sigma(i)}\|.
\@
This bound gives root-$n_1$ negligible imbalance whenever the average
matching distance is $o_P(n_1^{-1/2})$, as required by the
approximation conditions in
\citet[Theorem~1]{AI2012} and \citet[Assumption~3]{AS2022}. However, this requirement is difficult to fulfill, which leads to requiring a large control pool that grows much faster than $n_1$. 

The key insight of our analysis is that the first-order expansion may neglect the cancellation between positive
and negative pairwise differences.
To see how this cancellation may happen, by Taylor expansion,
\[
B_{n,\sigma}(g)
=
\frac1{n_1}\sum_{i=1}^{n_1}
\nabla g(T_i)^\top\{T_i-C_{\sigma(i)}\}
+R_{n,\sigma}(g),
\qquad
|R_{n,\sigma}(g)|
\le
\frac{dL}{2n_1}\sum_{i=1}^{n_1}
\|T_i-C_{\sigma(i)}\|^2.
\]
The remainder is controlled by the quadratic matching objective.
The leading sum, however, keeps the signs of the differences, and bounding each term by the magnitude in~\eqref{eq:first_order} could be loose. The main step in our analysis
is to control this signed sum for the globally optimal assignment
under the stated sampling assumptions.
For a fixed function $g$ and a fixed matrix $A_0$, our proof (Appendix~\ref{app:sec_proof_uniform_balance}) studies
the optimal value of the tilted objective  
$Q_n(\sigma;A_0)+tB_{n,\sigma}(g)$ for $t\in \RR$. Its derivative at $t=0$ equals the imbalance of
the original optimizer $\sigma_n^\star$ almost surely. 
We use integration by parts, as well as the Efron--Stein inequality to obtain bounds on the optimal values and translate them to bounds on the derivative. The E--S inequality is applicable because we prove a stability property of the optimal value under one-observation replacement (Lemmas~\ref{lem:stab_opt_value} and~\ref{lem:value-bounds}). 
We then use a standard chaining technique to extend to uniform balance over $\cF_L$. 

This distinction also connects to \citet[Proposition~1]{GR2023}, who uses the conditional matching bias to characterize the validity of  paired randomization test.
Our result establishes sufficient conditions for uniformly negligible
smooth-function imbalance (bias) under the quadratic objective.

\subsection{The dimension boundary: counterexample for \texorpdfstring{$d\geq 4$}{d >= 4}}
\label{subsec:dim}

The uniform balance result holds for $d\leq 3$. A natural question is whether this is the sharp dimension boundary. 
We now show that with $d\geq 4$, even for the exact optimizer $\sigma_n^\star$ for a fixed matching objective with $\hat A=A_0=I_d$ and $\xi_n=0$ can fail to yield $\sqrt{n}$-negligible imbalance.  The construction of the example and the proof of Theorem~\ref{thm:dimension-obs} are in Appendix~\ref{app:sec_proof_boundary}. 

\begin{theorem}\label{thm:dimension-obs}
For every fixed integer $d\ge4$, there exist fixed $C^\infty$ functions $f,e$ satisfying Assumption~\ref{assump:obs-sample} and a fixed $C^\infty$ function $g$ on a neighborhood of $\Omega$, with $0\le g\le1$ and $\sup_\Omega\|\nabla g\|\le1$, such that the exact minimizer $\sigma_n^\star$ under $A_0=I_d$ has the following properties. If $d=4$, there are constants $b_*,p_*>0$ such that
\begin{equation}\label{sharp:eq:iid-prob}
 \liminf_{n\to\infty}\PP\{\sqrt{n_1}\,B_{n,\sigma_n^\star}(g)\ge b_*\}\ge p_*.
\end{equation}
If $d>4$, then $\sqrt n B_{n,\sigma_n^\star}(g)\to+\infty$ in probability. The imbalance is  set to be zero on $\mathcal I_n^c$. 
\end{theorem}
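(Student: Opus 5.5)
We construct an example in which the treated and control covariate densities differ by a first-order amount in a fixed direction, so that optimal matching leaves a systematic signed displacement. The mechanism is semi-discrete optimal transport with a local surplus. Take $A_0=I_d$ and $f\equiv1$ on $\Omega$. Choose a $C^\infty$ propensity score $e$ that is nonconstant and depends only on the first coordinate, say $e(x)=e_0+\epsilon\,\psi(x_1)$ with $\psi$ increasing, so that all bounds in Assumption~\ref{assump:obs-sample} hold. The treated density is $f_1\propto e$ and the control density is $f_0\propto 1-e$. Since $n_0>n_1$, the optimal injection chooses a subset of controls of size $n_1$ and matches it to the treated units. Heuristically, each treated point at location $x$ is matched within a radius $r_n\asymp n^{-1/d}$. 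The ratio of available controls to treated units varies smoothly in $x_1$, so the nearest available controls are slightly more likely to lie on the side where control density is higher. This produces a displacement $\mathbb E[T_i-C_{\sigma(i)}\mid T_i=x]\approx c\,r_n^2\nabla\log\{(1-e)/e\}(x)$, a density-gradient bias of the kind familiar from nearest-neighbor matching. Taking $g(x)=x_1$ (smoothed to satisfy $0\le g\le1$ and $\|\nabla g\|\le1$), the signed sum in the Taylor expansion displayed before this subsection is then of order $n^{-2/d}$. This order equals $n^{-1/2}$ when $d=4$ and exceeds it when $d>4$, which matches the dichotomy in the statement.

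The plan is to make this rigorous through the tilted-objective device from the proof of Theorem~\ref{thm:observational}. Define $V_n(t)=\min_\sigma\{Q_n(\sigma;I_d)+tB_{n,\sigma}(g)\}$. This is concave in $t$, and its derivative at $0$ equals $B_{n,\sigma_n^\star}(g)$ almost surely. Concavity gives the one-sided bound
\begin{equation}
B_{n,\sigma_n^\star}(g)\ \ge\ \frac{V_n(t)-V_n(0)}{t}\qquad\text{for } t<0 .
\end{equation}
A lower bound on the imbalance therefore reduces to showing that the tilted optimal value rises, at the correct rate, relative to the untilted one. I would argue this using \emph{two different} densities on a single probability space, so that the lower bound is not merely an artifact of concavity. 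Concretely, I would establish a value expansion of the form $\mathbb E V_n(t)=n^{-2/d}\Phi(t n^{2/d})+o(n^{-2/d})$ for $|t|\lesssim n^{-2/d}$. Here $\Phi$ is a local, Zador-type constant integrated against the densities $f_1$ and $f_0$. The required ingredient is that $\Phi'(0)>0$, a strictly positive drift coming from the gradient of $e$. Combining this expansion with the Efron--Stein concentration of $V_n(t)$ from Lemmas~\ref{lem:stab_opt_value} and~\ref{lem:value-bounds} then yields the result. The fluctuations are of order $n^{-1/2}\cdot n^{-1/d}\cdot(\log n)^{c}$, which is smaller than $t\,n^{-2/d}$ for $t=\kappa n^{-2/d}$ in the regime $d\ge4$ once $\kappa$ is suitably chosen.

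For $d>4$ this gives $B\gtrsim n^{-2/d}\gg n^{-1/2}$ with probability tending to one. For $d=4$ the drift and the fluctuations are both of order $n^{-1/2}$, so we only obtain a $\liminf$ probability bound $p_*>0$. This follows from a Paley--Zygmund or Markov argument applied to $\sqrt{n_1}B$, whose mean is at least $2b_*$ and whose second moment is bounded.

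The main obstacle is the first-order value expansion with a strictly nonzero tilt derivative. This is a local optimal-transport asymptotic, computing the sensitivity of the optimal matching cost to an additive linear perturbation, and the $n^{-2/d}$ term must be identified exactly rather than merely bounded. My first attempt would be a localization argument. I would partition $\Omega$ into mesoscopic cubes of side $h$ with $n^{-1/d}\ll h\ll1$. Inside each cube, I would compare with an idealized problem in which $e$ is replaced by its linear approximation. I would use the subadditivity and superadditivity of optimal matching cost to glue the cubes, with boundary errors of order $h^{-1}n^{-1/d}\cdot n^{-1/d}$. Within each cube, the surplus controls should be discarded preferentially on the side of higher control density, and an explicit coupling argument would yield the positive drift. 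If this proves too delicate, a fallback is a construction in which $e$ takes two nearly constant values on two halves of the cube. Then the control surplus is concentrated on one half, and a crossing-flux argument at the interface forces a net displacement of order $r_n^2$ per unit.
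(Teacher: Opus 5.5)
Your overall frame matches the end of the paper's proof: a mean lower bound of order $n^{-2/d}$, a variance bound, a one-sided Chebyshev or Paley--Zygmund step, then transfer to random $n_1$. The gap is the central step, a strictly positive mean imbalance of exact order $n^{-2/d}$. You only motivate it heuristically, and the route you propose for it cannot work.

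First, the expansion is mis-scaled. If $\mathbb{E}V_n(t)=n^{-2/d}\Phi(tn^{2/d})+o(n^{-2/d})$ with $\Phi'(0)>0$, then differentiating at $t=0$ gives $\mathbb{E}B=\Phi'(0)$, an $O(1)$ imbalance. This contradicts $|\mathbb{E}B_{D,A}(g)|\le C\Gamma_n$ from Theorem~\ref{thm:balance}, which holds in every dimension under Assumption~\ref{ass:cov}. More fundamentally, to recover a slope of size $n^{-2/d}$ from difference quotients over $|t|\lesssim n^{-2/d}$, you must know $\mathbb{E}V_n(t)$, itself of size $n^{-2/d}$, to precision $o(n^{-4/d})$ uniformly in $t$. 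A Zador-type expansion with error $o(n^{-2/d})$ carries no information at that scale. Localization with sub/superadditive gluing over cubes of side $h\le 1$ incurs boundary errors of order $n^{-3/d}/h\gg n^{-4/d}$. Even the sign is unclear for $g(x)=x_1$. Since $\partial_1 g\neq 0$ on the faces $x_1=0,1$, boundary layers of width $r_n$ contribute terms of the same order $n^{-2/d}$, with opposite signs and density-dependent sizes. These compete with the interior density-gradient drift you describe, which is exactly why the paper imposes $q'(0)=q'(1)=0$.

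The missing idea is to bypass asymptotics entirely through an exact identity. Take the treated density uniform and $g(x)=q(x_1)$ with $q'(0)=q'(1)=0$. As a function of one treated point, the optimal value is a minimum of quadratics with a common quadratic part. Integrating by parts in $x_1$ then gives
$\mathbb{E}\bigl[n_1^{-1}\sum_i q'(T_{i1})(T_{i1}-C_{\sigma_n^\star(i),1})\bigr]=-\int_\Omega q''(x_1)J(x)\,dx$,
where $J(x)$ is the expected marginal cost of inserting a treated point at $x$ (Lemma~\ref{sharp:lem:IBP}). After that, only crude one-sided bounds are needed:
\begin{enumerate}
\item Make $p_0=a$ small on a thin slab and place the negative part $-v_w$ of $q''$ there. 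On the slab, $J(x)\ge\frac12\mathbb{E}\min_j\|x-C_j\|^2\gtrsim a^{-2/d}n_0^{-2/d}$.
\item Bound the positive part by $M\int J\le M\overline{\mathfrak C}_{2n_1}/n_1\lesssim n_0^{-2/d}$, using supermodularity of the optimal cost in the treated set and a dyadic matching bound.
\item Take $a$ small so the slab term dominates. No matching constant ever has to be identified.
\end{enumerate}

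Two smaller problems remain. Your concavity inequality is reversed: since $V_n(t)\le V_n(0)+tB$ for all $t$, the lower bound $B\ge\{V_n(t)-V_n(0)\}/t$ holds for $t>0$, not $t<0$. Also, at $d=4$ the fluctuations from Lemma~\ref{lem:value-bounds} are of order $H_n(t)/\sqrt n\asymp n^{-1/2-2/d}$ up to logarithms, not $n^{-1/2-1/d}$. Difference quotients at $t\asymp n^{-1/2}$ therefore have noise exceeding the $n^{-1/2}$ signal by a log factor. Your Paley--Zygmund step needs a log-free bound such as $\operatorname{Var}B\le 2(n_1+n_0)/n_1^2$, which you do not supply. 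The paper obtains it by applying Efron--Stein to $B$ directly, via the selected-control-set stability of Lemma~\ref{lem:control_set_stability}.
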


In the construction, the expected imbalance is of order $n^{-2/d}$, while its variance is of order at most $n^{-1}$. The former coincides with the sampling-error scale at $d=4$ and decays more slowly when $d>4$. For matching with replacement, \cite{AI2006} pointed out the bias scale of $n^{-2/d}$, which coincides with ours in the matching without replacement setting. 
Finally, the single-$g$ bound clarifies that the failure of uniform balance is due to inexact matching, instead of the intrinsic sampling uncertainty due to taking the uniform over the function class for $d=4$ (which happens to be $\sqrt{n}$).

%% file: 04_wald.tex

\section{Wald-type and bootstrap inference from one matched design}
\label{sec:wald_inference}

A useful implication of the uniform balance in Theorem~\ref{thm:observational} is that one matched design provides negligible bias in a wide range of functions, so that the impact of inexact matching on various downstream inference is negligible. 
In this section, we show how the same matched design allows convenient Wald-type inference and Bootstrap approximation for various targets, including the average treatment effect on the treated (ATT) (Section~\ref{subsec:wald_att}), distribution effects (Section~\ref{subsec:wald_cdf}), and quantile treatment effects (Section~\ref{subsec:wald_quantile}). 

Throughout this section, for notational convenience, we denote the treated outcomes as $\{Y_{1,i}\}_{i=1}^{n_1}$ and the control outcomes as $\{Y_{0,j}\}_{j=1}^{n_0}$. Then, a matched design $\hat\sigma_n$ matches each treated sample $i$ to the control sample $\hat\sigma_n(i)$. 
We also consider a bootstrap approximation to the Wald-type inference. 

\paragraph{Bootstrapping the matched pairs.} We consider the bootstrap inference when resampling the matched pairs. Given the observed data, the original matching $\hat\sigma_n$, and bootstrap size $B\in \NN^+$, for each $b=1,\dots,B$, we draw $I_1^{*(b)},\ldots,I_{n_1}^{*(b)}$ independently and uniformly from $\{1,\ldots,{n_1}\}$ with replacement. 
The $b$-th bootstrap matched pairs  are 
$(Y_{1,I_i^{*(b)}}, Y_{0, \hat\sigma_n(I_i^{*(b)})} )$, $i=1,\dots, n_1$.  
Note that we do not re-compute the matched design.  

\vspace{1em}

Throughout, we assume the conditions in Theorem~\ref{thm:observational}, which we state for easier reference. The only additional requirement is that $\hat{A}$ must also use the outcome-blind information as the matching. 

\begin{assumption}\label{assump:match}
    Suppose $d\leq 3$. 
    Let $\cG_n = \sigma((X_i,Z_i)_{i=1}^n)$ be the $\sigma$-field used in constructing the matching. 
    Let $\hat{A}$ be a data-dependent symmetric matrix that is $\cG_n$-measurable such that $\norm{\hat{A}-A_0}_{\textnormal{op}} = O_P(1/\sqrt{n})$ for some fixed symmetric positive-definite matrix $A_0$. 
On the event $\cI_n=\{2\leq n_1\leq n_0\}$, let
$\hat\sigma_n\in\mathcal A_{n_1,n_0}$ be a $\cG_n$-measurable injection with optimization gap  
$\xi_n = Q_n(\hat\sigma_n;\hat{A})
-\min_{\sigma\in\mathcal A_{n_1,n_0}}Q_n(\sigma;\hat{A}) = o_P(1/n)$.  
\end{assumption}

Throughout, the probability and asymptotics concern the super-population sampling uncertainty, so the covariates, $\hat\sigma_n$, and outcomes are random, and the extra bootstrap sampling randomness when applicable.

\subsection{Average treatment effect on the treated}
\label{subsec:wald_att}

The population ATT is defined as $\theta_{\att} := \EE[Y(1)-Y(0)\given Z=1]$. Given a matched design $\hat\sigma_n$, we estimate with the difference-in-means estimator, with the Wald-type variance estimator 
\@\label{eq:def_Ri_theta_att}
\hat\theta_{\att} = \frac{1}{n_1}\sum_{i=1}^{n_1} R_i,\quad \text{where}\quad R_i := Y_{1i} - Y_{0\hat\sigma_n(i)},\quad \hat{V}_{\att} := \frac{1}{n_1-1} \sum_{i=1}^{n_1} (R_i-\hat\theta_{\att})^2.
\@ 
For bootstrap inference, we consider the studentized t-statistics. For $b=1,\dots,B_n$ and some fixed $B_n\in \NN^+$,
\@\label{eq:def_bootstrap_att_tstat}
T^{*(b)} = \frac{\hat\theta_{\att}^{*(b)} - \hat\theta_\att}{\sqrt{\hat{V}_\att^{*(b)}/n_1}},\quad \hat\theta^{*(b)}_\att = \frac{1}{n_1}\sum_{i=1}^{n_1} R_{I_i^{*(b)}} ,\quad 
\hat{V}_\att^{*(b)} = \frac{1}{n_1-1}\sum_{i=1}^{n_1} (R_{I_i^{*(b)}} - \hat\theta_\att^{*(b)} )^2.
\@

We make standard moment and smoothness conditions on the outcomes. Let $\mu_z(x)=\EE[Y(z)\given X=x]$. 
We write the conditional treatment effect (CATE) function as $\tau(x) = \mu_1(x)-\mu_0(x)$, and the conditional variance function $\sigma_z^2(x):=\EE[(Y(z)-\mu_z(x))^2\given x=x]$. Let $\Var_z(\cdot)$ denote the variance conditional on $Z=z$. 

\begin{assumption}\label{assump:outcome}
    The function $\mu_0$ has a $C^2$ extension near $\Omega$, $\mu_1$ is bounded and measurable, and $\sigma_0^2$ is continuous. 
    In addition, $\EE[(Y(z)-\mu_z(x))^4\given X=x]\leq M$ for some a common  constant $M>0$ for all $x$ and $z\in \{0,1\}$. Define $v_\tau=\Var_1\{\tau(X)\}$, $v_\epsilon=\E_1\{\sigma_1^2(X)+\sigma_0^2(X)\}$,  
and suppose $V_\att:=v_\tau+v_\epsilon>0$.
\end{assumption}

We show that when $d\leq 3$, the matched-pair difference-in-means estimator is asymptotically normal with variance $V_\att$ defined above, and permits valid Wald-type and bootstrap inference. 

\begin{theorem}[Population ATT inference]\label{thm:ATT}\label{thm:bootstrap}
Under Assumptions~\ref{assump:obs-sample},~\ref{assump:match}, and~\ref{assump:outcome}, as $n\to \infty$,
\begin{equation}\label{eq:ATT-main}
 \sqrt {n_1}(\hat\theta_\att-\theta_\att)\Rightarrow N(0,V_\att),
 \qquad \hat V_{\att}\to V_{\att}\quad\text{in probability}.
\end{equation} 
Hence the Wald-CI $\hat\theta_\att\pm z_{1-\alpha/2}\sqrt{\hat V_\att/{n_1}}$ has limiting coverage of $1-\alpha$, where $z_{1-\alpha/2}$ is the $1-\alpha/2$-th quantile of a standard normal distribution. Furthermore, the matched-pair bootstrap satisfies
\begin{equation}\label{eq:bootstrap}
 \sup_{x\in \RR}\left|\PP^*\{ 
 T^{*(1)}\leq x \}-\Phi(x) 
                  \right|\to 0
 \quad\text{in probability}.
\end{equation}
For bootstrap, let $B=B_n$ be a
deterministic sequence of positive integers satisfying
$B_n\to\infty$ as $n\to\infty$.
For the t-statistics in~\eqref{eq:def_bootstrap_att_tstat}, define $c_{q}^*$ as the $q$-th empirical quantile of $\{T^{*(b)}\}_{b=1}^B$. Then the bootstrap confidence interval $\mathrm{CI}_{n,1-\alpha}^{\mathrm{boot}} = [\hat\theta_{\att} - c_{1-\alpha/2}^*  \cdot(\hat{V}_\att/n_1)^{1/2}, \hat\theta_{\att} - c_{\alpha/2}^* \cdot (\hat{V}_\att/n_1)^{1/2}]$ has limiting coverage of $1-\alpha$. 
\end{theorem}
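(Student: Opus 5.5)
The plan is to decompose the estimation error conditionally on the design $\sigma$-field $\cG_n$. By unconfoundedness and i.i.d.\ sampling, conditional on $\cG_n$ the outcomes are independent with means $\mu_1(T_i)$ and $\mu_0(C_j)$. Writing $\epsilon_{1i}=Y_{1i}-\mu_1(T_i)$ and $\epsilon_{0j}=Y_{0j}-\mu_0(C_j)$, we get
\begin{align}
\hat\theta_\att-\theta_\att
&=\underbrace{\frac1{n_1}\sum_{i=1}^{n_1}\{\tau(T_i)-\theta_\att\}}_{\text{(I)}}
+\underbrace{\frac1{n_1}\sum_{i=1}^{n_1}\{\epsilon_{1i}-\epsilon_{0\hat\sigma_n(i)}\}}_{\text{(II)}}
+\underbrace{\frac1{n_1}\sum_{i=1}^{n_1}\{\mu_0(T_i)-\mu_0(C_{\hat\sigma_n(i)})\}}_{\text{(III)}}.
\end{align}
Term (III) equals $B_{n,\hat\sigma_n}(\mu_0)$. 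Since $\mu_0$ has a $C^2$ extension near the compact set $\Omega$, $\mu_0\in\cF_L$ for some fixed $L$. Theorem~\ref{thm:observational} then gives $\sqrt n\,|\text{(III)}|=o_P(1)$; we only need this for one fixed function, but the uniform version is available. Conditionally on $Z$, the $T_i$ are i.i.d.\ from the treated covariate law and $\mu_1$ is bounded, so the CLT gives $\sqrt{n_1}\,\text{(I)}\Rightarrow N(0,v_\tau)$. The factor $n_1/n\to \PP(Z=1)>0$ lets us switch between $\sqrt n$ and $\sqrt{n_1}$.

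For (II), we use that $\hat\sigma_n$ is an injection. Conditional on $\cG_n$, the summands $\epsilon_{1i}-\epsilon_{0\hat\sigma_n(i)}$ are independent and mean zero, with conditional variances $s_i^2=\sigma_1^2(T_i)+\sigma_0^2(C_{\hat\sigma_n(i)})$ and uniformly bounded fourth moments (by $M$). A conditional Lyapunov CLT therefore applies once we show $n_1^{-1}\sum_i s_i^2\to_P v_\epsilon$. We have $n_1^{-1}\sum_i\sigma_1^2(T_i)\to \E_1\sigma_1^2(X)$ by the LLN. For the control part, we need $n_1^{-1}\sum_i\sigma_0^2(C_{\hat\sigma_n(i)})\to \E_1\sigma_0^2(X)$. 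Since $\sigma_0^2$ is only continuous, Theorem~\ref{thm:observational} cannot be applied directly. Instead we approximate $\sigma_0^2$ uniformly on $\Omega$ within $\varepsilon$ by a smooth $h\in\cF_{L_\varepsilon}$ (via Stone--Weierstrass or mollification) and apply Theorem~\ref{thm:observational} to $h$, which requires only $o_P(1)$ imbalance here.

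We then combine (I) and (II). Term (I) is $\cG_n$-measurable, while (II) is conditionally asymptotically normal with a deterministic limit variance. A standard characteristic-function argument, conditioning on $\cG_n$ and using dominated convergence, yields the joint limit $N(0,v_\tau+v_\epsilon)$. Adding (III) by Slutsky completes the proof of the first claim in~\eqref{eq:ATT-main}.

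For $\hat V_\att\to_P V_\att$, we expand $n_1^{-1}\sum_i R_i^2$ using $R_i=\tau(T_i)+\epsilon_{1i}-\epsilon_{0\hat\sigma_n(i)}+\{\mu_0(T_i)-\mu_0(C_{\hat\sigma_n(i)})\}$. The terms are handled as follows.
\begin{itemize}
\item The term $n_1^{-1}\sum_i\tau(T_i)^2$ is handled by the LLN.
\item The squared noise terms are handled by conditional Chebyshev (fourth moments) together with the approximation argument above.
\item Cross terms between $\tau$ and the noise have conditional mean zero and conditional variance $O(1/n)$.
\item The last term needs the pairwise differences $\mu_0(T_i)-\mu_0(C_{\hat\sigma_n(i)})$, not just their average. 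Its square average is bounded by $\|\nabla\mu_0\|_\infty^2\, n_1^{-1}\sum_i\|T_i-C_{\hat\sigma_n(i)}\|^2$, which is $O_P(Q_n^\star+\xi_n)$ up to constants. This is $o_P(1)$ because the optimal quadratic matching cost tends to zero: a greedy construction with abundant controls gives a cost of order $n^{-2/d}$ times logarithmic factors. We take this as part of the appendix machinery behind Theorem~\ref{thm:observational}.
\end{itemize}
Cross terms with this last piece are then handled by Cauchy--Schwarz. The Wald interval follows from Slutsky's lemma.

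For the bootstrap, conditional on the full data the resampled $R_{I_i^*}$ are i.i.d.\ draws from the empirical distribution of $\{R_i\}$. By the previous step, its variance $\hat V_\att$ converges to $V_\att>0$, and $n_1^{-1}\sum_i|R_i|^{3}=O_P(1)$ follows from the fourth-moment bound and boundedness of $\mu_1$ and $\mu_0$. A Berry--Esseen bound for the bootstrap mean, plus consistency of $\hat V^{*}_\att$ (bootstrap WLLN), then gives~\eqref{eq:bootstrap} with the Kolmogorov distance going to zero in probability. Because $T^{*}$ is centered at $\hat\theta_\att$, the bias term (III) cancels and never enters. Since $B_n\to\infty$, the empirical quantiles $c^*_q$ converge in probability to $z_q$ by Glivenko--Cantelli conditional on the data, and the coverage of the bootstrap interval then follows from~\eqref{eq:ATT-main}.

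The main obstacle is the analysis of term (II) and the variance estimator, where the control indices are selected data-dependently. This is resolved by conditioning on $\cG_n$ together with injectivity of $\hat\sigma_n$, plus the smoothing argument for the merely continuous $\sigma_0^2$. The crucial bias control is (III), which is exactly Theorem~\ref{thm:observational}.
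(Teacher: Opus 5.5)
Your proposal is correct and follows the paper's proof step by step. It uses the same three-term decomposition with the bias term controlled by Theorem~\ref{thm:observational}, and the same conditional CLT given $\cG_n$ that exploits injectivity of $\hat\sigma_n$. Your variance-estimator expansion likewise uses $L_n(\hat\sigma_n)\to 0$ to kill $n_1^{-1}\sum_i\{\mu_0(T_i)-\mu_0(C_{\hat\sigma_n(i)})\}^2$, and you close with the same conditional Berry--Esseen bound for the studentized bootstrap.

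The differences are minor. The paper transfers the merely continuous $\sigma_0^2$ via $L_n(\hat\sigma_n)\to0$ and uniform continuity (Lemma~\ref{lem:transfer}); since you need $L_n(\hat\sigma_n)=o_P(1)$ anyway, your smoothing-plus-balance detour is valid but redundant. For $c_q^*\to_P z_q$, invoke a distribution-free bound such as DKW, or the paper's pointwise Chebyshev argument, rather than plain Glivenko--Cantelli, because the conditional law of $T^{*(1)}$ changes with $n$.
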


The proof of Theorem~\ref{thm:ATT} is in Appendix~\ref{app:subsec_att_proof}. 
Here we outline the intuitions for the root-$n$ Wald-type inference. Let $\tau(x)=\EE[Y(1)\given X=x]-\EE[Y(0)\given X=x]$ be the conditional average treatment effect. Then 
\$
\sqrt{n_1}(\hat\theta_{\att}-\theta_\att) = \frac{1}{\sqrt{n_1}} \sum_{i=1}^{n_1} \{\tau(T_i) - \theta_\att\}
+ \sqrt{n_1} B_{n,\hat\sigma_n}(\mu_0) + \frac{1}{\sqrt{n_1}}\sum_{i=1}^{n_1} (\varepsilon_{1,i} - \varepsilon_{0,\hat\sigma_n(i)}),
\$
where $\epsilon_{1,i}=Y_{1,i}-\mu_1(T_i)$ and $\epsilon_{0,j} = Y_{0,j}-\mu_0(C_j)$ are the independent, mean-zero residuals. 
The first term is a standard centered sample mean. 
The second term is the imbalance in the (yet-unknown) outcome model $\mu_0$, but the uniform balance in Theorem~\ref{thm:observational} ensures it is root-$n$ negligible as long as the outcome model belongs to that $C^2$ smooth class. 
Finally, since the design is outcome-blind and the matching is an injection without re-using control samples in the matching pairs, the third term is a summation over independent and mean-zero variables conditional on the matched design. 
Finally, the (design-conditional) central limit theorem for the third term can be combined with the first two to obtain the desired result~\eqref{eq:ATT-main}. 

\subsection{Counterfactual distributions and distributional effects}
\label{subsec:wald_cdf}

Apart from mean effects, distributional effects are widely used to assess the variation of the outcomes. We show that if the underlying distribution functions belong to the smooth function class, then the matched design also seamlessly supports the inference on distributional effects. 

Let $\EE_z$ denote the expectation under the treatment-conditional distribution for $z\in \{0,1\}$. We define the conditional distribution function and the cumulative distribution function (in the treated)
\$
H_z(y\given x) = \PP(Y(z)\leq y\given X=x),\quad F_{z,T}(y) = \EE_1[H_z(y\given X)].
\$
The distributional effect (in the treated) is given by 
\$
D(y) := F_{1,T}(y) - F_{0,T}(y).
\$
Here both $F_{1,T}$ and $F_{0,T}$ are defined by the covariate distribution in the treated group. 

\paragraph{Point-wise Wald inference.} With the matched design $\hat\sigma_n$, the distributional effect can be estimated by 
\@\label{eq:def_emp_cdf}
\hat{D}(y):= \hat{F}_1(y) - \hat{F}_0(y),\quad 
\text{where}\quad 
\hat{F}_1(y) = \frac{1}{n_1}\sum_{i=1}^{n_1} \ind\{Y_{1,i}\leq y\},\quad 
\hat{F}_0(y) = \frac{1}{n_1}\sum_{i=1}^{n_1}\ind\{Y_{0,\hat\sigma_n(i)} \leq y\},
\@
whose variance can be estimated by 
\$
\hat{V}_D(y) = \frac{1}{n_1-1} \sum_{i=1}^{n_1} \big\{ R_i(y) - \hat{D}(y)\big\}^2,\quad 
R_i(y) = \ind\{Y_{1,i}\leq y\} - \ind\{Y_{0,\hat\sigma_n(i)} \leq y\}.
\$
We will show that $(\hat{F}_1(y),\hat{F}_0(y))$ converges to a Gaussian process, so pointwise Wald-type inference is convenient and valid. However, inference on confidence bands is arguable more convenient with bootstrap, which we present next. 

\paragraph{Bootstrap inference.} The bootstrap inference will use the same resampling scheme introduced at the beginning of Section~\ref{sec:wald_inference}. For $b=1,\dots,B_n$, given the resampled pair indices $I_i^{*(b)}$, we define  
\@\label{eq:def_cdf_boot}
\hat{D}^{*(b)}  = \hat{F}_1^{*(b)}  - \hat{F}_0^{*(b)},\quad 
\hat F_1^{*(b)}(y)
  =\frac1{n_1}\sum_{i=1}^{n_1}\ind\{Y_{1,I_i^{*(b)}}\le y\},~
 \hat F_0^{*(b)}(y)
 &=\frac1{n_1}\sum_{i=1}^{n_1}
       \ind\{Y_{0,\hat\sigma_n(I_i^{*(b)})}\le y\}.
\@
To perform inference on the bands, we define 
\$
T_F^{*(b)} = \sqrt{n_1} \max_{z\in \{0,1\}} \sup_{y\in \RR} \big|\hat{F}_z^{*(b)}(y) - \hat{F}_z(y)\big|,\quad 
T_D^{*(b)} = \sqrt{n_1} \sup_{y\in \RR} \big| \hat{D}^{*(b)}(y) - \hat{D}(y)\big|. 
\$
For a fixed $\alpha\in (0,1)$ and $H\in \{F,D\}$, we define $c_H^*$ as the empirical $(1-\alpha)$-quantile of $\{T_H^{*(b)}\}_{b=1}^{B_n}$.  
Define the joint CDF confidence bands $\text{CI}_{z,n,1-\alpha}^{\text{joint}}$, $z\in \{0,1\}$ and the confidence band for the contrast $\text{CI}_{n,1-\alpha}^{\text{boot}}(y)$ as 
\$
\text{CI}_{z,n,1-\alpha}^{\text{joint}} &= [L_z(y),U_z(y)] , \quad L_z(y) =  \hat{F}_z(y)-c_F^*/\sqrt{n_1} ,\quad U_z(y) =  \hat{F}_z(y)+c_F^*/\sqrt{n_1} , \\ 
\text{CI}_{n,1-\alpha}^{\text{contrast}}(y) &= [\hat{D}(y) - c_D^*/\sqrt{n_1}, \hat{D}(y) + c_D^*/\sqrt{n_1}].
\$
By default, $\text{CI}_{z,n,1-\alpha}^{\text{joint}}$ is truncated to $[0,1]$ and $\text{CI}_{n,1-\alpha}^{\text{contrast}}(y)$ is truncated to $[-1,1]$. 
To state our results, we require the conditional CDF of the \textit{control} outcome to be smooth. 

\begin{assumption}\label{assump:cdf}
    The functions $H_0,H_1$ are jointly measurable, and there exists a fixed $L<\infty$ such that 
 $\{H_0(y\given\cdot):y\in\RR\}\subseteq\mathcal F_L$.
\end{assumption}

For a random variable $W\in \RR$ with CDF $G_W(\cdot)$, we say it has a \emph{regular $p$-quantile} if there exists some $c\in \RR$ such that $G_W(\cdot)$ is continuous at $c$, $G_W(c)=p$, and $G_W(c-\varepsilon)<p<G_W(c+\varepsilon)$ for every $\varepsilon>0$. 

We present Wald inference for $D(y)$ in a pointwise fashion since the corresponding asymptotic variance can be explicitly estimated. 
For confidence bands, we state the inferential results for bootstrap inference. 
To describe the joint limiting distribution, we let $(\BB_1,\BB_0)$ be centered Gaussian process with covariance kernel 
\@\label{dist:eq:kernel}
 K_{zz}(s,t)&=F_{z,T}(s\wedge t)-F_{z,T}(s)F_{z,T}(t),\notag\\
 K_{10}(s,t)&=\operatorname{Cov}_1\{H_1(s\given X),H_0(t\given X)\},
 \qquad K_{01}(s,t)=K_{10}(t,s).
\@
Theorem~\ref{thm:wald_cdf} shows the convergence of the point estimates $(\hat{F}_1,\hat{F}_0)$, as well as the validity of Wald CI and bootstrap confidence bands. The proof is in Appendix~\ref{app:subsec_thm_wald_cdf}. The key idea is that the uniform balance in Theorem~\ref{thm:observational} applied to the conditional CDFs makes the matching bias negligible in the CDF inference. 

\begin{theorem}
\label{thm:wald_cdf}
Under Assumptions~\ref{assump:obs-sample},~\ref{assump:match}, and~\ref{assump:cdf}, as $n\to \infty$, 
\begin{equation}\label{dist:eq:CDFCLT}
 \sqrt{n_1}(\hat F_1-F_{1,T},\hat F_0-F_{0,T})
 \Rightarrow(\mathbb B_1,\mathbb B_0)
 \quad\text{in }\ell^\infty(\mathbb R)^2.
\end{equation}
For each $y\in \RR$, the Wald interval $\textnormal{CI}_{n,1-\alpha}^{\textnormal{contrast,Wald}}(y) = \hat{D}(y) \pm z_{1-\alpha/2} \sqrt{\hat{V}_D(y)/n_1}$ has a limiting coverage of $1-\alpha$ for $D(y)$. 
For bootstrap, suppose $B_n\to \infty$. Define
 $W_F=\max_z\sup_y|\mathbb B_z(y)|$ and 
 $W_D=\sup_y|\mathbb B_1(y)-\mathbb B_0(y)|$. 
If $W_F$ has a regular $(1-\alpha)$-quantile, then
\begin{equation}\label{dq:eq:joint-cdf-coverage}
 \PP\big\{ F_{z,T}(y) \in \textnormal{CI}_{z,n,1-\alpha}^{\textnormal{joint}}
       \text{ for every }y\in\RR,\ z\in\{0,1\}\big\}
 \to 1-\alpha.
\end{equation}
If $W_D$ has a regular $(1-\alpha)$-quantile, then
\begin{equation}\label{dq:eq:direct-cdf-coverage}
 \mathbb P\{D(y)\in\mathrm{CI}^{\textnormal{contrast}}_{n,1-\alpha}(y)
                         \text{ for every }y\in\mathbb R\}
 \to1-\alpha.
\end{equation}
The coverage probabilities include both data and bootstrap sampling randomness. 
\end{theorem}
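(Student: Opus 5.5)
The plan is to decompose each empirical CDF error into a conditional-mean part, a matching-bias part and a design-conditional noise part, mirroring the ATT argument. For $z=1$, $\hat F_1-F_{1,T}$ is an ordinary treated-sample empirical process. For $z=0$ we write
\[
\hat F_0(y)-F_{0,T}(y)=\frac1{n_1}\sum_{i=1}^{n_1}\{H_0(y\given T_i)-F_{0,T}(y)\}-B_{n,\hat\sigma_n}\{H_0(y\given\cdot)\}+\frac1{n_1}\sum_{i=1}^{n_1}\big[\ind\{Y_{0,\hat\sigma_n(i)}\le y\}-H_0(y\given C_{\hat\sigma_n(i)})\big].
\]
By Assumption~\ref{assump:cdf}, $\{H_0(y\given\cdot)\}_y\subseteq\cF_L$. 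Theorem~\ref{thm:observational}, which applies under Assumption~\ref{assump:match}, therefore gives $\sqrt{n_1}\sup_y|B_{n,\hat\sigma_n}\{H_0(y\given\cdot)\}|=o_P(1)$. The matching bias is thus uniformly negligible in $\ell^\infty(\RR)$.

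The remaining terms are handled conditionally on $\cG_n$. Because $\hat\sigma_n$ is $\cG_n$-measurable and injective, the matched control outcomes $Y_{0,\hat\sigma_n(i)}$ are conditionally independent with conditional CDFs $H_0(\cdot\given C_{\hat\sigma_n(i)})$. The treated outcomes are likewise conditionally independent with CDFs $H_1(\cdot\given T_i)$, and the two groups are mutually independent given $\cG_n$. Consequently
\[
\sqrt{n_1}(\hat F_1-F_{1,T},\hat F_0-F_{0,T})=\mathbb A_n+\mathbb E_n+o_P(1),
\]
where $\mathbb A_n$ is the i.i.d.\ empirical process of $(H_1(\cdot\given T_i),H_0(\cdot\given T_i))$ over the treated covariates, and $\mathbb E_n$ is the conditionally centered, independent-but-non-identical empirical process of the outcome indicators.

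The weak convergence of $\mathbb A_n$ follows from a Donsker argument. The classes $\{H_z(y\given\cdot)\}_y$ are uniformly bounded and monotone in $y$, so they have bracketing number $O(\varepsilon^{-2})$ in $L_2$.

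For $\mathbb E_n$ we use a conditional functional CLT for independent non-identical indicators, for instance a Lindeberg--Feller functional CLT. After the quantile transform through the monotone CDF $\tfrac1{n_1}\sum_i H_0(y\given C_{\hat\sigma_n(i)})$, the bracketing condition holds uniformly. The conditional covariance of the control component at $(s,t)$ is
\[
\frac1{n_1}\sum_i\big\{H_0(s\wedge t\given C_{\hat\sigma_n(i)})-H_0(s\given C_{\hat\sigma_n(i)})H_0(t\given C_{\hat\sigma_n(i)})\big\}.
\]
The first summand is a value of a function in $\cF_L$, so the uniform balance lets us replace $C_{\hat\sigma_n(i)}$ by $T_i$ in it. The product $H_0(s\given\cdot)H_0(t\given\cdot)$ lies in $\cF_{3L^2}$ say, so the balance theorem applies to it as well, at the rate $o_P(1)$ that suffices here. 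The covariance therefore converges in probability to $\EE_1[H_0(s\wedge t\given X)-H_0(s\given X)H_0(t\given X)]$, and the treated component is treated in the same way.

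Summing the conditional kernel of $\mathbb E_n$ and the unconditional kernel of $\mathbb A_n$ reproduces~\eqref{dist:eq:kernel}. For $K_{zz}$ this is the law of total covariance; for $K_{10}$ the cross term comes only from $\mathbb A_n$, since $\mathbb E_n$ has conditionally independent components. Combining conditional and unconditional limits via characteristic functionals of finite-dimensional marginals, together with asymptotic tightness of each piece, yields~\eqref{dist:eq:CDFCLT}.

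The pointwise Wald result then follows by the continuous mapping theorem for $\hat D(y)$. Consistency of $\hat V_D(y)$ needs $\frac1{n_1}\sum_i R_i(y)^2\to_P \EE$ of its limit. Expanding $R_i(y)^2=\ind\{Y_{1,i}\le y\}+\ind\{Y_{0,\hat\sigma_n(i)}\le y\}-2\ind\{Y_{1,i}\le y\}\ind\{Y_{0,\hat\sigma_n(i)}\le y\}$, the cross term has conditional mean $H_1(y\given T_i)H_0(y\given C_{\hat\sigma_n(i)})$. Approximating this by $H_1(y\given T_i)H_0(y\given T_i)$ is the delicate point, because $H_1$ is not assumed smooth. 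I would control it through a conditional weak law of large numbers plus $\frac1{n_1}\sum_i|H_0(y\given T_i)-H_0(y\given C_{\hat\sigma_n(i)})|\le L\,\frac1{n_1}\sum_i\|T_i-C_{\hat\sigma_n(i)}\|\to_P0$. The last convergence holds because the optimal quadratic objective is $O_P(n^{-2/d})$, by the value bounds of Lemma~\ref{lem:value-bounds}, and Cauchy--Schwarz then controls the mean distance.

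For the bootstrap, we resample the pairs $W_i=(Y_{1,i},Y_{0,\hat\sigma_n(i)})$. Conditionally on the data, this is the empirical bootstrap of the bivariate-indicator class indexed by $y$, applied to $n_1$ fixed points. I would invoke the conditional multiplier/bootstrap CLT for Donsker classes of independent non-identically distributed data (e.g.\ Kosorok, or van der Vaart--Wellner Thm.~3.6.13-type arguments), with limiting covariance equal to the empirical covariance of the pair indicators. The main obstacle is showing that this empirical covariance converges uniformly to the full kernel~\eqref{dist:eq:kernel}, in particular its cross component. Centering at $\hat F$ changes the covariance to $\frac1{n_1}\sum_i$ of products of indicators minus products of the $\hat F$'s. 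The decomposition above, with cross means $H_1(s\given T_i)H_0(t\given C_{\hat\sigma_n(i)})$ handled by the distance argument just described, shows this sample covariance converges uniformly (via a Glivenko--Cantelli bracketing argument) to $\mathrm{Cov}$ of $(\ind\{Y(1)\le s\},\ind\{Y(0)\le t\})$ with $Y(1)$ and $Y(0)$ conditionally independent given $X$ under $Z=1$. This gives exactly the kernel~\eqref{dist:eq:kernel}, since conditional independence leaves only $\mathrm{Cov}_1\{H_1,H_0\}$ as the cross term.

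With that, the bootstrap law of $T_F^{*}$ and $T_D^{*}$ converges in probability to the laws of $W_F$ and $W_D$, by the continuous mapping theorem for the sup functionals. The regular-quantile assumptions, together with $B_n\to\infty$, imply $c_H^*\to_P$ the $(1-\alpha)$-quantile of $W_H$. Slutsky's lemma then gives~\eqref{dq:eq:joint-cdf-coverage} and~\eqref{dq:eq:direct-cdf-coverage}; truncation to $[0,1]$ or $[-1,1]$ does not affect coverage, since the true values already lie in those ranges.
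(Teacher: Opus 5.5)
Your proposal is correct in substance, but it takes a different route from the paper.

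\textbf{The paper's route.} The paper never analyzes the non-identically distributed matched control outcomes directly. It couples each $Y_{0,\hat\sigma_n(i)}$ to an auxiliary outcome $Y_{0,i}^\dagger=H_0^{\leftarrow}(V_{0i}\mid T_i)$ through a randomized probability integral transform, so that the pairs $(Y_{1,i},Y_{0,i}^\dagger)$ are i.i.d.\ given $n_1$. It then shows $\sqrt{n_1}\|\hat F_0-\hat F_0^\dagger\|_\infty\to0$ in probability. The balance theorem handles the mean part, and the Bernstein-type Lemma~\ref{cdf32:lem:crossing} handles the fluctuation part; the latter needs the quantitative rate $\delta_n\log n_1\to0$. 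The same coupling handles $\hat V_D(y)$. For the bootstrap, the paper compares actual and auxiliary statistics under common resampling indices (Lemma~\ref{cdf32:lem:resample}). As a result, only the standard i.i.d.\ Donsker and empirical-bootstrap theorems are ever invoked.

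\textbf{What your route buys.} You split the process into a $\mathcal G_n$-measurable covariate process, the balance term, and conditionally independent noise. This needs no auxiliary randomization. It makes the kernel~\eqref{dist:eq:kernel} transparent through the law of total covariance. Beyond the balance theorem, it requires only $L_n(\hat\sigma_n)\to0$.

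\textbf{The price.} Your route needs heavier machinery:
\begin{itemize}
\item a conditional bracketing CLT for independent non-identically distributed processes \citep[Section~2.11]{VW1996}, with the fixed semimetric $|F_{0,T}(s)-F_{0,T}(t)|$ justified by $\sup_y|n_1^{-1}\sum_i H_0(y\mid C_{\hat\sigma_n(i)})-F_{0,T}(y)|\to0$ in probability;
\item a transfer of conditional asymptotic equicontinuity to the unconditional law;
\item the random-$n_1$ argument for $\mathbb A_n$;
\item for the bootstrap, a Donsker theorem holding uniformly over sequences of sampling laws for VC (uniform-entropy) classes \citep[Section~2.8]{VW1996}, applied to i.i.d.\ draws from the empirical measure of the pairs.
\end{itemize}
That last result, rather than a bootstrap theorem for non-identically distributed data, is the right tool. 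The uniform convergence of the empirical pair covariance, which you identify as the main obstacle, is exactly its hypothesis.

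\textbf{Two small corrections.} Lemma~\ref{lem:value-bounds} concerns exact tilted optimizers with a fixed $A$ and gives order $(\log n/n)^{2/d}$, not $n^{-2/d}$. For $\hat\sigma_n$ you should cite Lemma~\ref{inf:lem:geometry} instead. Also, the Lipschitz constant of $H_0(y\mid\cdot)$ is $\sqrt d\,L$, not $L$.
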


\subsection{Quantile treatment effects}
\label{subsec:wald_quantile}

The same matched design also support inference on quantile treatment effects at regular quantiles. 
For $u\in (0,1)$, the quantile treatment effect is defined as 
\$
\Delta_Q(u) =Q_1(u)-Q_0(u),\quad \text{where}\quad Q_z(u) = \inf\{y\colon F_{z,T}(y)\geq u\}. 
\$
An estimator is $\hat{\Delta}_Q(u) = \hat{Q}_1(u)-\hat{Q}_0(u)$, where $\hat{Q}_z(u)$ is the $u$-th quantile for the empirical CDFs $\hat{F}_1(\cdot)$ and $\hat{F}_0(\cdot)$ given in~\eqref{eq:def_emp_cdf}. 
As the asymptotic variance in Wald-type inference requires involves the density function and requires additional estimation, we shall primarily focus on the convenient bootstrap inference. 

Following the approach of~\cite{chernozhukov2020generic}, one can invert the CDF inference in Section~\ref{subsec:wald_cdf} for the quantile treatment effect. Namely, one can invert the upper and lower confidence bands for $F_{z,T}(y)$ obeying~\eqref{dq:eq:joint-cdf-coverage} to obtain a valid confidence bands for $\{\Delta_Q(u)\}_{u\in (0,1)}$. However, this approach can be conservative and may produce unbounded confidence bands near the tails. 

Here, we present a  direct bootstrap inference approach for regular quantiles to show the wide range of inferential targets the matched design can support. We use the same resampling scheme introduced earlier. For $b=1,\dots,B_n$, given the resampled pair indices $I_i^{*(b)}$, we define  
\$
\hat{Q}_z^{*(b)}(u) = \inf\{y\colon \hat{F}_z^{*(b)}(y) \geq u\},\quad \hat\Delta_Q^{*(b)} = \hat{Q}_1^{*(b)}(u) - \hat{Q}_0^{*(b)}(u),
\quad S_Q^{*(b)}(u) = \sqrt{n_1} \{\hat\Delta_Q^{*(b)}  - \hat\Delta_Q(u)\}.
\$
where the bootstrap empirical CDFs $\hat{F}_z^{*(b)}(\cdot)$ are given in~\eqref{eq:def_cdf_boot}. 

The inference will be limited to regular quantiles. Consider a range $\cU = [u_-,u_+]$ with $0<u_-<u_+<1$.  
\begin{assumption}\label{assump:quantile}
    For each $z\in \{0,1\}$, there is a compact interval $J_z = [a_z,b_z]$ such that $F_{z,T}(a_z)<u_-$ and $F_{z,T}(b_z)>u_+$. On a neigborhood of $J_z$, $F_{z,T}$ is continuously differentiable and its derivative $f_{z,T}$ is bounded away from zero on $J_z$.   
\end{assumption}
Let $c_Q^*$ be the empirical $(1-\alpha)$-quantile of $T_Q^{*(b)} = \sup_{u\in \cU}|S_Q^{*(b)}(u)|$ for $b=1,\dots,B_n$. We define the simultaneous confidence band 
\$
\textnormal{CI}_{n,1-\alpha}^Q(u) = \big[\hat\Delta_Q(u) - c_Q^*/\sqrt{n_1}, \hat\Delta_Q(u) + c_Q^*/\sqrt{n_1}\big].
\$
For a fixed $u\in (0,1)$, let $c_{Q}^*(u)$ be the empirical $(1-\alpha)$-th quantile of $\{|S_Q^{*(b)}(u)|\}_{b=1}^{B_n}$. 
We then define the pointwise bootstrap interval is 
\$
\textnormal{CI}_{n,1-\alpha}^{Q,\textnormal{pt}}(u) = \big[\hat\Delta_Q(u) - c_{Q}^*(u)/\sqrt{n_1}, \hat\Delta_Q(u) + c_{Q}^*(u)/\sqrt{n_1}\big].
\$

The proof of Theorem~\ref{thm:quantile} is in Appendix~\ref{app:subsec_proof_quantile}. 

\begin{theorem}\label{thm:quantile} 
Under Assumptions~\ref{assump:obs-sample},~\ref{assump:match},~\ref{assump:cdf} and~\ref{assump:quantile}, as $n\to \infty$, 
\begin{equation}\label{dist:eq:Qlimit}
 \sqrt{n_1}(\hat\Delta_Q-\Delta_Q)
 \Rightarrow\mathcal Z_Q
 \quad\text{in }\ell^\infty(\mathcal U),\qquad
 \mathcal Z_Q(u)=
 -\frac{\mathbb B_1\{Q_1(u)\}}{f_{1,T}\{Q_1(u)\}}
 +\frac{\mathbb B_0\{Q_0(u)\}}{f_{0,T}\{Q_0(u)\}},
\end{equation}
where the joint process $(\BB_1,\BB_0)$ is defined in Section~\ref{subsec:wald_cdf}. 
Suppose $B_n\to\infty$. If
$W_Q=\sup_{u\in\mathcal U}|\mathcal Z_Q(u)|$
has a regular $(1-\alpha)$-quantile, then
\begin{equation}\label{dq:eq:quantile-coverage}
 \PP\big\{\Delta_Q(u)\in\mathrm{CI}^{Q}_{n,1-\alpha}(u) \text{ for every }u\in\cU\big\} \to 1-\alpha.
\end{equation}
For every fixed $u\in\mathcal U$ with
$\operatorname{Var}\{\mathcal Z_Q(u)\}>0$, 
\begin{equation}\label{dq:eq:quantile-pointwise-coverage}
 \PP\big\{\Delta_Q(u)\in\mathrm{CI}^{Q,\mathrm{pt}}_{n,1-\alpha}(u)\big\} \to 1-\alpha.
\end{equation}
The above coverage probabilities include both the data and bootstrap sampling randomness. 
\end{theorem}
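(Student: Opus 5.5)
We will obtain Theorem~\ref{thm:quantile} from the CDF limit in Theorem~\ref{thm:wald_cdf} via the functional delta method for the quantile (inverse) map, and then apply the same reasoning to the bootstrap. Fix $z\in\{0,1\}$. Under Assumption~\ref{assump:quantile}, the map $\phi\colon F\mapsto F^{-1}$ restricted to $\cU$ is Hadamard differentiable at $F_{z,T}$ tangentially to the continuous functions on $J_z$ (van der Vaart, Lemma~21.4 / Section~3.9), with derivative $h\mapsto -h\{Q_z(u)\}/f_{z,T}\{Q_z(u)\}$. We need the limit $\BB_z$ to have a.s.\ continuous paths on $J_z$. This holds because $K_{zz}$ depends only on $F_{z,T}$, which is continuous on $J_z$, so $\BB_z$ is a time-changed Brownian bridge there. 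Applying the delta method jointly to the pair $(\hat F_1,\hat F_0)$ from~\eqref{dist:eq:CDFCLT}, and then the continuous linear map $(a,b)\mapsto a-b$, gives~\eqref{dist:eq:Qlimit}.

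For the bootstrap, the key step is a conditional bootstrap functional CLT:
\[
\sqrt{n_1}\,(\hat F_1^{*}-\hat F_1,\ \hat F_0^{*}-\hat F_0)\Rightarrow(\BB_1,\BB_0)
\]
conditionally in probability. Conditional on the data, the resampled pairs $(Y_{1,I_i^*},Y_{0,\hat\sigma_n(I_i^*)})$ are i.i.d.\ draws from the empirical measure of the $n_1$ pairs, and the indicator classes are Donsker. Hence it suffices to show two things: (a) the empirical covariance of the pair indicators converges uniformly to $K$, and (b) a uniform entropy/bracketing bound holds for the multiplier/bootstrap empirical process.

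We expect (a) to be the main obstacle. The pairs are not i.i.d.\ from a fixed law, and the cross term $n_1^{-1}\sum_i \ind\{Y_{1,i}\le s\}\ind\{Y_{0,\hat\sigma_n(i)}\le t\}$ must converge to $\EE_1[H_1(s\given X)H_0(t\given X)]$. To handle it, we condition on $\cG_n$, under which outcomes are independent across units and the matching is fixed. The conditional mean is then $n_1^{-1}\sum_i H_1(s\given T_i)H_0(t\given C_{\hat\sigma_n(i)})$. We replace $C_{\hat\sigma_n(i)}$ by $T_i$ using $|H_0(t\given T_i)-H_0(t\given C_{\hat\sigma_n(i)})|\le L\|T_i-C_{\hat\sigma_n(i)}\|$, because the average pair distance is $o_P(1)$ (indeed $Q_n(\hat\sigma_n;\hat A)=O_P(n^{-2/d})$, which is implied by the objective bounds behind Theorem~\ref{thm:observational}). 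Note that consistency suffices here, so no balance is needed beyond a vanishing average distance. Uniformity in $(s,t)$ follows from monotonicity and a finite grid argument, as in Glivenko--Cantelli. The diagonal terms are handled by Theorem~\ref{thm:observational} and the same argument. We then use the bootstrap version of the functional delta method (van der Vaart--Wellner, Theorem~3.9.11), which applies with the same Hadamard derivative, to get the conditional law of $S_Q^{*}$ converging to $\mathcal Z_Q$ in probability.

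Finally, for coverage: $\sup_u|\cdot|$ is continuous, so $T_Q^*$ converges conditionally to $W_Q$. Because $W_Q$ has a regular $(1-\alpha)$-quantile, the conditional bootstrap quantile converges in probability to that quantile, and the empirical quantile over $B_n\to\infty$ draws also converges by a Glivenko--Cantelli/DKW bound for $B_n$ conditionally i.i.d.\ draws. Slutsky together with the unconditional limit $\sup_u\sqrt{n_1}|\hat\Delta_Q-\Delta_Q|\Rightarrow W_Q$ then gives~\eqref{dq:eq:quantile-coverage}. The pointwise result~\eqref{dq:eq:quantile-pointwise-coverage} follows identically: $|\mathcal Z_Q(u)|$ is a mean-zero Gaussian with positive variance, so its quantile is automatically regular.
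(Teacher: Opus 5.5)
Your proposal is correct, but it reaches the bootstrap conclusion by a different route from the paper.

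For the sampling limit the two arguments agree in substance. You invoke Hadamard differentiability of the inverse map, tangentially to continuous functions on $J_z$, together with the functional delta method. The paper proves the same uniform inverse expansion by hand, assuming only a tight norm and a vanishing modulus of continuity for the centered CDF process.

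The difference is in the bootstrap.
\begin{itemize}
\item \textbf{The paper's route.} It never proves a bootstrap CLT for the matched pairs themselves. It reuses the quantile coupling from the proof of Theorem~\ref{thm:wald_cdf}: each matched control outcome is replaced by $Y^\dagger_{0,i}=H_0^{\leftarrow}(V_{0i}\given T_i)$, so the auxiliary pairs are i.i.d.\ from $P^\dagger$ and the textbook i.i.d.\ empirical-bootstrap theorem applies. The resampled actual-minus-auxiliary discrepancy $E_n^*$ is shown to be conditionally negligible by the crossing and resampling lemmas, which need the rate $\delta_n\log n_1\to 0$. The hand-made inversion is then applied a second time to $\hat F_z^*$.
\item \textbf{Your route.} You treat the bootstrap as $n_1$ i.i.d.\ draws from the empirical measure of the matched pairs. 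Because the half-line indicator class is VC with envelope one, it is Donsker uniformly over all sampling laws, so your item (b) is automatic. The conditional FCLT then follows once that empirical measure converges to $P^\dagger$ uniformly on the indicators and their pairwise products, which is exactly your item (a). Item (a) reduces to uniform consistency of the matched-pair bivariate CDF. As you note, this needs only $L_n(\hat\sigma_n)\to 0$, the Lipschitz property of $H_0(t\given\cdot)$, and a monotone bracketing grid. The bootstrap delta method then finishes the argument.
\end{itemize}

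What each buys:
\begin{itemize}
\item Your argument is more modular and needs no rates at the bootstrap stage; balance enters only through the sampling CLT of Theorem~\ref{thm:wald_cdf}. It also delivers the full conditional process limit at once.
\item The paper's argument stays within the standard i.i.d.\ bootstrap theorem with explicit remainders. It avoids checking the domain, measurability and ``conditionally in probability'' hypotheses of the abstract bootstrap delta method and of the sequence-of-measures Donsker theorem. It also recycles machinery already built for Theorem~\ref{thm:wald_cdf}.
\end{itemize}

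One small inaccuracy: the paper establishes $L_n(\hat\sigma_n)=O_P\{(\log n/n)^{2/d}\}+o_P(n^{-1})$, not $O_P(n^{-2/d})$. Since you only use consistency, this is harmless.
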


%% file: 05_randomized.tex

\section{Asymptotic validity of randomization inference}
\label{sec:rand_inference}

Paired randomization tests are commonly used after matching in observational studies. They compare the observed statistic with a reference distribution obtained by independently exchanging the treated and control roles within each completed pair. Ideally, if the matching finds pairs with exactly the same covariate values, i.e., $T_i=C_{\hat\sigma_n(i)}$, then conditional on the covariates, the treatment is as-if randomized in the matched pair. However, with  continuously-valued covariates, the matching is often inexact and leaves bias, in the sense that the treatment conditional on the matched pair is no long as-if uniform within the pair. 

Recognizing that matching is inexact, \cite{pimentel2024covariate} proposed to modify the permutation probabilities using estimated
within-pair propensity-score discrepancies.  
Here, our focus is instead when the randomization inference is asymptotically valid in the super-population sense, similar to~\cite{GR2023},  without
outcome adjustment or propensity-score modification.

\subsection{Paired randomization test of the weak and sharp null}

We consider two common hypotheses 
\$
H_{0,\text{sharp}}\colon Y(1)=Y(0)~\text{almost surely},\qquad 
H_{0,\text{weak}} \colon \theta_\att = \EE[Y(1)-Y(0)\given Z=1]=0.
\$
Throughout, we shall use the difference-in-mean test statistic. 
Recall the paired difference $R_i$ and the estimator $\hat\theta_\att$ defined in~\eqref{eq:def_Ri_theta_att}. 
Fix some $B_n\in \NN^+$. For $b=1,\dots,B_n$, given the observed data, we generate independent signs $\omega_i^{(b)}\sim\text{Unif}(\{-1,1\})$, independently of the data.
We then define the randomization p-values 
\begin{equation}\label{eq:rimean:statistics}
p_{\RI,n}
=\frac{1+\sum_{b=1}^{B_n}
\ind\{|T_{\RI,n}^{\omega^{(b)}}| \ge |T_{\RI,n}|\}}{B_n+1}, 
\qquad 
T_{\RI,n}= \hat\theta_{\mathrm{ATT}},
\qquad
T_{\RI,n}^{\omega^{(b)}}
=\frac1{n_1}\sum_{i=1}^{n_1}\omega_i^{(b)}R_i, 
\end{equation}
On the event $\cI_n^c$, we use the convention $p_{\RI,n}=1$. 
Theorem~\ref{thm:rand_inf} shows the validity of the difference-in-mean-based randomization inference, whose proof is in Appendix~\ref{app:subsec_proof_rand_inf}. 

\begin{theorem}\label{thm:rand_inf}
Suppose Assumptions~\ref{assump:obs-sample},~\ref{assump:match},~\ref{assump:outcome} hold, and $B_n\to \infty$. 
Under $H_{0,\mathrm{weak}}$, for every fixed
$\alpha\in(0,1)$, 
\[
\mathbb P\{p_{\RI,n}\le\alpha \}\to\alpha.
\]
The same conclusion holds under $H_{0,\mathrm{sharp}}$.  
\end{theorem}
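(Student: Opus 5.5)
The plan is to compare the sampling distribution of $\sqrt{n_1}\,T_{\RI,n}$ with the conditional (sign-flip) randomization distribution of $\sqrt{n_1}\,T^{\omega}_{\RI,n}$, and to show that both converge to the same centered normal law $N(0,V_\att)$. Once that is done, the Monte Carlo $p$-value is asymptotically uniform.

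\emph{Step 1: the observed statistic.} Under $H_{0,\mathrm{weak}}$ we have $\theta_\att=0$. Theorem~\ref{thm:ATT} then gives $\sqrt{n_1}\,T_{\RI,n}=\sqrt{n_1}\,\hat\theta_\att\Rightarrow N(0,V_\att)$ and $\hat V_\att\to_P V_\att$.

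\emph{Step 2: the conditional randomization law.} Conditional on the data, $\sqrt{n_1}\,T^\omega_{\RI,n}=n_1^{-1/2}\sum_i\omega_iR_i$ is a Rademacher sum with conditional variance $S_n^2:=n_1^{-1}\sum_iR_i^2$.
\begin{itemize}
\item We have $S_n^2=\tfrac{n_1-1}{n_1}\hat V_\att+\hat\theta_\att^2$. Since $\hat\theta_\att=O_P(n^{-1/2})$ under the null, Step~1 gives $S_n^2\to_P V_\att>0$.
\item For the conditional Lindeberg condition, it suffices that $\max_iR_i^2/n_1\to_P0$. This follows from $n_1^{-1}\sum_i R_i^4=O_P(1)$, which I would obtain by expanding $R_i=\tau(T_i)+\{\mu_0(T_i)-\mu_0(C_{\hat\sigma_n(i)})\}+\varepsilon_{1,i}-\varepsilon_{0,\hat\sigma_n(i)}$.
\item In that expansion, $\tau$ and $\mu_0$ are bounded, and the residuals have conditional fourth moments bounded by $M$. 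The residuals are also conditionally independent given $\cG_n$, because the matching is an injection and is outcome-blind.
\end{itemize}
A conditional Lindeberg--Feller argument then yields $\sup_x|\PP^\omega\{\sqrt{n_1}T^\omega_{\RI,n}\le x\}-\Phi(x/\sqrt{V_\att})|\to_P0$, and by symmetry the same holds for $|\sqrt{n_1}T^\omega_{\RI,n}|$. The only place the inexactness of the match could interfere is in identifying the limit of $S_n^2$, and here it is handled through Theorem~\ref{thm:ATT}.

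I expect this consistency of $S_n^2$ to be the main point to check, because the argument only works under the null. The reason is that $S_n^2$ estimates $\E_1\{\tau(X)^2\}+v_\epsilon=V_\att+\theta_\att^2$, not $V_\att$; it matches the sampling variance only because $\theta_\att=0$. As a direct check, I would also bound the matching term $n_1^{-1}\sum_i\{\mu_0(T_i)-\mu_0(C_{\hat\sigma_n(i)})\}^2\le L^2\cdot 2Q_n(\hat\sigma_n;I)$. This should be $o_P(1)$ because the optimal quadratic objective is $O_P(n^{-2/d})$ (for instance via a grid-cell matching comparison), combined with the $O_P(n^{-1/2})$ perturbation of $\hat A$ and the gap $\xi_n$.

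\emph{Step 3: the Monte Carlo $p$-value.} Conditionally on the data, $\{T^{\omega^{(b)}}_{\RI,n}\}_b$ are i.i.d. Write $G_n$ for the conditional survival function of $\sqrt{n_1}|T^\omega_{\RI,n}|$. The Dvoretzky--Kiefer--Wolfowitz inequality with $B_n\to\infty$ gives $p_{\RI,n}=G_n(\sqrt{n_1}|T_{\RI,n}|)+o_P(1)$. By Step~2, $G_n\to\bar G$ uniformly in probability, where $\bar G(t)=2\{1-\Phi(t/\sqrt{V_\att})\}$ is continuous. Combining this with Step~1 and the continuous mapping theorem gives $p_{\RI,n}\Rightarrow \bar G(|N(0,V_\att)|)\sim\mathrm{Unif}(0,1)$. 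Since the limit law is continuous, $\PP\{p_{\RI,n}\le\alpha\}\to\alpha$, and the convention on $\cI_n^c$ is asymptotically irrelevant.

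\emph{Step 4: the sharp null.} Under $H_{0,\mathrm{sharp}}$ we have $Y(1)=Y(0)$, so $\tau\equiv0$ and $\theta_\att=0$. Thus the weak null holds, and Assumption~\ref{assump:outcome}, including $V_\att>0$, is maintained. The conclusion therefore follows from the weak-null argument with no extra work.
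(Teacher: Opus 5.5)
Your proposal is correct and follows essentially the same route as the paper. Both arguments rest on three ingredients:
\begin{itemize}
\item Theorem~\ref{thm:ATT} for the observed statistic, together with $n_1^{-1}\sum_i R_i^2 = \tfrac{n_1-1}{n_1}\hat V_\att+\hat\theta_\att^2 \to_P V_\att$, which holds only because $\theta_\att=0$.
\item A conditional normal approximation for the sign-flip sum. The paper uses Berry--Esseen with $M_{3n}=O_P(1)$; you use Lindeberg--Feller plus P\'olya.
\item A Monte Carlo step. The paper uses binomial Chebyshev; you use DKW.
\end{itemize}

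One correction to your commentary: the inexactness of the match matters chiefly in Step~1, not in the limit of $S_n^2$. There, the bias term $\sqrt{n_1}B_{n,\hat\sigma_n}(\mu_0)$ is absorbed by Theorem~\ref{thm:ATT} through uniform balance. That is exactly what fails at $d=4$ in Proposition~\ref{prop:rand_boundary}, where the sign-flip variance is still consistent.
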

 
The probability in Theorem~\ref{thm:rand_inf} includes the randomness in both data sampling and randomization. 
The key idea of the randomization inference validity is that, due to the negligible imbalance proved in Section~\ref{subsec:uniform_balance}, the bias in the difference-in-mean statistic caused by inexact matching is asymptotically negligible. Note that while randomization inference for sharp null, assuming exact matching, can in principle use any test statistic, our results are limited to the difference-in-mean statistic so that the uniform balance works.

\subsection{A dimension boundary at \texorpdfstring{$d=4$}{d = 4}}

The preceding conclusion can fail when matching bias is not negligible. 
We continue to use the counterexample from Theorem~\ref{thm:dimension-obs}, keeping the covariate and treatment distribution and the function $g(x)$ and setting
\begin{equation}\label{eq:rimean:binary-model}
Y(1)=Y(0)=\ind\{U\le 1/4+g(X)/2\},
\qquad U\sim\operatorname{Unif}(0,1),\quad U\perp(X,Z).
\end{equation}
We consider the exact optimizer $\hat\sigma_n = \sigma_n^\star$ for $A_0=I_4$. 
The next proposition shows that at $d=4$, the randomization p-value can fail to be asymptotically valid in this example. 

\begin{prop} 
\label{prop:rand_boundary}
Assume $B_n\to \infty$. 
For the model \eqref{eq:rimean:binary-model}, for every fixed
$\alpha\in(0,1)$, there is a constant $\epsilon_\alpha>0$ such that
\[
\liminf_{n\to\infty}\mathbb P\{p_{\RI,n}\le\alpha\}
\ge\alpha+\epsilon_\alpha.
\] 
\end{prop}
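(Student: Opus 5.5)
The plan is to show that, in model \eqref{eq:rimean:binary-model}, the observed statistic $T_{\RI,n}=\hat\theta_\att$ carries a non-vanishing positive bias at the $\sqrt{n_1}$ scale, while the randomization reference distribution stays centered at zero with the usual spread. Write $\mu(x)=1/4+g(x)/2$, so that $\mu_1=\mu_0=\mu$ and $\theta_\att=0$. The sharp null, and hence the weak null, therefore holds. Since $0\le g\le 1$, we have $\mu\in[1/4,3/4]$.

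\textbf{Step 1: decompose the statistic.} Decompose, as in Section~\ref{subsec:wald_att},
\[
\sqrt{n_1}\,T_{\RI,n}=\sqrt{n_1}\,\tfrac12 B_{n,\sigma_n^\star}(g)+\frac{1}{\sqrt{n_1}}\sum_{i=1}^{n_1}(\varepsilon_{1,i}-\varepsilon_{0,\sigma_n^\star(i)}).
\]
Here $\tau\equiv0$, so there is no CATE term. Conditional on $\cG_n$, the residuals are independent Bernoulli-centered variables with conditional variance $\mu(1-\mu)$, and they are independent of the design. By Lindeberg, the second term is conditionally approximately $N(0,s_n^2)$, where
\[
s_n^2=n_1^{-1}\sum_i\{\mu(1-\mu)(T_i)+\mu(1-\mu)(C_{\sigma_n^\star(i)})\}\to_P v:=2\EE_1[\mu(1-\mu)(X)]>0.
\]
This limit uses the fact that $\mu(1-\mu)$ is smooth, so its imbalance vanishes; alternatively, a bounded-Lipschitz imbalance bound suffices. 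The first term is $\ge b_*/2$ with probability at least $p_*$ asymptotically, by Theorem~\ref{thm:dimension-obs}, which gives \eqref{sharp:eq:iid-prob}.

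\textbf{Step 2: the randomization distribution.} Conditionally on the data, $\sqrt{n_1}T^{\omega}_{\RI,n}$ has variance $n_1^{-1}\sum R_i^2$. Because the binary $R_i\in\{-1,0,1\}$ are bounded, a conditional Lindeberg CLT applies, and the conditional law is close to $N(0,\hat w_n)$ with $\hat w_n=n_1^{-1}\sum_i R_i^2$. Now $\EE[R_i^2\mid\cG_n]=\mu_T+\mu_C-2\mu_T\mu_C$, where $\mu_T=\mu(T_i)$ and $\mu_C=\mu(C_{\sigma_n^\star(i)})$. Using smoothness and Theorem~\ref{thm:dimension-obs}'s bias being $O_P(n^{-1/2})$ at $d=4$, together with the average squared pair distance being $o_P(1)$, we get $\hat w_n\to_P \EE_1[2\mu(1-\mu)]+o_P(1)=v$. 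Since $B_n\to\infty$, the Monte Carlo p-value is approximated by $2\{1-\Phi(|\sqrt{n_1}T_{\RI,n}|/\sqrt v)\}$ up to $o_P(1)$.

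\textbf{Step 3: conclude.} Let $c=z_{1-\alpha/2}\sqrt v$. Write $\sqrt{n_1}T_{\RI,n}=\beta_n+G_n$, where $\beta_n=\tfrac{\sqrt{n_1}}2 B_{n,\sigma_n^\star}(g)$ is $\cG_n$-measurable and $G_n$ is conditionally approximately $N(0,v)$. Then
\[
\PP\{p_{\RI,n}\le\alpha\}=\EE\,\psi(\beta_n)+o(1),\qquad \psi(b)=\PP\{|b+\sqrt v N|\ge c\}.
\]
The function $\psi$ is symmetric, increasing in $|b|$, and satisfies $\psi(0)=\alpha$. Hence $\psi(\beta_n)\ge\alpha$ always, and on the event $\{\beta_n\ge b_*/2\}$ we have $\psi(\beta_n)\ge\psi(b_*/2)>\alpha$. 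Taking $\epsilon_\alpha=p_*\{\psi(b_*/2)-\alpha\}$ gives the claim.

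\textbf{Main obstacle.} The delicate point is the conditional CLT uniformity combined with the fact that $\beta_n$ need not converge. We handle this by using the Berry–Esseen bound conditionally on $\cG_n$, which is uniform in the shift $b$, together with continuity of $\psi$. A second delicate point is verifying the variance limits at $d=4$. There the balance theorem is unavailable, so we rely directly on the counterexample's imbalance being $O_P(n^{-1/2})$, which follows from the variance bound in the appendix; we could also use only $o_P(1)$ consistency via the average squared match distance, which tends to zero.
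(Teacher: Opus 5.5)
Your proposal is correct and follows essentially the same route as the paper. The paper also decomposes $\sqrt{n_1}T_{\RI,n}=\tfrac12\sqrt{n_1}B_{n,\sigma_n^\star}(g)+W_n$ and applies conditional Berry--Esseen to both the outcome noise and the sign-flips. It obtains variance consistency from $L_n(\sigma_n^\star)=o_P(1)$ together with $\EE[\hat v_n\mid\cG_n]=s_n^2+n_1^{-1}\sum_i d_i^2$. It then uses the shifted-normal tail function, which is monotone in $|a|$, to set $\epsilon_\alpha=p_*\{\cR_{t_\alpha}(b_*/2)-\alpha\}$; this is exactly your $p_*\{\psi(b_*/2)-\alpha\}$.

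Two small differences. First, you assert $L_n(\sigma_n^\star)=o_P(1)$ at $d=4$, whereas the paper verifies it with a fixed-$K$ cell partition in which every cell has more controls than treated units with probability tending to one. Second, the paper avoids your anti-concentration step at the threshold: it uses a one-sided bound at $t>t_\alpha$ and then lets $t\downarrow t_\alpha$.
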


The proof of Proposition~\ref{prop:rand_boundary} is in Appendix~\ref{app:subsec_rand_dimension}. 
The key idea is as follows: the observed statistic is asymptotically equal to a centered outcome-noise
term plus the imbalance term  
$\tfrac12\sqrt{n_1}B_{n,\sigma_n^\star}(g)$. 
The randomization statistics approximate the centered noise distribution, but the imbalance term is non-negligible by Theorem~\ref{thm:dimension-obs} and translates to the violation of the type-I error control.

%% file: 06_simulations.tex

\section{Numerical demonstrations}
\label{sec:simu}

We corroborate the theory via simulations, where we vary the dimension of the covariates and evaluate the coverage of Wald and Bootstrap confidence intervals. 
Below, we introduce the matching algorithms evaluated, whereas the data-generating processes and evaluation metrics are introduced separately in each subsection.

\vspace{-0.5em}
\paragraph{Matching algorithm.} Throughout the experiments, we evaluate both global matching and greedy matching. For each sample of size $n$, let $T_i$ and $C_j$ denote the covariates of
the $n_1$ treated and $n_0$ control observations, respectively. We estimate $\hat A$ as the inverse covariance of all $n$
covariate vectors, using denominator $n-1$, and form the 
cost matrix $c_{ij}=\tfrac12(T_i-C_j)^\top\hat A(T_i-C_j)$. Global matching casts the optimization as a linear assignment problem, and uses SciPy's floating-point assignment solver to minimize
$Q_n(\sigma;\hat A)=n_1^{-1}\sum_i c_{i,\sigma(i)}$ over injections, which is guaranteed to find the globally optimal solution. 
The Greedy matching approach visits treated observations in an independently drawn random
order and assigns each the remaining unmatched control with the lowest matching cost; this approach does not necessarily find the optimal solution and is shown as a robustness check. 

\subsection{Population ATT inference}
\label{subsec:simu_att}

We first evaluate population ATT inference established in Section~\ref{subsec:wald_att}. 
%
We generate $n$ independent observations with
$X_i\sim\operatorname{Unif}([0,1]^d)$ and
$Z_i\mid X_i\sim\operatorname{Bernoulli}\{e(X_i)\}$, where
$e(x)=0.1+0.3x_1$. Potential outcomes are generated by
$Y_i(z)=\mu_z(X_i)+(0.5+0.5X_{i1})\varepsilon_{iz}$ for  $z\in\{0,1\}$, 
where the errors $\{\varepsilon_{iz}\}$ are independent standard normal,
independent of the covariates and treatment indicators. We set nonlinear outcomes  
$\mu_0(x) =2x_1+\frac{2}{\sqrt d}\sum_{j=1}^d(x_j^2-1/3)+g_d(x)$, 
$\mu_1(x)=\mu_0(x)+a(x_1-3/5)$, where $a\in \{0,2\}$ controls the heterogeneity of the conditional average treatment effect (CATE) function, with $g_d(x)=0$ when $d=1$ and $g_d(x) = 2(x_1-1/2)(x_2-1/2)$  when $d\geq 2$. 
Since the matching design is outcome-blind, the estimation and inference require no fitting or adjustment based on outcome models. It also does not require estimating the propensity score. 
We vary the total sample size $n\in\{500,2000,5000\}$ and covariate dimension $d\in\{1,2,3,4,6\}$.
The population ATT
is $\theta=0$ under both heterogeneity settings.

Given the matching $\hat\sigma_n$ by the two matching algorithms introduced at the beginning of this section, we compute the paired differences
$R_i=Y_{1i}-Y_{0,\hat\sigma_n(i)}$, ATT estimator, Wald-type and bootstrap (where we set $B_n=500$) confidence intervals at the $1-\alpha=95\%$ nominal level as introduced in Section~\ref{subsec:wald_att}. 
For each matching algorithm and each inference method (Wald versus bootstrap), we evaluate the empirical coverage and CI length over $R=2000$ independent runs (including data generation, matching, and inference). 

We summarize the results for the heterogeneous CATE case with $a=2$ in Table~\ref{tab:att-a2}, while the results for the constant CATE model with $a=0$ are deferred to Table~\ref{tab:att-a0} in Appendix~\ref{app:sec_simu} with qualitatively similar messages. 
Consistent with our theory, with both global and greedy matching, both Wald and bootstrap CI achieve close to $95\%$ empirical coverage for $d\leq 3$, with the gap diminishing as the sample size grows to $n=5000$. This shows the particular benefit of the matching design: even with nonlinear outcome models, as long as $d\leq 3$, matching without outcome information still delivers valid inference. 

In contrast, when $d\in \{4,6\}$, the coverage gap does not vanish as we increase the sample size: it appears to converge to some probability below $0.95$ for $d=4$ and keeps deteriorating for $d=6$. This suggests that $d=4$ can indeed be the dimension boundary beyond which the bias of inexact matching becomes non-negligible. 

\begin{table}[p]
\centering
\caption{Population ATT inference results with $a=2$ (heterogeneous CATE). Coverage and its Monte Carlo SE (parentheses) are in percentage. Results are averaged over $R=2000$ independent runs.}
\label{tab:att-a2}
\small\setlength{\tabcolsep}{6pt}\renewcommand{\arraystretch}{1.08}
\begin{tabular}{rrlrrrr}
\toprule
&&&\multicolumn{2}{c}{Coverage (\%)}&\multicolumn{2}{c}{Mean CI width}\\
\cmidrule(lr){4-5}\cmidrule(lr){6-7}
$n$ & $d$ & Matching & Wald & Bootstrap-t & Wald & Bootstrap-t \\
\midrule
500 & 1 & Global & 94.65 (0.50) & 95.00 (0.49) & 0.4452 (0.0007) & 0.4493 (0.0009)\\
 &  & Greedy & 94.80 (0.50) & 94.60 (0.51) & 0.4452 (0.0007) & 0.4490 (0.0009)\\
\addlinespace[2pt]
 & 2 & Global & 94.70 (0.50) & 95.25 (0.48) & 0.4483 (0.0007) & 0.4532 (0.0009)\\
 &  & Greedy & 94.30 (0.52) & 94.10 (0.53) & 0.4489 (0.0008) & 0.4539 (0.0009)\\
\addlinespace[2pt]
 & 3 & Global & 93.90 (0.54) & 93.95 (0.53) & 0.4566 (0.0008) & 0.4611 (0.0009)\\
 &  & Greedy & 93.75 (0.54) & 93.55 (0.55) & 0.4575 (0.0008) & 0.4622 (0.0009)\\
\addlinespace[2pt]
 & 4 & Global & 91.20 (0.63) & 91.10 (0.64) & 0.4661 (0.0008) & 0.4711 (0.0009)\\
 &  & Greedy & 91.55 (0.62) & 90.80 (0.65) & 0.4679 (0.0008) & 0.4731 (0.0009)\\
\addlinespace[2pt]
 & 6 & Global & 85.25 (0.79) & 84.95 (0.80) & 0.4890 (0.0008) & 0.4934 (0.0010)\\
 &  & Greedy & 85.70 (0.78) & 85.50 (0.79) & 0.4914 (0.0008) & 0.4962 (0.0010)\\
\midrule
2000 & 1 & Global & 93.65 (0.55) & 93.35 (0.56) & 0.2224 (0.0002) & 0.2228 (0.0003)\\
 &  & Greedy & 93.50 (0.55) & 93.45 (0.55) & 0.2224 (0.0002) & 0.2227 (0.0003)\\
\addlinespace[2pt]
 & 2 & Global & 94.65 (0.50) & 94.65 (0.50) & 0.2229 (0.0002) & 0.2234 (0.0003)\\
 &  & Greedy & 94.20 (0.52) & 94.00 (0.53) & 0.2231 (0.0002) & 0.2235 (0.0003)\\
\addlinespace[2pt]
 & 3 & Global & 94.50 (0.51) & 94.25 (0.52) & 0.2247 (0.0002) & 0.2246 (0.0003)\\
 &  & Greedy & 94.30 (0.52) & 94.25 (0.52) & 0.2250 (0.0002) & 0.2251 (0.0003)\\
\addlinespace[2pt]
 & 4 & Global & 91.10 (0.64) & 90.80 (0.65) & 0.2280 (0.0002) & 0.2280 (0.0003)\\
 &  & Greedy & 91.30 (0.63) & 90.75 (0.65) & 0.2285 (0.0002) & 0.2286 (0.0003)\\
\addlinespace[2pt]
 & 6 & Global & 78.40 (0.92) & 77.75 (0.93) & 0.2364 (0.0002) & 0.2367 (0.0003)\\
 &  & Greedy & 79.20 (0.91) & 79.00 (0.91) & 0.2371 (0.0002) & 0.2374 (0.0003)\\
\midrule
5000 & 1 & Global & 95.00 (0.49) & 94.85 (0.49) & 0.1407 (0.0001) & 0.1407 (0.0002)\\
 &  & Greedy & 94.95 (0.49) & 94.80 (0.50) & 0.1407 (0.0001) & 0.1407 (0.0002)\\
\addlinespace[2pt]
 & 2 & Global & 94.70 (0.50) & 94.55 (0.51) & 0.1408 (0.0001) & 0.1407 (0.0002)\\
 &  & Greedy & 94.75 (0.50) & 94.60 (0.51) & 0.1408 (0.0001) & 0.1407 (0.0002)\\
\addlinespace[2pt]
 & 3 & Global & 94.05 (0.53) & 93.95 (0.53) & 0.1415 (0.0001) & 0.1415 (0.0002)\\
 &  & Greedy & 93.65 (0.55) & 93.35 (0.56) & 0.1416 (0.0001) & 0.1417 (0.0002)\\
\addlinespace[2pt]
 & 4 & Global & 91.70 (0.62) & 90.90 (0.64) & 0.1430 (0.0001) & 0.1430 (0.0002)\\
 &  & Greedy & 91.70 (0.62) & 91.65 (0.62) & 0.1431 (0.0001) & 0.1432 (0.0002)\\
\addlinespace[2pt]
 & 6 & Global & 71.50 (1.01) & 70.80 (1.02) & 0.1470 (0.0001) & 0.1469 (0.0002)\\
 &  & Greedy & 72.05 (1.00) & 71.60 (1.01) & 0.1474 (0.0001) & 0.1472 (0.0002)\\
\bottomrule\end{tabular}
\par\vspace{7pt}
\end{table}

\subsection{Distributional treatment effects}

We then evaluate the distributional inference based on a matched design in Section~\ref{subsec:wald_cdf}. 
%
We generate $n$ independent observations with
$X_i\sim\operatorname{Unif}([0,1]^d)$ and
$Z_i\mid X_i\sim\operatorname{Bernoulli}\{e(X_i)\}$, where the propensity score is $e(x)=0.10+\frac{0.30}{1+\exp\{-6h_d(x)\}}$ for 
 $h_d(x)=\frac{1}{\sqrt d}\sum_{j=1}^d(x_j^2-1/3)$. 
The potential outcomes are generated as  $Y_i(z)=4h_d(X_i)+\varepsilon_{iz}$ for
$z\in\{0,1\}$, where the errors are independent standard normal and
independent of everything else. Thus
$H_1(y\given x)=H_0(y\given x)=\Phi\{y-4h_d(x)\}$ and the population
contrast is $D(y)=F_{1,T}(y)-F_{0,T}(y)=0$ for every $y$.
We vary the total sample size $n\in\{500,2000,5000\}$ and covariate dimension $d\in\{2,3,4,6\}$.
%
Given the matching $\hat\sigma_n$ by the two algorithms described, we form the bootstrap (with $B_n=500$) joint confidence bands and confidence band for the contrast $D(y)$ at the $1-\alpha=95\%$ nominal level as described in Section~\ref{subsec:wald_cdf}. 
For each matching algorithm and each type of confidence bands, we evaluate the empirical coverage and mean band width over $R=2000$ independent runs.  

Table~\ref{tab:cdf-bands} summarizes the results. As expected, the coverage is close to nominal for $d\leq 3$. For $d=4$, as we increase the sample size $n$, the coverage tends to a constant level below $0.95$. The coverage for $d=6$ keeps deteriorating as $n$ increases. This confirms our theory, and the uniform balance violation at $d\geq 4$ appears to play a role in this experiment.

\begin{table}[!htbp]
\centering
\caption{Coverage and band width in CDF inference. Joint coverage requires both $F_{0,T}$ and $F_{1,T}$ to be covered
at every threshold; contrast coverage requires $D=F_{1,T}-F_{0,T}$ to
be covered at every threshold. Coverage and its Monte Carlo SE (parentheses) are in percentage. The confidence bands are truncated to $[0,1]$ before computing the mean width. Results are averaged over $R=2000$ independent runs.}
\label{tab:cdf-bands}
\small\setlength{\tabcolsep}{6pt}\renewcommand{\arraystretch}{1.08}
\begin{tabular}{rrlrrrr}
\toprule
& & & \multicolumn{2}{c}{Coverage (\%)} & \multicolumn{2}{c}{Mean band width} \\
\cmidrule(lr){4-5}\cmidrule(lr){6-7}
$n$ & $d$ & Matching & Joint CDF & Contrast & Joint CDF & Contrast \\
\midrule
 500 & 1 & Global & 94.00 (0.53) & 96.60 (0.41) & 0.2566 (0.00028) & 0.2740 (0.00036) \\
     &   & Greedy & 93.65 (0.55) & 96.70 (0.40) & 0.2568 (0.00028) & 0.2742 (0.00036) \\
\addlinespace[2pt]
     & 2 & Global & 94.00 (0.53) & 95.35 (0.47) & 0.2546 (0.00028) & 0.2788 (0.00037) \\
     &   & Greedy & 93.80 (0.54) & 95.40 (0.47) & 0.2546 (0.00028) & 0.2788 (0.00037) \\
\addlinespace[2pt]
     & 3 & Global & 92.35 (0.59) & 94.75 (0.50) & 0.2543 (0.00028) & 0.2816 (0.00038) \\
     &   & Greedy & 92.20 (0.60) & 94.50 (0.51) & 0.2544 (0.00028) & 0.2820 (0.00037) \\
\addlinespace[2pt]
     & 4 & Global & 90.10 (0.67) & 92.35 (0.59) & 0.2550 (0.00028) & 0.2857 (0.00039) \\
     &   & Greedy & 90.25 (0.66) & 92.30 (0.60) & 0.2547 (0.00028) & 0.2855 (0.00038) \\
\addlinespace[2pt]
     & 6 & Global & 77.95 (0.93) & 83.95 (0.82) & 0.2542 (0.00027) & 0.2913 (0.00036) \\
     &   & Greedy & 78.85 (0.91) & 83.95 (0.82) & 0.2543 (0.00026) & 0.2913 (0.00037) \\
\addlinespace[2pt]
2000 & 1 & Global & 94.90 (0.49) & 95.55 (0.46) & 0.1310 (0.00009) & 0.1382 (0.00011) \\
     &   & Greedy & 95.20 (0.48) & 96.00 (0.44) & 0.1311 (0.00009) & 0.1381 (0.00011) \\
\addlinespace[2pt]
     & 2 & Global & 94.00 (0.53) & 96.00 (0.44) & 0.1303 (0.00009) & 0.1411 (0.00011) \\
     &   & Greedy & 93.80 (0.54) & 95.80 (0.45) & 0.1303 (0.00009) & 0.1411 (0.00011) \\
\addlinespace[2pt]
     & 3 & Global & 93.65 (0.55) & 94.25 (0.52) & 0.1299 (0.00009) & 0.1420 (0.00012) \\
     &   & Greedy & 93.60 (0.55) & 94.85 (0.49) & 0.1297 (0.00009) & 0.1420 (0.00012) \\
\addlinespace[2pt]
     & 4 & Global & 91.30 (0.63) & 91.95 (0.61) & 0.1299 (0.00009) & 0.1432 (0.00012) \\
     &   & Greedy & 91.30 (0.63) & 91.80 (0.61) & 0.1299 (0.00009) & 0.1433 (0.00012) \\
\addlinespace[2pt]
     & 6 & Global & 68.20 (1.04) & 67.50 (1.05) & 0.1298 (0.00009) & 0.1457 (0.00012) \\
     &   & Greedy & 69.10 (1.03) & 68.55 (1.04) & 0.1298 (0.00009) & 0.1456 (0.00012) \\
\addlinespace[2pt]
5000 & 1 & Global & 94.50 (0.51) & 95.35 (0.47) & 0.0835 (0.00005) & 0.0876 (0.00005) \\
     &   & Greedy & 94.55 (0.51) & 95.50 (0.46) & 0.0834 (0.00005) & 0.0877 (0.00006) \\
\addlinespace[2pt]
     & 2 & Global & 94.30 (0.52) & 94.40 (0.51) & 0.0829 (0.00005) & 0.0896 (0.00006) \\
     &   & Greedy & 94.55 (0.51) & 94.55 (0.51) & 0.0830 (0.00005) & 0.0896 (0.00006) \\
\addlinespace[2pt]
     & 3 & Global & 94.10 (0.53) & 93.15 (0.56) & 0.0828 (0.00005) & 0.0901 (0.00006) \\
     &   & Greedy & 93.85 (0.54) & 93.40 (0.56) & 0.0828 (0.00005) & 0.0901 (0.00006) \\
\addlinespace[2pt]
     & 4 & Global & 90.65 (0.65) & 89.40 (0.69) & 0.0828 (0.00005) & 0.0906 (0.00006) \\
     &   & Greedy & 90.70 (0.65) & 88.80 (0.71) & 0.0828 (0.00005) & 0.0906 (0.00006) \\
\addlinespace[2pt]
     & 6 & Global & 57.15 (1.11) & 56.30 (1.11) & 0.0827 (0.00005) & 0.0920 (0.00006) \\
     &   & Greedy & 57.65 (1.10) & 57.40 (1.11) & 0.0827 (0.00005) & 0.0920 (0.00006) \\
\bottomrule
\end{tabular}
\end{table}

\subsection{Quantile treatment effects}

We further demonstrate the inference for quantile treatment effects in Section~\ref{subsec:wald_quantile}. 
We use the same covariate distribution $X\sim\operatorname{Unif}([0,1]^d)$ and treatment assignment
mechanism $e(x)$ as in the CDF experiment. 
The potential outcomes are generated as 
$Y_i(0)=4h(X_i)+\varepsilon_{i0},$ and
$Y_i(1)=0.5+4h(X_i)+\varepsilon_{i1}$, where the errors are independent standard normal and independent of everything else. The target is the quantile treatment effect 
$\Delta_Q(u)=F_{1,T}^{-1}(u)-F_{0,T}^{-1}(u)$ for
 $u\in U=[0.1,0.9]$. 
We consider the total sample size $n\in\{500,2000,5000\}$ and covariate dimension $d\in\{1,2,3,4,6\}$. 
Given the matching $\hat\sigma_n$ from the described global and greedy procedures, we construct the bootstrap simultaneous confidence bands for $\textnormal{CI}_{n,1-\alpha}^{Q}(u)$ as stated in Section~\ref{subsec:wald_quantile} over $B_n=500$ boostrap draws. 
We then report coverage over the entire curve on $U$ and the mean width of the confidence band, averaged over $R=2000$ independent runs as before.  

The simultaneous coverage is reported in Table~\ref{tab:quantile-risk-aligned} column named \texttt{Raw cov.}, where we observe valid coverage for $d\leq 3$ and deteriorating coverage for $d=6$ when $n$ increases. 
However, the coverage appears conservative: it remains above $97\%$ for $d\leq 4$ when $n$ is as large as $5000$. 

To diagnose this conservativeness, we construct an oracle version (column name \texttt{Oracle cov.}) by removing the matching-induced bias in the quantile treatment effects, and observe the over-coverage for all dimensions; this confirms that the inference without matching bias is itself conservative at this sample size.  
In addition, we measure the matching-induced bias by averaging $D_Q=\sup_{u\in U}|\delta_n(u)|$ and $\sqrt{n_1}D_Q$, where   $Q_{T,n}$ and $Q_{M,n}$
are the quantile functions of the unit-variance Gaussian mixtures
with means $4h(X_i)$ over treated and matched-control covariates,
respectively.  These quantities are reported in the column \texttt{Mean design bias}. 
The results confirm our theory: the scaled bias $\sqrt{n_1} D_Q$ appears to diminish for $d\leq 3$ (although a bit slow for $d=3$), converge for $d=4$, while keeps increasing for $d=6$. 
Together, these results demonstrate the dimension boundary for the imbalance, yet in this example, the effect on simultaneous coverage is diluted especially for $d=4$ due to the intrinsic conservativeness of the inference procedure. 

Finally, we report the coverage and width of point-wise confidence intervals for the $u$-th quantile treatment effects introduced in Section~\ref{subsec:wald_quantile}, for $u\in \{0.25, 0.5, 0.75\}$ in Table~\ref{tab:pointwise-qte} in Appendix~\ref{app:sec_simu}. The point-wise confidence intervals converge to the nominal level faster than simultaneous ones for $d\leq 3$, and illustrates the systematic undercoverage for $d=4$ and deteriorating coverage for $d=6$.

\begin{table}[!htbp]
\centering
\small\setlength{\tabcolsep}{4pt}\renewcommand{\arraystretch}{1.04}
\caption{Simultaneous coverage for quantile treatment effects. Coverage is for the entire $U=[0.1,0.9]$, in percent, with Monte Carlo SEs in parentheses.
Oracle coverage is a diagnostic which removes the matching-induced bias from the construction of confidence bands.}
\label{tab:quantile-risk-aligned}
\begin{tabular}{rrlccccc}
\toprule
&&&\multicolumn{3}{c}{Direct simultaneous band}&\multicolumn{2}{c}{Mean design bias}\\
\cmidrule(lr){4-6}\cmidrule(lr){7-8}
$n$&$d$&Matching&Raw cov. (\%) & Oracle cov. (\%) &Width&$D_Q$&$\sqrt{n_1}D_Q$\\
\midrule
500 & 1 & Global & 99.90 (0.07) & 99.85 (0.09) & 1.8362 (0.0040) & 0.00862 (0.00033) & 0.0956 (0.0037) \\
 &  & Greedy & 99.75 (0.11) & 99.70 (0.12) & 1.8412 (0.0041) & 0.00964 (0.00033) & 0.1069 (0.0037) \\
\addlinespace[2pt]
 & 2 & Global & 99.30 (0.19) & 99.60 (0.14) & 1.8309 (0.0045) & 0.04471 (0.00085) & 0.4979 (0.0095) \\
 &  & Greedy & 99.30 (0.19) & 99.35 (0.18) & 1.8333 (0.0045) & 0.04540 (0.00083) & 0.5054 (0.0093) \\
\addlinespace[2pt]
 & 3 & Global & 99.55 (0.15) & 99.70 (0.12) & 1.8313 (0.0046) & 0.09649 (0.00120) & 1.0747 (0.0136) \\
 &  & Greedy & 99.45 (0.17) & 99.75 (0.11) & 1.8349 (0.0046) & 0.09558 (0.00120) & 1.0648 (0.0135) \\
\addlinespace[2pt]
 & 4 & Global & 99.15 (0.21) & 99.40 (0.17) & 1.8423 (0.0045) & 0.15228 (0.00150) & 1.6937 (0.0169) \\
 &  & Greedy & 99.15 (0.21) & 99.65 (0.13) & 1.8435 (0.0046) & 0.14997 (0.00149) & 1.6681 (0.0168) \\
\addlinespace[2pt]
 & 6 & Global & 97.65 (0.34) & 99.95 (0.05) & 1.8600 (0.0043) & 0.24923 (0.00174) & 2.7746 (0.0196) \\
 &  & Greedy & 97.35 (0.36) & 99.80 (0.10) & 1.8610 (0.0043) & 0.24762 (0.00174) & 2.7565 (0.0196) \\
\midrule
2000 & 1 & Global & 99.10 (0.21) & 99.10 (0.21) & 0.8899 (0.0012) & 0.00096 (0.00004) & 0.0212 (0.0009) \\
 &  & Greedy & 99.35 (0.18) & 99.35 (0.18) & 0.8900 (0.0012) & 0.00119 (0.00004) & 0.0262 (0.0009) \\
\addlinespace[2pt]
 & 2 & Global & 99.15 (0.21) & 99.20 (0.20) & 0.8877 (0.0014) & 0.01150 (0.00019) & 0.2552 (0.0042) \\
 &  & Greedy & 99.00 (0.22) & 98.90 (0.23) & 0.8871 (0.0014) & 0.01149 (0.00020) & 0.2550 (0.0044) \\
\addlinespace[2pt]
 & 3 & Global & 98.85 (0.24) & 98.90 (0.23) & 0.8799 (0.0014) & 0.03833 (0.00039) & 0.8526 (0.0086) \\
 &  & Greedy & 98.50 (0.27) & 99.00 (0.22) & 0.8804 (0.0014) & 0.03718 (0.00039) & 0.8271 (0.0087) \\
\addlinespace[2pt]
 & 4 & Global & 97.80 (0.33) & 99.30 (0.19) & 0.8833 (0.0014) & 0.07535 (0.00055) & 1.6772 (0.0122) \\
 &  & Greedy & 98.25 (0.29) & 99.20 (0.20) & 0.8841 (0.0014) & 0.07304 (0.00055) & 1.6259 (0.0122) \\
\addlinespace[2pt]
 & 6 & Global & 92.45 (0.59) & 99.40 (0.17) & 0.8862 (0.0014) & 0.15845 (0.00078) & 3.5290 (0.0175) \\
 &  & Greedy & 91.80 (0.61) & 99.45 (0.17) & 0.8879 (0.0014) & 0.15515 (0.00077) & 3.4557 (0.0174) \\
\midrule
5000 & 1 & Global & 99.00 (0.22) & 99.00 (0.22) & 0.5485 (0.0006) & 0.00023 (0.00001) & 0.0079 (0.0003) \\
 &  & Greedy & 98.80 (0.24) & 98.85 (0.24) & 0.5486 (0.0006) & 0.00029 (0.00001) & 0.0101 (0.0003) \\
\addlinespace[2pt]
 & 2 & Global & 98.60 (0.26) & 98.55 (0.27) & 0.5480 (0.0006) & 0.00474 (0.00007) & 0.1662 (0.0026) \\
 &  & Greedy & 98.85 (0.24) & 98.80 (0.23) & 0.5481 (0.0006) & 0.00465 (0.00007) & 0.1632 (0.0025) \\
\addlinespace[2pt]
 & 3 & Global & 98.30 (0.29) & 98.65 (0.26) & 0.5450 (0.0006) & 0.02051 (0.00019) & 0.7208 (0.0065) \\
 &  & Greedy & 98.55 (0.27) & 98.85 (0.24) & 0.5453 (0.0006) & 0.01976 (0.00019) & 0.6942 (0.0066) \\
\addlinespace[2pt]
 & 4 & Global & 97.25 (0.37) & 98.75 (0.25) & 0.5443 (0.0006) & 0.04776 (0.00029) & 1.6792 (0.0101) \\
 &  & Greedy & 97.80 (0.33) & 98.95 (0.23) & 0.5446 (0.0006) & 0.04590 (0.00029) & 1.6140 (0.0101) \\
\addlinespace[2pt]
 & 6 & Global & 83.95 (0.82) & 98.85 (0.24) & 0.5471 (0.0006) & 0.11701 (0.00044) & 4.1194 (0.0155) \\
 &  & Greedy & 85.10 (0.80) & 98.90 (0.23) & 0.5477 (0.0006) & 0.11387 (0.00043) & 4.0086 (0.0153) \\
\bottomrule
\end{tabular}
\end{table}

\subsection{Randomization inference}

Finally, we demonstrate the super-population validity of randomization inference for $d\leq 3$ and the dimension boundary for $d\geq 4$. 
We use the same covariate distribution, propensity score, outcome regression, and matching procedures as in the ATT experiment in Section~\ref{subsec:simu_att}.
We consider two null settings. Under the weak null, we keep the heterogeneous CATE with $a=2$, so that $\theta_\att=0$. 
Under the sharp null, we instead set 
$Y_i(1)=Y_i(0)=\mu_0(X_i)+s(X_i)\eta_i$, 
where the $\eta_i$'s are independent standard normal variables and independent of everything else, leading to the same observed distribution as the $a=0$ case. We vary the total sample size $n\in \{500, 2000, 5000\}$ and the covariate dimension $d\in \{1,2,3,4,6\}$.  
Given the matching $\hat\sigma_n$ by the two algorithms, we form the randomization p-value in Section~\ref{sec:rand_inference} with $B_n=4999$. We then evaluate the type-I error at nominal levels $\alpha\in \{0.025, 0.05, 0.1\}$. 

The results for the weak null are in Table~\ref{tab:ri-weak-heterogeneous-alphas}, while we defer those for the sharp null to Table~\ref{tab:ri-sharp-shared-alphas} in Appendix~\ref{app:sec_simu}. 
Across both null settings and various nominal levels, both matching methods yield close-to-nominal rejection rates for $d\leq 3$. The rejection rate  for $d=4$ appears to converge to some value above the nominal level, and that for $d=6$ keeps increasing as the total sample size $n$ increases. This is consistent with our theory on asymptotic, super-population validity of randomization inference based on the difference-in-mean statistic. 

\begin{table} 
\centering
\small
\setlength{\tabcolsep}{4.5pt}
\renewcommand{\arraystretch}{1.08}
\caption{Rejection rates in randomization inference under the weak null. The rejection rates and its Monte Carlo SE (parentheses) are in percentage, averaged over $R=2000$ independent runs.}
\label{tab:ri-weak-heterogeneous-alphas}
\begin{tabular}{@{}rr*{6}{r}@{}}
\toprule
 & & \multicolumn{3}{c}{Global} & \multicolumn{3}{c}{Greedy} \\
\cmidrule(lr){3-5}\cmidrule(lr){6-8}
$n$ & $d$ & $\alpha=0.025$ & $\alpha=0.05$ & $\alpha=0.10$ & $\alpha=0.025$ & $\alpha=0.05$ & $\alpha=0.10$ \\
\midrule
500 & 1 & 2.65 (0.36) & 5.10 (0.49) & 9.00 (0.64) & 2.70 (0.36) & 4.90 (0.48) & 9.30 (0.65) \\
500 & 2 & 2.60 (0.36) & 5.10 (0.49) & 10.70 (0.69) & 2.65 (0.36) & 5.70 (0.52) & 10.20 (0.68) \\
500 & 3 & 3.15 (0.39) & 5.90 (0.53) & 10.95 (0.70) & 3.10 (0.39) & 5.90 (0.53) & 10.75 (0.69) \\
500 & 4 & 5.00 (0.49) & 8.65 (0.63) & 15.05 (0.80) & 4.60 (0.47) & 8.25 (0.62) & 14.70 (0.79) \\
500 & 6 & 9.15 (0.64) & 14.45 (0.79) & 23.05 (0.94) & 9.00 (0.64) & 13.95 (0.77) & 22.35 (0.93) \\
\addlinespace
2000 & 1 & 3.60 (0.42) & 6.30 (0.54) & 11.40 (0.71) & 3.75 (0.42) & 6.80 (0.56) & 11.80 (0.72) \\
2000 & 2 & 3.05 (0.38) & 5.25 (0.50) & 10.80 (0.69) & 3.30 (0.40) & 5.65 (0.52) & 11.10 (0.70) \\
2000 & 3 & 2.50 (0.35) & 5.50 (0.51) & 10.55 (0.69) & 2.80 (0.37) & 5.60 (0.51) & 10.45 (0.68) \\
2000 & 4 & 4.60 (0.47) & 8.80 (0.63) & 15.25 (0.80) & 3.80 (0.43) & 8.35 (0.62) & 14.85 (0.80) \\
2000 & 6 & 14.15 (0.78) & 21.50 (0.92) & 32.65 (1.05) & 13.00 (0.75) & 20.60 (0.90) & 31.40 (1.04) \\
\addlinespace
5000 & 1 & 2.75 (0.37) & 4.75 (0.48) & 9.95 (0.67) & 2.50 (0.35) & 5.15 (0.49) & 9.85 (0.67) \\
5000 & 2 & 2.70 (0.36) & 5.10 (0.49) & 10.50 (0.69) & 2.60 (0.36) & 5.45 (0.51) & 10.85 (0.70) \\
5000 & 3 & 3.15 (0.39) & 5.85 (0.52) & 11.55 (0.71) & 3.45 (0.41) & 6.40 (0.55) & 11.75 (0.72) \\
5000 & 4 & 4.60 (0.47) & 8.35 (0.62) & 15.15 (0.80) & 5.00 (0.49) & 8.05 (0.61) & 15.25 (0.80) \\
5000 & 6 & 20.25 (0.90) & 28.65 (1.01) & 40.75 (1.10) & 19.60 (0.89) & 27.75 (1.00) & 39.35 (1.09) \\
\bottomrule
\end{tabular}
\end{table}

%% file: 02_literature.tex

\section{Related literature}\label{sec:literature}

This work adds to the large-sample theory of matching, especially one-to-one matching without replacement. We discuss related literature in several strands. 
 
\vspace{-0.5em}
\paragraph{Matching with and without replacement.}
Much of the large-sample theory for matching concerns nearest-neighbor (NN) matching with replacement. \citet{AI2006} establish asymptotic and bias results for estimators with a fixed number of matches, and \citet{AI2011} develop regression-based bias correction. In these procedures, control samples can be reused for different treated units. More recent work has revisited NN matching from
the perspective of increasing numbers of matches and semiparametric efficiency~\citep{lin2023estimation,he2024propensity,cattaneo2025rosenbaum,lin2026consistency}. In particular, \citet{lin2023estimation} shows that the matching nearest neighbors essentially estimates a density ratio, and with regression adjustment, NN matching can be doubly robust and semiparametrically efficient.  
In contrast, we study one-to-one matching without replacement, a classical design
\citep{rosenbaum1989optimal} with applied precedents for the squared Mahalanobis
objective~\citep{bind2019bridging,zhang2023matching}. 
Assigning a control to one treated observation changes the controls available to the others. Consequently, the local nearest-neighbor calculations do not directly apply in our setting, which necessitates distinct analyses.

\vspace{-0.5em}
\paragraph{Error cancellation, bias, and balance in matching without replacement}

This work expands upon the classical bias-reduction theory for matching. Under ellipsoidal symmetry and related mixture assumptions, \citet{RT1992} and \citet{RS2006} establish equal-percent bias-reduction properties for affinely invariant matching. These results describe how matching improves covariate comparability but do not study the rate for empirical imbalance. Similarly, the balancing-score identity of \citet{RR1983} provides a population-level perspective without controlling the finite-sample discrepancy. 

For matching without replacement, even consistency depends on the availability of controls in the regions occupied by treated observations. \citet{Savje2022} shows that optimal matching on the propensity score can be inconsistent when some regions have too few controls. We rule out this complication by assuming the propensity score is strictly bounded below $1/2$, providing a clean basis for analyzing the matching bias in large samples.
\citet{GR2023} study the finite-sample matching bias using ordinary Mahalanobis distance, similarly assuming the propensity score is strictly bounded below $1/2$. They establish asymptotic validity of an unadjusted paired randomization test with one continuous covariate, conjecture validity with up to three, and give a failure example for $d=4$. Our results establish a quadratic counterpart, yielding dimensionality results echoing their conjecture.  
Second-order bias and the relevance of dimension four also have nearest-neighbor precedents. The recent work of \citet{VTY2025} derive higher-order bias bounds and expansions under geometric and smoothness conditions in the NN matching setting, yet matching without replacement requires distinct analysis techniques. Our contribution is to show that
the global use-once constraint does not create a first-order bias, and to control
the resulting signed imbalance uniformly.

\vspace{-0.5em}
\paragraph{Inference from matched samples.}

Inference after matching without replacement already has a substantial foundation. \citet{AI2012} derive a martingale representation, establish an asymptotic normal distribution for a without-replacement matching estimator, and give a consistent paired-difference variance estimator. \citet{AS2022} develop post-matching inference that allows regression misspecification and justify resampling complete matched sets (similar to our bootstrap procedures). These works isolate the approximation error from inexact matching, often assuming sufficiently small absolute distance, defined as  
 $A_n=\frac1m\sum_{i=1}^m\|T_i-C_{\sigma(i)}\|$. 
A sufficient condition for the results in \citet[Theorem~1]{AI2012} and \citet[Assumption~3]{AS2022} is $\sqrt m\,A_n = o_P(1)$. The primitive sufficient conditions in \citet[Proposition~1]{AI2012} include $m^r/N=O(1)$ for some $r>d$, requiring a control pool that grows much faster than the treated sample. By contrast, when $m\asymp N\asymp n$ and the covariates have bounded full-dimensional densities, their argument gives an $N^{-1/d}$ lower bound for average absolute distance with probability tending to one. Hence the absolute-distance condition cannot hold in dimensions two and above.
By contrast, our theory controls the imbalance $B_{n,\sigma_n^\star}(g)$ directly instead of establishing the distance bound, assuming quadratic  objective and smoothness conditions.  
 
\vspace{-0.5em}
\paragraph{Related balancing and distributional methods.}
Explicit balancing offers another route to small aggregate error. \citet{Kallus2020} formulate matching in terms of function-class imbalance and variance, while \citet{WZ2023} establish root-$n$ and efficiency results for matching subject to aggregate balance constraints. Their asymptotic analysis allows replacement and directly imposes aggregate balance constraints. Our procedure concerns minimizing the matching objective only, without any other constraints. The uniform balance is a consequence of minimizing geometric cost.

Our inference procedures also connect to a literature for counterfactual distribution and quantile effects, including regression-based~\citep{CFM2013}, inverse-propensity weighting-based~\citep{DH2014}, and matching-with-replacement based~\citep{YZ2023} procedures. We provide a justification for paired bootstrap after inexact matching without replacement.

%% file: 10_appendix.tex

\input{131_unif_balance}
\input{132_dim_balance}

\input{141_wald_att}
\input{142_wald_cdf}

\input{143_wald_quantile}

\input{150_rand_inference}

\FloatBarrier
\section{Auxiliary lemmas and proofs}

Below we quote well-known concentration inequalities (without proof) and simple facts that will be used in the technical proofs. 
The two concentration bounds below can be found in \citet[Chapter 2]{concentration2013book} and \citet[Chapter 3]{concentration2013book}, respectively. 

\begin{lemma}[Chernoff bounds, quoted]\label{lem:chernoff}
If $X$ is binomial with mean $\mu$, then $\PP\{X\le(1-\epsilon)\mu\}\le\exp(-\epsilon^2\mu/2)$ for $0<\epsilon<1$, and $\PP\{X\ge(1+\epsilon)\mu\}\le\exp\{-\epsilon^2\mu/(2+\epsilon)\}$ for $\epsilon>0$.
\end{lemma}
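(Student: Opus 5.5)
This is a quoted standard result, so I would not prove it from scratch. I would instead recall the textbook derivation via the exponential moment method, which is what \citet{concentration2013book} does.

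\textbf{Lower tail.} Write $X=\sum_{k=1}^m \xi_k$ with independent $\xi_k\sim\mathrm{Bernoulli}(p)$ and $\mu=mp$. For $\lambda>0$, Markov's inequality applied to $e^{-\lambda X}$ gives
\[
\PP\{X\le(1-\epsilon)\mu\}\le e^{\lambda(1-\epsilon)\mu}\,\EE e^{-\lambda X}.
\]
Since $1+x\le e^x$, we have $\EE e^{-\lambda X}=(1-p+pe^{-\lambda})^m\le\exp\{\mu(e^{-\lambda}-1)\}$. Optimizing at $\lambda=-\log(1-\epsilon)$ yields the bound $\exp[-\mu\{(1-\epsilon)\log(1-\epsilon)+\epsilon\}]$. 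The elementary inequality $(1-\epsilon)\log(1-\epsilon)\ge-\epsilon+\epsilon^2/2$ on $(0,1)$ then gives $\exp(-\epsilon^2\mu/2)$. That inequality follows by comparing Taylor series: $(1-\epsilon)\log(1-\epsilon)=-\epsilon+\sum_{k\ge2}\epsilon^k/\{k(k-1)\}$, and every term in the sum is nonnegative.

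\textbf{Upper tail.} The same argument with $e^{\lambda X}$ and $\lambda=\log(1+\epsilon)$ gives $\exp[-\mu\{(1+\epsilon)\log(1+\epsilon)-\epsilon\}]$. It then suffices to show
\[
h(\epsilon):=(1+\epsilon)\log(1+\epsilon)-\epsilon-\frac{\epsilon^2}{2+\epsilon}\ge0 \quad\text{for } \epsilon>0 .
\]
The cleanest route is the known inequality $\log(1+\epsilon)\ge 2\epsilon/(2+\epsilon)$. It holds because both sides vanish at $0$ and the derivatives satisfy $1/(1+\epsilon)\ge 4/(2+\epsilon)^2$, which is equivalent to $(2+\epsilon)^2\ge 4(1+\epsilon)$, i.e. $\epsilon^2\ge0$. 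Substituting this lower bound for $\log(1+\epsilon)$ gives
\[
(1+\epsilon)\log(1+\epsilon)-\epsilon\ \ge\ \frac{2\epsilon(1+\epsilon)}{2+\epsilon}-\epsilon\ =\ \frac{\epsilon^2}{2+\epsilon},
\]
which is exactly the claim.

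\textbf{Main obstacle.} The only nonroutine step is choosing the right elementary inequality in the upper tail, and the logarithm bound above resolves it. Everything else is the standard moment-generating-function argument.
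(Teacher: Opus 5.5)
Your proof is correct: the exponential-moment bound with $\lambda=-\log(1-\epsilon)$ and $\lambda=\log(1+\epsilon)$, combined with the two elementary inequalities $(1-\epsilon)\log(1-\epsilon)\ge-\epsilon+\epsilon^2/2$ and $\log(1+\epsilon)\ge 2\epsilon/(2+\epsilon)$ (both of which you verify correctly), gives exactly the stated bounds. The paper does not prove the lemma; it only quotes it from \citet{concentration2013book}, whose Cram\'er--Chernoff method is the argument you reproduce.
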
 

\begin{lemma}[Efron--Stein inequality]\label{lem:ES}
For independent coordinates $Z=(Z_1,\ldots,Z_s)$ and square-integrable $H(Z)$, let $Z^{(j)}$ replace $Z_j$ by an independent copy. Then $\Var (H)\le\frac12\sum_j\EE[\{H(Z)-H(Z^{(j)})\}^2]$.
\end{lemma}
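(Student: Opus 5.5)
The plan is the standard martingale-difference (Doob) decomposition of $H(Z)$, followed by a conditional Jensen step and the elementary identity $\Var(W)=\tfrac12\EE[(W-W')^2]$ for i.i.d.\ $W,W'$. Throughout, write $H=H(Z)$. Let $\EE_i[\cdot]=\EE[\cdot\mid Z_1,\dots,Z_i]$, with $\EE_0=\EE$. Let $\EE^{(i)}[\cdot]$ denote integration over the $i$-th coordinate only, with the others held fixed; by independence and Fubini this equals $\EE[\cdot\mid Z_1,\dots,Z_{i-1},Z_{i+1},\dots,Z_s]$.

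\emph{Step 1 (orthogonal decomposition).} Set $\Delta_i=\EE_iH-\EE_{i-1}H$ for $i=1,\dots,s$. Then $H-\EE H=\sum_{i=1}^s\Delta_i$, and since $H$ is square integrable each $\Delta_i\in L_2$. For $i<j$, $\Delta_i$ is $\sigma(Z_1,\dots,Z_i)$-measurable and $\EE_{j-1}\Delta_j=0$, so $\EE[\Delta_i\Delta_j]=0$. Hence
\[
\Var(H)=\sum_{i=1}^s\EE[\Delta_i^2].
\]

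\emph{Step 2 (rewriting each increment).} The key identity is
\[
\EE_{i-1}H=\EE_i\big[\EE^{(i)}H\big].
\]
To see it, note that $\EE^{(i)}H$ is a function of $(Z_j)_{j\ne i}$ only. Conditioning on $Z_1,\dots,Z_i$ then integrates out $Z_{i+1},\dots,Z_s$. By independence of the coordinates and Fubini, the result is the integral of $H$ over $(Z_i,\dots,Z_s)$ with $Z_1,\dots,Z_{i-1}$ fixed, which is exactly $\EE_{i-1}H$. Therefore
\[
\Delta_i=\EE_i\big[H-\EE^{(i)}H\big].
\]
This is the one place where independence is genuinely used, and it is the step I would write out most carefully, via product-measure integrals.

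\emph{Step 3 (Jensen and symmetrization).} Conditional Jensen gives
\[
\EE[\Delta_i^2]\le\EE\big[(H-\EE^{(i)}H)^2\big]=\EE\big[\Var^{(i)}(H)\big],
\]
where $\Var^{(i)}$ is the variance over $Z_i$ with the other coordinates fixed. Fix $(Z_j)_{j\ne i}$. Then $H(Z)$ and $H(Z^{(i)})$ are i.i.d.\ copies in the $i$-th argument, so
\[
\Var^{(i)}(H)=\tfrac12\,\EE^{(i)}\big[\{H(Z)-H(Z^{(i)})\}^2\big],
\]
where $\EE^{(i)}$ now averages over both $Z_i$ and its copy. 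Taking expectations and summing over $i$ using Step 1 yields
\[
\Var(H)\le\tfrac12\sum_{i=1}^s\EE\big[\{H(Z)-H(Z^{(i)})\}^2\big].
\]
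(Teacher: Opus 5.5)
Your proof is correct: the Doob decomposition, the identity $\EE_{i-1}H=\EE_i[\EE^{(i)}H]$ (which is where independence enters), conditional Jensen, and the symmetrization $\Var^{(i)}(H)=\tfrac12\EE^{(i)}[\{H(Z)-H(Z^{(i)})\}^2]$ all go through as written. The paper does not prove this lemma; it quotes it from Chapter~3 of Boucheron, Lugosi and Massart, and your argument is essentially the standard martingale proof given there.
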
 

The next is a bound which follows simply from Taylor expansion.

\begin{lemma}[Taylor expansion]\label{lem:Taylor}
For $x,y\in\Omega$, $g(x)-g(y)=\nabla g(x)^\top(x-y)+R_g(x,y)$, where $|R_g(x,y)|\le 1/2\cdot\norm{\nabla^2 g}_\infty\norm{x-y}^2$.
\end{lemma}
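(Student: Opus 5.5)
The statement to prove is the last one displayed, the Taylor expansion lemma (Lemma~\ref{lem:Taylor}). The plan is a direct one-variable reduction along the segment joining $y$ to $x$. This uses that $\Omega=[0,1]^d$ is convex, and that $g$ is $C^2$ on a neighborhood of $\Omega$, as for functions in $\mathcal F_L$.

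Fix $x,y\in\Omega$ and set $\phi(t)=g(x+t(y-x))$ for $t\in[0,1]$. By convexity the whole segment lies in $\Omega$, so $\phi$ is $C^2$ on $[0,1]$. The chain rule gives
\[
\phi'(t)=\nabla g(x+t(y-x))^\top(y-x),\qquad \phi''(t)=(y-x)^\top\nabla^2 g(x+t(y-x))(y-x).
\]
Taylor's theorem with Lagrange remainder, expanded around $t=0$, gives $\phi(1)=\phi(0)+\phi'(0)+\tfrac12\phi''(s)$ for some $s\in(0,1)$. Rearranging,
\[
g(y)-g(x)=\nabla g(x)^\top(y-x)+\tfrac12 (y-x)^\top\nabla^2 g(z)(y-x),\qquad z=x+s(y-x)\in\Omega.
\]
Multiplying by $-1$ gives $g(x)-g(y)=\nabla g(x)^\top(x-y)+R_g(x,y)$ with
\[
R_g(x,y)=-\tfrac12(x-y)^\top\nabla^2 g(z)(x-y).
\]
The expansion is centered at $x$, which is what the statement requires.

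It remains to bound the remainder: $|R_g(x,y)|\le\tfrac12\|\nabla^2 g(z)\|_{\mathrm{op}}\|x-y\|^2\le\tfrac12\|\nabla^2 g\|_\infty\|x-y\|^2$. This step depends on how $\|\nabla^2 g\|_\infty$ is read. If it means $\sup_{z\in\Omega}\|\nabla^2 g(z)\|_{\mathrm{op}}$, the bound is immediate. If it means the supremum of the entrywise maximum, one uses $\|M\|_{\mathrm{op}}\le d\max_{jk}|M_{jk}|$ instead. That costs a factor of $d$, and it matches the constant $dL/2$ used for $R_{n,\sigma}$ in Section~\ref{subsec:uniform_balance}.

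The only real point of care is therefore fixing the norm convention for $\nabla^2 g$; convexity of $\Omega$ and the $C^2$ extension handle everything else.
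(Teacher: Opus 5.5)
Your argument is correct and is the same one-variable Taylor reduction the paper intends; the paper states the lemma without proof, saying only that it ``follows simply from Taylor expansion.'' On the norm convention, the proof of Lemma~\ref{lem:slope} applies the lemma with $G_2=\sup_\Omega\|\nabla^2 g\|_{\mathrm{op}}$. So your operator-norm reading is the intended one, and the bound follows at once.
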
 

\begin{lemma}[Finite lower envelopes]\label{lem:envelope}
Let $\{a_j\}_{j=1}^J$ and $\{b_j\}_{j=1}^J$ be fixed real numbers for some fixed integer $J<\infty$. 
Define $V(t)=\min_{j\in J}(a_j+t b_j)$. 
Then $V$ is continuous, concave, piecewise affine, and absolutely continuous on bounded intervals. If $j_0$ is a minimizer at $t=0$, i.e., $a_{j_0}=V(0)$, then for every $h>0$,
\begin{equation}\label{eq:secants}
 \frac{V(h)-V(0)}h\le b_{j_0}\le\frac{V(0)-V(-h)}h.
\end{equation}
Finally, for every $t_0\in \RR$ at which $V$ is differentiable, we have $b_j = V'(t_0)$ for every $j\in \argmin_{k \in J} \{a_k+t_0b_k\}$. 
\end{lemma}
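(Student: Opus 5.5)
The statement just before the cut is Lemma~\ref{lem:envelope}, on finite lower envelopes $V(t)=\min_{j\le J}(a_j+tb_j)$. The plan is to derive everything from the fact that $V$ is the pointwise minimum of finitely many affine functions $\ell_j(t)=a_j+tb_j$.

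First, continuity and concavity. A minimum of finitely many continuous functions is continuous. A pointwise infimum of affine (hence concave) functions is concave, since for $\lambda\in[0,1]$ and each $j$,
\[
\ell_j(\lambda s+(1-\lambda)t)=\lambda\ell_j(s)+(1-\lambda)\ell_j(t)\ge \lambda V(s)+(1-\lambda)V(t),
\]
and taking the minimum over $j$ gives concavity.

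Second, piecewise affinity. For any $j\ne k$ with $b_j\ne b_k$, the set $\{t:\ell_j(t)=\ell_k(t)\}$ is a single point. When $b_j=b_k$, either $\ell_j\equiv\ell_k$ or the two lines never meet. So there are at most $\binom{J}{2}$ crossing points. On each open interval between consecutive crossing points, the ordering of the $\ell_j$ is fixed, hence a single (class of identical) $\ell_j$ attains the minimum, and $V$ is affine there. A continuous piecewise affine function with finitely many pieces is Lipschitz with constant $\max_j|b_j|$. Indeed, $|V(s)-V(t)|\le\max_j|\ell_j(s)-\ell_j(t)|$, because the difference of two minima is bounded by the maximal coordinatewise difference. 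Lipschitz continuity gives absolute continuity on bounded intervals.

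Third, the secant bounds~\eqref{eq:secants}. Since $\ell_{j_0}(0)=V(0)$ and $V\le\ell_{j_0}$ everywhere, we have
\[
V(h)-V(0)\le \ell_{j_0}(h)-\ell_{j_0}(0)=hb_{j_0}.
\]
Dividing by $h>0$ gives the left inequality. Similarly,
\[
V(-h)\le \ell_{j_0}(-h)=V(0)-hb_{j_0},
\]
which rearranges to $b_{j_0}\le\{V(0)-V(-h)\}/h$.

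Finally, the derivative claim. Apply the same argument at a general point $t_0$ instead of $0$, with any $j$ attaining the minimum at $t_0$. This gives, for all $h>0$,
\[
\frac{V(t_0+h)-V(t_0)}{h}\le b_j\le \frac{V(t_0)-V(t_0-h)}{h}.
\]
If $V$ is differentiable at $t_0$, both secant ratios converge to $V'(t_0)$ as $h\downarrow0$. Squeezing then forces $b_j=V'(t_0)$.

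None of these steps is a real obstacle. The only point needing care is the piecewise-affine claim: the number of crossing points is finite, and coincident lines must be handled as one class.
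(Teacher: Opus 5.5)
Your proposal is correct and follows essentially the same argument as the paper: continuity and concavity because $V$ is a minimum of finitely many affine functions, piecewise affinity from finitely many crossing points, the secant bounds from $V(\pm h)\le a_{j_0}\pm h b_{j_0}$, and the derivative identity from the same one-sided comparison at $t_0$ with $h\to0$ from both sides. The one addition is your explicit Lipschitz bound $|V(s)-V(t)|\le\max_j|b_j|\,|s-t|$, which justifies absolute continuity more directly than the paper, which simply asserts it alongside piecewise affinity.
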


\begin{proof}[Proof of Lemma~\ref{lem:envelope}]
Since each $t\mapsto a_j+tb_j$ function is affine, the minimum of finitely many such affine functions is concave and continuous. Also, as each of them is a straight line, since $J<\infty$, there are only finitely many possible intersection points between distinct nonparallel lines (identical lines cause no difficulty). The function $V$ is therefore piecewise affine. 
Equation~\eqref{eq:secants} follows from the fact that $V(h)\le a_{j_0}+hb_{j_0}$ and $V(-h)\le a_{j_0}-hb_{j_0}$ due to the definition of $V$. 
To prove the last claim, fix any $t_0\in \RR$ at which $V$ is differentiable, and take any $j\in \argmin_{k\in J}\{a_k+t_0b_k\}$. Then by the definition of $V$, for every $h\in \RR$, 
\$
V(t_0+h)\leq a_j + (t_0+h) b_j  = V(t_0)+hb_j.
\$
For $h>0$, dividing both sides by $h$ and take the limit $h\to 0$, by differentiability we know $V'(t_0)\leq b_j$. 
For $h<0$, dividing both sides by $h$ changes the direction of the inequality; so taking the limit $h\to 0$, by differentiability we know $V'(t_0)\geq b_j$. Combining the two directions, we obtain $V'(t_0)=b_j$. 
\end{proof}

Lemma~\ref{lem:unif:bd} is the almost-sure version of the McDiarmid's inequality; see \citet[Theorem~6.2]{concentration2013book}. 
Its proof is well-known, and we only briefly justify the use of almost-sure bounds. 

\begin{lemma} 
\label{lem:unif:bd}
Let $X_1,\ldots,X_n$ be independent random elements and let
$W=w(X_1,\ldots,X_n)$ for a bounded measurable real function $w$.
For each $j$, let $X^{(j)}$ replace $X_j$ by an independent copy from
its own distribution. Suppose there are deterministic $c_j\ge0$ such that
$|w(X)-w(X^{(j)})|\le c_j$ almost surely for each $j$.
If $v_c=\sum_{j=1}^n c_j^2>0$, then for every $u>0$,
\begin{equation}\label{eq:unif:bd}
 \mathbb P(|W-\EE[W]|>u)
 \le2\exp(-2u^2/v_c).
\end{equation}
If $v_c=0$, then $W=\EE[W]$ almost surely.
\end{lemma}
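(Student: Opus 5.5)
The statement to prove is the last one displayed, Lemma~\ref{lem:unif:bd}: the bounded-differences (McDiarmid) inequality with almost-sure, replace-one-coordinate bounds. The plan is the standard martingale argument: a Doob decomposition followed by Hoeffding's lemma applied to each increment. The only delicate point is checking that the almost-sure replacement bounds give almost-sure bounds on the increments.

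\textbf{Setup and degenerate case.} Let $\mathcal F_k=\sigma(X_1,\dots,X_k)$ and $W_k=\EE[W\mid\mathcal F_k]$, so $W_0=\EE W$ and $W_n=W$. Boundedness of $w$ makes all conditional expectations well defined. Write $D_k=W_k-W_{k-1}$. By independence we can take $X'_k$ to be an independent copy of $X_k$, independent of everything else, and represent
\[
D_k=\EE\big[w(X)-w(X^{(k)})\,\big|\,\mathcal F_k\big].
\]
Here $X^{(k)}$ replaces $X_k$ by $X'_k$; this works because $\EE[w(X^{(k)})\mid\mathcal F_k]=W_{k-1}$ by Fubini. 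If $v_c=0$, then every $c_k=0$, so every $D_k=0$ almost surely, and hence $W=\EE W$ almost surely.

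\textbf{Bounding the increments.} Define
\[
U_k=\operatorname*{ess\,sup}_{x}\,\EE\big[w(X_1,\dots,X_{k-1},x,X_{k+1},\dots)\,\big|\,\mathcal F_{k-1}\big],
\]
and define $L_k$ as the corresponding essential infimum, where $x$ ranges over the support of $X_k$. Then $L_k\le D_k+W_{k-1}\le U_k$, and both $U_k$ and $L_k$ are $\mathcal F_{k-1}$-measurable.

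The main obstacle is showing $U_k-L_k\le c_k$ almost surely. The hypothesis only gives $|w(X)-w(X^{(k)})|\le c_k$ almost surely under the product law of $(X,X'_k)$. By Fubini, for $P_{X_{-k}}$-almost every value of the other coordinates, the bound $|w(\dots,x,\dots)-w(\dots,x',\dots)|\le c_k$ holds for $P_{X_k}\otimes P_{X_k}$-almost every pair $(x,x')$. Integrating out the future coordinates conditionally on $\mathcal F_{k-1}$ and taking essential sup and inf over $x$ then gives the claim. The sup and inf must be essential rather than pointwise, which is the adjustment needed for the almost-sure version; null sets do not affect $D_k$.

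\textbf{Exponential bound.} Given the interval bound, Hoeffding's lemma applied conditionally on $\mathcal F_{k-1}$ gives
\[
\EE[e^{\lambda D_k}\mid\mathcal F_{k-1}]\le e^{\lambda^2c_k^2/8}.
\]
Iterating through the tower property gives $\EE e^{\lambda(W-\EE W)}\le e^{\lambda^2v_c/8}$. The Chernoff bound with $\lambda=4u/v_c$ yields $\PP(W-\EE W>u)\le e^{-2u^2/v_c}$. Applying the same argument to $-w$ and taking a union bound gives \eqref{eq:unif:bd}.
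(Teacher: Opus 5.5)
Your proof is correct. Its key step is the same as the paper's: Fubini turns the almost-sure replacement bound into a bound of $c_j$ on an \emph{essential} (rather than pointwise) range, and then a standard bounded-differences argument goes through.

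The difference is which standard argument is run. The paper only shows that the conditional essential range of $W$ given all the other coordinates $X_{-j}$ has length at most $c_j$. It then defers to the proof of the cited textbook theorem (Boucheron--Lugosi--Massart, Theorem~6.2), which uses exactly these ranges conditional on $X_{-j}$, and it handles $v_c=0$ via Efron--Stein ($\operatorname{Var}(W)=0$).

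You instead write out the Doob-martingale/Hoeffding-lemma proof. That proof needs the range of $\mathbb E[W\mid\mathcal F_k]$ given the past $\mathcal F_{k-1}$, so it costs one extra Fubini step averaging over $X_{k+1},\dots,X_n$. In exchange the argument is self-contained, the constant in $e^{-2u^2/v_c}$ is derived explicitly, and the degenerate case follows directly from the vanishing increments.

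One technical point for the write-up. Define $U_k,L_k$ through the jointly measurable version $h_k(x_{1:k-1},x)=\int w(x_{1:k-1},x,y)\,dP_{X_{k+1:n}}(y)$. Take the essential sup and inf with respect to the law of $X_k$, not over its ``support.'' This makes $U_k,L_k$ measurable and avoids the uncountably many $x$-dependent null sets hidden in $\mathbb E[\,\cdot\mid\mathcal F_{k-1}]$.
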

\begin{proof}[Proof of Lemma~\ref{lem:unif:bd}] 
To justify this almost-sure formulation, write $X_{-j}=(X_i)_{i\ne j}$
and $P_j=\mathcal L(X_j)$.
By Fubini, for almost every $x_{-j}$, the replacement
assumption gives
\[
|w(x_{-j},x_j)-w(x_{-j},x'_j)|\le c_j
\quad\text{for }(P_j\otimes P_j)\text{-almost every }
(x_j,x'_j).
\]
Here $w(x_{-j},x_j)$ means that $x_j$ is inserted in
coordinate $j$. Consequently, the conditional essential
range of $W$ given $X_{-j}$ has length at most $c_j$:
otherwise, two sets of values separated by more than $c_j$
would each have positive conditional probability, contrary
to the displayed bound and conditional independence of
the two draws.
The proof of the cited theorem applies with these essential
ranges and gives the asserted tail bound.
If $v_c=0$, the Efron--Stein inequality gives
$\operatorname{Var}(W)=0$.
\end{proof}

Lemma~\ref{lem:unif:entropy} gives a covering number bound for $\cF_L$ which is also standard ($\cF_L$ is contained in a fixed H\"older ball
of order $2$ in the convention of
\citet[Section~2.7.1]{VW1996}; see their Theorem 2.7.1 for a covering number bound).

\begin{lemma}
\label{lem:unif:entropy}
For the class $\mathcal F_L$ on $[0,1]^d$, there is a constant $C$
depending only on $d,L$ such that, for every $0<\varepsilon\le1$, a
finite set $\mathcal N_\varepsilon\subset\mathcal F_L$ satisfies
\begin{equation}\label{eq:unif:entropy}
 \sup_{g\in\mathcal F_L}\min_{f\in\mathcal N_\varepsilon}
          \|g-f\|_\infty\le\varepsilon,
 \qquad
 \log|\mathcal N_\varepsilon|
 \le C\varepsilon^{-d/2}\log(C/\varepsilon).
\end{equation}
In particular, $\mathcal F_L$ has a countable dense subset in the sup norm.
\end{lemma}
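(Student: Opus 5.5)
The plan is a direct piecewise-Taylor quantization argument (Kolmogorov--Tikhomirov style), using only that every $g\in\mathcal F_L$ has $|g|\le L$, $\|\nabla g\|\le \sqrt d L$, and Hessian operator norm at most $dL$ on $\Omega$. The log factor in \eqref{eq:unif:entropy} leaves room for a crude encoding, so no sharp entropy bound is needed.

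\textbf{Step 1: grid and Taylor approximation.} Fix $0<\varepsilon\le1$ and a mesh $h=c_0\sqrt{\varepsilon}$, with $c_0$ depending only on $d,L$ and chosen below. Partition $[0,1]^d$ into at most $N\le (1/h+1)^d\le C\varepsilon^{-d/2}$ cubes of side $\le h$, and let $x_k$ be the center of cube $k$. For $x$ in cube $k$ we have $\|x-x_k\|\le \sqrt d h$. By Lemma~\ref{lem:Taylor}, extended to the vector Hessian bound,
\[
\big|g(x)-g(x_k)-\nabla g(x_k)^\top(x-x_k)\big|\le \tfrac12 dL\cdot d h^2 .
\]

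\textbf{Step 2: quantize the local data.} Round $g(x_k)$ to the grid $\delta_0\mathbb Z$ with $\delta_0=\varepsilon/8$, and round each partial derivative $\partial_j g(x_k)$ to $\delta_1\mathbb Z$ with $\delta_1=\sqrt{\varepsilon}$. Call the resulting tuple over all $k$ the \emph{code} of $g$. Let $\tilde g$ be the piecewise-affine function defined on cube $k$ by the rounded value and rounded gradient. Then
\[
\sup_x|g(x)-\tilde g(x)|\le \delta_0+\sqrt d\,\delta_1\cdot\sqrt d h+\tfrac12 d^2Lh^2 = \varepsilon/8+d c_0\varepsilon+\tfrac12 d^2Lc_0^2\varepsilon\le \varepsilon/2
\]
once $c_0$ is small enough. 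Since values and derivatives lie in $[-L,L]$, each cube admits at most $(2L/\delta_0+1)(2L/\delta_1+1)^d\le (C/\varepsilon)^{C}$ codes. The total number of codes is therefore at most $\exp\{C\varepsilon^{-d/2}\log(C/\varepsilon)\}$.

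\textbf{Step 3: pick the net inside $\mathcal F_L$.} For each code realized by some member of $\mathcal F_L$, select one representative $g_{\text{code}}\in\mathcal F_L$, and let $\mathcal N_\varepsilon$ be the set of these representatives. Two functions with the same code share the same $\tilde g$, so they are within $\varepsilon/2+\varepsilon/2=\varepsilon$ in sup norm. This gives \eqref{eq:unif:entropy}, and $\mathcal N_\varepsilon\subset\mathcal F_L$ as required. Choosing representatives requires no measurability, since the set of codes is finite.

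\textbf{Step 4: countable dense subset.} Take $\bigcup_{m\ge1}\mathcal N_{1/m}$, which is countable and dense in $\mathcal F_L$ in the sup norm.

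The only delicate point is the constant bookkeeping in Step 2. The gradient resolution must be $\sqrt\varepsilon$, not $\varepsilon$, to keep the count per cube polynomial in $1/\varepsilon$, while its error contribution $\delta_1 h\asymp\varepsilon$ stays at the correct scale. With this choice everything is routine.
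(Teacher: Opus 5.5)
Your argument is correct and self-contained, but it takes a different route from the paper. The paper does not prove the lemma itself. It notes that $\mathcal F_L$ lies in a fixed H\"older ball of order $2$, since bounded second derivatives on the convex cube give a Lipschitz gradient, and then cites Theorem~2.7.1 of \citet{VW1996}.

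That theorem uses essentially your discretization: values at resolution $\asymp\varepsilon$ and first derivatives at resolution $\asymp\sqrt\varepsilon$, on a grid of mesh $\asymp\sqrt\varepsilon$. It counts codes more cleverly, however. Given the rounded Taylor data at one grid point, the data at a neighbouring point is confined to $O(1)$ possibilities. The total count is therefore $\exp(C\varepsilon^{-d/2})$, with no logarithm.

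Your count treats cells independently and pays $(C/\varepsilon)^{C}$ per cell, so it loses the factor $\log(C/\varepsilon)$. The statement explicitly allows this factor, and it is harmless downstream, because $\int_0^\delta u^{-d/4}\sqrt{\log(1/u)}\,du<\infty$ for $d<4$ in Lemma~\ref{lem:unif:chain}. In exchange, your Step~3 places the net inside $\mathcal F_L$ directly. The citation gives external centers and implicitly relies on the standard $\varepsilon/2$-to-internal-net step.

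One small correction to your closing remark. Gradient resolution $\delta_1=\varepsilon$ would also keep the per-cell count polynomial, namely $(C/\varepsilon)^{d+1}$, and would give the same final bound. The choice $\sqrt\varepsilon$ is the coarsest admissible scale, because the error term $d\delta_1 h$ with $h\asymp\sqrt\varepsilon$ must stay $O(\varepsilon)$; it is not needed for polynomial counting.
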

 
Lemma~\ref{lem:unif:chain} is also a standard result providing uniform covergence via covering. We include a brief proof quoting some textbook results. 
\begin{lemma}[Uniform convergence under sub-Gaussian increments]
\label{lem:unif:chain}
Let $1\le d<4$. For every $n$, let $\{Z_n(g):g\in\mathcal F_L\}$ be a real
process with measurable coordinates and almost surely continuous paths
under $\|\cdot\|_\infty$.
Suppose a fixed constant $0<K<\infty$ satisfies
\[
 \mathbb P(|Z_n(g)-Z_n(f)|>u)
 \le2\exp\{-u^2/(K\|g-f\|_\infty^2)\}
\]
for every $n$, distinct $g,f\in\mathcal F_L$, and $u>0$.
Then whenever $Z_n(g)=o_P(1)$ for every fixed $g$, we have  
 $\sup_{g\in\mathcal F_L}|Z_n(g)|\to 0$ in probability.  
\end{lemma}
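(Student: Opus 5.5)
The plan is the standard chaining argument: a finite-net reduction combined with a maximal inequality for sub-Gaussian increments, applied to the metric space $(\mathcal F_L,\|\cdot\|_\infty)$. By Lemma~\ref{lem:unif:entropy}, this space is totally bounded, with $\log N(\varepsilon)\le C\varepsilon^{-d/2}\log(C/\varepsilon)$. The Dudley entropy integral
\[
J(\delta)=\int_0^\delta\sqrt{\log N(\varepsilon)}\,d\varepsilon\lesssim\int_0^\delta \varepsilon^{-d/4}\sqrt{\log(C/\varepsilon)}\,d\varepsilon
\]
is finite precisely because $d/4<1$, i.e.\ $d<4$. Moreover $J(\delta)\to0$ as $\delta\to0$. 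This is where the dimension restriction enters.

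Fix $\eta>0$ and choose $\delta\in(0,1]$ small enough that $J(\delta)$ is small, to be tuned below. Let $\mathcal N_\delta$ be the finite $\delta$-net from Lemma~\ref{lem:unif:entropy}, and let $\pi(g)\in\mathcal N_\delta$ denote a closest net point to $g$. Then
\[
\sup_g|Z_n(g)|\le\max_{f\in\mathcal N_\delta}|Z_n(f)|+\sup_{\|g-f\|_\infty\le 2\delta}|Z_n(g)-Z_n(f)|.
\]
The first term is a maximum over finitely many fixed functions. Each $Z_n(f)=o_P(1)$ by hypothesis, so a union bound over the fixed finite set shows the maximum is $o_P(1)$.

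For the second term, I will apply the generalized Dudley bound for processes with sub-Gaussian increments with respect to the metric $\sqrt K\|\cdot\|_\infty$. This is, for example, \citet[Corollary~2.2.8]{VW1996} with $\psi_2$-Orlicz norms, or its modulus-of-continuity version. The hypothesis gives $\|Z_n(g)-Z_n(f)\|_{\psi_2}\lesssim\sqrt K\|g-f\|_\infty$, uniformly in $n$. That result yields
\[
\EE\sup_{\|g-f\|_\infty\le2\delta}|Z_n(g)-Z_n(f)|\le C'\sqrt K\,J(2\delta)
\]
with $C'$ universal, hence uniform in $n$. To apply it I first need the supremum to be measurable and attainable over a countable set. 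The paths are almost surely continuous in $\|\cdot\|_\infty$, and $\mathcal F_L$ has a countable dense subset by Lemma~\ref{lem:unif:entropy}. So the supremum equals a supremum over a countable set, and I can run chaining on finite subsets and pass to the limit by monotone convergence.

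By Markov's inequality, the second term exceeds $\eta/2$ with probability at most $2C'\sqrt K J(2\delta)/\eta$, and I choose $\delta$ to make this at most $\eta$. Then $\limsup_n\PP(\sup_g|Z_n(g)|>\eta)\le\eta$, and since $\eta$ is arbitrary the conclusion follows.

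The main obstacle is bookkeeping rather than substance. The tail hypothesis is stated only for distinct pairs; for equal pairs the increment is zero, so this is harmless. The second point is converting the stated tail bound into the $\psi_2$-norm bound needed by the chaining theorem. The bound $2\exp(-u^2/(K\|g-f\|^2))$ gives $\|\cdot\|_{\psi_2}\le\sqrt{3K}\,\|g-f\|_\infty$ by the standard computation.
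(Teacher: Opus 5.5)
Your proposal is correct and follows essentially the same route as the paper. Both arguments use a deterministic finite net handled by pointwise convergence and a union bound, a uniform-in-$n$ entropy-integral (Dudley) bound on the expected modulus of continuity that is finite exactly because $d<4$, measurability from the countable dense subset plus path continuity, and Markov's inequality to combine the pieces. The differences are cosmetic: the paper cites the general Orlicz maximal inequality (Theorem~2.2.4 of \citet{VW1996}) rather than Corollary~2.2.8, and takes $\limsup_n$ before sending $\delta\downarrow0$ instead of tuning $\delta$ to $\eta$.
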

\begin{proof}[Proof of Lemma~\ref{lem:unif:chain}]
For $\delta>0$, define
\[
\omega_n(\delta)
=
\sup_{ g,f\in\mathcal F_L, \|g-f\|_\infty<\delta}
|Z_n(g)-Z_n(f)|.
\]
The countable dense subset given in 
Lemma~\ref{lem:unif:entropy}, together with sample-path
continuity, gives measurable versions of this supremum
and of $\sup_g|Z_n(g)|$.

The sub-Gaussian increment bound and the entropy-integral
maximal inequality of
\citet[Theorem~2.2.4]{VW1996} imply
\[
\sup_n\EE\omega_n(\delta)
\le
C\int_0^\delta
\sqrt{1+\log N(u,\mathcal F_L,\|\cdot\|_\infty)}\,du
\to 0 
\]
as $\delta\downarrow 0$. 
where the last limit follows from
Lemma~\ref{lem:unif:entropy} and $d<4$.

Fix $\delta>0$ and choose a deterministic finite
$\delta/2$-net $\mathcal N_\delta\subset\mathcal F_L$.
For every sample path,
\[
\sup_{g\in\mathcal F_L}|Z_n(g)|
\le
\max_{f\in\mathcal N_\delta}|Z_n(f)|+\omega_n(\delta).
\]
Consequently, for every $\varepsilon>0$,
\[
\mathbb P\!\left\{\sup_g|Z_n(g)|>\varepsilon\right\}
\le
\sum_{f\in\mathcal N_\delta}
\mathbb P\{|Z_n(f)|>\varepsilon/2\}
+
\frac{2}{\varepsilon}\EE\omega_n(\delta).
\]
For any fixed $\delta$, the finite sum tends to zero by
pointwise convergence. Taking $\limsup_{n\to\infty}$
and then letting $\delta\downarrow0$ proves the claim.
\end{proof}

Lemma~\ref{lem:expect_var_bd} provides a one-sided variance bound. It can be proved by applying the Cantelli's inequality, and we include a short proof here. 

\begin{lemma}\label{lem:expect_var_bd}
    If a real random variable $Z$ has mean $\mu>0$ and variance at most $v$, then $\PP(Z\geq \mu/2) \geq \mu^2/(4v+\mu^2)$.  
\end{lemma}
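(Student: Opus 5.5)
The plan is to combine Cantelli's one-sided inequality with the monotonicity of $t\mapsto t^2/(v+t^2)$. Write $\sigma^2=\Var(Z)\le v$. If $\sigma^2=0$, then $Z=\mu$ almost surely. Since $\mu>0$, this gives $\PP(Z\ge\mu/2)=1\ge\mu^2/(4v+\mu^2)$, so we may assume $\sigma^2>0$.

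Next, consider the complementary event $\{Z<\mu/2\}=\{Z-\mu<-\mu/2\}$. Cantelli's inequality states that for any $a>0$,
\[
\PP(Z-\mu\le -a)\le \frac{\sigma^2}{\sigma^2+a^2}.
\]
This follows from the standard argument: for every $s\ge0$,
\[
\PP(\mu-Z\ge a)\le \PP\{(\mu-Z+s)^2\ge (a+s)^2\}\le \frac{\sigma^2+s^2}{(a+s)^2}
\]
by Markov's inequality, and the choice $s=\sigma^2/a$ optimizes this bound.

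Now take $a=\mu/2$. Then
\[
\PP(Z\ge \mu/2)\ge \PP(Z>\mu/2)= 1-\PP(Z-\mu\le -\mu/2)\ge 1-\frac{\sigma^2}{\sigma^2+\mu^2/4}=\frac{\mu^2}{4\sigma^2+\mu^2}.
\]
Finally, $\sigma^2\mapsto \mu^2/(4\sigma^2+\mu^2)$ is decreasing and $\sigma^2\le v$, so the right-hand side is at least $\mu^2/(4v+\mu^2)$, as claimed.

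There is no real obstacle in this argument. The only points requiring care are the degenerate case $\sigma^2=0$ and the non-strict versus strict inequality at $\mu/2$, and both are handled above because the event $\{Z>\mu/2\}$ is contained in $\{Z\ge\mu/2\}$.
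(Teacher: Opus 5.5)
Your proof is correct and follows the same route as the paper: the shifted Markov bound (Cantelli's inequality) on the lower tail $\{Z-\mu\le -\mu/2\}$, with the optimal shift $s=\sigma^2/a$. The only cosmetic difference is that you work with the true variance $\sigma^2$ and then use that $\mu^2/(4\sigma^2+\mu^2)$ is decreasing in $\sigma^2$, whereas the paper inserts the bound $v$ directly into the second-moment estimate $\EE[(Z-\mu-b)^2]\le v+b^2$ and optimizes with $b=v/t$.
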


\begin{proof}[Proof of Lemma~\ref{lem:expect_var_bd}]
    If $v=0$ the conclusion is immediate. Otherwise, for any $t>0$ and $b\geq 0$, on the event $\{Z-\mu\leq -t\}$ we have $(Z-\mu-b)^2 \geq (t+b)^2$. Thus by Markov's inequality, we have $\PP(Z-\mu\leq -t) \leq (v+b^2)/(t+b)^2$. Taking $b=v/t$  gives $\PP(Z-\mu\leq -t)\leq (v+v^2/t^2)/(t+v/t)^2$, hence $\PP(Z\geq \mu/2)\geq 1- (v+v^2/t^2)/(t+v/t)^2 = 1-v/(v+t^2) = t^2/(v+t^2)$. Then, taking $t=\mu/2$ gives the desired bound.
\end{proof}

The next lemma is the standard continuity theorem; see, e.g., the textbook~\cite{billingsley2012probability}. 
\begin{lemma}\label{lem:Levy}
If characteristic functions of probability laws on $\RR^k$ converge pointwise to a characteristic function continuous at zero, the corresponding laws converge weakly to that law. Conversely, weak convergence implies pointwise convergence of characteristic functions.  
\end{lemma}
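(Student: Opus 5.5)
The plan is the classical route: tightness, then extraction of convergent subsequences, then identification of the limit by uniqueness of characteristic functions. Let $\mu_n$ be laws on $\RR^k$ with characteristic functions $\varphi_n$, and suppose $\varphi_n(t)\to\varphi(t)$ for every $t$, where $\varphi$ is the characteristic function of a law $\mu$. Continuity of $\varphi$ at zero is automatic for a characteristic function, but it is the only property of $\varphi$ the tightness step uses.

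The first step, and the only nontrivial one, is tightness of $\{\mu_n\}$. I would first treat each coordinate separately. Let $\varphi_n^{(j)}(s)=\varphi_n(se_j)$ be the characteristic function of the $j$-th marginal of $\mu_n$. For $u>0$ we have the truncation inequality
\[
\mu_n^{(j)}\{x:|x|\ge 2/u\}\le \frac1u\int_{-u}^{u}\{1-\varphi_n^{(j)}(s)\}\,ds .
\]
It follows from Fubini and the bound $1-\sin(ux)/(ux)\ge 1/2$ for $|ux|\ge2$. The integrand is bounded by $2$, so dominated convergence sends the right-hand side to $u^{-1}\int_{-u}^{u}\{1-\varphi(se_j)\}\,ds$. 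This limit is small for small $u$ because $\varphi$ is continuous at $0$ with $\varphi(0)=1$.

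Hence, for any $\varepsilon>0$, we can choose $u$ and $n_0$ such that $\mu_n^{(j)}\{|x|\ge 2/u\}\le\varepsilon/k$ for all $n\ge n_0$. The finitely many $n<n_0$ are handled individually, since each single law is tight. Taking a union bound over the $k$ coordinates then places mass at least $1-\varepsilon$ in a fixed cube, uniformly in $n$.

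Given tightness, Prokhorov's theorem (equivalently, Helly selection plus tightness) shows that every subsequence of $\mu_n$ has a further subsequence converging weakly to some probability law $\nu$. Along that sub-subsequence, the functions $x\mapsto\cos(t^\top x)$ and $x\mapsto\sin(t^\top x)$ are bounded and continuous, so $\varphi_n(t)\to\varphi_\nu(t)$. Therefore $\varphi_\nu=\varphi$.

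By the uniqueness theorem for characteristic functions (via the inversion formula, or Stone--Weierstrass density of trigonometric polynomials on compacts combined with tightness), this gives $\nu=\mu$. Since every subsequence has a further subsequence converging weakly to the same $\mu$, the whole sequence satisfies $\mu_n\Rightarrow\mu$.

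The converse direction is immediate from the same observation: $\cos(t^\top x)$ and $\sin(t^\top x)$ are bounded continuous functions, so weak convergence gives pointwise convergence of characteristic functions.

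The main obstacle is the tightness step, since it is the only place where continuity of $\varphi$ at zero is used. The truncation inequality is the key tool there, and I would verify it carefully in one dimension before applying it coordinatewise.
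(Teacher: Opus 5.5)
Your proof is correct: tightness from the truncation inequality applied to each coordinate marginal, then Prokhorov, then identification of every subsequential limit by uniqueness of characteristic functions. The paper gives no proof of this lemma and simply quotes it as the standard continuity theorem from \citet{billingsley2012probability}, where essentially the same route is taken.
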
 

We also present a conditional central limit theorem; its proof is standard and we include one below. 

\begin{lemma}[Conditional CLT]\label{lem:conditional-CLT}
Let $n_1=n_1(n)\in \NN^+$ be $\cG_n$-measurable  with ${n_1}\to\infty$ in probability. Conditional on $\cG_n$, suppose the row $\eta_{n1},\ldots,\eta_{n,n_1}$ is independent and centered, with $\EE(|\eta_{ni}|^3\mid\cG_n)\le K$ for a fixed constant $K$, $1\leq i\leq n_1$. If
$v_n={n_1}^{-1}\sum_i\E(\eta_{ni}^2\mid\cG_n)\to v\ge0$ in probability, then for each fixed $u\in \RR$,
\begin{equation}\label{eq:cond-cf}
 \E\left[ e^{iu {n_1}^{-1/2}\sum_i\eta_{ni}}\given |\cG_n\right]
 \to \exp(-u^2v/2)
 \quad\text{in probability and }L^1.
\end{equation}
If $A_n$ is $\cG_n$-measurable and $A_n\Rightarrow N(0,w)$, then $A_n+{n_1}^{-1/2}\sum_i\eta_{ni}\Rightarrow N(0,w+v)$.
\end{lemma}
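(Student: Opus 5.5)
We prove the conditional CLT of Lemma~\ref{lem:conditional-CLT} in two stages. First we establish the convergence \eqref{eq:cond-cf} of the conditional characteristic function. Then we deduce the unconditional weak convergence of $A_n+n_1^{-1/2}\sum_i\eta_{ni}$ by conditioning on $\cG_n$ and applying Lemma~\ref{lem:Levy}.

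\emph{Stage 1.} Fix $u\in\RR$ and work conditionally on $\cG_n$. Let $\phi_{ni}(u)=\EE[e^{iu\eta_{ni}/\sqrt{n_1}}\mid\cG_n]$ and $s_{ni}^2=\EE(\eta_{ni}^2\mid\cG_n)$. By conditional independence, the left side of \eqref{eq:cond-cf} equals $\prod_{i}\phi_{ni}(u)$.

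The third-order Taylor bound $|e^{ix}-1-ix+x^2/2|\le |x|^3/6$, together with centering, gives
\[
|\phi_{ni}(u)-(1-u^2s_{ni}^2/(2n_1))|\le |u|^3K/(6n_1^{3/2}).
\]
By Lyapunov's inequality, $s_{ni}^2\le K^{2/3}$, so each factor $1-u^2s_{ni}^2/(2n_1)$ lies in $[0,1]$ once $n_1\ge u^2K^{2/3}/2$.

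We then use $|\prod a_i-\prod b_i|\le\sum|a_i-b_i|$ for complex numbers of modulus at most one. This bounds the distance between $\prod_i\phi_{ni}(u)$ and $\prod_i(1-u^2s_{ni}^2/(2n_1))$ by $|u|^3K/(6\sqrt{n_1})$. Similarly, $|e^{-x}-(1-x)|\le x^2/2$ for $x\ge0$ bounds the distance between that product and $\exp(-u^2v_n/2)$ by
\[
\sum_i u^4s_{ni}^4/(8n_1^2)\le u^4K^{4/3}/(8n_1).
\]
Both bounds tend to zero in probability because $n_1\to\infty$ in probability. Finally, $\exp(-u^2v_n/2)\to\exp(-u^2v/2)$ in probability by continuity. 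This yields convergence in probability in \eqref{eq:cond-cf}.

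To upgrade to $L^1$ convergence: all quantities are bounded by one in modulus, so dominated convergence for convergence in probability applies. On the event where $n_1$ is too small for the factor bounds to hold, we only use this boundedness; that event has vanishing probability.

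\emph{Stage 2.} Let $S_n=n_1^{-1/2}\sum_i\eta_{ni}$ and fix $(s,t)$. Because $A_n$ is $\cG_n$-measurable,
\[
\EE e^{isA_n+itS_n}=\EE\big[e^{isA_n}\,\EE(e^{itS_n}\mid\cG_n)\big].
\]
We write this as $\EE[e^{isA_n}]e^{-t^2v/2}$ plus an error term. The error is at most
\[
\EE\big|\EE(e^{itS_n}\mid\cG_n)-e^{-t^2v/2}\big|,
\]
which tends to zero by the $L^1$ part of Stage 1. Since $A_n\Rightarrow N(0,w)$, we have $\EE e^{isA_n}\to e^{-s^2w/2}$. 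Hence the joint characteristic function of $(A_n,S_n)$ converges to $e^{-s^2w/2-t^2v/2}$, which is continuous at zero.

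By Lemma~\ref{lem:Levy}, $(A_n,S_n)$ converges jointly to independent Gaussians $N(0,w)$ and $N(0,v)$. Taking $s=t$ (or applying the continuous mapping theorem) gives $A_n+S_n\Rightarrow N(0,w+v)$.

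The main obstacle is the randomness of $n_1$ and the conditioning. The product expansion must be carried out pathwise on the $\cG_n$-measurable event $\{n_1\ge u^2K^{2/3}/2\}$, and the moment bound $K$ must be uniform so that the error bounds are deterministic functions of $n_1$ alone. Once that is handled, the step from convergence in probability to $L^1$ convergence, which Stage 2 needs, follows immediately from boundedness.
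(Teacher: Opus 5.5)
Your proof is correct. Stage 1 takes a different route from the paper; Stage 2 is essentially the paper's argument.

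\textbf{Stage 1, your route.} You prove the conditional characteristic-function convergence by a direct Lyapunov-type expansion. You factor $\mathbb{E}(e^{iuS_n}\mid\mathcal{G}_n)$ using conditional independence. You compare each factor to $1-u^2s_{ni}^2/(2n_1)$ and then to $\exp\{-u^2s_{ni}^2/(2n_1)\}$. On the $\mathcal{G}_n$-measurable event $\{n_1\ge u^2K^{2/3}/2\}$ this gives the explicit bound $|u|^3K/(6\sqrt{n_1})+u^4K^{4/3}/(8n_1)$ on $|\mathbb{E}(e^{iuS_n}\mid\mathcal{G}_n)-e^{-u^2v_n/2}|$.

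\textbf{Stage 1, the paper's route.} The paper uses the third-moment bound only to verify the Lindeberg condition, with the bound $K/(\varepsilon\sqrt{n_1})$. For $v>0$, it passes from an arbitrary subsequence to a further one along which $n_1\to\infty$ and $v_n\to v$ almost surely. It then applies the Lindeberg--Feller theorem realization by realization to the conditional laws. The case $v=0$ is handled separately via $|\varphi_n(u)-1|\le|u|\sqrt{v_n}$.

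\textbf{What each buys.} Your route needs no case split and no subsequence device. It also avoids any implicit appeal to regular conditional distributions, since you only take conditional expectations of specific bounded functions. In addition it delivers a quantitative rate. The paper's route delegates the analysis to a textbook theorem, and the third moment enters only through the Lindeberg condition. It therefore adapts with little change to a conditional Lindeberg-type assumption. Your expansion, by contrast, uses the uniform bound $K$ twice: for the Taylor remainder and for $s_{ni}^2\le K^{2/3}$.

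\textbf{Stage 2.} The only difference is that you first establish joint convergence of $(A_n,S_n)$ from the bivariate characteristic function and then project. The paper simply evaluates at $s=t=u$. Your extra asymptotic-independence statement is harmless but not needed for the stated conclusion.
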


\begin{proof}[Proof of Lemma~\ref{lem:conditional-CLT}]
Write 
$S_n=n_1^{-1/2}\sum_{i=1}^{n_1}\eta_{ni}$, 
$\varphi_n(u)=\EE(e^{iuS_n}\mid\mathcal G_n)$. 
For every $\varepsilon>0$, the third-moment bound gives
\[
\frac1{n_1}\sum_{i=1}^{n_1}
\EE\!\left[
\eta_{ni}^2
\ind\{|\eta_{ni}|>\varepsilon\sqrt{n_1}\}
\,\middle|\,\mathcal G_n
\right]
\le \frac{K}{\varepsilon\sqrt{n_1}}.
\]

Suppose first that $v>0$. From any subsequence, select a
further subsequence along which $n_1\to\infty$ and $v_n\to v$
almost surely. For almost every realization, the conditional
laws along this further subsequence satisfy the conditions
of the Lindeberg--Feller theorem
\citep[Theorem~3.4.10]{Durrett2019}.
Applying it to these independent rows gives
$\varphi_n(u)\to e^{-u^2v/2}$ almost surely along the further
subsequence, for every fixed $u\in\mathbb R$.
Thus the convergence holds in probability along the original
sequence.

If $v=0$, conditional Cauchy--Schwarz gives directly
$|\varphi_n(u)-1|
\le |u|\,\EE(|S_n|\mid\mathcal G_n)
\le |u|\sqrt{v_n} = o_P(1)$.
In either case, $|\varphi_n(u)-e^{-u^2v/2}|\le2$,
so the convergence also holds in $L^1$.

Finally, since $A_n$ is $\mathcal G_n$-measurable,
we know 
$\EE [e^{iu(A_n+S_n)}]=\EE\{e^{iuA_n}\varphi_n(u)\} =e^{-u^2v/2}\EE [e^{iuA_n}]+o(1) \to e^{-u^2(v+w)/2}$.  
The error in the second equality is bounded in absolute
value by $\EE|\varphi_n(u)-e^{-u^2v/2}|$.
The last assertion follows from Lemma~\ref{lem:Levy}.
\end{proof}

\subsection{Proofs of auxiliary lemmas in Appendix~\ref{app:subsec_thm_wald_cdf}}
\label{cdf32:app:comparisons}

In this section we prove the technical lemmas in Appendix~\ref{app:subsec_thm_wald_cdf}. 

We use the bounded-variable Bernstein inequality
\citep[Section~2.8]{concentration2013book}. If independent centered variables $X_i$ satisfy
$|X_i|\le b$ and $\sum_i\EE [X_i]^2\le v$, then
\begin{equation}\label{cdf32:eq:bernstein}
\mathbb P\bigg\{\Big|\sum_i X_i\Big|>
             \sqrt{2vt}+ {2bt}/{3}\bigg\}\le2e^{-t},
\qquad t>0.
\end{equation} 
 
\begin{proof}[Proof of Lemma~\ref{cdf32:lem:crossing}]
For each fixed threshold $y$, the inverse-CDF identity gives
\[
\ind\{A_i\le y\}=\ind\{V_i\le F_i(y)\},\qquad
\ind\{B_i\le y\}=\ind\{V_i\le G_i(y)\}.
\]
Their difference is nonzero if and only if $V_i$ lies between the two CDF
values, apart from endpoints of probability zero. Therefore
\[
\EE[e_i(y)]=F_i(y)-G_i(y),\qquad
\EE[e_i(y)^2]=|F_i(y)-G_i(y)|.
\]
The centered $e_i(y)$ are bounded by two and their variance sum is at most
$m\delta$. The $e_i(y)^2$ are indicators, with both mean sum and variance
sum at most $m\delta$.

\medskip
\noindent\textit{First, suppose the CDFs are continuous.}
Let $J(y)=(2m)^{-1}\sum_i\{F_i(y)+G_i(y)\}$ and put $K=m^2$.
Choose increasing points $y_j$ with $J(y_j)=j/K$, $1\le j<K$, and add
$y_0=-\infty$, $y_K=+\infty$. 
The existence of such points follows from continuity. Each interval $(y_j,y_{j+1}]$ has $J$-mass $1/K$.
Let
\[
N_j=\sum_{i=1}^m
 \{\ind\{A_i\in(y_j,y_{j+1}]\}
  +\ind\{B_i\in(y_j,y_{j+1}]\}\}.
\]
Then $\EE[N_j]=2m/K$. The contributions for different $i$ are independent
and lie in $[0,2]$. Since $x^2\le2x$ on $[0,2]$, their variance sum is at
most $4m/K$. Their centered versions are bounded by two.

Apply \eqref{cdf32:eq:bernstein} at every grid point to the centered sums
of $e_i(y_j)$ and of $e_i(y_j)^2$, and to each $N_j-2m/K$.
There are at most $3(K+1)$ quantities. With $t=(r+4)\log m$, a union
bound gives failure probability at most
$6(K+1)m^{-(r+4)}\le C_r m^{-r}$. On its complement,
\begin{align*}
\max_j\left|\sum_i\{e_i(y_j)-\EE[e_i(y_j)]\}\right|
&\le C_r\{\sqrt{m\delta\log m}+\log m\},\\
\max_j\sum_i e_i(y_j)^2
&\le m\delta+C_r\{\sqrt{m\delta\log m}+\log m\},\\
\max_{j<K}N_j
&\le\frac{2m}{K}
  +C_r\left\{\sqrt{\frac{m\log m}{K}}+\log m\right\}.
\end{align*}
For $y_j\le y\le y_{j+1}$, an indicator can change only if the corresponding
observation belongs to $(y_j,y_{j+1}]$. Consequently,
\[
\sum_i|e_i(y)-e_i(y_j)|\le N_j,
\qquad
\sum_i|\EE[e_i(y)]-\EE[e_i(y_j)]|\le 2m/K.
\]
Thus the centered sum at any threshold differs from its value at a grid
point by at most $N_j+2m/K$. Also, because
$|a^2-b^2|\le2|a-b|$ for $a,b\in\{-1,0,1\}$,
\[
\sum_i e_i(y)^2\le\sum_i e_i(y_j)^2+2N_j.
\]
Divide these bounds by $\sqrt m$ and $m$, respectively. With $K=m^2$, the
additional terms are bounded by $C_r\log m/\sqrt m$ and
$C_r\log m/m$. Enlarging $C_r$ proves
\eqref{cdf32:eq:crossing-centered}--\eqref{cdf32:eq:crossing-square}, including
the two tail intervals. The finite grid points have no outcome mass in
this continuous case.

\medskip
\noindent\textit{Now allow jumps in the CDFs.}
Fix $m$ and the CDFs. For $\epsilon>0$ define
\[
F_{i,\epsilon}(y)=\int_0^1 F_i(y+\epsilon v)\,dv,
\qquad
G_{i,\epsilon}(y)=\int_0^1 G_i(y+\epsilon v)\,dv.
\]
For any proper CDF $F$, the corresponding $F_\epsilon$ is the law of
$X-\epsilon V$ for independent $X\sim F$ and $V\sim\operatorname{Unif}(0,1)$.
Conditionally on $X$, this law has a density bounded by $1/\epsilon$, so
$F_\epsilon$ is continuous. Monotonicity and right continuity give
\begin{equation}\label{cdf32:eq:smoothing}
F(y)\le F_\epsilon(y)\le F(y+\epsilon),\qquad
F_\epsilon(y)\to F(y)\quad\text{for every }y.
\end{equation}
If $F\le G$, the set where $G\ge v$ contains the set where $F\ge v$;
taking their infima gives $G^{\leftarrow}(v)\le F^{\leftarrow}(v)$.
Apply this to the two inequalities in \eqref{cdf32:eq:smoothing} to obtain
\begin{equation}\label{cdf32:eq:inverse-sandwich}
F^{\leftarrow}(v)-\epsilon
\le F_\epsilon^{\leftarrow}(v)\le F^{\leftarrow}(v),
\qquad 0<v<1.
\end{equation}
Moreover,
$\|F_{i,\epsilon}-G_{i,\epsilon}\|_\infty
 \le\|F_i-G_i\|_\infty$ by the integral triangle inequality.

Using the original uniforms, define
$A_i^\epsilon=F_{i,\epsilon}^{\leftarrow}(V_i)$ and
$B_i^\epsilon=G_{i,\epsilon}^{\leftarrow}(V_i)$, and let
$e_i^\epsilon(y)$ be their indicator differences. The continuous-case bounds
hold with the original $\delta$ and the same constants for every $\epsilon$.
Equation \eqref{cdf32:eq:inverse-sandwich} implies
$A_i-\epsilon\le A_i^\epsilon\le A_i$ and similarly for $B_i$.
For every fixed $y$, it follows that 
$\ind\{A_i^\epsilon\le y\}\to \ind\{A_i\le y\}$, and 
$\ind\{B_i^\epsilon\le y\}\to \ind\{B_i\le y\}$. 
The expectations of $e_i^\epsilon(y)$
also converge to those of $e_i(y)$ by \eqref{cdf32:eq:smoothing}.

For these fixed $m$ and CDFs, let $a$ and $b$ denote the deterministic
right sides of \eqref{cdf32:eq:crossing-centered} and
\eqref{cdf32:eq:crossing-square}. All the paths involved are right-continuous,
so their suprema equal the suprema over rational thresholds. If either
original supremum exceeds its bound, a rational threshold witnesses a strict
violation. Along any deterministic $\epsilon_l\downarrow0$, that same strict
violation persists for all sufficiently large $l$. Writing $E$ and $E_l$
for the unions of the two failure events in the original and approximating
samples, respectively, gives
\[
\ind_E\le\liminf_{l\to\infty}\ind_{E_l},
\qquad
\mathbb P(E)\le\liminf_{l\to\infty}\mathbb P(E_l)
\le C_r m^{-r},
\]
where the middle inequality is Fatou's lemma. This proves both inequalities
for arbitrary proper CDFs. The approximation takes place at each fixed $m$
and is used only in this proof; the estimators and bootstrap do not modify
any observed outcomes.
\end{proof}
  
\begin{proof}[Proof of Lemma~\ref{cdf32:lem:resample}]
Condition on the fixed pairs. For any $y$, the variables
$e_{I_i^*}(y)-\bar e_m(y)$ are independent, centered, bounded by two, and have
conditional variance
\[
\frac1m\sum_{j=1}^m\{e_j(y)-\bar e_m(y)\}^2
\le\frac1m\sum_{j=1}^m e_j(y)^2\le\kappa_m.
\]
As $y$ varies, the vector $(e_1(y),\ldots,e_m(y))$ has at most $2m+1$
distinct values: it changes only at an observed $A_i$ or $B_i$.
At one representative threshold for each vector, apply
\eqref{cdf32:eq:bernstein} with variance bound $m\kappa_m$, bound two on
the centered summands, and $t=(r+3)\log m$. A union bound has failure
probability at most $2(2m+1)m^{-(r+3)}\le C_r m^{-r}$.
On its complement, division by $\sqrt m$ gives
\[
\sup_y\left|\frac1{\sqrt m}\sum_i
                       \{e_{I_i^*}(y)-\bar e_m(y)\}\right|
\le\sqrt{2\kappa_m(r+3)\log m}
      +\frac{4(r+3)\log m}{3\sqrt m}.
\]
This is \eqref{cdf32:eq:resampling-bound} with an enlarged $C_r$.
Ties only decrease the number of distinct vectors.
\end{proof}

\input{161_additional_simulation}

%% file: 131_unif_balance.tex

\section{Proof of Theorem~\ref{thm:observational}}
\label{app:sec_proof_uniform_balance}

Under the super-population model, conditional on the treatment assignments $\{Z_i\}_{i=1}^n$, the treated and control group sizes are fixed, and the covariates in each group are i.i.d.~from the distribution $\PP_{X\given Z=1}$ and $\PP_{X\given Z=0}$, respectively. 
Our theoretical results are more easily stated when holding the $\{Z_i\}_{i=1}^n$'s fixed, and will be extended to random treatment assignments by concentration arguments. 
In the former case, for easier distinctions between treated and control samples, we denote covariates in the treated group as $\{T_i\}_{i=1}^{n_1}$ and those in the control group as $\{C_j\}_{j=1}^{n_0}$. 
We assume the following conditions on the densities:

\begin{assumption}\label{ass:cov}
$\{T_i\}_{i=1}^{n_1}$ and $\{C_j\}_{j=1}^{n_0}$ are i.i.d.~from distributions with densities $p_1$ and $p_0$, respectively. The densities $p_0,p_1$ have $C^1$ extensions to a neighborhood of $\Omega$, satisfy $0<\underline p\le p_z(x)\le\overline p<\infty$ on $\Omega$, and have bounded first derivatives. For some fixed $\delta>0$ and all sufficiently large $n$,
\begin{equation}\label{eq:surplus}
 n_0 p_0(x)\ge(1+\delta)n_1 p_1(x),\qquad x\in\Omega.
\end{equation} 
\end{assumption}

We first prove the result conditional on the group sample sizes, viewing $n_1,n_0$ as fixed. 

\begin{theorem}\label{thm:uniform_balance}
    Under Assumption~\ref{ass:cov},   suppose $d\leq 3$, and the (approximate) solution $\hat\sigma_n$ satisfies $\Delta_n := Q_n(\hat\sigma_n ;A_0)-Q_n^\star = o_P(1/n)$. Then, for every fixed $0<L<\infty$, we have 
    \@\label{eq:uniform_balance}
    \sqrt{n}\sup_{g\in \cF_L} \big| B_{n,\hat\sigma_n}(g)\big| \to 0
    \@
    in probability. In particular, this conclusion holds for the exact optimizer $\sigma_n^\star$. 
\end{theorem}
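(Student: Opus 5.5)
The plan is to follow the route sketched in Section~\ref{sec:balance}: first a fixed-$g$ bound for the exact optimizer, then a transfer to approximate optimizers, and finally a uniformization over $\cF_L$ by chaining.

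\textbf{Step 1: tilted optimal value.} Fix $g\in\cF_L$ and consider the tilted value
\[
V_n(t)=\min_{\sigma\in\cA_{n_1,n_0}}\{Q_n(\sigma;A_0)+tB_{n,\sigma}(g)\},\qquad t\in\RR.
\]
This is a finite lower envelope of affine functions of $t$. By Lemma~\ref{lem:envelope}, for every $h>0$,
\[
\frac{V_n(h)-V_n(0)}{h}\le B_{n,\sigma_n^\star}(g)\le\frac{V_n(0)-V_n(-h)}{h}.
\]
So it suffices to control $\EE V_n(\pm h)-\EE V_n(0)$ and the fluctuations $V_n(t)-\EE V_n(t)$ for a suitably chosen $h=h_n$.

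\emph{Fluctuations.} I would prove a one-observation replacement stability bound. Replacing a single $T_i$ or $C_j$ changes $V_n(t)$ by at most $C(n^{-1}\cdot\text{local matching cost})$ plus $C|t|/n$. The surplus condition~\eqref{eq:surplus} guarantees that a displaced treated unit can be rematched to a nearby free control at cost $O((\log n/n)^{2/d})$, with high probability. The Efron--Stein inequality (Lemma~\ref{lem:ES}) then gives $\Var V_n(t)\lesssim n^{-1}\{(\log n/n)^{4/d}+t^2/n^2\}$.

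\emph{Mean.} The key observation is that the tilt $tB_{n,\sigma}(g)=\frac{t}{n_1}\sum_i\{g(T_i)-g(C_{\sigma(i)})\}$ splits into $\frac{t}{n_1}\sum_i g(T_i)$, which does not depend on $\sigma$, and a control-side term. The minimization therefore equals the original quadratic matching problem with the control sample reweighted by the cost $-tg(C_j)/n_1$. Equivalently, one can view it as matching the treated sample to a control density tilted in the direction $\nabla g$. Comparing with the untilted problem through integration by parts (moving $\nabla g$ onto the densities $p_1,p_0$) should show
\[
|\EE V_n(t)-\EE V_n(0)-t\,\EE(\text{first-order term})|\lesssim t\,\Gamma_n+t^2,\qquad \Gamma_n\asymp n^{-2/d}\mathrm{polylog}(n),
\]
and that the first-order term vanishes. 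This is because, by optimality, the use-once constraint does not create a first-order drift.

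\emph{Choice of $h$.} Taking $h\asymp n^{-1/2}\Gamma_n^{1/2}$ or similar balances the terms. This yields $B_{n,\sigma_n^\star}(g)=O_P(\Gamma_n+\text{fluct}/h)$, which is $o_P(n^{-1/2})$ precisely when $2/d>1/2$, i.e.\ $d\le3$.

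\textbf{Step 2: approximate optimizers.} For $\hat\sigma_n$ with gap $\Delta_n=o_P(1/n)$, the same secant argument applies with $V_n(0)$ replaced by $V_n(0)+\Delta_n$. This adds a term $\Delta_n/h$, which is $o_P(n^{-1/2})$ as long as $h\gg n^{-1/2}$.

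\textbf{Step 3: uniformity over $\cF_L$.} Set $Z_n(g)=\sqrt n\,B_{n,\hat\sigma_n}(g)$ and apply Lemma~\ref{lem:unif:chain}. Sample paths are continuous, since $|Z_n(g)-Z_n(f)|\le2\sqrt n\|g-f\|_\infty$. That crude bound is not sub-Gaussian with a constant $K$, however, so I would instead split
\[
Z_n(g)=\sqrt n\{P_{n_1}g-P_1g\}-\sqrt n\{\hat Q_0g-P_1g\}.
\]
The first piece is a standard empirical process with sub-Gaussian increments (McDiarmid, Lemma~\ref{lem:unif:bd}). For the second piece I would use that $\hat Q_0-P_{n_1}$ is small: approximate $g$ by a finite net from Lemma~\ref{lem:unif:entropy} at scale $\epsilon_n\to0$, and bound the remainder using the Taylor expansion (Lemma~\ref{lem:Taylor}) together with the quadratic cost bound
\[
\frac1{n_1}\sum_i\|T_i-C_{\hat\sigma_n(i)}\|^2=O_P\bigl(n^{-2/d}\bigr)=o_P(n^{-1/2})\quad\text{for } d\le3.
\]
The first-order remainder is handled by applying the fixed-$g$ result to the coordinate functions multiplied by the net functions' gradients.

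\textbf{Main obstacle.} I expect the hardest step to be the mean comparison for $\EE V_n(t)$ in Step 1. One must show that the linear-in-$t$ term has no $n^{-1/d}$-order component. This requires a careful coupling or transport argument linking the discrete optimal matching to a continuum transport between $p_1$ and the tilted $p_0$, with boundary effects on $[0,1]^d$ contributing only $O(n^{-2/d}\log n)$.
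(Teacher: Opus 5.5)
Your skeleton (tilted value $V_n(t)$, envelope sandwich, Efron--Stein, chaining) is the paper's. However, the two estimates that carry Step~1 are missing or too weak.

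\emph{Mean.} The linear-in-$t$ coefficient of $\mathbb{E}V_n(t)-\mathbb{E}V_n(0)$ is $\mathbb{E}B_{n,\sigma_n^\star}(g)$ itself. So ``the first-order term vanishes by optimality'' restates what must be proved. A discrete-to-continuum transport comparison is not needed, and would be very hard to make accurate to $o(n^{-1/2})$. The missing idea is an envelope identity in the \emph{location} of a treated point. With $\sigma_t$ the a.s.\ unique tilted optimizer, $\nabla_{T_i}V_n(t)=n_1^{-1}\{A_0(T_i-C_{\sigma_t(i)})+t\nabla g(T_i)\}$. Hence the gradient part $G_n(t)=n_1^{-1}\sum_i\nabla g(T_i)^\top(T_i-C_{\sigma_t(i)})$ satisfies $\mathbb{E}G_n(t)+tJ_g=\sum_i\mathbb{E}[\nabla_{T_i}V_n(t)^\top A_0^{-1}\nabla g(T_i)]$, where $J_g=\mathbb{E}\{\nabla g(T_1)^\top A_0^{-1}\nabla g(T_1)\}$. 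Now integrate by parts in the position of each $T_i$ against $p_1$. This moves the derivative onto $p_1A_0^{-1}\nabla g$ and leaves $n_1$ times boundary and volume integrals of $\mu_t(x)-\mathbb{E}V_n(t)$, where $\mu_t(x)$ is the expected tilted value with $T_1$ pinned at $x$. One-point stability gives $\sup_x|\mu_t(x)-\mathbb{E}V_n(t)|\le CH_n(t)/n$, with $H_n(t)=(r_n^2+t^2)\log n$ and $r_n\asymp(\log n/n)^{1/d}$. Adding the Taylor remainder $O(\mathbb{E}L_n(\sigma_t))=O(r_n^2+t^2)$ then gives $|\mathbb{E}V_n'(t)|\le C\{H_n(t)+|t|\}$.

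\emph{Fluctuations.} A per-replacement change of order $|t|/n$ only yields $\operatorname{Var}V_n(t)\lesssim n^{-1}(r_n^4+t^2)$ up to logarithms; the $t^2/n^2$ in your display is an arithmetic slip. That bound leaves a term of order $n^{-1/2}$ in the standard deviation of $\{V_n(h)-V_n(0)\}/h$, which is exactly the scale you must beat. The change must be quadratic in $t$. On every edge, $|t\{g(x)-g(y)\}|\le L_g|t|\,\|x-y\|$, and AM--GM absorbs this into the quadratic cost: a removed edge costs at least $-Ct^2$, and an added local edge at most $C(r_n^2+t^2)$. After a replacement, repair the matching along an augmenting path of $O(\log n)$ local edges, which exists on the high-probability event that every cell has a control surplus. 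This gives the bound $CH_n(t)/n$, which feeds both the variance bound and the $\mu_t$ estimate. The resulting standard-deviation bound $\Gamma_n/(\sqrt n h)+h\log n/\sqrt n$ dictates $h=(\Gamma_n/\sqrt n)^{1/2}$. Your choice $h=n^{-1/2}\Gamma_n^{1/2}$ makes the first term $\Gamma_n^{1/2}\gg n^{-1/2}$ for $d\in\{2,3\}$. Likewise, in Step~2 the condition $h\gg n^{-1/2}$ clashes with the $O(h)$ bias in $\mathbb{E}\{V_n(h)-V_n(0)\}/h$. Take instead $h_n=(\Gamma_n/\sqrt n+\epsilon_n)^{1/2}$ with $\Delta_n=O_P(\epsilon_n)$ and $n\epsilon_n\to0$. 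Then $\sqrt n h_n\to0$ and $\sqrt n\Delta_n/h_n=O_P(\sqrt{n\epsilon_n})$.

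Step~3 does not close. After splitting off $\sqrt n(P_{n_1}-P_1)g$, the remaining piece is $-\sqrt n B_{n,\hat\sigma_n}(g)$ again. After a net approximation, the first-order Taylor term of $B_{n,\hat\sigma_n}(g-f)$ is once more an imbalance over an infinite class. The natural direct bound, $\|\nabla(g-f)\|_\infty\,n_1^{-1}\sum_i\|T_i-C_{\hat\sigma_n(i)}\|\lesssim\epsilon_n^{1/2}n^{-1/d}$ up to logarithms, forces $\epsilon_n\ll n^{2/d-1}$. At $d=3$ that is a net of log-size about $n^{1/2}$, far beyond what fixed-$g$ second-moment bounds plus a union bound can handle.

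The missing ingredient is a bounded-differences property. If one row or column of the cost matrix changes, the selected control set of the unique optimal matching changes by at most one deletion and one addition; this follows from an alternating-path argument on the symmetric difference. Since $\hat\sigma_n$ is only approximate, the paper applies this to exact tilted optimizers along $g_u=f+u(g-f)$. Concretely, it uses the secant process $S_t(g)=\{V_{n,g}(t)-V_{n,g}(0)\}/t$ at deterministic $t=\pm h_n$, for which $S_t(g)-S_t(f)=\int_0^1B_{n,\sigma_u}(g-f)\,du$.
\begin{itemize}
\item Replacing one observation changes $S_t(g)-S_t(f)$ by at most $4\|g-f\|_\infty/n_1$.
\item McDiarmid then gives sub-Gaussian increments with an $n$-free constant.
\item Chaining (entropy $\epsilon^{-d/2}$, integrable for $d<4$), together with the fixed-$g$ moment bounds, yields $\sqrt n\sup_g|S_{\pm h_n}(g)|=o_P(1)$.
\end{itemize}
The sandwich $S_{h_n}(g)-\Delta_n/h_n\le B_{n,\hat\sigma_n}(g)\le S_{-h_n}(g)+\Delta_n/h_n$ holds for all $g$ simultaneously and finishes the proof.
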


The proof of Theorem~\ref{thm:uniform_balance} is given in Appendix~\ref{app:subsec_imbalance_fixed_theorem}-\ref{app:subsec_covering}. The road map is:
\begin{itemize}
    \item Appendix~\ref{app:subsec_imbalance_fixed_theorem} controls the imbalance for a fixed function $g$ for the exact optimizer when the objective involves a fixed matrix $A$. 
    \item Appendix~\ref{app:subsec_opt_hatA} extends this to inexact optimization and data-dependent matrix $\hat{A}$. 
    \item Appendix~\ref{app:subsec_covering} turns the per-$g$ bound into a uniform balance result using chaining and covering. 
\end{itemize}
Then, we complete the proof of Theorem~\ref{thm:observational} in Appendix~\ref{app:subsec_proof_obs} with random treated/control sample sizes. 

\subsection{Controlling the imbalance of exact optimizer for a fixed function}
\label{app:subsec_imbalance_fixed_theorem}

Here we control $B_{n,\sigma_n^\star}(g)$ for a fixed function $g\in \cF_L$ and a fixed symmetric positive-definite matrix $A$. 
Write 
\$
a_A = \lambda_{\min}(A)>0,\quad b_A=\lambda_{\max}(A)<\infty.
\$
We maintain the following fixed-ratio condition: 
for some fixed $a_{\mathrm s}>0$, $a_{\mathrm s}n\le n_1,n_0\le(1-a_{\mathrm s})n$.  

For clarity, in this part, we revise a few notations to make the dependence on $A$ and dataset $D=(T_1,\dots,T_{n_1},C_1,\dots,C_{n_0})$ explicit. 
We use  $Q_D(\sigma,A)$ to denote the objective $Q_n(\sigma;A)$ defined in~\eqref{eq:def_matching_obj}, and 
denote the minimizer as $\sigma_{D,A}^\star$. 
We use $B_D(\sigma,g)$ to denote the imbalance measure $B_{n,\sigma}(g)$ defined in~\eqref{eq:def_imbalance_g}. 
We also denote the minimizer-incurred imbalance as $B_{D,A}(g) = B_{n,\sigma_{D,A}^\star}(g)$. 
Analogously, we write $L_D(\sigma)=L_n(\sigma)$, where 
\@\label{eq:def_Ln}
L_n(\sigma) = \frac{1}{n_1}\sum_{i=1}^{n_1} \|T_i-C_{\sigma(i)}\|^2.
\@

\begin{theorem}[Smooth signed balance]\label{thm:balance}
Under Assumption~\ref{ass:cov}, for all sufficiently large $n$, we have 
\begin{equation}\label{eq:balance-moments}
 \big| \E B_{D,A}(g) \big| \le C\Gamma_n,
 \qquad
 \E B_{D,A}(g)^2\le C\{\Gamma_n^2+\Gamma_n/\sqrt n\}.
\end{equation}
Consequently,
\begin{equation}\label{eq:balance-rate}
 B_{D,A}(g)=O_P \big(  
 n^{-2/d}(\log n)^{1+2/d}
 +n^{-1/4-1/d}(\log n)^{1/2+1/d} \big).
\end{equation}
For $d\le3$, $\sqrt n B_{D,A}(g)\to0$ in $L^2$ and in probability as $n\to \infty$. 
\end{theorem}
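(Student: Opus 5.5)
The plan is to analyse the tilted optimal value
\[
V_D(t)=\min_{\sigma}\{Q_D(\sigma,A)+tB_D(\sigma,g)\},\qquad t\in\RR,
\]
which is a finite lower envelope of affine functions of $t$. By Lemma~\ref{lem:envelope}, $V_D$ is concave and piecewise affine, and the secant inequalities give
\[
\{V_D(h)-V_D(0)\}/h\le B_{D,A}(g)\le \{V_D(0)-V_D(-h)\}/h
\]
for every $h>0$. Hence it suffices to control $V_D(\pm h)-V_D(0)$ in expectation and in fluctuation, at a well-chosen scale $h=h_n$.

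\textbf{Step 1: the mean, via a deterministic expansion.} Write $V_D(t)=V_D(0)+t\,\bar B(t)$, where $\bar B(t)=t^{-1}\int_0^t B_D(\sigma^\star_{D,A,s},g)\,ds$ (integration by parts, using that $V_D$ is absolutely continuous). I would bound $\E V_D(t)$ through a continuum approximation. The tilted problem is a matching with cost $\frac12(x-y)^\top A(x-y)+t\{g(x)-g(y)\}$. Under the surplus condition~\eqref{eq:surplus}, the optimal partial transport from $p_1$ into $p_0$ with this cost leaves the mass balance unchanged to first order: $g(x)-g(y)$ is absorbed by shifting the transport map by $-tA^{-1}\nabla g$, which changes the value only by $O(t^2)$. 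The discrete optimum should therefore satisfy
\[
\E V_D(t)=\E V_D(0)+O(t^2)+O(\text{discretisation error}).
\]
For the discretisation error I would use the local-abundance of controls. A competitor assignment for $V_D(t)$ is built by composing $\sigma^\star_{D,A}$ with small local shifts of size $|t|\|A^{-1}\nabla g\|$, reassigning within cubes of side $\asymp (\log n/n)^{1/d}$ where Chernoff bounds (Lemma~\ref{lem:chernoff}) guarantee surplus controls with high probability. This should give an upper bound $\E[V_D(t)-V_D(0)]\le C(t^2+|t|\Gamma_n)$ in both directions of $t$. Plugging into the secant inequalities with $h=\Gamma_n$ yields $|\E B_{D,A}(g)|\le C\Gamma_n$, with $\Gamma_n\asymp n^{-2/d}(\log n)^{1+2/d}$. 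This matches the typical squared pair distance $(\log n/n)^{2/d}$ inflated by the cube-level log factors.

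\textbf{Step 2: the fluctuation, via Efron--Stein.} I would show that $V_D(t)$ is stable under replacing one observation. Replacing a treated $T_i$ changes the value by at most $C/n_1$, by rematching $i$ to a nearby free control; replacing a used control requires a short augmenting reassignment, again bounded by $C(\log n/n)^{2/d}/n+C|t|/n$ on a high-probability event (Lemmas~\ref{lem:stab_opt_value} and~\ref{lem:value-bounds}). Lemma~\ref{lem:ES} then gives $\Var V_D(t)\lesssim n^{-1}\cdot(\text{stability})^2\cdot n$. Combining this with the secant bounds, $B_{D,A}(g)-\E B_{D,A}(g)$ is controlled by $h^{-1}\{|V_D(\pm h)-\E V_D(\pm h)|+|V_D(0)-\E V_D(0)|\}+Ch+C\Gamma_n$. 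Optimising $h$ balances $h$ against $h^{-1}\sqrt{\Var}$, giving the cross term $\Gamma_n/\sqrt n$ in the second-moment bound~\eqref{eq:balance-moments}.

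\textbf{Step 3: conclusion.} The rate~\eqref{eq:balance-rate} follows from Chebyshev, since the square root of $\Gamma_n^2+\Gamma_n/\sqrt n$ gives exactly $n^{-2/d}(\log n)^{1+2/d}+n^{-1/4-1/d}(\log n)^{1/2+1/d}$. For $d\le3$ both exponents exceed $1/2$ ($2/d\ge 2/3$ and $1/4+1/d\ge7/12$), so $nB^2\to0$ in $L^2$. The main obstacle is Step 1: obtaining an upper bound on $V_D(t)-V_D(0)$ of order $t^2+|t|\Gamma_n$ without a first-order term in $t$. This requires the explicit local-shift competitor, including a careful treatment of the boundary of $\Omega$, where the shift must point inward (handled by the surplus $\delta$ and the $C^1$ extensions). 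I would first attempt it on a dyadic cube partition, matching within cubes and controlling the cube-count deviations.
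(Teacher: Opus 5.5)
\textbf{There is a genuine gap in Step 1, which you yourself identify as the crux.}

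First, the inequality goes the wrong way. A competitor assignment for $V_D(t)$ only gives \emph{upper} bounds on $V_D(\pm h)-V_D(0)$. The secant inequalities $\{V_D(h)-V_D(0)\}/h\le B_{D,A}(g)\le\{V_D(0)-V_D(-h)\}/h$ need \emph{lower} bounds on both $V_D(h)$ and $V_D(-h)$. Since $V_D$ is concave, it always lies below its tangent $V_D(0)+tB_{D,A}(g)$, so upper bounds on $V_D(\pm h)$ say nothing about the slope of that tangent.

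Second, and more fundamentally, the required scale $h\asymp\Gamma_n$ is far below the interpoint spacing $n^{-1/d}$. The same holds for $h\asymp(\Gamma_n/\sqrt n)^{1/2}$ in Step 2. So no reassignment of the fixed controls can realize a ``shift by $-tA^{-1}\nabla g$''. In fact, the optimizer is a.s.\ unique, so for $|t|$ below the random gap to the second-best injection, $V_D(t)=V_D(0)+tB_{D,A}(g)$ holds exactly. By bounded convergence, $\EE[V_D(h)-V_D(0)]/h\to\EE B_{D,A}(g)$ as $h\downarrow0$. Proving $\EE[V_D(h)-V_D(0)]=O(h^2+h\Gamma_n)$ at such $h$ is therefore essentially a restatement of the claim. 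A continuum partial-transport heuristic cannot supply a discretisation error that is linear in $t$ with coefficient $\Gamma_n$.

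\textbf{The missing idea is an exact calculus identity in the treated locations, not a competitor.}
\begin{itemize}
\item By the envelope argument, $\nabla_{T_i}V_D(t)=n_1^{-1}[A\{T_i-C_{\sigma_t(i)}\}+t\nabla g(T_i)]$ almost surely. Hence the first-order part $G_D(t)$ of $V_D'(t)=B_D(\sigma_t,g)$ satisfies $\EE G_D(t)+tJ_g=\sum_i\EE[\nabla_{T_i}V_D(t)^\top A^{-1}\nabla g(T_i)]$.
\item Condition on everything except $T_i$ and integrate by parts in its location against $a=p_1A^{-1}\nabla g$. Each summand becomes boundary and divergence integrals of $\mu_t(x)-\EE V_D(t)$.
\item One-point replacement stability gives $\sup_x|\mu_t(x)-\EE V_D(t)|\le CH_n(t)/n$. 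Multiplying by $n_1$ and adding the Taylor remainder, which is controlled by $\EE L_D(\sigma_t)\le C(r_n^2+t^2)$, yields $|\EE V_D'(t)|\le C\{H_n(t)+|t|\}$.
\end{itemize}
This gives the mean bound directly at $t=0$. It also handles $\partial\Omega$ through the same stability bound, with no inward-pointing shift. Integrated over $[0,h]$, it gives the two-sided bounds $|\EE R_h|,|\EE L_h|\le C(\Gamma_n+h)$ that your Step 2 also needs.

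\textbf{A secondary issue in Step 2.} The replacement increments must be $CH_n(t)/n$ with $H_n(t)=(r_n^2+t^2)\log n$. The $C|t|/n$ (or $C/n_1$) increments you describe only give $\mathrm{sd}(R_h)\lesssim\Gamma_n/(\sqrt n\,h)+n^{-1/2}$. That leaves an $n^{-1}$ term in the bound on $\EE B_{D,A}(g)^2$, which destroys the root-$n$ negligibility. The $t^2$ dependence comes from the Cauchy--Schwarz edge bounds \eqref{eq:A2:edge-lower}--\eqref{eq:A2:edge-upper}. The $\log n$ comes from repairing the matching along augmenting paths of length $O(\log n)$ in the local graph, not from rematching to a ``nearby free control'', which need not exist.
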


The remaining of this subsection is devoted to the proof of Theorem~\ref{thm:balance}. 
The key strategy is to define a tilted loss function that combines the matching objective and imbalance. We shall control the mean, variance, and expected derivative of the tilted loss, which leads to bounds on the imbalance for a fixed function $g$.

Fixing a constant $\cgrid$, we define $q_n = \lfloor \{n/(\cgrid\log n)\}^{1/d}\rfloor$, and partition the cube $\Omega=[0,1]^d$ into $q_n^d$ equal cells of side length $l_n = q_n^{-1}$. We use half-open intervals except on the outer faces of $\Omega$, so every point in $\Omega$ belongs to a unique cell. 
We then set the cell diameter 
\@\label{eq:def_rn_Gamma_n}
r_n = \sqrt{d}\cdot  l_n  \asymp \Big(\frac{\log n}{n}\Big)^{1/d},\quad 
\Gamma_n = r_n^2 \log n, \quad 
H_n(t) = (r_n^2+t^2)\log n  \asymp \Gamma_n + t^2\log n.
\@
Also, $n\ell_n^d$ lies between fixed multiples of $\cgrid \log n$. 
We shall specify suitable choices of $t$ in later stages of the proof. 
Throughout, we use $C$ to denote a constant that depends only on fixed regularity quantities and the $C^2$-norm of $g$, but not on $n_1,n_0, n$ and the specific choice of $g$. The value of $C$ may change from line to line.

We call a \uline{\textit{one-coordinate replacement}} of a dataset $D=(T_1,\dots,T_{n_1},C_1,\dots,C_{n_0}) \in \Omega^{n}$ as replacing one element by another vector in $\Omega$ while holding others and the memberships to the treated/control groups fixed.


For any $t\in \RR$, define the tilted loss 
\$
\ell_t(x,y) = \frac{1}{2}(x-y)^\top A (x-y) + t\{g(x)-g(y)\},
\$
and the tilted matching objective 
\@\label{eq:def_Vt}
V_D(t) = \min_{\sigma\in \cA_{n_1,n_0}} \frac{1}{n_1}\sum_{i=1}^{n_1} \ell_t(T_i,C_{\sigma(i)}) =
 \min_{\sigma\in \cA_{n_1,n_0}}  \Big\{ Q_D(\sigma,A) + t B_D(\sigma,g) \Big\}.
\@
Then $V_D(0)$ coincides with the matching objective. 
We suppress the dependence on $A,g$ in $\ell_t$ and $V_D(t)$ only. 
We write the minimizer of $V_D(t)$ as 
\@\label{eq:def_Vd_minimizer}
\sigma_t = \argmin_{\sigma\in \cA_{n_1,n_0}} \Big\{Q_D(\sigma,A) +t B_D(\sigma,g)\Big\}.
\@
When this minimizer is non-unique, we choose $\sigma_t$ as the lexicographically first minimizer. This optimizer is well-defined. 
The proof of the next lemma is in Appendix~\ref{app:subsec_lem_unique_tilt}. 

\begin{lemma} \label{lem:unique}
For every fixed $t\in\RR$, the minimizer in~\eqref{eq:def_Vt} is unique almost surely.
\end{lemma}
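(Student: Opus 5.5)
The plan is to reduce uniqueness to a statement about finitely many pairwise ties and to show that each tie has probability zero by looking at a single treated coordinate. Since $\mathcal A_{n_1,n_0}$ is finite, the minimizer in~\eqref{eq:def_Vt} fails to be unique only if the event
\[
E_{\sigma,\sigma'}=\Big\{\textstyle\sum_{i=1}^{n_1}\ell_t(T_i,C_{\sigma(i)})=\sum_{i=1}^{n_1}\ell_t(T_i,C_{\sigma'(i)})\Big\}
\]
occurs for some pair $\sigma\neq\sigma'$. By a finite union bound it therefore suffices to show $\PP(E_{\sigma,\sigma'})=0$ for each fixed pair of distinct injections.

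Fix $\sigma\ne\sigma'$ and choose an index $i_0$ with $\sigma(i_0)\neq\sigma'(i_0)$. The key observation is that $T_{i_0}$ appears in exactly one summand on each side, namely $\ell_t(T_{i_0},C_{\sigma(i_0)})$ and $\ell_t(T_{i_0},C_{\sigma'(i_0)})$, because $\sigma$ and $\sigma'$ are functions of the treated index. In the difference of these two summands, the quadratic term $\tfrac12 T_{i_0}^\top A T_{i_0}$ cancels, and so does the tilt term $t\,g(T_{i_0})$. Writing $\Delta(\cdot)$ for the difference of the two total objectives, we get that $x\mapsto\Delta$ with $T_{i_0}=x$ is affine:
\[
\Delta = -\big(C_{\sigma(i_0)}-C_{\sigma'(i_0)}\big)^\top A\, x + R,
\]
where $R$ depends only on the other coordinates of $D$. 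The nonlinearity of $g$ therefore plays no role, which is why a fixed $t$ of any size causes no trouble.

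Now condition on all coordinates of $D$ other than $T_{i_0}$; by independence, $T_{i_0}$ still has a Lebesgue density $p_1$. Two cases arise.
\begin{itemize}
\item On the event $C_{\sigma(i_0)}\neq C_{\sigma'(i_0)}$, the vector $A(C_{\sigma(i_0)}-C_{\sigma'(i_0)})$ is nonzero because $A$ is positive definite. The zero set of $\Delta$ in $x$ is then an affine hyperplane, which is Lebesgue-null and hence has conditional probability zero under $p_1$.
\item The complementary event $\{C_{\sigma(i_0)}=C_{\sigma'(i_0)}\}$ involves two distinct independent controls with density $p_0$, so it has probability zero.
\end{itemize}
Integrating the conditional probability over the remaining coordinates by Fubini gives $\PP(E_{\sigma,\sigma'})=0$, and the union bound then yields almost-sure uniqueness.

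I do not expect a genuine obstacle here. The one point requiring care is the choice of which coordinate to perturb. Perturbing a control $C_j$ instead can leave a non-affine function $\tfrac12 c^\top Ac - t\,g(c)$ whose level sets are harder to control when $A-t\nabla^2 g$ is degenerate. Perturbing the treated point $T_{i_0}$ avoids this because both the quadratic and the $g$ terms cancel exactly. The measurability of the events involved is immediate, since $\Delta$ is continuous in $D$.
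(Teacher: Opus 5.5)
Your proposal is correct and matches the paper's proof essentially step for step. Like the paper, you fix two distinct injections and a treated index $T_{i_0}$ where they differ, observe that the quadratic and tilt terms cancel so the objective difference is affine in $T_{i_0}$ with almost surely nonzero linear coefficient $A(C_{\sigma(i_0)}-C_{\sigma'(i_0)})$, and conclude via the null hyperplane and a finite union bound.
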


Let $L_g = \sup_{x\in \Omega}\|\nabla g(x)\|$ and write $r=\|x-y\|$. Then as $\Omega$ is convex we know $|g(x)-g(y)|\leq L_gr$. By the smallest eigenvalue of $A$, we know $\ell_t(x,y)\geq (a_A/2)r^2 - L_g |t| r$. The Cauchy-Schwarz inequality gives 
\@\label{eq:A2:edge-lower}
\ell_t(x,y) \geq \frac{a_A}{2}r^2 - \frac{a_A}{4}r^2 - \frac{L_g^2}{a_A}t^2 = \frac{a_A}{4} \|x-y\|^2 - \frac{L_g^2}{a_A}t^2. 
\@
Thus the cost contribution of an edge between any $(x,y)$ to $V_D(t)$ is at least $-Ct^2$ for some constant $C>0$. 
If $(x,y)$ is connected by a local edge, then $r\leq r_n$ in~\eqref{eq:def_rn_Gamma_n}, and the Cauchy-Schwarz inequality gives 
\@\label{eq:A2:edge-upper}
\ell_t(x,y) \leq \frac{b_A}{2}r_n^2 +L_g|t|r_n 
\leq \frac{b_A+1}{2} r_n^2 + \frac{L_g^2}{2}t^2 \leq C(r_n^2+t^2).
\@
These inequalities hold for every $t\in \RR$. 

The lemma below states that one-coordinate replacement in data does not change the tilted objective too much; see Appendix~\ref{app:subsec_proof_lem_opt_stab} for the proof. Its proof combines a graph perspective of matching, as well as the abundance of control samples in each cell $Q$ split as before. 

\begin{lemma}\label{lem:stab_opt_value}
    If $D\in \cE_n$ and $D'$ is a one-coordinate replacement of $D$ by any point in $\Omega$, then $|V_D(t)-V_{D'}(t)|\leq C H_n(t)/n$ uniformly for every $|t|\leq 1$. 
\end{lemma}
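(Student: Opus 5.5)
The plan is to bound the difference of optimal values directly, by comparing each optimizer against a feasible competitor built from the other. By symmetry of the roles of $D$ and $D'$ it suffices to show $V_{D'}(t)\le V_D(t)+CH_n(t)/n$. Here $\cE_n$ is the high-probability event on which every grid cell of side $l_n$ contains at least $c\,n l_n^d\asymp c\,\cgrid\log n$ treated and control points. By the surplus condition~\eqref{eq:surplus}, every cell, and every union of neighbouring cells, then contains more controls than treated points. Let $\sigma=\sigma_t$ be the optimal injection for $D$ at tilt $t$. I would construct an injection $\sigma'$ feasible for $D'$ whose tilted cost exceeds that of $\sigma$ by at most $CH_n(t)/n$. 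Two cases arise.

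\emph{Case 1: a treated point $T_i$ is replaced by $T_i'$.} Keep $\sigma$ on all $k\ne i$. Re-match $i$ to some control $C_j$ in the cell of $T_i'$ (or an adjacent cell) that is unused by $\sigma$. If none is free, I would use a short augmenting path in the matching graph (see the next paragraph). The old edge $\ell_t(T_i,C_{\sigma(i)})$ is removed. Its removal can only increase the cost by at most $Ct^2$, by the lower bound~\eqref{eq:A2:edge-lower}, and this contributes $Ct^2/n_1\le CH_n(t)/n$. The new local edge costs at most $C(r_n^2+t^2)$ by~\eqref{eq:A2:edge-upper}. Each edge enters $V$ with weight $1/n_1$, so the total change is $O((r_n^2+t^2)/n)\le CH_n(t)/n$.

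\emph{Case 2: a control $C_j$ is replaced by $C_j'$.} If $j$ is unmatched under $\sigma$, nothing changes. Otherwise the treated point $T_i=\sigma^{-1}(j)$ loses its partner. I would then find an alternating path inside the cell structure: match $T_i$ to a local control $C_{j_1}$; if $C_{j_1}$ is used by $T_{i_1}$, reassign $T_{i_1}$ to another local control, and so on until a free control is reached. Each step replaces one edge by a local edge of cost at most $C(r_n^2+t^2)$, while the removed edges cost at least $-Ct^2$. Hence the cost increase is at most (path length)$\times C(r_n^2+t^2)/n_1$.

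The main obstacle is showing that an augmenting path of length $O(\log n)$ always exists, which is exactly where the factor $\log n$ in $H_n(t)$ comes from. My approach is a Hall-type expansion argument on the cell adjacency graph. Consider the set $S_k$ of cells reachable within $k$ alternating steps. If none contains a free control, then every control in $S_k$ is matched to a treated point. By the $(1+\delta)$ surplus, the number of controls in $S_k$ is at least $(1+\delta)$ times the number of treated points in it, while each cell's count is concentrated on $\cE_n$ up to a factor $1\pm o(1)$. Optimality of $\sigma$ should force the matched partners of those controls to lie within $O(1)$ cells. If that fails, a long edge exists, and since the edge lower bound~\eqref{eq:A2:edge-lower} together with optimality rules out edges much longer than $r_n$ when $|t|\le1$, I would first prove a lemma that all edges of $\sigma_t$ have length $O(r_n)$ on $\cE_n$. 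Granting this, the treated mass touching $S_k$ grows geometrically in $k$ until a free control appears. Since the total number of cells is polynomial in $n$, a free control is reached within $k=O(\log n)$ steps.

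The path has length $O(\log n)$ and every edge on it is local (spans $O(1)$ cells, so has length $O(r_n)$). This gives $V_{D'}(t)-V_D(t)\le C(r_n^2+t^2)\log n/n=CH_n(t)/n$, uniformly in $|t|\le1$, because the constants in~\eqref{eq:A2:edge-lower}--\eqref{eq:A2:edge-upper} are uniform in $t$. Exchanging $D$ and $D'$ completes the argument. I expect the local-edge lemma for $\sigma_t$ and the expansion step to need the most care; the cost accounting is routine.
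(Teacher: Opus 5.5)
Your overall plan matches the paper's: repair the optimizer for one dataset into a feasible injection for the other along an alternating path of length $O(\log n)$. Each added local edge costs at most $C(r_n^2+t^2)$ via~\eqref{eq:A2:edge-upper}, and each removed edge costs at most $Ct^2$ via~\eqref{eq:A2:edge-lower}.

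The gap is in how you obtain the short path. You make the expansion step conditional on an auxiliary lemma that every edge of $\sigma_t$ has length $O(r_n)$ on $\cE_n$, and you also want the removed edges on the path to be local. That lemma is false in the range $|t|\le 1$ that the statement covers. Take $A=I_d$ and $g(x)=cx_1$. Then $\ell_t(x,y)=\tfrac12\|x+tce_1-y\|^2-\tfrac12 t^2c^2$, so $\sigma_t$ is just the quadratic matching of the shifted points $T_i+tce_1$ to the controls. Its edges typically have length of order $|t|c$, far above $r_n$ once $|t|\gg r_n$. This agrees with~\eqref{eq:A2:edge-lower}, which makes edges of length up to order $|t|$ cheap rather than ruling them out, and with $\EE L_D(\sigma_t)\le C(r_n^2+t^2)$ in Lemma~\ref{lem:value-bounds}. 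Even at $t=0$, a bound on the longest edge would itself have to come from the same remove-and-repair expansion argument (which yields $O(r_n\sqrt{\log n})$), so the detour is circular.

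The locality lemma is unnecessary. Run the expansion on sets of treated vertices rather than on cells, as in Lemma~\ref{lem:local_edge}. Let $\cN(S)$ be the controls lying in the cells of the vertices of $S$; the per-cell surplus $N_{Q,0}\ge(1+\zeta)N_{Q,1}$ gives $|\cN(S)|\ge(1+\zeta)|S|$. Start from the unmatched treated vertex. If $\cN(S_k)$ contains no free control, each of its controls has a distinct partner. Adding all these partners, wherever they lie and however long the matching edges reaching them, gives $|S_{k+1}|\ge(1+\zeta)|S_k|$, so a free control appears within $O(\log n_1)$ rounds. Tracing back yields a path whose added edges are local and whose removed edges are old matching edges of any length. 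Your own Case~1 observation already prices each removed edge at $Ct^2$.

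Nothing about $\sigma_t$ beyond being a partial matching is used. This matters for the reverse direction, where you start from an optimizer for $D'$. Relatedly, the two directions are not symmetric, since $D'$ need not lie in $\cE_n$. You need a count event with slack, such as $N_{Q,0}\ge(1+2\zeta)N_{Q,1}+4$ for every cell as in Lemma~\ref{lem:count_stable}, so that the surplus survives one replacement. Lower bounds on cell counts alone do not give more controls than treated points. At the $\cgrid\log n$ scale, Chernoff bounds give concentration only within a fixed factor $1\pm\epsilon$, not $1\pm o(1)$.
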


Such stability allows us to apply the Efron--Stein inequality to control the tilted loss. 
To this end, for $x\in\Omega$, let $D^{i\to x}$ be the dataset obtained by replacing only $T_i$ by $x$ in $D$. Define the deterministic function
\[
 \mu_t(x)=\EE V_{D^{1\to x}}(t)
 =\EE_{T_2,\ldots,T_{n_1},C_1,\ldots,C_{n_0}}
    V_{(x,T_2,\ldots,T_{n_1},C_1,\ldots,C_{n_0})}(t),
\]
where the expectation is taken over $(T_2,\dots,T_{n_1},C_1,\dots,C_{n_0})$. 
By independence among data we know $\EE V_D(t)=\int_\Omega\mu_t(x)p_1(x)\ud x$. 
The proof of the next lemma is in Appendix~\ref{app:subsec_lem_value_bounds}.

\begin{lemma}\label{lem:value-bounds}
Choose $\cgrid$ sufficiently large. Uniformly over deterministic $|t|\le1$,
\begin{equation}\label{eq:value-bounds}
 \Var \big( V_D(t)\big) \le\frac{CH_n(t)^2}{n},\qquad
 \sup_{x\in\Omega}|\mu_t(x)-\EE [V_D(t)]|\le\frac{CH_n(t)}n.
\end{equation} 
Furthermore, for every deterministic \(t\in[-1,1]\)  and every measurable optimizer of $V_D(t)$, i.e., any measurable map $s_t:\Omega^n\to \mathcal A_{n_1,n_0}$ satisfying 
$s_t(D)\in \argmin_{\sigma} 
\left\{Q_D(\sigma,A)+tB_D(\sigma,g)\right\}$ almost surely, we have 
$\EE[L_D(s_t(D))]\leq C(r_n^2+t^2)$. 
\end{lemma}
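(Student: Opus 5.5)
The plan is to prove the three claims of Lemma~\ref{lem:value-bounds} in sequence. The tools are the stability bound of Lemma~\ref{lem:stab_opt_value} on the good event $\cE_n$, a crude deterministic bound off $\cE_n$, and the edge inequalities \eqref{eq:A2:edge-lower}--\eqref{eq:A2:edge-upper}.

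I take $\cE_n$ to be the event that every grid cell $Q$ contains at most $C n l_n^d$ treated points and at least $(1+\delta/2)$ times as many control points as treated points. The surplus condition \eqref{eq:surplus}, together with $n l_n^d \gtrsim \cgrid\log n$, gives the required expected counts in each cell. Chernoff bounds (Lemma~\ref{lem:chernoff}) and a union bound over the $q_n^d\le n$ cells then give $\PP(\cE_n^c)\le n^{-k}$, where $k$ can be made as large as desired by choosing $\cgrid$ large. This is where "choose $\cgrid$ sufficiently large" enters. Since $\Omega$ is bounded and $|t|\le1$, every edge cost $\ell_t$ is bounded by a constant. Hence $|V_D(t)|\le C$ deterministically, and $|V_D(t)-V_{D'}(t)|\le 2C$ for any pair of datasets.

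\emph{Variance.} I apply Efron--Stein (Lemma~\ref{lem:ES}) to the $n$ independent coordinates of $D$, with $D^{(j)}$ the one-coordinate replacement. If either $D$ or $D^{(j)}$ lies in $\cE_n$, Lemma~\ref{lem:stab_opt_value} applies. The lemma only requires the base dataset to be good, and $D^{(j)}$ is itself a replacement of $D$ and vice versa, so the roles can be swapped. This gives a squared difference of at most $CH_n(t)^2/n^2$. If neither lies in $\cE_n$, I bound the squared difference by $4C^2$, and this case has probability at most $n^{-k}$. Summing over $j$ gives
\[
\Var V_D(t)\le C H_n(t)^2/n + C n^{1-k}.
\]
Because $H_n(t)\ge \Gamma_n\gtrsim n^{-2/d}$, the last term is absorbed once $k$ is large.

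\emph{Deviation of $\mu_t$.} For $x,x'\in\Omega$, the datasets $D^{1\to x}$ and $D^{1\to x'}$ differ in one coordinate. The same good/bad split gives
\[
|\mu_t(x)-\mu_t(x')|\le CH_n(t)/n + Cn^{-k}\le CH_n(t)/n.
\]
Since $\EE V_D(t)=\int\mu_t p_1$ is an average of $\mu_t$, the quantity $\sup_x|\mu_t(x)-\EE V_D(t)|$ is at most the oscillation of $\mu_t$, which proves the second bound in \eqref{eq:value-bounds}.

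\emph{Bound on $\EE L_D(s_t(D))$.} From \eqref{eq:A2:edge-lower}, every edge satisfies $\ell_t\ge (a_A/4)\|x-y\|^2-Ct^2$. Averaging over the matched pairs gives $(a_A/4)L_D(s_t)\le V_D(t)+Ct^2$. On $\cE_n$, I exhibit a feasible injection that matches the treated points of each cell to distinct controls in the same cell, which is possible by the surplus. Each such edge has length at most $r_n$, so by \eqref{eq:A2:edge-upper} it costs at most $C(r_n^2+t^2)$, and therefore $V_D(t)\le C(r_n^2+t^2)$. Off $\cE_n$, I use $L_D\le d$. Combining,
\[
\EE L_D(s_t)\le C(r_n^2+t^2)+d\,n^{-k}\le C(r_n^2+t^2).
\]

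The main obstacle is the variance step. The delicate points are choosing $\cE_n$ so that Lemma~\ref{lem:stab_opt_value} is applicable whenever at least one of $D,D^{(j)}$ is good, and making sure the bad-event contribution $n\cdot n^{-k}$ is small relative to $H_n(t)^2/n$ uniformly in $|t|\le1$. The latter is what dictates how large $\cgrid$ must be.
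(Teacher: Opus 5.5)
Your plan follows the paper's proof: Efron--Stein with Lemma~\ref{lem:stab_opt_value} on a high-probability good event and a crude deterministic bound off it; a one-point replacement comparison for $\mu_t$; and the cellwise injection together with \eqref{eq:A2:edge-lower}--\eqref{eq:A2:edge-upper} for $L_D$. The variance and $L_D$ steps are fine, but two details need repair.

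\textbf{The good event.} You should not define your own $\cE_n$. Lemma~\ref{lem:stab_opt_value} is proved only on the event of Lemma~\ref{lem:count_stable}, namely $\{N_{Q,0}\ge(1+2\zeta)N_{Q,1}+4 \text{ for every } Q\}$. Its additive slack is what guarantees $N'_{Q,0}\ge(1+\zeta)N'_{Q,1}$ in every cell after a single replacement, which is needed to apply Lemma~\ref{lem:local_edge} to the modified dataset. Your purely multiplicative condition $N_{Q,0}\ge(1+\delta/2)N_{Q,1}$ allows, for instance, a cell with $N_{Q,1}=0$ and $N_{Q,0}\le 1$. Moving a treated point into that cell destroys the surplus, so the stability lemma cannot be cited on your event. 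The fix is simply to use the event of Lemma~\ref{lem:count_stable}, which already satisfies $\PP(\cE_n^c)\le n^{-k}$ for any fixed $k$ once $\cgrid$ is large.

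\textbf{The oscillation bound for $\mu_t$.} The claim $|\mu_t(x)-\mu_t(x')|\le CH_n(t)/n+Cn^{-k}$ for all $x,x'$ does not follow from ``the same good/bad split''. The datasets $D^{1\to x}$ and $D^{1\to x'}$ have a deterministic first coordinate. The bound $\PP(D\notin\cE_n)\le n^{-k}$ controls only the $p_1$-average of $\PP(D^{1\to x}\notin\cE_n)$ over $x$, not its value at a given $x$.

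The paper avoids this with a coupling. Draw $\bar T_1\sim p_1$ independently and set $\bar D=(\bar T_1,T_2,\ldots,T_{n_1},C_1,\ldots,C_{n_0})$, which has the law of $D$. Then
\[
\mu_t(x)-\EE V_D(t)=\EE\{V_{\bar D_x}(t)-V_{\bar D}(t)\},
\]
where $\bar D_x$ replaces $\bar T_1$ by $x$. Imposing the good event on $\bar D$ gives the bound $CH_n(t)/n$ simultaneously for every $x$, with failure probability at most $n^{-k}$. In your notation, this amounts to requiring only $D^{1\to x'}\in\cE_n$ and integrating your comparison over $x'\sim p_1$, rather than taking a supremum over $x'$.
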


After controlling the variance and Euclidean distances for the tilted-optimal matching, we proceed to control the expectation of the slope of $V_D(t)$. Lemma~\ref{lem:slope} establishes this slope equals the imbalance for the tilted-optimal $\sigma_t$, which will be used in combination of the fact that $\sigma_0 = \sigma_{D,A}^\star$ to control the imbalance.

\begin{lemma}\label{lem:slope}
For every deterministic $|t|\le1$, $V_D'(t)$ exists almost surely, $V_D'(t)=B_D(\sigma_t,g)$ almost surely, and $|\EE V_D'(t)|\le C\{H_n(t)+|t|\}$, where $\sigma_t$ is defined in~\eqref{eq:def_Vd_minimizer}.
\end{lemma}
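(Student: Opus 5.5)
The plan has three parts: get the derivative formula from the envelope structure of $V_D$, bound $\EE V_D'(t)$ by integrating the slope over a short window, and handle the window via the bounds of Lemma~\ref{lem:value-bounds}.

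\textbf{Step 1 (derivative formula).} For each fixed dataset $D$, the function $V_D(t)$ is a minimum over the finite set $\cA_{n_1,n_0}$ of the affine functions $t\mapsto Q_D(\sigma,A)+tB_D(\sigma,g)$. Lemma~\ref{lem:envelope} therefore applies: $V_D$ is concave and piecewise affine, and it is differentiable except at finitely many kinks. Fix a deterministic $t$. If the tilted minimizer is unique at $t$ (true almost surely by Lemma~\ref{lem:unique}), then $t$ lies strictly inside an affine piece. A kink at $t$ would need two distinct affine minimizers there, which contradicts uniqueness. This gap can be closed either by uniqueness itself, or by noting that two injections can share the same $(Q,B)$ value only on a null set. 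By the last claim of Lemma~\ref{lem:envelope}, $V_D'(t)=B_D(\sigma_t,g)$ almost surely.

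\textbf{Step 2 (integrate the slope).} To bound $\EE V_D'(t)$, I would not work with the derivative directly. Concavity gives secant bounds: for $h>0$,
\[
\frac{V_D(t+h)-V_D(t)}{h}\le V_D'(t)\le\frac{V_D(t)-V_D(t-h)}{h}.
\]
Since $|V_D'|\le 2L$ everywhere, taking expectations is legitimate. This yields
\[
\frac{\EE V_D(t+h)-\EE V_D(t)}{h}\le\EE V_D'(t)\le\frac{\EE V_D(t)-\EE V_D(t-h)}{h},
\]
so it suffices to control the increments of the deterministic function $m(s)=\EE V_D(s)$.

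\textbf{Step 3 (the key symmetry).} I want to show that $m$ is nearly flat, that is, $|m'(s)|\lesssim H_n(s)+|s|$. I would split the slope using the integrand structure:
\[
B_D(\sigma_s,g)=\frac1{n_1}\sum_i g(T_i)-\frac1{n_1}\sum_i g(C_{\sigma_s(i)}).
\]
The expectation of the first part is $\EE_1 g$. For the control part, I would use a perturbation argument. Consider the density-tilted dataset in which the treated density $p_1$ is replaced by $p_1(1+\epsilon g)$, suitably normalized, or equivalently reweight through $\mu_s(x)$. Then
\[
\frac{d}{ds}\EE V_D(s)=\int\Big(\partial_s\mu_s(x)\Big)p_1(x)\,dx .
\]
Compare this with $\frac{d}{d\epsilon}$ of the value under the tilted treated law: moving one treated point from $x$ to $x'$ changes $\EE V$ by $\mu_s(x')-\mu_s(x)$. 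Lemma~\ref{lem:value-bounds} says this change is $O(H_n/n)$ uniformly, so the contribution of the treated-side $g$ nearly cancels against the sum of the per-point gradients of $g$. Concretely, I would approximate $\EE B_D(\sigma_s,g)$ by $\int\{g-\EE_1 g\}\,\partial\mu\ldots$, which vanishes to order $n\cdot H_n/n=H_n$ because $\mu_s$ is constant up to $CH_n/n$. The extra $|s|$ term comes from the quadratic edge lower bound~\eqref{eq:A2:edge-lower}: the tilt changes edge costs by order $s^2$, and its derivative is of order $|s|$. In addition, the matched distance satisfies $\EE L_D(\sigma_s)\le C(r_n^2+s^2)$ by Lemma~\ref{lem:value-bounds}. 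Combined with the Taylor expansion (Lemma~\ref{lem:Taylor}), this gives $|B_D(\sigma_s,g)-\text{first-order term}|\le CL_D(\sigma_s)$, which bounds the second-order remainder by $C(r_n^2+s^2)\le C(H_n(s)+|s|)$.

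\textbf{Main obstacle.} Step 3 is where I expect the difficulty. The task is to turn the near-constancy of $\mu_t(x)$ into a bound on the signed first-order term $\EE\frac1{n_1}\sum_i\nabla g(T_i)^\top(T_i-C_{\sigma_t(i)})$. I would try first to realize this term as a directional derivative of $\mu_t$ along the flow $x\mapsto x+\epsilon A^{-1}\nabla g(x)$. That flow shifts the treated density by a divergence term of order $\epsilon$. Integration by parts against $p_1$ then converts the derivative into $\int(\mu_t-\EE V_D)\,\nabla\!\cdot(p_1 A^{-1}\nabla g)$ plus boundary terms. Each piece is $O(n\cdot H_n/n)=O(H_n)$, using $\|\mu_t-\EE V_D\|_\infty\le CH_n/n$ and the $C^1$ regularity of $p_1$ and $g$.
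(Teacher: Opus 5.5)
Your Step~1 and the Taylor-remainder part of Step~3 agree with the paper. Uniqueness of the tilted optimizer at a deterministic $t$ (Lemma~\ref{lem:unique}) makes $V_D$ affine near $t$ with slope $B_D(\sigma_t,g)$. The bound $|B_D(\sigma_t,g)-G_D(t)|\le CL_D(\sigma_t)$, with $G_D(t)=n_1^{-1}\sum_i\nabla g(T_i)^\top\{T_i-C_{\sigma_t(i)}\}$ and $\EE L_D(\sigma_t)\le C(r_n^2+t^2)$, handles the second-order part.

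The rest does not yet prove the bound on $\EE V_D'(t)$. Step~2 is circular. Controlling the secants of $m(s)=\EE V_D(s)$ requires controlling $m'(s)=\EE V_D'(s)$ at nearby $s$, which is the statement itself. (In the paper the logic runs the other way: this pointwise bound is integrated in $s$ to control $\EE R_h$.) The density-tilt argument in Step~3 also fails. Reweighting the treated law by $1+\epsilon(g-\EE_1 g)$ changes $\EE V_D(s)$ at rate $n_1\int\{\mu_s-\EE V_D(s)\}p_1(g-\EE_1 g)=O(H_n(s))$. But nothing links this quantity to $\partial_s\EE V_D(s)=\EE B_D(\sigma_s,g)$, which evaluates $g$ at the selected controls. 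The claimed cancellation is not an identity.

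Your ``main obstacle'' paragraph is the paper's route. What is missing to make it a proof is the spatial envelope identity (the paper's Lemma~\ref{lem:diff_VDt}): almost surely,
$\nabla_{T_i}V_D(t)=n_1^{-1}\{A(T_i-C_{\sigma_t(i)})+t\nabla g(T_i)\}$.
This holds because the unique tilted optimizer stays optimal for $T_i$ in a small ball, and only the $i$th edge depends on $T_i$.

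Take the inner product with $v(T_i)=A^{-1}\nabla g(T_i)$ and sum over $i$. This gives $\sum_i\EE[\nabla_{T_i}V_D(t)^\top v(T_i)]=\EE G_D(t)+tJ_g$, where $J_g=\EE\{\nabla g(T_1)^\top A^{-1}\nabla g(T_1)\}$. This is precisely what realizes the signed first-order term as a directional derivative, and it is why the quadratic cost and the field $A^{-1}\nabla g$ are used.

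The remaining steps are then routine:
\begin{itemize}
\item Integrate by parts coordinatewise for each fixed $D_{-i}$; the map $x\mapsto V_{D^{i\to x}}(t)$ is Lipschitz and piecewise $C^1$ on coordinate lines.
\item Use $\EE V_{D^{i\to x}}(t)=\mu_t(x)$, by exchangeability.
\item Subtract the constant $\EE V_D(t)$ via $\int_{\partial\Omega}a^\top\nu\,dS=\int_\Omega\nabla\cdot a\,dx$ with $a=p_1v$.
\end{itemize}
Lemma~\ref{lem:value-bounds} then yields $|\EE G_D(t)+tJ_g|\le Cn_1\sup_x|\mu_t(x)-\EE V_D(t)|\le CH_n(t)$.

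Two smaller points. The $|t|$ in the final bound comes from the term $tJ_g$, not from the edge lower bound~\eqref{eq:A2:edge-lower} as you suggest. And $\nabla\cdot(p_1A^{-1}\nabla g)$ needs $g\in C^2$, not only $C^1$.
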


The proof of Lemma~\ref{lem:slope} uses   integration-by-parts to bound $\EE[V_D'(t)]$ by the summation of (i) several integrals that involve $\mu_t(x)-\EE[V_D(t)]$, and (ii) multiples of $\EE[L_D(\sigma_t)]$. Both can be controlled based on components in Lemma~\ref{lem:value-bounds}. The detailed proof is in Appendix~\ref{app:subsec_lem_slope}.


We now combine Lemmas~\ref{lem:value-bounds} and~\ref{lem:slope} to complete the proof of Theorem~\ref{thm:balance}.
For a deterministic $0<h\leq 1/\log n$, we define the left and right difference quotients
\@\label{eq:def_Rh_Lh}
R_h = \frac{V_D(h)-V_D(0)}{h},\quad L_h = \frac{V_D(0)-V_D(-h)}{h}.
\@
Lemma~\ref{lem:envelope} applied to $a_\sigma = Q_D(\sigma,A)$ and $b_\sigma = B_{D}(\sigma, g)$ for finitely many $\sigma$ (replacing the $j$-indices) gives 
\@\label{eq:Bd_sandwich}
R_h \leq B_{D}(\sigma_0,  g) \leq L_h,
\@
where $\sigma_0$ is the minimizer of $V_D(t)$ at $t=0$. By definition, $\sigma_0 = \sigma_{D,A}^\star$ is the global minimizer of the matching objective only without the tilt. By Lemma~\ref{lem:slope}, $V_D'(s)$ exists almost surely and equals the imbalance measure at $\sigma_s$. 
Taking $s=0$ then gives $B_ D (\sigma_{D,A}^\star, g) = V_D'(0)$ and thus by Lemma~\ref{lem:slope} we know 
\$
\big| \EE[B_ D (\sigma_{D,A}^\star, g)] \big| \leq C H_n(0) = C\Gamma_n,
\$
which proves the first inequality in~\eqref{eq:balance-moments} (using the updated notation here). 
In the following, we aim to bound $R_h$ and $L_h$ for fixed values of $h$ and then choose $h$ appropriately to prove the second inequality in~\eqref{eq:balance-moments}.

For each dataset $D$, by Lemma~\ref{lem:envelope}, the mapping $s\mapsto V_D(s)$ is continuous, picewise affine, and absolutely continuous on $[-h,h]$. By Lemma~\ref{lem:slope}, $V_D'(s)$ exists almost surely and equals the imbalance measure at $\sigma_s$, hence by definition $|V_D'(s)|\leq 2\|g\|_\infty$ is uniformly bounded. Applying  Fubini's theorem and Lemma~\ref{lem:slope},
\@\label{eq:bd_Rh_expect}
\big|\EE[R_h]\big|  = \bigg|\frac{1}{h}\int_0^h \EE[V_D'(s)] \ud s \bigg| 
\leq \frac{C}{h}\int_0^h \{H_n(s)+|s|\}\ud s = C(\Gamma_n + h^2 \log n/3 + h/2) 
\leq C(\Gamma_n + h)
\@
since $h \log n\leq 1$. 
Similarly the same arguments for $L_h$ gives 
\$
\big|\EE[L_h] \big|= \bigg|\frac{1}{h}\int_{-h}^0 \EE[V_D'(s)] \ud s\bigg| 
\leq C(\Gamma_n + h ). 
\$
Furthermore, writing $\|W\|_2= (\EE[W^2])^{1/2}$, we can bound the standard deviation of $R_h$ by 
\$
\text{sd}(R_h) &= \frac{1}{h}\big\|\{V_D(h)-\EE[V_D(h)] \} - \{V_D(0)-\EE[V_D(0)]\}\big\|_2 \\ 
&\leq \frac{\text{sd}(V_D(h)) + \text{sd}(V_D(0))}{h} \leq \frac{C\{H_n(h)+H_n(0)\}}{\sqrt{n} h} 
\leq C  \Big( \frac{\Gamma_n}{\sqrt{n}h} + \frac{h\log n}{\sqrt{n}}\Big).
\$
where the first inequality uses triangle inequality, the second uses Lemma~\ref{lem:value-bounds}, and the last uses the definition of $H_n(t)$. 
The same idea yields exactly the same bound for $\text{sd}(L_h)$. Since $\log n \leq \sqrt{n}$ for all sufficiently large $n$, $\text{sd}(R_h)\vee \text{sd}(L_h) \leq C\{\Gamma_n/(\sqrt{n}h) +h\}$. 
Combining the bounds on the expectation and variance, we know 
\@\label{eq:Rh_Lh_L2_bd}
\EE[R_h^2]+\EE[L_h^2]\leq C\Big\{ (\Gamma_n +  h)^2 + [\Gamma_n/(\sqrt{n}h) +h]^2 \Big\}
\leq C\Big\{ (\Gamma_n +  h)^2 + \frac{\Gamma_n^2}{nh^2}  \Big\}
\@
where the value of the absolute constant $C$ can change across equations, and we used the inequality $(a+b)^2\leq 2a^2+2b^2$, absorbing the $h^2$ term into $(\Gamma_n+h)^2$. Going back to~\eqref{eq:Bd_sandwich}, we know 
\$
\EE[B_D(\sigma_{D,A}^\star, g)^2] 
\leq \EE\big[ (|R_h|+|L_h|)^2 \big] 
\leq 2\EE[R_h^2]+2\EE[L_h^2] \leq C\Big\{ (\Gamma_n +  h)^2 + \frac{\Gamma_n^2}{nh^2}  \Big\}.
\$
We now choose the deterministic $h_n=(\Gamma_n/\sqrt{n})^{1/2}$, which gives $h_n^2 = \Gamma_n/\sqrt{n}$ and $\Gamma_n^2/(nh^2) = \Gamma_n/\sqrt{n}$, and
\$
\EE[B_D(\sigma_{D,A}^\star, g)^2] \leq C \big\{\Gamma_n^2 +  \Gamma_n / \sqrt{n} \big\},
\$
which proves the second inequality in~\eqref{eq:balance-moments}. 

Plugging in the definition $\Gamma_n = r_n^2 \log n = (\log n)^{1+2/d}/n^{2/d}$, and by Markov's inequality, we know 
\$
B_D(\sigma_{D,A}^\star, g) = O_P \big(\Gamma_n + (\Gamma_n/\sqrt{n})^{1/2}\big)
= O_P \big((\log n)^{1+2/d}/n^{2/d} + 
(\log n)^{1/2+1/d}/n^{1/d+1/4} \big),
\$
which is~\eqref{eq:balance-rate}. Finally, we note that 
\$
\EE\big[ (\sqrt{n}B_D(\sigma_{D,A}^\star, g))^2 \big] 
\leq C\{ n\Gamma_n^2 + \sqrt{n} \Gamma_n\}
= C \big\{ (\log n)^{2+4/d} n^{1-4/d} + (\log n)^{1+2/d} n^{1/2-2/d} \big\} \to 0
\$
when $d\leq 3$, which proves the last claim of Theorem~\ref{thm:balance}.

\subsection{Allowing optimization error and data-dependent matrix}
\label{app:subsec_opt_hatA}

We now extend Theorem~\ref{thm:balance} in two ways. First, we allow the chosen matching to be subject to optimization error of order $O(1/n)$. Second, we allow the matrix used in the metric to be data-dependent. 

Recall the definition of the matching objective $Q_D(\sigma,A) = Q_n(\sigma;A)$ defined in~\eqref{eq:def_matching_obj} where we made the dependence on the data $D$ and matrix $A$ explicit. The exact global minimizer is denoted as $\sigma_{D,A}^\star$. The imbalance measure is denoted as $B_D(\sigma,g) = B_{n,\sigma}(g)$ defined in~\eqref{eq:def_imbalance_g} with explicit notational dependence.  

Let $\hat\sigma\in \cA_{n_1,n_0}$ be any injection with the optimization gap 
\$
\Delta_D(\hat\sigma,A):= Q_{D}(\hat\sigma,A) - Q_D(\sigma_{D,A}^\star,A) \geq 0.
\$

\begin{theorem}\label{thm:obj_gap}
    Under Assumption~\ref{ass:cov}, for any deterministic $h_n \in (0,  1/\log n]$, it holds that 
    \@\label{eq:obj_gap}
    |B_D( \hat\sigma, g) |\leq Z_{n,h_n} + \Delta_n(\hat\sigma)/h_n,\quad Z_{n,h_n} =O_P\Big(\Gamma_n +h_n + \frac{\Gamma_n}{\sqrt{n} h_n} \Big).
    \@
    Here we define $Z_{n,h}=|R_h|+|L_h|$. In particular, if $\Delta_D(\hat\sigma,A) = O_P(\epsilon_n)$ for a deterministic sequence of $\epsilon_n \geq 0$ obeying $\epsilon_n = o((\log n)^{-2})$, then 
    \@\label{eq:obj_gap_2}
|B_D( \hat\sigma, g) | = O_P\Big( \Gamma_n + (\Gamma_n/\sqrt{n} + \epsilon_n)^{1/2} \Big),
    \@
    and $L_D(\hat\sigma) = O_P(r_n^2+\epsilon_n)$. 
    If $d\leq3$ and $\Delta_D(\hat\sigma,A)=o_P(1/n)$, then 
    $B_D( \hat\sigma, g) = o_P(n^{-1/2})$ and $L_D(\hat\sigma) = o_P(1)$.  
\end{theorem}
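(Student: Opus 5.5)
The plan is to reuse the tilted-envelope sandwich from the proof of Theorem~\ref{thm:balance}, replacing the exact optimizer by $\hat\sigma$ and paying for the optimization gap. Write $V_D(t)=\min_\sigma\{Q_D(\sigma,A)+tB_D(\sigma,g)\}$ as in~\eqref{eq:def_Vt}. For $h>0$,
\[
V_D(h)\le Q_D(\hat\sigma,A)+hB_D(\hat\sigma,g)=V_D(0)+\Delta_D(\hat\sigma,A)+hB_D(\hat\sigma,g).
\]
This gives $B_D(\hat\sigma,g)\ge R_h-\Delta_D/h$. The symmetric bound at $-h$ gives $B_D(\hat\sigma,g)\le L_h+\Delta_D/h$, with $R_h,L_h$ as in~\eqref{eq:def_Rh_Lh}. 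Hence $|B_D(\hat\sigma,g)|\le |R_h|+|L_h|+\Delta_D/h$, which is the first display~\eqref{eq:obj_gap} with $Z_{n,h}=|R_h|+|L_h|$. The stochastic order of $Z_{n,h_n}$ has already been computed: by~\eqref{eq:Rh_Lh_L2_bd}, $\EE Z_{n,h}^2\le C\{(\Gamma_n+h)^2+\Gamma_n^2/(nh^2)\}$ for every deterministic $h\le 1/\log n$. Markov's inequality then gives $Z_{n,h_n}=O_P(\Gamma_n+h_n+\Gamma_n/(\sqrt n h_n))$.

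For~\eqref{eq:obj_gap_2} I would optimize $h$. Take $h_n=(\Gamma_n/\sqrt n+\epsilon_n)^{1/2}$. This choice is admissible: $\Gamma_n/\sqrt n$ is polynomially small and $\epsilon_n=o((\log n)^{-2})$, so $h_n\le 1/\log n$ for large $n$. With this $h_n$, each of the three terms is at most $h_n$. Indeed $\Gamma_n/(\sqrt n h_n)\le (\Gamma_n/\sqrt n)^{1/2}\le h_n$, and $\Delta_D/h_n=O_P(\epsilon_n/h_n)=O_P(\epsilon_n^{1/2})=O_P(h_n)$. Combining the terms gives the stated rate. If $\epsilon_n=0$, one uses $h_n=(\Gamma_n/\sqrt n)^{1/2}$ directly.

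For the distance bound I would use positive-definiteness: $L_D(\hat\sigma)\le (2/a_A)Q_D(\hat\sigma,A)=(2/a_A)\{Q_D(\sigma^\star_{D,A},A)+\Delta_D\}$. By the last part of Lemma~\ref{lem:value-bounds} at $t=0$, $\EE L_D(\sigma^\star_{D,A})\le Cr_n^2$, so $Q_D(\sigma^\star,A)\le (b_A/2)L_D(\sigma^\star)=O_P(r_n^2)$. Hence $L_D(\hat\sigma)=O_P(r_n^2+\epsilon_n)$.

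For the final claim, $\Delta_D=o_P(1/n)$ does not come with a deterministic rate, so I would extract one. Since $n\Delta_D\to_P 0$, there is a deterministic $\epsilon_n=o(1/n)$ with $\Delta_D=O_P(\epsilon_n)$; for instance, take $\epsilon_n=\delta_n/n$ with $\delta_n\downarrow0$ slowly enough that $\PP(n\Delta_D>\delta_n)\to0$. Then $\sqrt n\,(\Gamma_n/\sqrt n+\epsilon_n)^{1/2}=(\sqrt n\Gamma_n+\delta_n)^{1/2}\to0$, because $\sqrt n\Gamma_n\to0$ for $d\le3$, and also $\sqrt n\Gamma_n\to0$ separately. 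So $B_D(\hat\sigma,g)=o_P(n^{-1/2})$, and $L_D(\hat\sigma)=O_P(r_n^2+\epsilon_n)=o_P(1)$.

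The argument is essentially bookkeeping. The step needing care is the choice of a deterministic $h_n$, because Lemma~\ref{lem:value-bounds} and Lemma~\ref{lem:slope} apply only for deterministic $t$. This is also why the gap must first be dominated by a deterministic sequence $\epsilon_n$, rather than choosing $h$ adaptively from the random value of $\Delta_D$.
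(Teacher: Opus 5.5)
Your proposal is correct and follows the paper's proof: the same envelope sandwich at $\pm h$ combined with \eqref{eq:Rh_Lh_L2_bd}, the same choice $h_n=(\Gamma_n/\sqrt n+\epsilon_n)^{1/2}$, and the same extraction of a deterministic $\epsilon_n=\delta_n/n$ for the $o_P(1/n)$ case. The only cosmetic difference is how you get $Q_D(\sigma^\star_{D,A},A)=O_P(r_n^2)$: you use the expectation bound in the last claim of Lemma~\ref{lem:value-bounds} plus Markov, while the paper argues directly on the local-count event $\cE_n$, which is the same argument that proves that lemma.
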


\begin{proof}[Proof of Theorem~\ref{thm:obj_gap}]
By definition of $V_D(t)$ and $\Delta_D(\hat\sigma,A)$, we have 
\$
&V_D(h) \leq Q_D(\hat\sigma,A) + h B_D( \hat\sigma, g) 
\leq V_D(0) + \Delta_{D}(\hat\sigma,A) +  h B_D( \hat\sigma, g),\\ 
&\text{and}\quad V_D(-h) \leq V_D(0) + \Delta_{D}(\hat\sigma,A) - h B_D( \hat\sigma, g).
\$
Then we have 
\$
R_h - \Delta_{D}(\hat\sigma,A)/h \leq B_D( \hat\sigma, g) \leq L_h + \Delta_{D}(\hat\sigma,A)/h,
\$
where we recall the definition of $R_h$ and $L_h$ in~\eqref{eq:def_Rh_Lh}. 
Thus~\eqref{eq:obj_gap} follows from the bounds on $R_h$ and $L_h$ we obtained in the proof of Theorem~\ref{thm:balance}, e.g., equation~\eqref{eq:Rh_Lh_L2_bd}. 

Now choose $h_n = \sqrt{\Gamma_n/\sqrt{n}+\epsilon_n}$ which is smaller than $1/\log n$ for all sufficiently large $n$, and $\Gamma_n/(\sqrt{n}h_n)$ and $\epsilon_n/h_n$ are both upper bounded by $h_n$. 
Furthermore, we consider the event $\cE_n$ on which there are sufficiently many controls in each cell defined in Lemma~\ref{lem:count_stable} in Appendix~\ref{app:subsec_local_cells}. On the event $\cE_n$, each treated vertex can be matched to a distinct control with a Euclidean distance below $r_n$, which gives $Q_{D}(\sigma_{D,A}^\star,A) \leq C r_n^2$. On $\cE_n^c$, we have $Q_{D}(\sigma_{D,A}^\star,A) \leq b_A d$, and Lemma~\ref{lem:count_stable} gives for every fixed $b>0$, one can find  a sufficiently large constant $\cgrid$ so $\PP(\cE_n^c)\leq n^{-b}$ when $n$ is large enough. Combining the bounds and choosing $b$ appropriately gives $Q_{D}(\sigma_{D,A}^\star,A) = O_P(r_n^2)$. The eigenvalue condition on $A$ yields 
\$
(a_A/2) L_D(\hat\sigma) \leq Q_D(\hat\sigma,A) \leq Q_{D}(\sigma_{D,A}^\star,A) + \Delta_D(\hat\sigma,A) = O_P(r_n^2 + \epsilon_n),
\$
so $L_D(\hat\sigma)=O_P(r_n^2+\epsilon_n)$. 

Finally, since $\Delta_D(\hat\sigma,A)=o_P(1/n)$, there exists a deterministic sequence $b_n\to 0$ such that $\PP(n\Delta_D(\hat\sigma,A)>b_n)\to 0$. To see this, for each integer $k$ one can choose $n_k$ so $\PP(n\Delta_D(\hat\sigma,A)>1/k)<1/k$ for all $n\geq n_k$, and then choose $b_n = 1/k$ between successive $n_k$. 
Now set $\epsilon_n=b_n/n$, then $\Delta_D(\hat\sigma,A) = O_P(\epsilon_n)$ and $n\epsilon_n = o(1)$. Putting this back to~\eqref{eq:obj_gap_2}, when $d\leq 3$, we know $\sqrt{n}\Gamma_n \to 0$ and $n(\Gamma_n/\sqrt{n}+\epsilon_n) = o(1)+n\epsilon_n = o(1)$. This leads to $B_D( \hat\sigma, g) = o_P(n^{-1/2})$. Finally, $L_D(\hat\sigma)=O_P(r_n^2+\epsilon_n) = o_P(1)$ by definition. 
\end{proof}

Next, we establish imbalance bounds when the matrix $A$ is replaced by some data-dependent $\hat{A}$ that converges. 
Let $A_0$ be a fixed positive definite matrix, and let $\hat{A}$ be any data-dependent, random matrix that is symmetric and positive definite on an event with probability tending to one. 
Let $\hat\sigma\in \cA_{n_1,n_0}$ be any injection, and define its optimization gap for the matching objective using $\hat{A}$ as
\$
\xi_n := Q_D(\hat\sigma,\hat{A}) - Q_D(\sigma_{D,\hat{A}}^\star, \hat{A}). 
\$

\begin{theorem}\label{thm:hat_A_imbalance}
    Suppose $\norm{\hat A-A_0}_{\mathrm{op}}=O_P(a_n)$ for a  deterministic sequence $a_n\to0$, and $\xi_n=O_P(\eta_n)$ for a deterministic sequence $\eta_n\to0$. Then the optimization gap under the oracle balancing objective satisfies 
\begin{equation}\label{eq:metric-gap}
 Q_{D}(\hat\sigma,A_0)-\min_\sigma Q_{D}(\sigma,A_0)
 =O_P(a_n r_n^2+\eta_n).
\end{equation}
Whenever $(a_n r_n^2+\eta_n)(\log n)^2\to0$, equation~\eqref{eq:obj_gap_2} applies with $\epsilon_n=a_n r_n^2+\eta_n$. 
Moreover, if $\norm{\hat A-A_0}_{\mathrm{op}}=O_P(n^{-1/2})$, then~\eqref{eq:balance-rate} holds under $\hat{A}$ for the corresponding exact minimizer, i.e., 
\begin{equation}\label{eq:balance-rate-hat-A}
 B_{D}(\hat\sigma, g)=O_P \big(  
 n^{-2/d}(\log n)^{1+2/d}
 +n^{-1/4-1/d}(\log n)^{1/2+1/d} \big),
\end{equation}
which obeys $\sqrt{n} B_{D}(\hat\sigma, g) =o_P(1)$ for $d\leq 3$. 
Finally, if $\xi_n = O_P(1/n)$, then $B_D(\hat\sigma, g)=o_P(n^{-1/2})$ when  $d\leq 3$.
\end{theorem}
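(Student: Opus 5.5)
The plan is to reduce everything to Theorem~\ref{thm:obj_gap} with the fixed matrix $A_0$. To do so, we show that $\hat\sigma$, which is near-optimal for $\hat A$, is also near-optimal for $A_0$ with a controlled gap. Write $E=\hat A-A_0$. For any injection $\sigma$,
\[
Q_D(\sigma,\hat A)-Q_D(\sigma,A_0)=\frac{1}{2n_1}\sum_{i}(T_i-C_{\sigma(i)})^\top E(T_i-C_{\sigma(i)}),
\]
so $|Q_D(\sigma,\hat A)-Q_D(\sigma,A_0)|\le \tfrac12\|E\|_{\mathrm{op}}L_D(\sigma)$. Let $\sigma_0^\star=\sigma_{D,A_0}^\star$. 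Then
\[
Q_D(\hat\sigma,A_0)\le Q_D(\hat\sigma,\hat A)+\tfrac12\|E\|_{\mathrm{op}}L_D(\hat\sigma)\le Q_D(\sigma_0^\star,\hat A)+\xi_n+\tfrac12\|E\|_{\mathrm{op}}L_D(\hat\sigma),
\]
where the second step uses $Q_D(\sigma_{D,\hat A}^\star,\hat A)\le Q_D(\sigma_0^\star,\hat A)$. Applying the perturbation bound once more at $\sigma_0^\star$ gives
\[
Q_D(\hat\sigma,A_0)-Q_D(\sigma_0^\star,A_0)\le \xi_n+\tfrac12\|E\|_{\mathrm{op}}\{L_D(\hat\sigma)+L_D(\sigma_0^\star)\}.
\]

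The main obstacle is bounding $L_D(\hat\sigma)$ at rate $r_n^2$ without circularity. On the event where $\hat A$ is positive definite with $\|E\|_{\mathrm{op}}\le a_{A_0}/2$, which has probability tending to one, we have $\lambda_{\min}(\hat A)\ge a_{A_0}/2$ and $\lambda_{\max}(\hat A)\le 2b_{A_0}$. Then
\[
(a_{A_0}/4)L_D(\hat\sigma)\le Q_D(\hat\sigma,\hat A)\le Q_D(\sigma_{D,\hat A}^\star,\hat A)+\xi_n .
\]
On the cell event $\cE_n$ of Lemma~\ref{lem:count_stable}, every treated point can be matched to a distinct control within distance $r_n$, uniformly over the matrix. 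This gives $Q_D(\sigma_{D,\hat A}^\star,\hat A)\le 2b_{A_0}r_n^2$, and $\PP(\cE_n^c)\to0$. Hence $L_D(\hat\sigma)=O_P(r_n^2+\eta_n)$. The same argument gives $L_D(\sigma_0^\star)=O_P(r_n^2)$, as in the proof of Theorem~\ref{thm:obj_gap}. Substituting into the gap bound yields $O_P(a_n(r_n^2+\eta_n)+\eta_n)=O_P(a_nr_n^2+\eta_n)$, since $a_n\to0$. This proves~\eqref{eq:metric-gap}.

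Next, if $\epsilon_n:=a_nr_n^2+\eta_n$ satisfies $\epsilon_n(\log n)^2\to0$, then Theorem~\ref{thm:obj_gap} applied with the fixed matrix $A_0$ and gap $O_P(\epsilon_n)$ gives~\eqref{eq:obj_gap_2}.

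For the rate claims, take $a_n=n^{-1/2}$ and $\hat\sigma$ the exact minimizer under $\hat A$, so $\xi_n=0$. Then $\epsilon_n=n^{-1/2}r_n^2=\Gamma_n/(\sqrt n\log n)\le\Gamma_n/\sqrt n$. Hence $\Gamma_n+(\Gamma_n/\sqrt n+\epsilon_n)^{1/2}$ is of the same order as in~\eqref{eq:balance-rate}, which gives~\eqref{eq:balance-rate-hat-A}. For $d\le3$, both $\sqrt n\Gamma_n$ and $n\cdot\Gamma_n/\sqrt n$ tend to zero, so $\sqrt n\,B_D(\hat\sigma,g)=o_P(1)$.

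For the final claim, note that the displayed hypothesis $\xi_n=O_P(1/n)$ is presumably meant as $o_P(1/n)$, matching Theorem~\ref{thm:observational}. With an $O(1/n)$ gap, Theorem~\ref{thm:obj_gap} only yields $O_P(n^{-1/2})$. I would therefore argue under $\xi_n=o_P(1/n)$. In that case the $A_0$-gap is $O_P(n^{-1/2}r_n^2)+o_P(1/n)=o_P(1/n)$, because $\sqrt n\,r_n^2\to0$ when $d\le3$. Theorem~\ref{thm:obj_gap}'s last statement then yields $B_D(\hat\sigma,g)=o_P(n^{-1/2})$.
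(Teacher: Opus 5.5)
Your proposal is correct and follows the paper's proof essentially step for step. Both use the perturbation bound $|Q_D(\sigma,\hat A)-Q_D(\sigma,A_0)|\le \tfrac12\|\hat A-A_0\|_{\mathrm{op}}L_D(\sigma)$ at $\hat\sigma$ and at the $A_0$-optimizer, control $L_D$ on $\mathcal{E}_n$ intersected with the event that $\hat A$ has eigenvalues in $[a_{A_0}/2,2b_{A_0}]$, and then invoke Theorem~\ref{thm:obj_gap} with $\epsilon_n=a_nr_n^2+\eta_n$ (respectively $n^{-1/2}r_n^2\le\Gamma_n/\sqrt n$). Your inequality chain is the correct form of the paper's three-term decomposition, which the paper writes with an equality sign, and your reading of the last hypothesis as $\xi_n=o_P(1/n)$ is exactly what the paper's proof uses.
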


\begin{proof}[Proof of Theorem~\ref{thm:hat_A_imbalance}]
Work on the event that $\hat A$ has eigenvalues between $a_{A_0}/2$ and $2b_{A_0}$ and that $\cE_n$ defined in Lemma~\ref{lem:count_stable} holds. Its probability tends to one. 
Since on $\cE_n$ one can match all  treated  units to distinct control units in the same cell, the oracle optimizer $\sigma_{D,A_0}^\star$ has $L_D(\sigma_{D,A_0}^\star)\le Cr_n^2$, while $L_D(\hat\sigma)\le C(r_n^2+\xi_n)$ applying the eigenvalue condition on $\hat{A}$ on this event. 

For any injection $\sigma$, we know $|Q_{D}(\sigma,A_0)-Q_{D}(\sigma,\hat{A})|\le\frac12\norm{\hat A-A_0}_{\mathrm{op}}\cdot L_D(\sigma)$. Thus 
\$
0 &\leq Q_D(\hat\sigma,A_0) - \min_{\sigma} Q_D(\sigma, A_0) \\
&= \big\{ Q_D(\hat\sigma,A_0) - Q_D(\hat\sigma,\hat{A}) \big\}
+ \big\{ Q_D(\hat\sigma,\hat{A}) - Q_D(\sigma_{D,\hat{A}}^\star, \hat{A}) \big\}
+  \big\{ Q_D(\sigma_{D,A_0}^\star, \hat{A}) - Q_D(\sigma_{D,A_0}^\star,A_0) \big\}.
\$
Applying the above bounds for the first and third terms and the optimization gap bound for the second yield
\$
0 \leq Q_D(\hat\sigma,A_0) - \min_{\sigma} Q_D(\sigma, A_0) &\leq \frac12\norm{\hat A-A_0}_{\mathrm{op}}\cdot  \big\{ L_D(\hat\sigma) + L_D(\sigma_{D,A_0}^\star)\big\} + \xi_n \\ 
&\leq C \norm{\hat A-A_0}_{\mathrm{op}}\cdot r_n^2 + (1+ C\norm{\hat A-A_0}_{\mathrm{op}}) \xi_n
\$
which proves \eqref{eq:metric-gap}. If $a_n=n^{-1/2}$ and $\eta_n=0$, 
then optimization gap becomes
\$
0 \leq Q_D(\hat\sigma,A_0) - \min_{\sigma} Q_D(\sigma, A_0)  \leq O_P(r_n^2/\sqrt{n}) ,
\$ 
where 
$r_n^2/\sqrt n = o(\Gamma_n/\sqrt{n})$, so plugging into~\eqref{eq:obj_gap_2} with $\epsilon_n = \Gamma_n/\sqrt{n} = o((\log n)^{-2})$ yields 
\$
|B_{D}(\hat\sigma, g)| = O_P(\Gamma_n + (\Gamma_n/\sqrt{n} +\epsilon_n)^{1/2})
= O_P(\Gamma_n + (\Gamma_n/\sqrt{n})^{1/2}),
\$
and therefore~\eqref{eq:balance-rate-hat-A}  and the root-$n$ bound follow from the definition of $\Gamma_n$. 
Finally, if $\xi_n=o_P(1/n)$, then the optimization gap becomes 
\$
0 \leq Q_D(\hat\sigma,A_0) - \min_{\sigma} Q_D(\sigma, A_0)  \leq O_P(r_n^2/\sqrt{n}) + o_P(1/n) = o_P(1/n),
\$ 
since for $d\leq 3$  we know $r_n^2/\sqrt{n} = (\log n)^{2/d} n^{-1/2-2/d} = o_P(1/n)$. Thus by the last claim in Theorem~\ref{thm:obj_gap} we know 
$B_{D}(\hat\sigma,g)= o_P(1/\sqrt{n})$. This concludes the proof. 
\end{proof}

By Theorems~\ref{thm:obj_gap} and~\ref{thm:hat_A_imbalance}, we obtain the $o_P(1/\sqrt{n})$ rate for the imbalance even with (i) an approximate solution with $o_P(1/n)$ optimization gap, and (ii) a data-dependent matrix which converges in $O_P(1/\sqrt{n})$ rate to a fixed matrix. In practice, this corresponds to using  inverse sample covariance matrices such as the Mahalanobis distance.

\subsection{Uniform balance by covering}
\label{app:subsec_covering}

In this section, we expand the imbalance bound for a fixed function $g$ to the function class $\cF_L$ using the standard chaining trick. 
We inherit the definitions of $r_n$, $\Gamma_n$, and $H_n(t)$ in the last subsection. 

To clearly state the balance regarding any function $g$, we update the notations so throughout this subsection, the tilted objective and the difference quotients are denote by 
\$
V_{D,g}(t) = \min_{\sigma\in \cA_{n_1,n_0}} \Big\{ Q_D(\sigma, A_0) + tB_D(\sigma,g) \Big\}, \quad 
S_{D,t}(g) = \frac{V_{D,g}(t)-V_{D,g}(0)}{t},\quad t\neq 0.
\$
We denote the minimizer that achieves $V_{D,g}(t)$ by $\sigma_{D,g}(t)$. 
Since the expectation and variance bounds in Appendix~\ref{app:subsec_imbalance_fixed_theorem} are with respect to fixed quantities, those results especially~\eqref{eq:bd_Rh_expect} and~\eqref{eq:Rh_Lh_L2_bd} imply that, for all sufficiently  large $n$ and all deterministic $|t|\in (0, 1/\log n]$, 
\@\label{eq:bd_SDt}
\sup_{g\in \cF_L} \big|\EE[S_{D,t}(g)] \big| \leq C(\Gamma_n+t), \qquad 
\sup_{g\in \cF_L} \EE\big[ S_{D,t}(g)^2\big] 
\leq C \Big\{ (\Gamma_n + |t|)^2 + \frac{\Gamma_n^2}{nt^2}\Big\}.
\@
These constants $C$ are universal over $g\in \cF_L$, since the earlier bounds use $g$
only through $\|g\|_\infty$, $\sup_\Omega\|\nabla g\|$, and
$\sup_\Omega\|\nabla^2g\|_{\mathrm{op}}$, which are at most $L$,
$\sqrt d\,L$, and $dL$, respectively. 
The event $\cE_n$ defined in Lemma~\ref{lem:count_stable} only uses the distributions of the covariates and does not involve $g$. 

We note a continuity result whose proof is in Appendix~\ref{app:subsec_lem_cont_SDt}. 
\begin{lemma} \label{lem:cont_SDt}
Following the definitions above, one has 
    \@
|S_{D,t}(g) - S_{D,t}(f) |\leq 2\|g-f\|_{\infty},\quad g,f\in \cF_L, ~t\neq 0.
\@
\end{lemma}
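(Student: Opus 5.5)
The plan is to reduce the claim to a pointwise comparison of the tilted matching objectives $V_{D,g}(t)$ and $V_{D,f}(t)$ for the \emph{same} injection, and then to pass to the minimum over injections. Since $S_{D,t}$ is a difference quotient of minima, it suffices to show that both $V_{D,g}(t)$ and $V_{D,g}(0)$ move by a controlled amount when $g$ is replaced by $f$. The point $t=0$ causes no trouble: $V_{D,g}(0)=V_{D,f}(0)=\min_\sigma Q_D(\sigma,A_0)$ does not depend on the function at all.

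\emph{Step 1 (fixed injection).} For any injection $\sigma\in\cA_{n_1,n_0}$, the imbalance is linear in the function, so
\[
|B_D(\sigma,g)-B_D(\sigma,f)| = \Big|\frac1{n_1}\sum_{i=1}^{n_1}\big[(g-f)(T_i)-(g-f)(C_{\sigma(i)})\big]\Big|\le 2\|g-f\|_\infty .
\]
It follows that
\[
Q_D(\sigma,A_0)+tB_D(\sigma,g)\le Q_D(\sigma,A_0)+tB_D(\sigma,f)+2|t|\,\|g-f\|_\infty,
\]
and the same inequality holds with the roles of $g$ and $f$ exchanged.

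\emph{Step 2 (pass to minima).} Taking the minimum over the finitely many $\sigma$ on both sides gives
\[
|V_{D,g}(t)-V_{D,f}(t)|\le 2|t|\,\|g-f\|_\infty .
\]
This is the standard fact that a minimum of finitely many functions is $1$-Lipschitz with respect to uniform perturbations of those functions.

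\emph{Step 3 (conclude).} Subtract the identical values $V_{D,g}(0)=V_{D,f}(0)$ and divide by $|t|>0$. This yields
\[
|S_{D,t}(g)-S_{D,t}(f)|=\frac{|V_{D,g}(t)-V_{D,f}(t)|}{|t|}\le 2\|g-f\|_\infty .
\]

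No step here is a real obstacle. The only point needing care is in Step 3: the $t=0$ terms must cancel exactly rather than contribute an extra error. They do, because the tilt vanishes at $t=0$, so the bound carries no $1/|t|$ blow-up. The norm $\|\cdot\|_\infty$ is taken over $\Omega$, which contains all the $T_i$ and $C_j$, so the bound in Step 1 is legitimate.
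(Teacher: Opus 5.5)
Your proof is correct and is essentially the paper's argument: the paper uses $V_{D,g}(0)=V_{D,f}(0)$ and then evaluates the $g$-tilted objective at the $f$-optimizer $\sigma_{D,f}(t)$ (and vice versa), which is your ``minimum of finitely many functions is $1$-Lipschitz under uniform perturbation'' step written out explicitly. Your version, which bounds $|V_{D,g}(t)-V_{D,f}(t)|\le 2|t|\,\|g-f\|_\infty$ before dividing, has a small advantage: it handles $t<0$ cleanly, whereas the paper divides inequalities by $t$ without tracking its sign.
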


We write the entire dataset as $D=(W_1,\dots,W_n)=(T_1,\dots,T_{n_1},C_1,\dots,C_{n_0})$. 
For each $k\in \{1,\dots,n\}$, let $D^{(k)} = (W_1,\dots,W_{k-1},\tilde{W}_k,W_{k+1},\dots,W_n)$ be the data obtained by replacing the $k$-th data vector by an independent copy from the corresponding treated/control group, with other data vectors unchanged. 

Lemma~\ref{lem:unif:replacement} shows the quotient is stable with respect to the one-coordinate replacement. Its proof is in Appendix~\ref{app:unif:replacement-proof}, which uses similar ideas and the graphical results used in the proof of Lemma~\ref{lem:stab_opt_value}.

\begin{lemma}
\label{lem:unif:replacement}
For every fixed $g,f\in\mathcal F_L$, deterministic $t\ne0$, and
$k\in\{1,\ldots,n\}$,
\begin{equation}\label{eq:unif:replacement}
 \left|\{S_{D,t}(g)-S_{D,t}(f)\}
       -\{S_{D^{(k)},t}(g)-S_{D^{(k)},t}(f)\}\right|
 \le\frac4{n_1}\|g-f\|_\infty
 \quad\text{almost surely}.
\end{equation}
\end{lemma}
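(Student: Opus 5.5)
The first step is to reduce the statement to a statement about optimal values. Since $V_{D,g}(0)=V_{D,f}(0)=\min_\sigma Q_D(\sigma,A_0)$ does not depend on the tilting function,
\[
S_{D,t}(g)-S_{D,t}(f)=\frac{V_{D,g}(t)-V_{D,f}(t)}{t}.
\]
It therefore suffices to show
\[
\bigl|\{V_{D,g}(t)-V_{D,f}(t)\}-\{V_{D^{(k)},g}(t)-V_{D^{(k)},f}(t)\}\bigr|\le \frac{4|t|}{n_1}\|g-f\|_\infty .
\]
Set $h=g-f$ and interpolate with $\phi_D(s)=V_{D,f+sh}(t)$ for $s\in[0,1]$. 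Because $B_D(\sigma,f+sh)=B_D(\sigma,f)+sB_D(\sigma,h)$, the function $\phi_D$ is a finite lower envelope of affine functions of $s$. Lemma~\ref{lem:envelope} then says that $\phi_D$ is absolutely continuous and piecewise affine. Moreover, at every differentiability point $s$, which is all but finitely many $s$, we have $\phi_D'(s)=tB_D(\sigma,h)$ for \emph{every} minimizer $\sigma$ of the $s$-problem. Consequently
\[
\phi_D(1)-\phi_D(0)-\{\phi_{D^{(k)}}(1)-\phi_{D^{(k)}}(0)\}=t\int_0^1\bigl\{B_D(\sigma_s,h)-B_{D^{(k)}}(\sigma'_s,h)\bigr\}\,ds,
\]
and we are free to choose any minimizers $\sigma_s$ for $D$ and $\sigma'_s$ for $D^{(k)}$. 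The goal becomes the pointwise bound $|B_D(\sigma_s,h)-B_{D^{(k)}}(\sigma'_s,h)|\le 4\|h\|_\infty/n_1$ for a suitable choice of minimizers.

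Next, decompose the imbalance as
\[
B(\sigma,h)=\frac1{n_1}\sum_i h(T_i)-\frac1{n_1}\sum_{j\in \mathrm{im}\,\sigma}h(C_j).
\]
The treated sum does not depend on $\sigma$. Under a one-coordinate replacement it changes by at most $2\|h\|_\infty/n_1$, and only if $k$ is a treated index. The control sum depends only on the image set $\mathrm{im}\,\sigma$ and on the values $C_j$. The key combinatorial claim is the following. One can choose $\sigma'_s$ among the minimizers for $D^{(k)}$ so that the images $\mathrm{im}\,\sigma_s$ and $\mathrm{im}\,\sigma'_s$ differ, apart from possibly the replaced index $k$, by at most one exchange of a single control, that is, by symmetric difference of size at most two. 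Granting this, the case analysis is short.
\begin{itemize}
\item If a treated unit is replaced, the treated sum changes by at most $2\|h\|_\infty/n_1$ and the control sum by at most $2\|h\|_\infty/n_1$, for a total of $4\|h\|_\infty/n_1$.
\item If a control is replaced, there is no treated contribution. The changed value at index $k$ contributes at most $2\|h\|_\infty/n_1$ and the single exchange contributes at most another $2\|h\|_\infty/n_1$, which again totals $4\|h\|_\infty/n_1$.
\end{itemize}
Integrating over $s$ gives the lemma.

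The main obstacle is the combinatorial claim, and I would prove it through the graph perspective already used for Lemma~\ref{lem:stab_opt_value}. View $\sigma_s$ and $\sigma'_s$ as matchings in the bipartite graph on treated and control vertices. Every edge not incident to the replaced vertex $k$ has the same tilted cost $\ell$ in $D$ and in $D^{(k)}$. Their symmetric difference decomposes into alternating cycles and paths. Consider any component $P$ that avoids vertex $k$. Swapping $P$ is a legal operation in both datasets and changes the costs by $+\Delta_P$ in one and $-\Delta_P$ in the other. Optimality of both matchings forces $\Delta_P=0$, so $\sigma'_s\triangle P$ is still a minimizer for $D^{(k)}$.

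Choose $\sigma'_s$ to be a minimizer for $D^{(k)}$ that minimizes $|\sigma_s\triangle\sigma'_s|$; there are finitely many candidates. With this choice, every component avoiding $k$ must be empty. Only the single component through $k$ survives, and it is one alternating path, since $k$ has degree at most two in the symmetric difference. Along such a path, the interior controls are matched in both solutions, so the image sets differ only at the path's endpoints. This gives at most one control entering and one leaving, besides $k$ itself.

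The delicate points I expect to check carefully are three: the endpoint bookkeeping when the path ends at a treated vertex versus a control vertex; the fact that all treated vertices are saturated in both matchings; and the fact that this nearest-minimizer choice is compatible with Lemma~\ref{lem:envelope}'s statement that the derivative is the same for every minimizer. No uniqueness and no probability are needed, so the bound holds for every realization and in particular almost surely.
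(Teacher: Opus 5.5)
Your proof is correct. It shares the paper's skeleton: the reduction $S_{D,t}(g)-S_{D,t}(f)=\{V_{D,g}(t)-V_{D,f}(t)\}/t$, the interpolation $f+s(g-f)$ handled by Lemma~\ref{lem:envelope}, and the treated/control case split. You handle the key combinatorial step differently, though.

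\textbf{The paper's route.} The paper integrates the derivative along the specific (lexicographically first) optimizers. It invokes Lemma~\ref{lem:unique} for each fixed $u$, together with Tonelli's theorem and joint measurability in $(D,u)$, to get that both optimizers are unique for almost every $u$ on a probability-one event. It then applies Lemma~\ref{lem:control_set_stability}, whose proof needs uniqueness to eliminate every component of the symmetric difference that avoids the modified vertex.

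\textbf{Your route.} You use the last clause of Lemma~\ref{lem:envelope}: at a differentiability point, every minimizer gives the same slope. This lets you choose, for each $s$, a minimizer $\sigma'_s$ for $D^{(k)}$ closest to $\sigma_s$. The same zero-cost switching argument then empties all components avoiding $k$ without any uniqueness.

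\textbf{What each buys.} Your version gives a deterministic statement: the bound holds for every realization, needs no density assumption, and needs no measure-theoretic bookkeeping in $u$. In effect you prove a tie-robust version of Lemma~\ref{lem:control_set_stability}. The paper's route is shorter only because it reuses that lemma, which it also needs for Proposition~\ref{sharp:prop:variance}.

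\textbf{One small imprecision.} The surviving component through $k$ need not be a path, since degree at most two does not rule out a cycle. In the cycle case every control on it is selected in both matchings, so the image sets coincide and your bound only improves.

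\textbf{Your flagged endpoint check.} This resolves as you expect. Every treated vertex is saturated in both matchings, so it has degree $0$ or $2$ in the symmetric difference. Hence every path has two control endpoints, one selected only in each matching.
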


We now define the centered process
\@\label{eq:unif:centered-process}
 \mathbb Z_{n,t}(g)=\sqrt n\{S_{D,t}(g)-\mathbb E S_{D,t}(g)\}.
\@
For fixed $g,f\in \cF_L$, we apply Lemma~\ref{lem:unif:bd} to the bounded random variable $\sqrt{n}\{S_{D,t}(g)-S_{D,t}(f)\}$. 
Then by Lemma~\ref{lem:unif:replacement} we can take $c_k = 4\sqrt{n}\|g-f\|_{\infty}/n_1$, and hence $\sum_{k=1}^n c_k^2 = 16n^2\|g-f\|_\infty^2 /n_1^2$ which is upper bounded by a constant $16\|g-f\|_\infty^2/a_s^2$ based on our sample size assumption. Thus, for $u>0$ and $\|g-f\|_\infty>0$, we have 
\@\label{eq:bd_subgaussian}
\PP\big( | \mathbb Z_{n,t}(g) - \mathbb Z_{n,t}(f) |>u \big) \leq 2\exp\Big\{-\frac{u^2}{K\|g-f\|_\infty^2}\Big\},\quad K = 8/a_s^2. 
\@
Moreover, Lemma~\ref{lem:cont_SDt} gives $| \mathbb Z_{n,t}(g) - \mathbb Z_{n,t}(f) | \leq 4\sqrt{n} \|g-f\|_\infty$ for each dataset. 

The next lemma provides the uniform convergence leveraging the standard covering and chaining techniques; see Lemmas~\ref{lem:unif:entropy} and~\ref{lem:unif:chain}. We isolate this result to make the ideas clear. 

\begin{lemma} 
\label{lem:unif:quotients}
Suppose $d\le3$. Let $h_n\in(0,1/\log n]$ be a deterministic sequence with  $nh_n^2+\frac{\Gamma_n^2}{h_n^2}\to 0$. 
Then
\begin{equation}\label{eq:unif:secants-conclusion}
 \sqrt n\sup_{g\in\mathcal F_L}|S_{D,h_n}(g)|
 +\sqrt n\sup_{g\in\mathcal F_L}|S_{D,-h_n}(g)|
 \to 0\quad\text{in probability}.
\end{equation}
\end{lemma}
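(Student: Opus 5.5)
We write $t=\pm h_n$ and treat both signs the same way. The plan is to split $\sqrt n\,S_{D,t}(g)$ into a deterministic part and a centered random process:
\[
\sqrt n\, S_{D,t}(g)=\mathbb Z_{n,t}(g)+\sqrt n\,\EE S_{D,t}(g).
\]
We then control the sup over $\mathcal F_L$ of each piece separately.

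\textbf{The mean term.} By the uniform bound \eqref{eq:bd_SDt}, valid for deterministic $|t|\le 1/\log n$,
\[
\sup_{g\in\mathcal F_L}\sqrt n\,|\EE S_{D,t}(g)|\le C\sqrt n(\Gamma_n+h_n).
\]
For $d\le3$ we have $\sqrt n\,\Gamma_n=(\log n)^{1+2/d}n^{1/2-2/d}\to0$. The hypothesis $nh_n^2\to0$ gives $\sqrt n\,h_n\to0$. So this deterministic term vanishes uniformly in $g$.

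\textbf{Pointwise convergence of $\mathbb Z_{n,t}$.} We will invoke Lemma~\ref{lem:unif:chain}, which requires three ingredients.

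First, we need pointwise convergence $\mathbb Z_{n,t}(g)=o_P(1)$ for each fixed $g$. This follows from the variance part of \eqref{eq:bd_SDt}:
\[
\EE\,\mathbb Z_{n,t}(g)^2\le n\,\EE S_{D,t}(g)^2\le C\Big\{n(\Gamma_n+h_n)^2+\frac{\Gamma_n^2}{h_n^2}\Big\}.
\]
This tends to $0$ because $n\Gamma_n^2\to0$ for $d\le3$, together with the hypothesis $nh_n^2+\Gamma_n^2/h_n^2\to0$. Chebyshev's inequality then gives the pointwise claim.

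Second, we need the sub-Gaussian increment bound. This is exactly \eqref{eq:bd_subgaussian}, obtained from Lemma~\ref{lem:unif:replacement} via the bounded-difference inequality Lemma~\ref{lem:unif:bd}. Its constant $K=8/a_s^2$ does not depend on $n$, $t$, $g$, or $f$.

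Third, we need almost-sure continuity of paths in $\|\cdot\|_\infty$. Lemma~\ref{lem:cont_SDt} gives the Lipschitz bound
\[
|\mathbb Z_{n,t}(g)-\mathbb Z_{n,t}(f)|\le 4\sqrt n\,\|g-f\|_\infty
\]
for each dataset. For fixed $n$ this is a deterministic Lipschitz constant, so the paths are continuous. Measurability of the coordinates holds because $S_{D,t}(g)$ is a finite minimum of measurable functions of $D$.

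\textbf{Uniform convergence and the main obstacle.} With $d<4$, Lemma~\ref{lem:unif:chain} now yields $\sup_{g\in\mathcal F_L}|\mathbb Z_{n,t}(g)|\to0$ in probability. The role of $d<4$ there is that the entropy bound of Lemma~\ref{lem:unif:entropy} makes the entropy integral $\int_0^\delta \varepsilon^{-d/4}\sqrt{\log(C/\varepsilon)}\,d\varepsilon$ finite. Combining with the mean term, via the triangle inequality, gives the result for each of $t=h_n$ and $t=-h_n$, and hence \eqref{eq:unif:secants-conclusion}.

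The main obstacle would be the increment bound, but it is already supplied by Lemma~\ref{lem:unif:replacement}. The one remaining point to check is that the pointwise variance bound is uniform in $g$; the text notes this holds because the constants depend on $g$ only through $L$. If any point is delicate, it is confirming that \eqref{eq:bd_SDt} holds for negative $t$ as well. This follows from the $L_h$ bounds by symmetry, since $S_{D,-h}=L_h$.
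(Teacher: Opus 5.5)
Your proposal is correct and follows essentially the same route as the paper. You split $\sqrt n\,S_{D,\pm h_n}(g)$ into $\mathbb Z_{n,\pm h_n}(g)$ plus its mean and kill the mean uniformly via \eqref{eq:bd_SDt}. You then get pointwise $L^2$ convergence of $\mathbb Z_{n,\pm h_n}(g)$ from the same bound and upgrade it to uniform convergence using the sub-Gaussian increments \eqref{eq:bd_subgaussian} and Lemma~\ref{lem:unif:chain}. Your extra checks, namely path continuity from Lemma~\ref{lem:cont_SDt} and the identity $S_{D,-h}=L_h$ for the negative sign, are correct and simply make explicit what the paper leaves implicit.
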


\begin{proof}[Proof of Lemma~\ref{lem:unif:quotients}]
We prove the result for $t_n:=h_n$ first. 
Since $d\le3$, plugging in the definition gives 
$n\Gamma_n^2\to0$. By \eqref{eq:bd_SDt}, for every fixed $g$, 
\[
 \mathbb E\{\mathbb Z_{n,t_n}(g)^2\}
 =n\operatorname{Var}\{S_{D,t_n}(g)\}
 \le C\left\{n(\Gamma_n+h_n)^2+
                       \frac{\Gamma_n^2}{h_n^2}\right\} \to 0. 
\]
Here $n(\Gamma_n+h_n)^2\le2n\Gamma_n^2+2nh_n^2$. 
Thus $\mathbb Z_{n,t_n}(g) = o_P(1)$ for any fixed $g$. 
By~\eqref{eq:bd_subgaussian}, Lemma~\ref{lem:unif:chain}
therefore gives
$\sup_{g\in\mathcal F_L}|\mathbb Z_{n,t_n}(g)|=o_P(1)$. On the other hand, we have  
$\sqrt n\sup_g|\mathbb E[ S_{D,t_n}(g)]|
\le C\sqrt n(\Gamma_n+h_n)\to0$ by the same conditions by~\eqref{eq:bd_subgaussian}. Thus by the triangle inequality and by the definition of $\ZZ_{n,t}(g)$, 
\$
\sqrt n\sup_{g\in \cF_L}|S_{D,t_n}(g)| \leq \sup_{g\in \cF_L} \mathbb{Z}_{n,t_n}(g) +\sqrt n\sup_{g\in \cF_L}|\mathbb E[ S_{D,t_n}(g)]| = o_P(1). 
\$
The result for $t_n:=-h_n$ follows in a similar way, which can be combined using a union bound. 
\end{proof}

\paragraph{Competing the proof of Theorem~\ref{thm:uniform_balance}.} We are now ready to complete the entire proof. 

Let $\hat\sigma_n$ be a measurable random injection and write
$\Delta_n=\Delta_D(\hat\sigma_n,A_0)=o_{\mathbb P}(n^{-1})$.
Choose a deterministic sequence $\epsilon_n>0$ such that
$n\epsilon_n\to0$ and $\Delta_n=O_{\mathbb P}(\epsilon_n)$. This is feasible as we argued in the proof of Theorem~\ref{thm:obj_gap}.  
Set $h_n= (\frac{\Gamma_n}{\sqrt n}+\epsilon_n )^{1/2}$. 
Since $\sqrt n\Gamma_n\to0$ for $d\le3$, we know that 
\[
 nh_n^2=\sqrt n\Gamma_n+n\epsilon_n\to 0,
 \qquad
 \Gamma_n^2 / h_n^2\le\sqrt n\Gamma_n\to 0.
\]
Also $h_n\log n=(\sqrt n h_n)(\log n/\sqrt n)\to0$, so condition in 
Lemma~\ref{lem:unif:quotients} is fulfilled for all sufficiently large $n$. 

Similar to the proofs in Section~\ref{app:subsec_opt_hatA}, for every dataset, 
every $h>0$, and every $g\in\mathcal F_L$, we have
\begin{equation}\label{eq:unif:gap-sandwich}
 S_{D,h}(g)-\frac{\Delta_n}{h}
 \le B_D(\hat\sigma_n,g)
 \le S_{D,-h}(g)+\frac{\Delta_n}{h}.
\end{equation} Taking the supremum and
using $\epsilon_n\le h_n^2$ gives
\$ 
 \sqrt n\sup_{g\in\mathcal F_L}|B_D(\hat\sigma_n,g)|
 &\le\sqrt n\sup_g|S_{D,h_n}(g)|
     +\sqrt n\sup_g|S_{D,-h_n}(g)|
     +\frac{\sqrt n\Delta_n}{h_n} o_P(1)
   +O_P\!\left(\frac{\sqrt n\epsilon_n}{h_n}\right)
  =o_P(1),
\$
because $\sqrt n\epsilon_n/h_n\le\sqrt{n\epsilon_n}\to0$.
For a fixed realized injection $\hat\sigma_n$, the mapping $g\mapsto B_D(\hat\sigma_n,g)$ is
$2$-Lipschitz in the sup norm, so its supremum is also measurable. This proves Theorem~\ref{thm:uniform_balance}.



\subsection{Proof of Theorem~\ref{thm:observational}}
\label{app:subsec_proof_obs}

\begin{proof}[Proof of Theorem~\ref{thm:observational}]
Fix $L\in(0,\infty)$ and write $\pi=\mathbb P(Z=1)$.
We set all the optimization gaps and imbalance measures in this proof to zero on the event 
$\mathcal I_n^c=\{n_1<2\}\cup\{n_1>n_0\}$, as stated earlier.

The propensity bounds imply
$\eta_0\le\pi\le1/2-\eta_1$.
Conditional on the treatment label vector, the
grouped covariates have distribution
$P_1^{\otimes n_1}\otimes P_0^{\otimes n_0}$, with densities
\[
p_1(x)=\frac{f(x)e(x)}{\pi},
\qquad
p_0(x)=\frac{f(x)\{1-e(x)\}}{1-\pi},
\]
where $n_1=\sum_{i=1}^n Z_i$ and $n_0=n-n_1$, and both are random counts. 
Let $\rho=(1/2+\eta_1)/(1/2-\eta_1)>1$. 
The density and derivative bounds required by Assumption~\ref{ass:cov} therefore follow from Assumption~\ref{assump:obs-sample}.

For $q=n_1/n$, we know 
\[
\inf_{x\in\Omega}\frac{(n-k)p_0(x)}{kp_1(x)}
\ge \frac{1-q}{q}\frac{\pi}{1-\pi}\rho.
\]
The right side is continuous in $q\in(0,1)$ and equals $\rho$
at $q=\pi$. Choose fixed $\delta>0$ and $\eta>0$ so that
$[\pi-\eta,\pi+\eta]\subseteq(0,1/2)$, $\eta<\pi$, and the last
display is at least $1+\delta$ whenever $|q-\pi|\le\eta$.
Define
\[
\mathcal K_n
=\{k\in\{0,\ldots,n\}:|k/n-\pi|\le\eta\},
\qquad
\mathcal J_n=\{n_1\in\mathcal K_n\}.
\]
For all sufficiently large $n$, we know counts in $\mathcal K_n$
satisfy $2\le k\le n-k$, as well as the lower bounds on
$k/n$ and $(n-k)/n$ and the margin $\delta$ required in Assumption~\ref{ass:cov}. 
Since $n_1\sim\operatorname{Bin}(n,\pi)$, 
Chernoff's inequality gives 
$\mathbb P(\mathcal J_n^c)\le2e^{-cn}$ for a fixed $c>0$.
In particular, $\mathbb P(\mathcal I_n)\to1$.

Write $a_0=\lambda_{\min}(A_0)>0$ and
$\delta_{A,n}=\|\hat A-A_0\|_{\mathrm{op}}$.
Use the same constants $r_n$, $\Gamma_n$ and $H_n(t)$ for all $k\in\mathcal K_n$. 
We consider the local-count event $\cE_n$ defined in Lemma~\ref{lem:count_stable}; 
following the same ideas there, for any fixed $b>0$, we can choose $c_{\text{grid}}$  so that
\[
\sup_{k\in\mathcal K_n}
\mathbb P(\mathcal E_n^c\mid n_1=k)\le n^{-b}
\]
for all sufficiently large $n$.
It follows by averaging that
$\mathbb P(\mathcal J_n\cap\mathcal E_n^c)\le n^{-b}$.
Also $\mathbb P(\delta_{A,n}\le a_0/2)\to1$.
Thus the event
$\mathcal J_n\cap\mathcal E_n\cap\{\delta_{A,n}\le a_0/2\}$
has probability tending to one.

On this event, the proofs of Theorem~\ref{thm:obj_gap} and Theorem~\ref{thm:hat_A_imbalance} give
\[
\begin{split}
\Delta_n
&:=Q_n(\hat\sigma_n;A_0)
       -\min_{\sigma\in\mathcal A_{n_1,n_0}}Q_n(\sigma;A_0) \le C\delta_{A,n}r_n^2+(1+C\delta_{A,n})\xi_n.
\end{split}
\] 
All constants are common to $k\in\mathcal K_n$.
The assumed unconditional rates therefore imply
\[
\Delta_n
=O_P(n^{-1/2}r_n^2)+o_P(n^{-1})
=o_P(n^{-1}),
\]
since $\sqrt n\,r_n^2\asymp
n^{1/2-2/d}(\log n)^{2/d}\to0$ for $d\le3$.
Choose a deterministic $\epsilon_n>0$ such that
$n\epsilon_n\to0$ and
$\mathbb P(\Delta_n>\epsilon_n)\to0$.
Such a sequence exists because $n\Delta_n\to0$ in probability,
by the construction near the end of the proof of Theorem~\ref{thm:obj_gap}.

For $k\in\mathcal K_n$, let
$\mathbb P_{n,k}=P_1^{\otimes k}\otimes P_0^{\otimes(n-k)}$.
We then write 
$\Delta_D(\sigma,A_0)=Q_D(\sigma,A_0)-\min_\tau Q_D(\tau,A_0)$
and  
\[
R_{n,k}(D)
=
\max_{\substack{\sigma\in\mathcal A_{k,n-k}\\
                 \Delta_D(\sigma,A_0)\le\epsilon_n}}
\sqrt n\sup_{g\in\mathcal F_L}|B_D(\sigma,g)|.
\]
The maximum is over a nonempty finite set, and the inner supremum is measurable since it takes 
the supremum over a fixed countable dense subset of $\mathcal F_L$. 
Choose the first maximizing injection in a fixed ordering and denote it as $\bar\sigma_{n,k}(D)$. Since it maximizes over finitely many measurable objectives, we know $\bar\sigma_{n,k}(D)$ is measurable.  

For every deterministic sequence $k_n\in\mathcal K_n$, all the
fixed-size assumptions hold with common constants and
$n\epsilon_n\to0$. 
Thus, Theorem~\ref{thm:uniform_balance} 
applied to $\bar\sigma_{n,k_n}$ gives 
$R_{n,k_n}\to0$ in $\mathbb P_{n,k_n}$-probability.
Consequently, 
\[
\sup_{k\in\mathcal K_n}
\PP_{n,k}\{R_{n,k}>u\}\to 0.
\]
for every $u>0$. 
To see this, if it failed, 
there would be a
subsequence of $(n,k_n)$ with 
$k_n\in \cK_n$ for which $\PP_{n,k_n}\{R_{n,k_n}>u\}\geq \delta$ for some constant $\delta>0$. 
This contradicts the stated result for the deterministic sequence $k_n$.

Now we return to the observational sample. 
On the event $\cJ_n\cap\{\Delta_n\le\epsilon_n\}$, the matching $\hat\sigma_n\in \cA_{n_1,n_0}$ satisfies $\Delta_{D}(\sigma,A_0)\leq \epsilon$.  
Thus, for every $u>0$,
\[
\begin{split}
\PP \Big\{\sqrt n\sup_{g\in\cF_L}|B_{n,\hat\sigma_n}(g)|>u\Big\}
&\le \PP(\cJ_n^c) +\PP(\Delta_n>\epsilon_n) + \sum_{k\in\cK_n}\PP(n_1=k) \PP_{n,k}\{R_{n,k}>u\}\\
&\le \PP(\cJ_n^c)
   +\PP(\Delta_n>\epsilon_n)
   +\sup_{k\in\cK_n}\PP_{n,k}\{R_{n,k}>u\}
\to 0.
\end{split}
\]
This concludes the proof of Theorem~\ref{thm:observational}.
\end{proof}

\subsection{Moment bounds under observational sampling}
\label{app:observational-moments}

\begin{prop} 
\label{prop:observational-moments}
Under Assumption~\ref{ass:cov}, fix a symmetric
positive-definite matrix $A_0$ and a function $g\in\mathcal F_L$.
On $\mathcal I_n=\{2\le n_1\le n_0\}$, let $\sigma_n^\star$
be an exact minimizer of $Q_n(\sigma;A_0)$, with the fixed
measurable tie-breaking rule. Set
$B_{n,\sigma_n^\star}(g)=0$ on $\mathcal I_n^c$.
Let $\Gamma_n=r_n^2\log n\asymp
n^{-2/d}(\log n)^{1+2/d}$ be the deterministic proof scale.
There is a constant $C_g<\infty$, independent of $n$, such that
for all sufficiently large $n$,
\[
|\mathbb E B_{n,\sigma_n^\star}(g)|\le C_g\Gamma_n,
\qquad
\mathbb E[B_{n,\sigma_n^\star}(g)^2]
\le C_g\left(\Gamma_n^2+\frac{\Gamma_n}{\sqrt n}\right).
\]
For $d\le3$, $\sqrt n B_{n,\sigma_n^\star}(g)\to0$ in $L^2$.
\end{prop}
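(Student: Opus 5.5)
The plan is to reduce to the fixed-sample-size bound of Theorem~\ref{thm:balance} by conditioning on the treatment labels, and then average over $n_1$. I read the proposition in the observational setting of Theorem~\ref{thm:observational}: the labels are random, and conditional on $n_1=k$ the grouped covariates have law $\mathbb P_{n,k}=P_1^{\otimes k}\otimes P_0^{\otimes(n-k)}$, with densities $p_1=fe/\pi$ and $p_0=f(1-e)/(1-\pi)$. I reuse the construction in the proof of Theorem~\ref{thm:observational}: fix $\eta,\delta>0$ and the window $\mathcal K_n=\{k:|k/n-\pi|\le\eta\}$. For large $n$, every $k\in\mathcal K_n$ satisfies $2\le k\le n-k$ (so $\mathcal I_n$ holds), the fixed-ratio condition with a common $a_{\mathrm s}$, and the surplus condition~\eqref{eq:surplus} with a common $\delta$. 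The Chernoff bound gives $\mathbb P(\mathcal J_n^c)\le 2e^{-cn}$.

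\textbf{Decomposition.} Since $|B_{n,\sigma}(g)|\le 2\|g\|_\infty\le 2L$ for every injection (and $B=0$ on $\mathcal I_n^c$), I would write
\[
\mathbb E B_{n,\sigma_n^\star}(g)=\sum_{k\in\mathcal K_n}\mathbb P(n_1=k)\,\mathbb E_{n,k}B_{n,\sigma_n^\star}(g)+\mathbb E\big[B_{n,\sigma_n^\star}(g)\ind_{\mathcal J_n^c}\big],
\]
and similarly for the second moment. The last term is at most $2L\cdot 2e^{-cn}$ in the first case and $4L^2\cdot 2e^{-cn}$ in the second. Both are $o(\Gamma_n^2)$, so they can be absorbed into $C_g\Gamma_n$ and $C_g\Gamma_n^2$. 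For each $k\in\mathcal K_n$, Theorem~\ref{thm:balance} applied under $\mathbb P_{n,k}$ with $A=A_0$ and the fixed tie-breaking rule gives $|\mathbb E_{n,k}B|\le C\Gamma_n$ and $\mathbb E_{n,k}B^2\le C(\Gamma_n^2+\Gamma_n/\sqrt n)$. Averaging over $k$ with weights summing to at most one then yields both claimed bounds. The $L^2$ statement follows directly: $n\Gamma_n^2+\sqrt n\,\Gamma_n\to0$ when $d\le 3$, exactly as at the end of the proof of Theorem~\ref{thm:balance}.

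\textbf{Main obstacle: uniformity in $k$.} Theorem~\ref{thm:balance} is stated for one sequence of sample sizes, with a threshold "for all sufficiently large $n$" and a constant $C$. I need both to hold uniformly over $k\in\mathcal K_n$. I would handle this by retracing the dependencies in the proof of Theorem~\ref{thm:balance}.
\begin{enumerate}
\item The grid $q_n$, $r_n$, $\Gamma_n$ and $H_n$ depend only on $n$, not on $k$.
\item The constants in Lemmas~\ref{lem:stab_opt_value}, \ref{lem:value-bounds} and~\ref{lem:slope} depend only on $a_{\mathrm s}$, $\delta$, $\underline p$, $\overline p$, the derivative bounds, $a_{A_0}$, $b_{A_0}$, and the $C^2$-norm of $g$.
\item The only probabilistic input is the cell-count event $\mathcal E_n$ of Lemma~\ref{lem:count_stable}. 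Its failure bound comes from Chernoff bounds on binomial cell counts with means of order $\cgrid\log n$. Because the cell means depend on $k$ only through $k/n$, which lies in a compact range, one can choose $\cgrid$ so that $\sup_{k\in\mathcal K_n}\mathbb P_{n,k}(\mathcal E_n^c)\le n^{-b}$; this is already asserted in the proof of Theorem~\ref{thm:observational}.
\end{enumerate}
Given these three points, the bounds hold with one constant $C_g$ and one threshold for all $k\in\mathcal K_n$. If a direct check were awkward, the fallback is a subsequence argument: if the bound failed uniformly, pick a deterministic sequence $k_n\in\mathcal K_n$ along which it fails, and obtain a contradiction with Theorem~\ref{thm:balance} applied to that sequence. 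That argument only gives $\limsup$-type conclusions, though, so an explicit constant needs the direct dependency check above.
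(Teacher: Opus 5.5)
Your proposal is correct and matches the paper's proof. You condition on $n_1=k\in\mathcal K_n$, invoke Theorem~\ref{thm:balance} with a constant common to all such $k$, bound the $\mathcal J_n^c$ contribution by $O(e^{-cn})$ and absorb it, then conclude from $n\Gamma_n^2+\sqrt n\,\Gamma_n\to0$ for $d\le3$. Your dependency check spells out what the paper asserts in one phrase, and your fallback is stronger than you suggest: applying Theorem~\ref{thm:balance} to the deterministic sequence of worst-case $k_n\in\mathcal K_n$ already gives a single (non-explicit) constant and threshold, which is all the proposition claims.
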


\begin{proof}[Proof of Proposition~\ref{prop:observational-moments}]
Use $\mathcal K_n$ and $\mathcal J_n$ from the proof of Theorem~\ref{thm:observational}. 
Conditional on $n_1=k\in\mathcal K_n$, we obtain the deterministic two-sample law. 
By Theorem~\ref{thm:balance},  for every such $k$,
\[
\left|\mathbb E[B_{n,\sigma_n^\star}(g)\mid n_1=k]\right|
\le C_g\Gamma_n,
\qquad
\mathbb E[B_{n,\sigma_n^\star}(g)^2\mid n_1=k]
\le C_g\left(\Gamma_n^2+\frac{\Gamma_n}{\sqrt n}\right).
\]
for the common constant $C_g$. 
Otherwise for $k\notin \cK_n$, we have
$|B_{n,\sigma_n^\star}(g)|\le2\|g\|_\infty$.
Using
$\mathbb P(\mathcal J_n^c)\le2e^{-cn}$ gives
\[
\begin{split}
|\mathbb E B_{n,\sigma_n^\star}(g)|
&\le C_g\Gamma_n+4\|g\|_\infty e^{-cn},\\
\mathbb E[B_{n,\sigma_n^\star}(g)^2]
&\le C_g\left(\Gamma_n^2+\frac{\Gamma_n}{\sqrt n}\right)
   +8\|g\|_\infty^2e^{-cn}.
\end{split}
\]
The remainders can be absorbed into the first terms for all sufficiently large $n$. Finally, by definition we have
$n\Gamma_n^2+\sqrt n\Gamma_n\to0$ for $d\le3$,
which proves the $L^2$ conclusion.
\end{proof}

\section{Proof of supporting lemmas in Appendix~\ref{app:subsec_imbalance_fixed_theorem}}

\subsection{Matching as bipartite graph and its  stability}
\label{app:subsec_match_graph_setup}

For preparation, we represent the matching problem as bipartite graph and prove its stability when replacing one treated/control data point. Such stability will later be used together with the Efron--Stein inequality (Lemma~\ref{lem:ES}) to bound the imbalance. These will be useful in proving several supporting lemmas in Section~\ref{app:subsec_imbalance_fixed_theorem}. 

\paragraph{Matching and bipartite graph.}

Keep $0<n_1,n_0<n$ fixed. 
Any matching, an injection $\sigma$ from $\{1,\dots,n_1\}$ to $\{1,\dots,n_0\}$, can be equivalently represented by a \emph{graph} in which the $n_1$ treated units are put on the left side and the $n_0$ control units are put on the right side. 
Each unit is a \emph{vertex}. 
An \emph{edge} connects a treated unit with a control unit, i.e., $T_i$ and $C_{\sigma(i)}$. A control unit is connected to a treated unit if and only if they constitute a matched pair. The matching can thus be represented by the graph consisting of the vertices and edges. 
Since we consider one-to-one matching, each edge joins two vertices from distinct groups, so this is a bipartite group. 
Each treated vertex/unit must be connected by some edge to another control vertex/unit (having degree $1$), but a control vertex may have degree $1$ or $0$. 

Let $c_{ij}$ be the cost of pairing treated unit $i$ with control unit $j$. This gives the \emph{cost matrix}, a $n_1$-by-$n_0$ matrix $c=(c_{ij})$, with each row for a treated unit and each column for a control unit. 
For the quadratic cost~\eqref{eq:def_matching_obj}, we have $c_{ij} = \frac{1}{2}(T_i-C_j)^\top A_0(T_i-C_j)$. We define $\|T_i-C_j\|$ as the Euclidean length of an edge between $(i,j)$. 
By definition, replacing $T_i$ by another value only changes row $i$ in the cost matrix, and replacing $C_j$ by another value only changes column $j$ in the cost matrix. 

With these definitions ready, a matching design $\cM_\sigma=\{(i,\sigma(i)\colon 1\leq i\leq n_1\}$ can be represented by the injection $\sigma$ and equivalently the collection of edges that connects each $(i,\sigma(i))$. The matching objective~\eqref{eq:def_matching_obj} is therefore $Q_n(\sigma;A_0) = \sum_{i=1}^{n_1} c_{i\sigma(i)}$. Thus, a cost matrix defines a matching objective and the corresponding optimal solution that minimizes the matching cost. 

\paragraph{Selected control set.}
We define the \emph{selected control set}  $S_\sigma:=\{\sigma(i)\colon 1\leq i\leq n_1\}$. Those are the control vertices with degree $1$ in the corresponding graph. 
The key insight is that, since the imbalance measure~\eqref{eq:def_imbalance_g} aggregates over the matched control units, what matters is the set $S_\sigma$ rather than the precise matching membership in $\sigma$. Thus, we will focus on the stability of $S_\sigma$ instead of the optimal injection $\hat\sigma_n$ when replacing one data point, which, as we stated at the beginning of this part, can be used together with the Efron--Stein bound. 
For example, consider
two matching designs $\cM_1=\{(1,1),(2,2)\}$ and $\cM_2=\{(1,2),(2,3)\}$. Both treated units change partners, but the selected-control set changes only from $\{1,2\}$ to $\{2,3\}$.

\paragraph{Set differences.} For two matchings $\cM_1$ and $\cM_2$, we define their \emph{symmetric difference} as the graph obtained by preserving all edges that appears in exactly one graph only. 
For example, in the preceding example of $\cM_1=\{(1,1),(2,2)\}$ and $\cM_2=\{(1,2),(2,3)\}$, the symmetric difference is the chain
\$
 C_1\;\text{---}\;T_1\;\text{---}\;C_2\;\text{---}\;T_2\;\text{---}\;C_3.
\$ 
since no edge is removed. 
In such a graph, each vertex has degree at most $2$ (since each vertex has degree at most $1$ in each graph), and there is at most one edge connecting any pair $(i,j)$ (since repeated edges in the two graphs are removed). 
We define a \emph{path} as a sequence of edges successively joining  distinct vertices, such as the chain above. 
We define a \emph{cycle} as a sequence of edges successively joining distinct vertices and closing back at the starting vertex, without a repeated vertex. 
A \emph{connected component} is a maximal group of vertices joined by such paths, and different components do not share vertices. 

The next Lemma shows that if two cost matrices differ by only one row or one column, then the  selected control set of the optimal matching would differ by at most one member. Its proof is in Appendix~\ref{app:subsec_lem_stab}.

\begin{lemma}\label{lem:control_set_stability}
    Let $c=(c_{ij})$ and $c'=(c_{ij}')$ be two cost matrices that differ by at most one row or one column. Suppose $\sigma_\star$ and $\sigma_\star'$ are the unique optimal matchings minimizing the corresponding matching costs. Then the selected control set either agree, $S_{\sigma_\star}= S_{\sigma_\star'}$, or they differ by deleting one element and adding another,  $S_{\sigma_\star'} = S_{\sigma_\star} \cup \{j'\} \backslash \{j\}$ for some $j'\neq j$.  
\end{lemma}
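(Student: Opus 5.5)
The plan is to compare the two optimal matchings through their symmetric difference $\cM_{\sigma_\star}\triangle\cM_{\sigma_\star'}$, viewed as a bipartite graph in which every vertex has degree at most two. Its connected components are therefore alternating paths and even cycles, with edges alternating between $\sigma_\star$ and $\sigma_\star'$. Every treated vertex has degree one in both matchings. Hence a treated vertex that lies in the symmetric difference has degree exactly two there, and cannot be an endpoint of a path. So every path component starts and ends at control vertices. Along such a path, one endpoint control is used by $\sigma_\star$ only and the other by $\sigma_\star'$ only. All interior controls are used by both. Cycles do not change the selected control set at all. Consequently $S_{\sigma_\star}\triangle S_{\sigma_\star'}$ consists exactly of the endpoints of the path components. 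Each path contributes one deleted element $j$ and one added element $j'\neq j$. It therefore suffices to show that there is at most one component in total, or more precisely at most one path component.

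The key step is an exchange argument using uniqueness. Take any component $K$ that does not touch the modified row (the treated unit $i_0$) or the modified column (the control unit $j_0$). Let $\sigma_\star\oplus K$ denote $\sigma_\star$ with its edges on $K$ replaced by those of $\sigma_\star'$. This is again a valid injection: alternating along a cycle, or along a path whose endpoints are controls, preserves degree one at the treated vertices and at most one at the controls. The costs of the edges in $K$ are identical under $c$ and $c'$, since no edge of $K$ uses row $i_0$ or column $j_0$. Let $\Delta_K$ be the cost difference between the $\sigma_\star'$-edges and the $\sigma_\star$-edges of $K$, computed under either matrix. Optimality and uniqueness of $\sigma_\star$ under $c$ force $\Delta_K>0$. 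Optimality and uniqueness of $\sigma_\star'$ under $c'$, applied to $\sigma_\star'\oplus K$, force $-\Delta_K>0$. This is a contradiction, so every component must contain $i_0$, or $j_0$ in the column case. Since components are vertex-disjoint, there is at most one component.

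Combining the two steps, the symmetric difference is either empty, or a single cycle (possible in the row case), or a single path. In the first two cases the selected sets coincide. In the path case they differ by swapping one endpoint control $j$ for the other $j'$. The main point requiring care is the column case, where the special vertex $j_0$ is a control and may itself be a path endpoint. The argument still goes through: with only one component, there is exactly one deleted and one added control. I also need to confirm that $j\neq j'$, which holds because the endpoints of a path are distinct vertices. Uniqueness of both optima is essential in the exchange step; without strict inequalities, two tied components could coexist.
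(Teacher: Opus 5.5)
Your proposal is correct and follows essentially the same route as the paper. Both arguments decompose the symmetric difference into alternating paths and cycles whose path endpoints are controls, then use a switching argument with uniqueness to show every nonempty component must contain the modified vertex, so there is at most one component; the paper phrases the switching step as $s_H\ge 0$ and $-s_H\ge 0$ contradicting uniqueness, while you use strict inequalities directly. One harmless imprecision: a single cycle through the modified control is also possible in the column case, but a cycle leaves the selected set unchanged, so your conclusion is unaffected.
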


\subsection{Local cells and replacement stability}
\label{app:subsec_local_cells}

This subsection continues with the split of $\Omega$ into cubes and develop definitions and intermediate results on the abundance of control samples in each cell, as well as the bounds of changes needed to repair a partial matching after one-coordinate replacement. 
Lemmas~\ref{lem:count_stable} and~\ref{lem:local_edge} in this subsection will be used in the proof of Lemma~\ref{lem:stab_opt_value}.

Recall that we have split $\Omega$ into $q_n^d$ equally-sized, non-overlapping cells. For a cell $Q$, we define the number of treated and control samples belonging to $Q$ as $N_{Q,1} = \sum_{i=1}^{n_1} \ind\{T_i\in Q\}$ and $N_{Q,0} = \sum_{j=1}^{n_0} \ind\{C_j \in Q\}$. Their mean values are denoted as $\mu_{Q,1} = n_1\int_Q p_1$ and $\mu_{Q,0} = n_0\int_Q p_0$, respectively. By Assumption~\ref{ass:cov}, we know $\mu_{Q,0}\geq (1+\delta) \mu_{Q,1}$, and both are bounded above and below by constant multiples of $L\log n$. 

Lemma~\ref{lem:count_stable} shows in every cell there are sufficiently more control samples than the treated, and the treated/control unit counts are stable under such replacement. The proof is in Appendix~\ref{app:subsec_proof_lem_count_stab}. 

\begin{lemma}\label{lem:count_stable}
    Fix $0<\zeta < \min\{\delta/8, 1/4\}$. For every $b>0$, $\cgrid$ can be chosen such that $\cE_n:=\{ N_{Q,0}\geq (1+2\zeta) N_{Q,1} +4 \text{ for every }Q\}$ satisfies $\PP(\cE_n^c)\leq n^{-b}$ for all sufficiently large $n$. Moreover, suppose one-coordinate replacement applies to a dataset $D$ in $\cE_n$ and yields the dataset $D'$, with the corresponding counts $N_{Q,1}'$ and $N_{Q,0}'$. Then the new counts satisfy $N_{Q,0}'\geq (1+\zeta) N_{Q,1}'$ for any cell $Q$. 
\end{lemma}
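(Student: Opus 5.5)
We need to prove Lemma~\ref{lem:count_stable}, which has two parts: a high-probability lower bound on local control counts, and the stability of that bound under one-coordinate replacement.

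\emph{Part 1: the event $\cE_n$.} Fix a cell $Q$. The counts $N_{Q,1}\sim\mathrm{Bin}(n_1,\int_Q p_1)$ and $N_{Q,0}\sim\mathrm{Bin}(n_0,\int_Q p_0)$ are independent. By Assumption~\ref{ass:cov}, their means satisfy $\mu_{Q,0}\ge(1+\delta)\mu_{Q,1}$. Both means lie between $c_1\cgrid\log n$ and $c_2\cgrid\log n$, where $c_1,c_2$ are fixed constants depending on $\underline p,\overline p,a_{\mathrm s},d$; this holds because $n l_n^d\asymp \cgrid\log n$. Since $\zeta<\delta/8$, we have $(1+2\zeta)(1+\delta/8)\le 1+\delta-\delta/8$ up to adjusting constants. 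The plan is to pick $\epsilon=\delta/(8(1+\delta))$, small but fixed, and use the two Chernoff bounds of Lemma~\ref{lem:chernoff}:
\[
\PP\{N_{Q,1}\ge(1+\epsilon)\mu_{Q,1}\}\le e^{-\epsilon^2\mu_{Q,1}/3},\qquad
\PP\{N_{Q,0}\le(1-\epsilon)\mu_{Q,0}\}\le e^{-\epsilon^2\mu_{Q,0}/2}.
\]
On the complement of both events,
\[
N_{Q,0}\ge(1-\epsilon)(1+\delta)\mu_{Q,1}\ge\frac{(1-\epsilon)(1+\delta)}{1+\epsilon}N_{Q,1}.
\]
For $\epsilon$ small relative to $\delta$, the ratio $(1-\epsilon)(1+\delta)/(1+\epsilon)$ exceeds $1+2\zeta+\delta/4$, using $2\zeta<\delta/4$. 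The extra $\delta/4\cdot N_{Q,1}$ only absorbs the additive $+4$ when $N_{Q,1}$ is large, so small $N_{Q,1}$ needs a separate check.

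The cleaner route for the additive term is to compare directly against means. On the good event,
\[
N_{Q,0}-(1+2\zeta)N_{Q,1}\ge\{(1-\epsilon)(1+\delta)-(1+2\zeta)(1+\epsilon)\}\mu_{Q,1},
\]
and the bracket is a positive constant. The right-hand side is therefore at least $c\,\cgrid\log n$, which exceeds $4$ for large $n$.

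Each bad event has probability at most $2\exp(-c'\cgrid\log n)=2n^{-c'\cgrid}$. A union bound over the $q_n^d\le n$ cells gives
\[
\PP(\cE_n^c)\le 2n^{1-c'\cgrid}.
\]
Choosing $\cgrid>(b+2)/c'$ makes this at most $n^{-b}$. The constant $c'$ depends only on $\epsilon,\underline p,a_{\mathrm s},d$, so it is uniform over cells and over sample sizes satisfying the ratio condition.

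\emph{Part 2: stability under replacement.} This part is deterministic. A one-coordinate replacement moves a single point, so for each cell $Q$ it changes $N_{Q,1}$ by at most $1$ and $N_{Q,0}$ by at most $1$. Moreover, only one group is affected, so in fact at most one of the two counts changes in each cell. The worst case is $N'_{Q,1}\le N_{Q,1}+1$ and $N'_{Q,0}\ge N_{Q,0}-1$. Then
\[
N'_{Q,0}\ge(1+2\zeta)N_{Q,1}+3\ge(1+2\zeta)(N'_{Q,1}-1)+3\ge(1+\zeta)N'_{Q,1}+\zeta N'_{Q,1}+3-(1+2\zeta).
\]
This last expression is at least $(1+\zeta)N'_{Q,1}$, since $\zeta<1/4$ gives $3-(1+2\zeta)>0$.

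The only mildly delicate step is making the Chernoff constants uniform. Lemma~\ref{lem:chernoff} is stated for exact means, and the binomial mean bounds themselves come from $\underline p\, l_n^d\le\int_Q p_z\le\overline p\, l_n^d$. Everything else is routine.
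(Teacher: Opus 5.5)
Your proof is correct and follows essentially the same route as the paper. You fix $\epsilon$ so that $(1-\epsilon)(1+\delta)-(1+2\zeta)(1+\epsilon)>0$, apply Chernoff bounds to $N_{Q,1}$ and $N_{Q,0}$, let the slack $\gtrsim \cgrid\log n$ absorb the $+4$, take a union bound over cells, and check the replacement claim deterministically. The differences are cosmetic: the ratio-based detour in Part 1 is unnecessary, since your ``cleaner route'' is exactly the paper's argument. In Part 2 you merge the paper's two cases (treated vs.\ control replacement) into one combined worst case, which is a valid, slightly more pessimistic bound.
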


\paragraph{Local graph.} We construct a \emph{local graph} by connecting a treated unit to every control in the cell it belongs to. The (Euclidean) length of the edges in the local graph is at most $r_n$. 
We also call such edges \emph{local}. 
For a set $S$ of treated vertices, let $\cN(S)$ be the set of controls connected in the local graph to at least one vertex in $S$. 
We define a \emph{partial matching} as a set of treated-control edges with no repeated endpoints; its distinction from an injection (which matches all treated vertices) is that it may leave some treated vertices unmatched.

The next lemma provides a lower bound for $|\cN(S)|$ and shows that any partial matching with one unmatched treated vertex can always be repaired within a certain distance. 
The proof is in Appendix~\ref{app:subsec_proof_lem_local_edge}.

\begin{lemma}\label{lem:local_edge}
Suppose $N_{Q,0}\ge(1+\zeta)N_{Q,1}$ in every cell. Then $|\mathcal N(S)|\ge(1+\zeta)|S|$ for every set $S$ of treated vertices. Given any partial matching with $n_1-1$ edges (not necessarily local), one can match the remaining unmatched treated vertex by adding and removing edges along a path of length at most $2\lceil\log n_1/\log(1+\zeta)\rceil+3$ and every added edge is a local edge defined above.  
\end{lemma}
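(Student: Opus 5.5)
The plan is to prove the two assertions of Lemma~\ref{lem:local_edge} separately: first a Hall-type expansion bound for the local graph, then an alternating-path repair argument driven by that expansion.

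\textbf{Expansion bound.} The local graph connects each treated vertex to every control in its own cell, and the cells partition $\Omega$. Given a set $S$ of treated vertices, let $\mathcal Q(S)$ be the set of cells containing at least one vertex of $S$. Then
\[
\mathcal N(S)=\bigcup_{Q\in\mathcal Q(S)}\{\text{controls in }Q\},
\qquad\text{so}\qquad
|\mathcal N(S)|=\sum_{Q\in\mathcal Q(S)}N_{Q,0}\ge(1+\zeta)\sum_{Q\in\mathcal Q(S)}N_{Q,1}\ge(1+\zeta)|S|.
\]
The last step uses that every vertex of $S$ lies in some cell of $\mathcal Q(S)$.

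\textbf{Repair via alternating paths.} Fix a partial matching $M$ with $n_1-1$ edges, and let $u$ be the unmatched treated vertex. Build layers by breadth-first search from $u$.
\begin{itemize}
\item Set $S_0=\{u\}$.
\item Given $S_k$, look at $\mathcal N(S_k)$ in the local graph. If some control there is unmatched in $M$, stop.
\item Otherwise every control in $\mathcal N(S_k)$ is matched. Since $M$ is injective, these controls have $|\mathcal N(S_k)|$ distinct $M$-partners. Let $S_{k+1}=S_k\cup M^{-1}(\mathcal N(S_k))$.
\end{itemize}
The partner of a control $c\in\mathcal N(S_k)$ is a treated vertex other than $u$. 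So $|S_{k+1}|\ge|\mathcal N(S_k)|+1$, and by the expansion bound
\[
|S_{k+1}|\ge(1+\zeta)|S_k|.
\]
Hence $|S_k|\ge(1+\zeta)^k$. Since $|S_k|\le n_1$, the search must stop within $K\le\lceil\log n_1/\log(1+\zeta)\rceil$ rounds, at a free control $c^*$ adjacent in the local graph to some vertex of $S_K$.

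Tracing back from $c^*$ gives an alternating path
\[
u=t_0,\ c_1,\ t_1,\ c_2,\ \dots,\ t_m,\ c^*,
\]
where each $(t_{i-1},c_i)$ and $(t_m,c^*)$ is a local edge and each $(c_i,t_i)$ belongs to $M$. Here $m\le K$, so the path has at most $2K+1$ edges.

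Flip the path: remove the $m$ matching edges $(c_i,t_i)$ and add the $m+1$ local edges. After the flip, every treated vertex has exactly one partner, including $u$. The controls $c_1,\dots,c_m$ remain used once, and $c^*$ becomes used once. The result is therefore a full injection, and every added edge is local. The path length $2K+1$ is within the stated bound $2\lceil\log n_1/\log(1+\zeta)\rceil+3$; the extra slack allows for ceiling effects.

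\textbf{Main obstacle.} The delicate point is the layer growth. The controls in $\mathcal N(S_k)$ must all be matched to \emph{distinct} treated vertices, and those vertices must be new or at least increase the count. Both follow from injectivity of $M$ and from $u$ being unmatched, which the counting above uses. Because the final bound only needs $|S_{k+1}|\ge|\mathcal N(S_k)|$, cumulative layers suffice and no separate check for disjointness of layers is required.
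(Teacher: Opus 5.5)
Your proof is correct and follows essentially the same route as the paper: the same cell-counting expansion bound $|\mathcal N(S)|\ge(1+\zeta)|S|$, the same layered alternating search in which $|S_k|$ grows geometrically, and the same flip along the traced-back path. The paper makes the trace-back explicit through first-discovery predecessors, which is what guarantees the path has no repeated vertices; in your version this follows from choosing predecessors with strictly decreasing BFS level. Your length bound $2K+1$ is slightly sharper than the stated $2\lceil\log n_1/\log(1+\zeta)\rceil+3$.
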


\subsection{Proof of Lemma~\ref{lem:control_set_stability}}
\label{app:subsec_lem_stab}

\begin{proof}[Proof of Lemma~\ref{lem:control_set_stability}]
Write $\mathcal M_1,\mathcal M_2$ as the unique optimal matchings, so $M_1 = \{(i,\sigma_\star(i))\colon 1\leq i\leq n_1\}$ and $M_2 = \{(i,\sigma_\star'(i))\colon 1\leq i\leq n_1\}$. Let $v_*$ be the single treated or control vertex whose row or column in the cost matrix changes between $(c,c')$. Therefore, every edge with $v_*$ as an endpoint has the same cost in $c$ and $c'$.

Consider the symmetric difference between the graphs defined by $\cM_1$ and $\cM_2$, i.e., the common edges are deleted after combining the edges in the two graphs. We denote this graph as $\cG_0$. 
In this graph, a treated vertex $i$ has either degree of $0$ (if $\sigma_\star(i)=\sigma_\star'(i)$), or two edges (if $\sigma_\star(i)\neq \sigma_\star'(i)$ hence the edges from $\cM_1$ and $\cM_2$ are both preserved. 
A control vertex $j$ has a degree of either $0$ (if it is matched to the same treated vertex in $\cM_1$ and $\cM_2$), $1$ (if it is selected in one graph but not the other), or $2$ (if it is selected in both and matched to distinct treated vertices). 
Whenever the degree is $2$, the two edges must come from each of the matchings and connect to distinct vertices in the other group.  

We claim that in $\cG_0$, each connected component is a path or a cycle. To see this, we consider two cases. 
\begin{itemize}
    \item First, if the component has any degree-$1$ vertex, we start a walk from any of such vertices following the edges. At any later, degree-two vertex, there is exactly one edge on which to continue without going straight back; for such vertex, this walk continues following that edge. This cannot branch since it would create a vertex of degree $3$. 
    Moreover, this walk cannot enter a cycle because otherwise this entry point would have a degree of three. Since there are finitely many vertices, this walk must end at another degree-one vertex, which traverses a sequence of distinct degree-two vertices and this component is a path. 

    \item On the other hand, if the component has no degree-one vertex, every vertex has degree two with distinct endpoints. Starting from any vertex must come back to it and create a cycle. No additional edge can attach to that cycle without creating a vertex of degree $3$. 
\end{itemize}


Next, we claim that, traversing  each of these paths and cycles, the edges must alternate between $\cM_1$ and $\cM_2$. 
To see this, because $\cG_0$ is the symmetric difference, every edge must appear only in exactly one of the two matchings. As we traverse in any path or cycle, the edges attached to a vertex of degree $2$ must come from different matchings; otherwise, if the two edges come from the same matching, it either connects to the same control vertex twice, which is impossible, or connects to distinct control vertices in the same matching, which is also impossible. 
In words, when traversing each path or cycle, the vertices alternate between treated and control groups, and the edges alternate between $\cM_1$ and $\cM_2$. 

As we argued earlier, a treated vertex cannot have degree $1$ in $\cG_0$, so every path must have two control endpoints. The number of edges in every path must be even, since edges run between the treated/control groups and both endpoints are controls. Thus, the first and last edges must belong to different matchings; one endpoint is in $S_{\sigma_\star}$ (selected in $\cM_1$) and one endpoint is in $S_{\sigma_\star'}$ (selected in $\cM_2$). 
On the other hand, on a cycle, every control vertex is in both $S_{\sigma_\star}$ and $S_{\sigma_\star'}$.

Recall that $v_\star$ is the single treated or control vertex whose row or column in the cost matrix changes. 

\begin{claim}\label{clm:contain_v*}
    Every nonempty component in the symmetric difference graph $\cG_0$ must contain $v_*$. 
\end{claim}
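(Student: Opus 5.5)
We argue by contradiction, using uniqueness of both optimal matchings together with an exchange (augmentation) argument along a component. Suppose some nonempty component $K$ of $\mathcal G_0$ does not contain $v_*$. By the previous step, $K$ is either an alternating path whose two endpoints are controls, or an alternating cycle. Its edges split into $K_1=K\cap\mathcal M_1$ and $K_2=K\cap\mathcal M_2$.

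We then form the two swapped configurations $\mathcal M_1'=(\mathcal M_1\setminus K_1)\cup K_2$ and $\mathcal M_2'=(\mathcal M_2\setminus K_2)\cup K_1$. First we check that each is a valid injection, i.e.\ every treated vertex still has exactly one edge and no control is used twice. For a treated vertex in $K$, both of its edges lie in $K$ (it has degree two in $\mathcal G_0$), so swapping replaces one edge by the other. For a control in $K$, the alternating structure and the fact that $K$ is a full connected component of the symmetric difference ensure it is used at most once after the swap. Controls outside $K$ are untouched. For a path component, the endpoint control that becomes selected in $\mathcal M_1'$ is, by construction, unselected in $\mathcal M_1$ off $K$ (otherwise it would carry another edge of $\mathcal M_1$ lying in $\mathcal G_0$ or shared with $\mathcal M_2$, contradicting degree counts). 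Hence $\mathcal M_1'$ and $\mathcal M_2'$ are feasible.

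Since $v_*\notin K$, every edge of $K$ avoids the changed row or column, so $c_e=c'_e$ for $e\in K$. Writing $\Delta=\sum_{e\in K_2}c_e-\sum_{e\in K_1}c_e$, we get $\mathrm{cost}_c(\mathcal M_1')=\mathrm{cost}_c(\mathcal M_1)+\Delta$ and $\mathrm{cost}_{c'}(\mathcal M_2')=\mathrm{cost}_{c'}(\mathcal M_2)-\Delta$, using $c'$ on $K$ equal to $c$. Because $K$ is nonempty, $\mathcal M_1'\neq\mathcal M_1$ and $\mathcal M_2'\neq\mathcal M_2$. Uniqueness of $\mathcal M_1$ under $c$ then forces $\Delta>0$, while uniqueness of $\mathcal M_2$ under $c'$ forces $-\Delta>0$. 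This is a contradiction, so every nonempty component contains $v_*$.

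The main care point is the feasibility check for the swapped matchings on path components, in particular that the endpoint controls are free in the other matching. This follows from the degree classification of $\mathcal G_0$ established above. The cost bookkeeping is routine once we note that edges incident to $v_*$ are exactly those excluded from $K$.
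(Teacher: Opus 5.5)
Your proposal is correct and follows essentially the same switching argument as the paper: swap the edges of a component avoiding $v_*$, observe that edge costs on it are identical under $c$ and $c'$, and contradict uniqueness. The only difference is cosmetic. You use strict uniqueness on both sides to obtain $\Delta>0$ and $-\Delta>0$, whereas the paper uses optimality on both sides to get $s_H=0$ and then invokes uniqueness once. Your feasibility check of the swapped matchings, in particular that the newly selected endpoint control is free in $\mathcal M_1$, is if anything more careful than the paper's.
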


\begin{proof}[Proof of Claim~\ref{clm:contain_v*}]
    Suppose a non-empty component $H$ in  $\cG_0$ does not contain $v_*$. 
We define a \texttt{switching} operation, where \texttt{switching} this component means removing its edges in $\cM_1$ (roughly denoted by $H\cap \cM_1$) and inserting its edges in $\cM_2$ (denoted by $H\cap \cM_2)$, leaving all other edges unchanged. 

Starting from any injection (represented by a bipartite graph), \texttt{switching} the component $H$ yields another injection, 
since each treated vertex or each internal control (i.e., control vertices with degree $2$) still has degree $1$ (one edge in $\cM_1$ removed and one edge in $\cM_2$ added); moreover, the endpoint controls if $H$ is a path either changes from being selected to not or vice versa. No vertex outside $H$ is affected. Therefore, \texttt{switching} from $\cM_1$ or $\cM_2$ gives a feasible injection.

Define the total cost change for switching from $\cM_1$ (which leads to $\cM_2$) by
\begin{equation*}
 s_H=\sum_{(i,j)\in\mathcal M_2\cap H}c_{ij}
       -\sum_{(i,j)\in\mathcal M_1\cap H}c_{ij}.
\end{equation*}
That is, evaluated under $(c_{ij})$, the cost of $\cM_2$ increases from $\cM_1$ by $s_H\geq 0$ since $\cM_1$ is optimal under $(c_{ij})$.

Now consider switching from $\cM_2$, which leads to $\cM_1$. 
Since $v^*$ is not in $H$, every edge in $H$ has the same cost in $(c_{ij})$ and $(c_{ij}')$, so this switch evaluated under $c'$, changes its cost (from $\cM_2$ to $\cM_1$) by $-s_H$. Optimality of $\cM_2$ under $c'$ gives $-s_H\ge0$. Hence $s_H=0$. But $H$ is non-empty and contains only non-common edges, so the switch from $\cM_1$ to $\cM_2$ produces a different matching with the same minimum cost under $c$. This contradicts the uniqueness.
Thus, every nonempty component must contain $v_*$. 
\end{proof}

By Claim~\ref{clm:contain_v*}, since different components in $\cG_0$ have no common vertex, there is at most one non-empty component in $\cG_0$. 
If none exists, the graph $\cG_0$ is empty, and all edges in $\cM_1$ and $\cM_2$ agree, so $S_{\sigma^\star} = S_{\sigma_\star'}$. 
If there is one component, and if it is a cycle, then because each internal control vertex in the cycle is selected in both $\cM_1$ and $\cM_2$, we know $S_{\sigma^\star} = S_{\sigma_\star'}$.  
If it is a path, then every internal control vertex is selected in both $\cM_1$ and $\cM_2$. 
Only the two control endpoints are selected in only one of the two matchings; moreover, by our earlier arguments, they belong to different matchings, thus $S_{\sigma_\star'}$ is obtained by removing one endpoint control vertex $j$ from $S_{\sigma^\star}$ and add another $j'$. This proves Lemma~\ref{lem:control_set_stability}.
\end{proof}

\subsection{Proof of Lemma~\ref{lem:count_stable}}
\label{app:subsec_proof_lem_count_stab}

\begin{proof}[Proof of Lemma~\ref{lem:count_stable}]
Let $\beta=1+2\zeta$. Since $\zeta<\delta/8$, we have
$\beta<1+\delta$. Hence we may choose a fixed
$\epsilon\in(0,1)$ sufficiently small so that 
$\kappa:=(1-\epsilon)(1+\delta) -\beta(1+\epsilon)>0$. 

For a fixed cell $Q$, consider the event 
$\mathcal G_Q
=
\left\{
N_{Q,0}\ge(1-\epsilon)\mu_{Q,0},~
N_{Q,1}\le(1+\epsilon)\mu_{Q,1}
\right\}$. 
On $\mathcal G_Q$, Assumption~\ref{ass:cov} implies
\$
N_{Q,0}-\beta N_{Q,1} \ge
(1-\epsilon)\mu_{Q,0}
-\beta(1+\epsilon)\mu_{Q,1} \ge
\left\{(1-\epsilon)(1+\delta)
-\beta(1+\epsilon)\right\}\mu_{Q,1} =\kappa\mu_{Q,1}.
\$
By the construction of the partition and the fixed lower bounds
on $n_1/n$ and $p_1$, there exists a constant $c_0>0$ such that, 
$\inf_{Q}\mu_{Q,1}\ge c_0 c_{\mathrm{grid}}\log n$ 
for all sufficiently large $n$. Therefore, for sufficiently large
$n$, 
$\kappa\mu_{Q,1}\ge4$, 
and hence $\mathcal G_Q$ implies 
$N_{Q,0}\ge \beta N_{Q,1}+4$. 

It remains to bound the probability that $\mathcal G_Q$ fails.
Since $N_{Q,0}$ and $N_{Q,1}$ are binomial, Lemma~\ref{lem:chernoff} gives
\[
\begin{aligned}
\mathbb P(\mathcal G_Q^c)
&\le
\mathbb P\!\left\{
N_{Q,0}<(1-\epsilon)\mu_{Q,0}
\right\}
+
\mathbb P\!\left\{
N_{Q,1}>(1+\epsilon)\mu_{Q,1}
\right\}\\
&\le
e^{- \epsilon^2 \mu_{Q,0}/2} 
+
e^{- \epsilon^2\mu_{Q,1}/(2+\epsilon)} \le
2\exp\{-c_1c_{\mathrm{grid}}\log n\}
=
2n^{-c_1c_{\mathrm{grid}}},
\end{aligned}
\]
for a constant $c_1>0$ depending only on the fixed regularity
constants and on $\delta,\zeta$.
There are $q_n^d\le Cn/(c_{\mathrm{grid}}\log n)$ cells for all
sufficiently large $n$. A union bound therefore gives
\[
\mathbb P(\mathcal E_n^c)
\le
\frac{Cn}{c_{\mathrm{grid}}\log n}
\,n^{-c_1c_{\mathrm{grid}}}.
\]
For any fixed $b>0$, choosing $c_{\mathrm{grid}}$ sufficiently
large makes the right-hand side at most $n^{-b}$ for all sufficiently
large $n$.

We then verify the replacement claim. Suppose $D\in\mathcal E_n$.
If a treated observation is replaced, then for every cell $Q$, 
$N'_{Q,0}=N_{Q,0}$ and
$N'_{Q,1}\le N_{Q,1}+1$. 
Thus
\$
N'_{Q,0}-(1+\zeta)N'_{Q,1}
&\ge
(1+2\zeta)N_{Q,1}+4
-(1+\zeta)(N_{Q,1}+1) =
\zeta N_{Q,1}+3-\zeta>0.
\$
If instead a control observation is replaced, then 
$N'_{Q,1}=N_{Q,1}$ and 
$N'_{Q,0}\ge N_{Q,0}-1$, 
and hence
\$
N'_{Q,0}-(1+\zeta)N'_{Q,1}
&\ge
(1+2\zeta)N_{Q,1}+3
-(1+\zeta)N_{Q,1} = 
\zeta N_{Q,1}+3>0.
\$
Therefore 
$N'_{Q,0}\ge(1+\zeta)N'_{Q,1}$ 
for every cell $Q$.
\end{proof}

\subsection{Proof of Lemma~\ref{lem:local_edge}}
\label{app:subsec_proof_lem_local_edge}

\begin{proof}[Proof of Lemma~\ref{lem:local_edge}]
Let $\cQ(S)$ be the cells containing at least one vertex in $S$. Every control in these cells belongs to $\mathcal N(S)$, so
\[
 |\mathcal N(S)|=\sum_{Q\in\mathcal Q(S)}N_{Q,0}
 \ge(1+\zeta)\sum_{Q\in\mathcal Q(S)}N_{Q,1}
 \ge(1+\zeta)|S|.
\]

Now consider the stated partial mapping. 
We perform a search which starts at the unique unmatched treated vertex, forming a singleton treated vertex set $S_0$. 
Then, for each $k\geq 0$, we follow all \textit{local} edges from treated sets in $S_k$ to controls, and then follow all existing edges in the matching (not necessarily local) from controls to their treated partners. The new set $S_{k+1}$ is formed by adding the newly visited treated vertices to $S_k$. 
In this search, we ignore local edges already in the matching, because searching along them do not increase $S_k$.  
We claim that after a bounded number of rounds of search, $\cN(S_k)$ must reach an unmatched control. 
To see this, if $\mathcal N(S_k)$ contains no unmatched control, then due to partial matching, every vertex in $\mathcal N(S_k)$ has a distinct treated partner, and all these partners are included in $S_{k+1}$. Thus $|S_{k+1}|\ge|\mathcal N(S_k)|\ge(1+\zeta)|S_k|$. This cannot persist for more than $\lceil\log n_1/\log(1+\zeta)\rceil+1$ rounds, because there are only $n_1$ treated vertices.

Whenever a new vertex $v$ is discovered for the first time, we record the \textit{first-discovery predecessor} $u=\text{pred}(v)$ as the other endpoint of the corresponding edge. For example,  $v$ may be a control vertex first reached because it is connected to some $u\in S_k$ by a local edge; or $v$ may be a treated vertex first reached because it is not in $S_k$ but is matched to some $u\in \cN(S_k)$ hence first discovered in round $k+1$. 
Since each control vertex may be connected by multiple treated vertices with local edges, we fix a total ordering of the vertices and take $\text{pred}(v)$ as the first vertex in this ordering satisfying the \textit{first-discovery predecessor} criterion. 
In this way, every vertex $v$ has a unique predecessor. 

For the reached unmatched control vertex, we trace back the search recursively via first-discovery predecessors to the initial vertex. 
This path thus alternates between local edges not in the matching (since we ignore local edges in the matching) and edges in the matching, starting and ending with a local edge not in the partial matching. 
Now, we start from the partial matching, remove the matching edges on this path and add the other (local) edges. 
Thus, each internal vertex loses one edge and gains one, while both endpoints gain one. 
The resulting graph covers all treated vertices and uses every control at most once. Moreover, the added edges are local by construction. 
Since the search persists no more than $\lceil\log n_1/\log(1+\zeta)\rceil+1$ rounds and each round contributes at most $2$ edges to this path, the length of this path is at most $2\lceil\log n_1/\log(1+\zeta)\rceil+3$.
\end{proof}

\subsection{Proof of Lemma~\ref{lem:unique}}
\label{app:subsec_lem_unique_tilt}

\begin{proof}[Proof of Lemma~\ref{lem:unique}]
Fix distinct injections $\sigma,\tau$ and an index $i$ with $\sigma(i)\ne\tau(i)$. Conditional on all controls and on all treated observations except $T_i$, the difference of their objectives is affine in $T_i$. The quadratic terms in $T_i$ and the two terms $tg(T_i)$ cancel. The linear coefficient is $n_1^{-1}A\{C_{\tau(i)}-C_{\sigma(i)}\}$, which is nonzero almost surely because the controls are independent with a density and $A$ is invertible. Equality requires $T_i$ to lie on a hyperplane, which has conditional probability zero. There are finitely many pairs of injections, so by the union bound, we know there are no ties almost surely.  
\end{proof}

\subsection{Proof of Lemma~\ref{lem:stab_opt_value}}
\label{app:subsec_proof_lem_opt_stab}

\begin{proof}[Proof of Lemma~\ref{lem:stab_opt_value}]
This proof uses Lemma~\ref{lem:count_stable} and Lemma~\ref{lem:local_edge}. 
We suggest reading Appendix~\ref{app:subsec_match_graph_setup} and Appendix~\ref{app:subsec_local_cells} before reading this proof.

Start from an optimizer (an injection, one-to-one matching which can be represented by a graph with $n_1$ edges as before) for $D$. We consider three cases for the one-coordinate replacement:
\begin{enumerate}[label=(\arabic*).,leftmargin=2.5em]\setlength\itemsep{-0.25em}
    \item If a treated sample is replaced, remove its edge and retain the other $n_1-1$ edges with endpoints in $D'$. 
    \item If a selected control is replaced, remove its edge and retain the other $n_1-1$ edges with endpoints in $D'$. 
    \item If an unselected control is replaced, the original injection remains feasible with identical objective in $D'$.
\end{enumerate}

In the first two cases, the new counts bounds in $D'$ in Lemma~\ref{lem:count_stable} applies, i.e., $N_{Q,0}'\geq (1+\zeta)N_{Q,1}'$ for any cell $Q$. 
Additionally applying Lemma~\ref{lem:local_edge} to $D'$, we know the partial matching obtained after the stated operations can be turned to a complete matching using $O(\log n)$ added and removed edges.
Every added edge costs at most $C(r_n^2+t^2)$ by~\eqref{eq:A2:edge-upper}. Every removed edge, including the initially deleted one evaluated in $D$, costs at least $-Ct^2$ by~\eqref{eq:A2:edge-lower}; deleting it can therefore increase the cost by at most $Ct^2$. All retained edges have the same endpoints and costs in the two datasets. The total increase (before normalization of $1/n_1$) is at most $C(r_n^2+t^2)\log n=CH_n(t)$. Division by $n_1\ge a_{\mathrm s}n$ and minimization in $D'$ give $V_{D'}(t)\le V_D(t)+CH_n(t)/n$.

For the other direction, we start from an optimizer in $D'$ and remove the affected edge if it is present. Performing the stated three cases yields a partial matching in $D$. Note that $D$  satisfies $N_{Q,0}\geq (1+\zeta) N_{Q,1}$ because $D\in\cE_n$, hence Lemma~\ref{lem:local_edge} applies to $D$. The same edge bounds and normalization give $V_D(t)\le V_{D'}(t)+CH_n(t)/n$. 
Combining the two directions yields the desired result.
\end{proof}

\subsection{Proof of Lemma~\ref{lem:value-bounds}}
\label{app:subsec_lem_value_bounds}
\begin{proof}[Proof of Lemma~\ref{lem:value-bounds}]
    Fix $|t|\le1$. Since $\|x-y\|^2\le d$, every edge satisfies $|\ell_t(x,y)|\le b_A d/2+2\|g\|_\infty=:M_0$. 
    Thus $|V_{\bar D}(t)|\le M_0$ for every dataset $\bar D\in\Omega^n$. In particular, $|V_D(t)-V_{\bar{D}}(t)|\leq 2M_0$ for any $D,\bar D\in \Omega^n$. 
    Choose a fixed constant $b>2+4/d$ and use Lemma~\ref{lem:count_stable} to 
    choose a fixed $\cgrid$ so that $\PP(\cE_n^c)\le n^{-b}$.

For notational simplicity, denote $(W_1,\dots,W_n)=(T_1,\dots,T_{n_1},C_1,\dots,C_{n_0})$, i.e., let  $W_k=T_k$ for $k\le n_1$ and $W_k=C_{k-n_1}$ for $n_1<k\le n$. 
For dataset $D=(W_k)_{k=1}^n$, for each $k$, let $D^{(k)}$ be the data obtained by replacing $W_k$ in $D$ by an independent copy from the corresponding treated or control covariate distribution. On the event $D\in\cE_n$, Lemma~\ref{lem:stab_opt_value} gives $|V_D(t)-V_{D^{(k)}}(t)|\leq CH_n(t)/n$; on the event $D\notin \cE_n$, the arguments early on gives $|V_D(t)-V_{D^{(k)}}(t)|\leq 2M_0$. Putting the two cases together we obtain 
\[
 \EE\{V_D(t)-V_{D^{(k)}}(t)\}^2
 \le \frac{CH_n(t)^2}{n^2}+4M_0^2n^{-b}.
\]
Since $W_k$'s are independent and $V_D(t)$ is square-integrable, Lemma~\ref{lem:ES}, summed over $k\in[n]$, yields
\[
 \Var V_D(t)\le\frac12\sum_{k=1}^n
       \EE\{V_D(t)-V_{D^{(k)}}(t)\}^2
 \le\frac{CH_n(t)^2}{n}+2M_0^2n^{1-b},\quad \forall t\in [-1,1].
\] 
Indeed, $H_n(t)\ge\Gamma_n$ and $\Gamma_n^2/n\asymp n^{-1-4/d}(\log n)^{2+4/d}$, so $n^{1-b}/(\Gamma_n^2/n)=O\{n^{2+4/d-b}/(\log n)^{2+4/d}\}\to0$ by the choice of $b$. This proves the first bound in~\eqref{eq:value-bounds}.

We now proceed to prove the second bound in~\eqref{eq:value-bounds}. 
Let $\bar T_1\sim p_1$ be independent of the remaining observations. Define the coupled datasets
\[
 \bar D=(\bar T_1,T_2,\ldots,T_{n_1},C_1,\ldots,C_{n_0}),\qquad
 \bar D_x=(x,T_2,\ldots,T_{n_1},C_1,\ldots,C_{n_0}).
\]
Then $D$ and $\bar D$ has the same law, and 
\[
 \mu_t(x)-\EE V_D(t)=\EE\{V_{\bar D_x}(t)-V_{\bar D}(t)\}.
\]
On the event $\bar D\in \cE_n$, Lemma~\ref{lem:stab_opt_value} gives $|V_{\bar{D}_x}(t)-V_{\bar D}(t)|\leq CH_n(t)/n$ for every $x\in\Omega$. On its complement the bound is $2M_0$ for every $x\in \Omega$. Consequently,
\[
 \sup_{x\in\Omega}|\mu_t(x)-\EE V_D(t)|
 \leq \EE\Big\{ \frac{CH_n(t)}{n} \ind\{\bar D\in \cE_n\} +2M_0\ind\{\bar D\notin \cE_n\} \Big\} 
 \leq \frac{CH_n(t)}{n}  + 2M_0 n^{-b}.
\]
The remainder is negligible because $H_n(t)\geq \Gamma_n$ and  
 $\frac{n^{-b}}{\Gamma_n/n}
 =O (\frac{n^{1+2/d-b}}{(\log n)^{1+2/d}} )\to 0$. 
Since $H_n(t)\ge\Gamma_n$, this proves the second bound in~\eqref{eq:value-bounds}. 

Finally, we prove the last claim. 
Write the measurable optimizer as $\sigma_t = s_t(D)$. 
On $\cE_n$, by Lemma~\ref{lem:count_stable}, every cell $Q$ contains enough controls to match all the treated observations within that cell. Combining the cellwise injections gives a feasible $\bar\sigma$, since distinct cells have disjoint vertices. Each of its edges has length at most $r_n$, so~\eqref{eq:A2:edge-upper} and optimality give $V_D(t)\le C_1(r_n^2+t^2)$. On the other hand, averaging both sides of~\eqref{eq:A2:edge-lower} over any optimizer gives $V_D(t)\ge(a_A/4)L_D(\sigma_t)-C_2t^2$. Therefore $L_D(\sigma_t)\le C(r_n^2+t^2)$ on $D\in\cE_n$.  
On the other hand, on the event $D\in \cE_n^c$, the diameter of the cube gives $L_D(\sigma_t)\le d$. Combining the two cases, we obtain  $\EE [L_D(\sigma_t)]\le C(r_n^2+t^2)+dn^{-b}$. Finally, noting that $n^{-b}/r_n^2=O\{n^{2/d-b}/(\log n)^{2/d}\}\to0$ yields the desired bound.
\end{proof}

\subsection{Proof of Lemma~\ref{lem:slope}}
\label{app:subsec_lem_slope}

\begin{proof}[Proof of Lemma~\ref{lem:slope}]
Fix a deterministic $t\in[-1,1]$. By Lemma~\ref{lem:unique},
the tilted objective has a unique minimizer at $t$ with
probability one. Fix a dataset in this event, and denote
its minimizer by $\sigma_t$.

For each injection $\sigma\in\mathcal A_{n_1,n_0}$, write
\[
b_\sigma=B_{D,\sigma,A}(g),
\qquad
\varphi_\sigma(s)=Q_D(\sigma,A)+s b_\sigma.
\]
Here $D,A,g$ are held fixed, so $b_\sigma$ is a constant
and $\varphi_\sigma$ is affine in $s$. By definition,
$V_D(s)=\min_\sigma\varphi_\sigma(s)$. Recall $\sigma_t$ is the minimizer defined in~\eqref{eq:def_Vd_minimizer}.

If there is only one feasible injection, $V_D$ is affine
and the derivative formula follows immediately.
Otherwise, uniqueness and finiteness imply
\[
\gamma
:=
\min_{\sigma\ne\sigma_t}
\{\varphi_\sigma(t)-\varphi_{\sigma_t}(t)\}>0.
\]
Set 
$M=\max_{\sigma\ne\sigma_t}|b_\sigma-b_{\sigma_t}|$ and  
$r=\frac{\gamma}{2(1+M)}>0$. 
For every $s$ with $|s-t|<r$ and every
$\sigma\ne\sigma_t$,
\[
\begin{aligned}
\varphi_\sigma(s)-\varphi_{\sigma_t}(s)
&=
\varphi_\sigma(t)-\varphi_{\sigma_t}(t)
+(s-t)(b_\sigma-b_{\sigma_t}) \ge \gamma-|s-t|M
\ge \gamma-rM
\ge \gamma/2>0.
\end{aligned}
\]
Thus $\sigma_t$ remains the unique minimizer whenever
$|s-t|<r$, and in which case 
$V_D(s)-V_D(t)=(s-t)b_{\sigma_t}$. 
Taking the difference quotient shows that almost surely 
\@\label{eq:def_VD't}
V_D'(t)=b_{\sigma_t}=B_{D,\sigma_t,A}(g).
\@ 
Finally, 
$|B_{D,\sigma_t,A}(g)|
\le
\frac1{n_1}\sum_{i=1}^{n_1}
\bigl\{|g(T_i)|+|g(C_{\sigma_t(i)})|\bigr\}
\le 2\|g\|_\infty$. 
Therefore, the derivative $V_D'(t)$ can be well-defined to be a bounded measurable function because one can set it to be zero on the null event where
uniqueness fails.  
By Lemma~\ref{lem:Taylor}, each matched pair satisfies 
$g(T_i)-g(C_{\sigma_t(i)})=\nabla g(T_i)^\top\{T_i-C_{\sigma_t(i)}\}+R_i(t)$, 
where $|R_i(t)|\le(G_2/2)\|T_i-C_{\sigma_t(i)}\|^2$. Averaging over $i$ therefore gives $\big|V_D'(t)-G_D(t) \big|\le(G_2/2)L_D(\sigma_t)$, where we define 
\[
 G_D(t)=\frac1{n_1}\sum_{i=1}^{n_1}
    \nabla g(T_i)^\top\{T_i-C_{\sigma_t(i)}\}, \quad \text{and}\quad   J_g=\EE\{\nabla g(T_1)^\top A^{-1}\nabla g(T_1)\},
\]
and $J_g\geq 0$ is fixed constant to be used later. 
Therefore, by triangle inequality, 
\@\label{eq:bd_VD't_1}
|\EE V_D'(t)|\leq |\EE G_D(t) | + (G_2/2) \EE[L_D(\sigma_t)] \leq |\EE G_D(t) | + C(r_n^2+t^2).
\@ 
almost surely, where the second inequality uses the last claim in Lemma~\ref{lem:value-bounds}, and
Here $G_D(t)$ depends on $t$ through $\sigma_t$, whereas $J_g$ is a fixed nonnegative number.  
We shall further control the term $G_D(t)$ in~\eqref{eq:bd_VD't_1} together with $J_g$. 
The proof of Lemma~\ref{lem:IBP} is in Appendix~\ref{app:subsec_lem_IBP}. 
As preparation, define the functions
\@\label{eq:def_v_a}
v(x)=A^{-1}\nabla g(x), \quad a(x)=p_1(x)v(x).
\@
For the vector field $a=(a_1,\ldots,a_d)^\top$, its divergence is the scalar $\nabla\!\cdot a=\sum_{k=1}^d\partial_k a_k$. Both $a$ and its first derivatives are continuous and bounded. Let $\nu(x)$ denote the outward unit normal to
$\Omega=[0,1]^d$ at $x\in\partial\Omega$, and let $dS$
denote surface measure. On the relative interior of the face $x_k=1$, $\nu(x)=e_k$; on the face $x_k=0$,
$\nu(x)=-e_k$, where $e_k$ is the $k$th standard basis vector.
Intersections of faces have zero surface measure.

\begin{lemma}\label{lem:IBP}
Recall the functions defined in~\eqref{eq:def_v_a}. 
For every deterministic $|t|\le1$,
\@\label{eq:IBP} 
 \EE [G_D(t)]+tJ_g
 =n_1\bigg[&\int_{\partial\Omega}\{\mu_t(x)-\EE V_D(t)\}
 a(x)^\top\nu(x)\ud S(x) -\int_\Omega\{\mu_t(x)-\EE V_D(t)\}
 \{\nabla\!\cdot a(x)\}\ud x\bigg].
\@
\end{lemma}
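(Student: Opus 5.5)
The plan is to identify $G_D(t)$ with a gradient of the tilted optimal value with respect to the treated covariates (an envelope-theorem argument), and then move that gradient onto the deterministic function $\mu_t$ by integration by parts on $\Omega$.

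\emph{Step 1 (envelope identity in $T_i$).} Fix deterministic $|t|\le1$, an index $i$, and all other observations. View $x\mapsto V_{D^{i\to x}}(t)$ as the minimum over finitely many injections $\sigma$ of the functions $x\mapsto \frac1{n_1}\{\ell_t(x,C_{\sigma(i)})+\sum_{k\ne i}\ell_t(T_k,C_{\sigma(k)})\}$. Each of these is $C^1$ in $x$ with gradient bounded uniformly in $\sigma$, $x$ and the data by $C/n_1$. Hence the minimum is Lipschitz in $x$ with constant $C/n_1$. At points where the minimizer is unique, which is almost every $x$ by the argument of Lemma~\ref{lem:unique}, the minimizer is locally constant in $x$ by the same gap argument as in the proof of Lemma~\ref{lem:slope}. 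At such points
\[
n_1\nabla_{T_i}V_D(t)=A\{T_i-C_{\sigma_t(i)}\}+t\nabla g(T_i).
\]
Solving for $T_i-C_{\sigma_t(i)}$ and pairing with $\nabla g(T_i)$ gives
\[
\nabla g(T_i)^\top\{T_i-C_{\sigma_t(i)}\}=n_1 v(T_i)^\top\nabla_{T_i}V_D(t)-t\,\nabla g(T_i)^\top A^{-1}\nabla g(T_i).
\]
Averaging over $i$, taking expectations, and using exchangeability of the treated units yields
\[
\EE G_D(t)+tJ_g=n_1\,\EE\big[v(T_1)^\top\nabla_x V_{D^{1\to x}}(t)\big|_{x=T_1}\big].
\]

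\emph{Step 2 (pass to $\mu_t$).} Condition on $(T_2,\dots,C_{n_0})$ and integrate over $T_1\sim p_1$. The right side becomes $n_1\int_\Omega p_1(x)v(x)^\top \EE[\nabla_x V_{D^{1\to x}}(t)]\,\ud x$. The functions $x\mapsto V_{D^{1\to x}}(t)$ are uniformly Lipschitz, so $\mu_t$ is Lipschitz. Difference quotients are bounded by $C/n_1$, so dominated convergence gives $\nabla\mu_t(x)=\EE[\nabla_xV_{D^{1\to x}}(t)]$ at almost every $x$. Fubini's theorem justifies the exchange of expectation and integral, since the exceptional set for differentiability is null in $(x,\text{rest})$ jointly. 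This step, namely making the almost-everywhere differentiability and the interchange rigorous given the finitely many nondifferentiability hyperplanes, is the main technical obstacle. I would handle it by working coordinatewise: for almost every line parallel to $e_k$, the restriction is absolutely continuous, so derivatives exist almost everywhere on that line and the one-dimensional fundamental theorem of calculus applies.

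\emph{Step 3 (integration by parts).} Replace $\mu_t$ by $\mu_t-\EE V_D(t)$, which has the same gradient. Since $a=p_1v$ is $C^1$ on a neighborhood of $\Omega$ and $\mu_t$ is Lipschitz, apply the divergence theorem on the cube coordinatewise. For each $k$, integrate $\partial_k(\mu_t-\EE V_D)\,a_k$ in $x_k$ by one-dimensional integration by parts for absolutely continuous functions, then integrate over the remaining coordinates. This gives
\[
\int_\Omega a^\top\nabla(\mu_t-\EE V_D)\,\ud x=\int_{\partial\Omega}(\mu_t-\EE V_D)\,a^\top\nu\,\ud S-\int_\Omega(\mu_t-\EE V_D)\,\nabla\!\cdot a\,\ud x.
\]
The face contributions come with signs matching $\nu=\pm e_k$, and the edges of the cube have zero surface measure. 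Multiplying by $n_1$ yields \eqref{eq:IBP}.
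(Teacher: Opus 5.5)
Your proof is correct, and its ingredients are the paper's: the envelope formula for $\nabla_{T_i}V_D(t)$, exchangeability of the treated units, and coordinatewise integration by parts on the cube. The one real difference is the order in which you integrate by parts and take expectations.

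\emph{The paper's order.} It fixes $D_{-i}$ and integrates by parts pathwise for $F_i(x)=V_{D^{i\to x}}(t)$. On every coordinate line, $F_i$ is the common smooth term $\{x^\top Ax/2+tg(x)\}/n_1$ plus a concave piecewise-affine function, so it is continuous and piecewise $C^1$ with finitely many breakpoints. Only then does the paper take expectations, and $\EE F_i(x)=\mu_t(x)$ is simply the definition of $\mu_t$. Finally it subtracts $\EE V_D(t)$ times the same identity applied to the constant function $1$.

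\emph{Your order.} You average first and integrate by parts on $\mu_t$. This forces you to prove $\nabla\mu_t=\EE[\nabla_xV_{D^{1\to x}}(t)]$ almost everywhere, via the uniform $C/n_1$ Lipschitz bound, dominated convergence, and a Fubini argument for the joint null set of non-differentiability. That step is valid as you sketch it.

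\emph{Comparison.} The obstacle you flag as the main technical difficulty disappears entirely if you swap the order as the paper does. What your route buys is that integration by parts is applied to a single Lipschitz function. You therefore never need the piecewise-$C^1$ structure of the individual realizations.
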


Now, write $G_2=\sup_\Omega\|\nabla^2g\|_{\mathrm{op}}$ and define the finite constant
\[
 K_a=\int_{\partial\Omega}|a^\top\nu|\ud S
               +\int_\Omega|\nabla\!\cdot a|\ud x.
\]
Indeed, $\sup_\Omega\|a\|\le\|p_1\|_\infty L_g/a_A$ and
$\nabla\!\cdot a=\nabla p_1^\top A^{-1}\nabla g+p_1\operatorname{tr}(A^{-1}\nabla^2g)$. Thus
$\|\nabla\!\cdot a\|_\infty\le(\sup_\Omega\|\nabla p_1\|L_g+d\|p_1\|_\infty G_2)/a_A$.
The cube has volume one and $2d$ faces of area one, so $K_a$ is bounded independently of $n,t$. Also by the eigenvalue bounds we have $0\le J_g\le L_g^2/a_A$.

Taking absolute values in~\eqref{eq:IBP} and applying Lemma~\ref{lem:value-bounds} gives
\[
 \big|\EE [G_D(t)] +tJ_g \big|
 \le n_1K_a\sup_{x\in\Omega}|\mu_t(x)-\EE V_D(t)|
 \le CK_a\frac{n_1}{n}H_n(t)
 \le C'H_n(t).
\]
Hence we have 
\$ \big|\EE [G_D(t)] \big|\leq C'H_n(t) + |t| J_g\leq  C'H_n(t)+|t|L_g^2/a_A .
\$
Combining this with~\eqref{eq:bd_VD't_1} yields $|\EE V_D'(t)|\le C\{H_n(t)+|t|+r_n^2+t^2\}$. For large $n$, since $\log n\ge1$ we know $r_n^2+t^2\le H_n(t)$, which proves the claim.
\end{proof}

\subsection{Proof of Lemma~\ref{lem:IBP}}
\label{app:subsec_lem_IBP}

\begin{proof}[Proof of Lemma~\ref{lem:IBP}]
For a dataset with a unique optimizer, continuity of the finitely many objectives keeps this optimizer unchanged in a sufficiently small neighborhood of the data. This allows us to obtain the differentiation of $V_D(t)$ along an entry $T_i$ in $D$. We prove the following after this proof.  

\begin{lemma}\label{lem:diff_VDt}
    It holds almost surely that 
    \@\label{eq:diff_VDt}
    \nabla_{T_i}V_D(t) = \frac1{n_1}
\left[A\{T_i-C_{\sigma_t(i)}\}+t\nabla g(T_i)\right].
    \@
\end{lemma}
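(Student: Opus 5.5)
We need to prove Lemma~\ref{lem:diff_VDt}: almost surely, $\nabla_{T_i}V_D(t)=n_1^{-1}[A\{T_i-C_{\sigma_t(i)}\}+t\nabla g(T_i)]$. The plan is a local-stability argument. On a full-measure set of datasets the minimizer is unique. The objective of each fixed injection depends smoothly on $T_i$. Hence the minimizer stays the same for all nearby values of $T_i$, and $V_D(t)$ locally coincides with a single smooth function of $T_i$, which we then differentiate.

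\textbf{Step 1 (full-measure set).} Fix $t$. By Lemma~\ref{lem:unique}, the set $\mathcal U$ of datasets $D\in\Omega^n$ with a unique minimizer $\sigma_t$ has probability one. We also intersect with the event that $T_i$ lies in the interior of $\Omega$. Since $T_i$ has a density, this loses only a null set.

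\textbf{Step 2 (local persistence of the minimizer).} Fix $D$ in this set and write $\Phi_\sigma(x)=n_1^{-1}\sum_k\ell_t(T_k,C_{\sigma(k)})$ with $T_i$ replaced by $x$. Each $\Phi_\sigma$ is $C^1$ in $x$ on a neighborhood of $T_i$, because $g$ has a $C^2$ extension and the quadratic part is a polynomial. Let $\gamma=\min_{\sigma\neq\sigma_t}\{\Phi_\sigma(T_i)-\Phi_{\sigma_t}(T_i)\}$. We have $\gamma>0$ by uniqueness and finiteness of $\mathcal A_{n_1,n_0}$. Since there are finitely many continuous functions, there is a ball $U\ni T_i$ on which every gap $\Phi_\sigma-\Phi_{\sigma_t}$ stays above $\gamma/2$. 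This is the same device as in the proof of Lemma~\ref{lem:slope}. Therefore $V_D(t)=\Phi_{\sigma_t}(x)$ for all $x\in U$, with the other coordinates held fixed.

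\textbf{Step 3 (differentiate).} Only the $i$th summand of $\Phi_{\sigma_t}$ depends on $x$. Since $A$ is symmetric, $\nabla_x\tfrac12(x-C)^\top A(x-C)=A(x-C)$. Also $\nabla_x\,t\{g(x)-g(C)\}=t\nabla g(x)$. Evaluating at $x=T_i$ and dividing by $n_1$ gives~\eqref{eq:diff_VDt}. The set where the formula may fail is contained in the null set of Step 1, so the identity holds almost surely.

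The only delicate point is the measure-zero bookkeeping. Uniqueness is stated for a fixed $t$ and almost every $D$. The boundary of $\Omega$ must be handled either by the interior restriction or by the $C^2$ extension of $g$ beyond $\Omega$. This matters because the integration by parts in Lemma~\ref{lem:IBP} uses $\mu_t$ evaluated up to $\partial\Omega$.
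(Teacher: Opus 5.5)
Your proposal is correct and matches the paper's proof of Lemma~\ref{lem:diff_VDt} step for step. Both restrict to the probability-one event where the tilted minimizer is unique and $T_i\in\Omega^\circ$, use the positive gap $\gamma$ and continuity of the finitely many $C^1$ objectives to keep $\sigma_t$ as the minimizer on a small ball around $T_i$ (the paper bounds each perturbation by $\gamma/3$ where you bound the gaps below by $\gamma/2$, and it explicitly takes the ball inside $\Omega^\circ$ so that $D^{i\to x}$ stays a valid dataset), and then differentiate the single $x$-dependent pair cost.
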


Taking the inner product of~\eqref{eq:diff_VDt} with $v(T_i)=A^{-1}\nabla g(T_i)$, we have
\[
 \nabla_{T_i}V_D(t)^\top v(T_i)
 =\frac1{n_1}\nabla g(T_i)^\top\{T_i-C_{\sigma_t(i)}\}
    +\frac{t}{n_1}\nabla g(T_i)^\top A^{-1}\nabla g(T_i)
\]
almost surely. By Lemma~\ref{lem:diff_VDt}, equation~\eqref{eq:diff_VDt} holds almost surely under the joint distribution. 
By Fubini's theorem, equation~\eqref{eq:diff_VDt} holds for almost every $x$ for almost every fixed $D_{-i}$; we assign any bounded values for derivatives on null sets. Thus, summarizing over $i\in[n_1]$ and taking expectation yields 
\$
\EE[G_D(t)+tJ_g] =  \sum_{i=1}^{n_1} \EE\big[ \nabla_{T_i}V_D(t)^\top v(T_i) \big].
\$
We shall prove~\eqref{eq:IBP} by investigating each summation term. Throughout we fix $t\in [-1,1]$.

Fix $i\in[n_1]$ and denote all observations other than $T_i$ as $D_{-i}$. We condition on $D_{-i}$. Let $F_i(x)=V_{D^{i\to x}}(t)$. 
Then by definition, $\nabla_{T_i}V_D(t) = \nabla F_i(x)$. 
Conditional on $D_{-i}$, the density of $T_i$ is still $p_1$, so for a.s.~$D_{-i}$,
\[
 \EE[\nabla_{T_i}V_D(t)^\top v(T_i)\mid D_{-i}]
 =\int_\Omega\nabla F_i(x)^\top v(x)p_1(x)\ud x
 =\int_\Omega\nabla F_i(x)^\top a(x)\ud x.
\]
Marginalizing over $D_{-i}$ yields 
\$
\EE[\nabla_{T_i}V_D(t)^\top v(T_i)] = \EE\bigg[ \int_\Omega\nabla F_i(x)^\top a(x)\ud x\bigg].
\$

For fixed $D_{-i}$, every objective $Q_{D^{i\to x}}(\sigma,A)+t B_{D^{i\to x}}(\sigma,A)$  has the common $x$-dependent term $\{x^\top Ax/2+tg(x)\}/n_1$, plus an affine function of $x$. Thus $F_i(x) = \min_{\sigma}\{Q_{D^{i\to x}}(\sigma,A)+t B_{D^{i\to x}}(\sigma,A)\}$ is continuous and piecewise $C^1$ on every coordinate line, with finitely many breakpoints. Its Lipschitz constant is at most $(b_A\sqrt d+L_g)/n_1$ for $|t|\le1$ as the same bound holds for $Q_{D^{i\to x}}(\sigma,A)+t B_{D^{i\to x}}(\sigma,A)$. 
Moreover, by the arguments in the proof of Lemma~\ref{lem:value-bounds}, we know $|F_i(x)|\leq M_0:= b_Ad/2 + 2\|g\|_\infty$.  

Fix a spatial coordinate $k$ and write $x^{(k,u)}$ for a point whose $k$-th coordinate is $u$, with the other coordinates $x_{-k}$ held fixed. Apply the product rule to $F_i a_k$ on each of the finitely many smooth subinterval of $[0,1]$ and take the summation. Continuity cancels all internal endpoints, leaving
\[
\begin{split}
 &\int_0^1\partial_kF_i(x^{(k,u)})a_k(x^{(k,u)})\ud u =F_i(x^{(k,1)})a_k(x^{(k,1)})
   -F_i(x^{(k,0)})a_k(x^{(k,0)})
   -\int_0^1F_i(x^{(k,u)})\partial_k a_k(x^{(k,u)})\ud u.
\end{split}
\]
Now integrate over $x_{-k}\in[0,1]^{d-1}$ and sum over $k$. The endpoint terms are the face integrals with outward normals, so
\[
 \int_\Omega\nabla F_i^\top a\ud x
 =\int_{\partial\Omega}F_i a^\top\nu\ud S
     -\int_\Omega F_i\nabla\!\cdot a\ud x.
\] 

For every fixed $x$, we know $\EE[F_i(x)]=\mu_t(x)$, since the remaining treated observations have the same joint distribution for every $i$, and relabeling treated indices does not change the minimum objective. Summing the preceding identities over $i$ therefore gives
\[
\EE[G_D(t)+tJ_g] = \sum_{i=1}^{n_1} \EE\big[ \nabla_{T_i}V_D(t)^\top v(T_i) \big] 
 =n_1\left\{\int_{\partial\Omega}\mu_t a^\top\nu\ud S
                  -\int_\Omega\mu_t\nabla\!\cdot a\ud x\right\}.
\] 

Apply the same coordinate calculation to the constant function $F_i\equiv1$ gives $\int_{\partial\Omega}a^\top\nu\ud S=\int_\Omega\nabla\!\cdot a\ud x$. 
Multiplying both sides by $\EE[V_D(t)]$ and subtract from the above equation gives the desired result.
\end{proof}

\begin{proof}[Proof of Lemma~\ref{lem:diff_VDt}]
Fix a deterministic $t\in[-1,1]$. By Lemma~\ref{lem:unique}
and absolute continuity of the covariate distributions,
the tilted optimizer is unique and all treated points lie in $\Omega^\circ$ (the interior of $\Omega$) with probability one. 
In the following we work on this event. 
Fix $i$, hold all other observations fixed, and define
\[
\varphi_\sigma(x)
=
Q_{D^{i\to x}}(\sigma,A)
+tB_{D^{i\to x},\sigma,A}(g),
\qquad \sigma\in\mathcal A_{n_1,n_0}.
\]
It's straightforward to see that $\varphi_\sigma$ is continuously differentiable in $x$,
and $V_{D^{i\to x}}(t)=\min_\sigma\varphi_\sigma(x)$ takes the minimum of finitely many such curves.
Let $\sigma_t$ denote the unique minimizer at $x=T_i$.

If $|\cA_{n_1,n_0}|=1$, then $\phi_\sigma(x)$ is clearly differentiable in $x$. Otherwise, finiteness and uniqueness give
\[
\gamma
:=
\min_{\sigma\ne\sigma_t}
\{\varphi_\sigma(T_i)-\varphi_{\sigma_t}(T_i)\}>0.
\]
By continuity of $\phi_\sigma(x)$ in $x$ and finiteness of $\sigma\in \cA_{n_1,n_0}$, one can choose a constant $r_0>0$ with
$B_{r_0}(T_i)=\{x\colon \|x-T_i\|\leq r_0\}\subset\Omega^\circ$ such that 
$\max_{\sigma\in\mathcal A_{n_1,n_0}}
|\varphi_\sigma(x)-\varphi_\sigma(T_i)|
<\gamma/3$ 
whenever  $\|x-T_i\|\leq r_0$. 
For every such $x$ and every $\sigma\ne\sigma_t$, we know  
$\varphi_\sigma(x)-\varphi_{\sigma_t}(x)
>
\varphi_\sigma(T_i)-\varphi_{\sigma_t}(T_i)
- 2\gamma/3 \geq \gamma/3>0$.  
Consequently,
$V_{D^{i\to x}}(t)=\varphi_{\sigma_t}(x)$ whenever $x\in B_{r_0}(T_i)$.
Note that only the $i$th matched pair's cost depends on $x$, so differentiation
at $x=T_i$ yields
\begin{equation}\label{eq:A2:spatial-gradient}
\nabla_{T_i}V_D(t) := \frac{\ud V_{D^{i\to x}}(t)}{\ud x}\bigg|_{x=T_i}
=
\frac{\ud \varphi_{\sigma_t}(x)}{\ud x}\bigg|_{x=T_i} 
=
\frac1{n_1}
\left[A\{T_i-C_{\sigma_t(i)}\}+t\nabla g(T_i)\right]
\quad\text{almost surely}.
\end{equation}
This proves Lemma~\ref{lem:diff_VDt}.
\end{proof}

\subsection{Proof of Lemma~\ref{lem:cont_SDt}}
\label{app:subsec_lem_cont_SDt}

\begin{proof}[Proof of Lemma~\ref{lem:cont_SDt}]
  By definition, $V_{D,g}(0)=V_{D,f}(0)$ for any $g,f\in \cF_L$, thus $S_{D,t}(g) - S_{D,t}(f) = \frac{V_{D,g}(t)-V_{D,f}(t)}{t}$, where 
\$
\frac{V_{D,g}(t)-V_{D,f}(t)}{t} &\leq \frac{Q_{D}(\sigma_{D,f}(t),A_0) + t B_D(\sigma_{D,f}(t),g)- Q_{D}(\sigma_{D,f}(t),A_0) - t B_D(\sigma_{D,f}(t),f)}{t} \\ 
&= B_D(\sigma_{D,f}(t),g) - B_D(\sigma_{D,f}(t),f) \\
&= \frac{1}{n_1}\sum_{i=1}^{n_1} \big\{ g(T_i) - g(C_{\sigma_{D,f}(t)(i)}) - f(T_i) +f(C_{\sigma_{D,f}(t)(i)}) \big\} 
\leq 2\|g-f\|_\infty. 
\$
Applying the same idea to $\sigma_{D,g}(t)$ yields 
\$
\frac{V_{D,g}(t)-V_{D,f}(t)}{t}\geq B_D(\sigma_{D,g}(t),g) - B_D(\sigma_{D,g}(t),f)\geq  -2 \|g-f\|_\infty.
\$
Combining the two directions yields the desired result.   
\end{proof}

\subsection{Proof of Lemma~\ref{lem:unif:replacement}}
\label{app:unif:replacement-proof}

\begin{proof}[Proof of Lemma~\ref{lem:unif:replacement}]
Fix $g,f\in\mathcal F_L$, $t\ne0$, and the replacement coordinate $k$.
Write $q=g-f$ and $g_u=f+uq$ for $u\in[0,1]$. Since $\mathcal F_L$ is convex, we know $g_u\in\mathcal F_L$. Let $\sigma_u$ and $\sigma'_u$ be the selected exact optimizers of
$V_{D,g_u}(t)$ and $V_{D^{(k)},g_u}(t)$, respectively.
For these fixed functions and $t$, the corresponding objectives $V_{D,g_u}(t)$ and $V_{D^{(k)},g_u}(t)$ 
are jointly measurable
in the data and $u$.  
 
Given the dataset $D$ and a fixed injection $\sigma$, the objective involved in $V_{D,g_u}(t)$ is affine in $u$:
\[ 
Q_D(\sigma,A_0)+tB_D(\sigma,g_u)
 =Q_D(\sigma,A_0)+tB_D(\sigma,f)+utB_D(\sigma,q).
\]
Thus $u\mapsto V_{D,g_u}(t)$ is the minimal of a finite family of affine functions.
By Lemma~\ref{lem:envelope}, this map is absolutely continuous and at every differentiability
point its derivative is $tB_D(\sigma_u,q)$.
Integration over $u\in [0,1]$
gives
\begin{equation}\label{eq:unif:interpolation}
 S_{D,t}(g)-S_{D,t}(f)
 =\frac{V_{D,g}(t)-V_{D,f}(t)}t
 =\int_0^1 B_D(\sigma_u,q) \ud u.
\end{equation}
The first equality used $V_{D,g}(0)=V_{D,f}(0)$ for any $g,f\in \cF_L$. 
The same result holds for $D^{(k)}$ and $\sigma'_u$. 

For each fixed $u$, Lemma~\ref{lem:unique} applied to  $g=g_u$ gives
uniqueness of $\sigma_u$ and $\sigma_{u'}$ with probability one.
The event that either optimizer is non-unique is jointly measurable, so Tonelli's theorem
gives
\[
 \EE\int_0^1
 \ind\{\text{either optimizer at }u\text{ is non-unique}\}\,du=0.
\]
That is, for almost every coupled pair $(D,D^{(k)})$, both $(\sigma_u,\sigma_u')$
are unique for (Lebesgue)-almost every $u$. 

We now revisit the language in Appendix~\ref{app:subsec_match_graph_setup}. For a fixed $u$, the cost matrix which defines the objective $Q_{D}(\sigma,A_0)+B_{D,A}(g)$ is given by 
$c_{ij}= \frac12(T_i-C_j)^\top A_0(T_i-C_j)
+t\{g_u(T_i)-g_u(C_j)\}$. 
Since the corresponding cost matrix $(c_{ij})$ and $(c_{ij}')$ for $D$ and $D'$ only differs from one row/column by replacing one treated/control data vector, Lemma~\ref{lem:control_set_stability} implies that the
selected control index sets $\mathcal S = \{\sigma_u(i)\colon i \in [n_1]\}$   and $\mathcal S'=\{\sigma_u'(i)\colon i\in [n_1]\}$ of $\sigma_u$
and $\sigma'_u$ either agree or differ by one deletion and one addition. 

We define the constant 
$\omega_q=\sup_\Omega q-\inf_\Omega q\le2\|g-f\|_\infty$. 
Write $D=(T_1,\dots,T_{n_1},C_1,\dots,C_{n_0})$ and $D^{(k)}=(T_1',\dots,T_{n_1}', C_1', \dots, C_{n_0}')$ for easier reference. 
Consider two cases of the replacement on $D$:
\begin{itemize}
    \item If $k\leq n_1$, i.e., a treated unit is replaced by a  copy, then $\big| \sum_{i=1}^{n_1} q(T_j') -  \sum_{i=1}^{n_1} q(T_j)\big|\leq \omega_q$. The control units remain the same, and due to the above arguments on $\cS$ and $\cS'$, we know $\big| \sum_{j\in \cS'} q(C_j') - \sum_{j\in \cS} q(C_j)\big|\leq  \omega_q$.  
    \item If $k>n_1$, i.e., a control unit is replaced, write $j_0 = k-n_1$ as its index. Then the treated summation is unchanged, while $ \sum_{j\in\mathcal S'}q(C'_j)-\sum_{j\in\mathcal S}q(C_j)
 =  \sum_{j\in\mathcal S'}q(C_j)
                   -\sum_{j\in\mathcal S}q(C_j) 
  +\ind\{j_0\in\mathcal S'\} \{q(C'_{j_0})-q(C_{j_0})\}.$
  The absolute value of the first term is at most $\omega_q$ since $\cS$ and $\cS'$ differs by at most one removal and one addition, and the absolute value of the second term is at most $\omega_q$. 
\end{itemize}
Therefore, in both cases, we have 
\[
 |B_D(\sigma_u,q)-B_{D^{(k)}}(\sigma'_u,q)|
 \leq \frac{2\omega_q}{n_1}
 \leq \frac4{n_1}\|g-f\|_\infty.
\]
This holds for almost every $u$ on the probability-one event we
constructed. Following~\eqref{eq:unif:interpolation}, 
we then know 
\$
\left|\{S_{D,t}(g)-S_{D,t}(f)\}
       -\{S_{D^{(k)},t}(g)-S_{D^{(k)},t}(f)\}\right|
= \bigg| \int_0^1 B_D(\sigma_u, q)\ud u 
- \int_0^1 B_{D^{(k)}}(\sigma_u, q)\ud u  \bigg| \leq \frac4{n_1}\|g-f\|_\infty.
\$
Note that the aforementioned probability-one event is needed for each fixed values of $g,f,t,k$ without requiring an intersection over the entire function
class.
\end{proof}

%% file: 132_dim_balance.tex

\section{Proof of Theorem~\ref{thm:dimension-obs}}\label{app:sec_proof_boundary} 

This entire section is devoted to the proof of Theorem~\ref{thm:dimension-obs}. 
As in Appendix~\ref{app:sec_proof_uniform_balance}, we first prove the counterexample conditional on $n_1,n_0$ (Theorem~\ref{thm:dimension_fixed}), and then return to the observational sampling model.

We continue on $\Omega=[0,1]^d$, now with the fixed matrix $A_0=I_d$. 
For now, ${n_1},{n_0}$ are deterministic integers with $1\le {n_1}\le {n_0}$, the treated observations $T_1,\ldots,T_{n_1}$ are i.i.d. with density $p_1$, the controls $C_1,\ldots,C_{n_0}$ are i.i.d. with density $p_0$, and the two samples are independent. 
In our construction, we let $p_1(x)\equiv 1$, i.e., the treated samples are uniformly distributed on $\Omega$.  

For a list of control units $C=(C_1,\dots,C_{n_0})$, define  the  optimal matching objective (since $A_0=I_d$) 
\$
\mathfrak C_k(T_1,\dots,T_k;C)=\min_{\sigma\in\cA_{k,n_0}}\frac12\sum_{i=1}^{k}\norm{T_i-C_{\sigma(i)}}^2,\quad 
\bar{\mathfrak{C}}_k = \EE[\mathfrak{C}_k], \quad \mathfrak{C}_0 = 0.
\$
Throughout, we use a fixed lexicographic order to define the first minimizer, so the optimizers are unique almost surely. 
Thus $\mathfrak C_{n_1}={n_1}\min_\sigma Q_D(\sigma,I_d)$ and we denote the imbalance 
\$
B_{{n_1},{n_0}}(g)=B_D(\sigma_n^\star,g)
= \frac{1}{n_1}\sum_{i=1}^{n_1} \{g(T_i)-g(C_{\sigma_n^\star(i)})\}.
\$

Theorem~\ref{thm:dimension_fixed} gives the lower bound for fixed group sample sizes. 

\begin{theorem}\label{thm:dimension_fixed}
For every fixed integer $d\ge4$, there are a density $p_0$ and a function $g$, both extending to $C^\infty$ functions near $\Omega=[0,1]^d$, and constants $\rho,c,C>0$, such that the following holds. The density $p_0$ is bounded above and away from zero, $0\le g\le1$, and $\|\nabla g\|_\infty\le1$. Let $p_1(x)\equiv 1$ for $x\in \Omega$, let the controls have density $p_0$, and let ${n_1}=\lfloor\rho {n_0}\rfloor$ for some fixed $\rho>0$, and ${n_0}p_0(x)\ge16{n_1}$ for all $x$. For all sufficiently large ${n_0}$, 
\@\label{sharp:eq:main-moments}
 c{n_0}^{-2/d}\le\EE [B_{{n_1},{n_0}}(g)]\le C{n_0}^{-2/d},
 \qquad \Var\{B_{{n_1},{n_0}}(g)\}\leq 2(n_1+n_0)/{n_1^2}.
\@
For $d=4$, there are fixed $c_0,p_*>0$ such that
\@\label{sharp:eq:four-prob}
 \liminf_{{n_0}\to\infty}\PP\{\sqrt{{n_1}+{n_0}}\,B_{{n_1},{n_0}}(g)\ge c_0\}\ge p_*.
\@
For $d>4$, $\sqrt{{n_1}+{n_0}}\,B_{{n_1},{n_0}}(g)\to+\infty$ in probability. 
\end{theorem}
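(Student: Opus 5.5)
The variance bound in \eqref{sharp:eq:main-moments} comes directly from Efron--Stein (Lemma~\ref{lem:ES}) and the control-set stability of Lemma~\ref{lem:control_set_stability}. The expectation bounds come from the tilted-value identity $B_{n_1,n_0}(g)=V_D'(0)$ of Lemma~\ref{lem:slope} combined with the integration-by-parts identity of Lemma~\ref{lem:IBP}. The probabilistic conclusions then follow from the two moment bounds.

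\emph{Variance.} Replace one treated point or one control point by an independent copy. By Lemma~\ref{lem:control_set_stability}, the selected control sets of the two (a.s.\ unique) optimal matchings either coincide or differ by one swap. Since $0\le g\le1$, the following hold.
\begin{itemize}
\item If a treated point is replaced, $n_1B$ changes by at most $1$ from the treated sum and at most $1$ from the swap.
\item If a control point is replaced, $n_1B$ changes by at most $1$ from the swap and at most $1$ from the replaced value.
\end{itemize}
So $|B-B'|\le 2/n_1$ in every case. Lemma~\ref{lem:ES} then gives $\Var(B)\le \tfrac12 (n_1+n_0)\cdot 4/n_1^2$, which is the stated bound.

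\emph{Construction and mean.} Take $A_0=I_d$ and $p_1\equiv1$, so that $a=\nabla g$ in \eqref{eq:def_v_a}. Choose $g$ smooth, supported in a compact subset of $\Omega^\circ$, with $0\le g\le 1$ and $\|\nabla g\|\le1$. Then the boundary term in \eqref{eq:IBP} vanishes, and Lemmas~\ref{lem:slope} and~\ref{lem:IBP} at $t=0$ give
\[
\EE B=\EE G_D(0)+\EE[R],\qquad \EE G_D(0)=-n_1\int_\Omega\{\mu_0(x)-\EE V_D(0)\}\,\Delta g(x)\,dx,
\]
where $R$ is the Taylor remainder. Write $m(x):=n_1\{\mu_0(x)-\EE V_D(0)\}$; this is essentially the expected marginal cost of the treated point placed at $x$, up to a common additive constant. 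Since $\int\Delta g=0$, that constant drops out.

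The key step is two-sided local control of this marginal cost:
\[
c_1\{n_0p_0(x)\}^{-2/d}\le \EE\big[\tfrac12\|x-C_{\sigma(1)}\|^2\,\big|\,T_1=x\big]\le C_1\{n_0p_0(x)\}^{-2/d}.
\]
The upper bound uses the abundance $n_0p_0\ge16n_1$ together with a local augmenting-path repair as in Lemma~\ref{lem:local_edge}, carried out at the natural scale $\{n_0p_0(x)\}^{-1/d}$ without log factors. The lower bound uses a nearest-neighbour lower bound for controls near $x$. The quantity $m(x)$ is related to this conditional cost via the exchangeability relation $\mu_0(x)=\EE V_{D^{1\to x}}$.

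I then choose $p_0$ to take two levels: $p_{\rm low}$ on the region where $-\Delta g>0$ and $p_{\rm high}$ where $-\Delta g<0$, joined by a smooth transition in between. The ratio $p_{\rm high}/p_{\rm low}$ is taken so large that
\[
c_1p_{\rm low}^{-2/d}\int(\Delta g)_- \;>\; C_1p_{\rm high}^{-2/d}\int(\Delta g)_+ \;+\; \text{(remainder constant)}.
\]
The remainder satisfies $|\EE R|\le \tfrac12\|\nabla^2g\|\,\EE L_D(\sigma^\star)$. In the low-density region this is of order $\|\nabla^2 g\|\,(n_0p_{\rm low})^{-2/d}$ times the $g$-support, and the main term has the same order. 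So I must arrange that the gap between the densities, rather than the shape of $g$, drives the sign; scaling $p_{\rm low}$ down, or shrinking the low-density region relative to where $\nabla^2 g$ lives, should handle this. Altogether this yields $c\,n_0^{-2/d}\le\EE B$, and the same bounds give $\EE B\le Cn_0^{-2/d}$.

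\emph{Main obstacle.} The hard step is the sharp, log-free, two-sided estimate of the marginal cost $m(x)$ under global matching without replacement, together with making the resulting first-order contrast dominate the second-order Taylor remainder. The stability machinery of Appendix~\ref{app:subsec_local_cells} only provides $O(\log n)$-length repairs, so I would refine it using the surplus factor $16$. The goal is to show that the expected repair chain has bounded length, so that the repair cost stays at the natural scale.

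\emph{Conclusions.} Given the moment bounds, the probabilistic statements follow. For $d=4$, the mean of $\sqrt{n_1+n_0}\,B$ is at least a constant $c'>0$ and its variance is $O(1)$. Lemma~\ref{lem:expect_var_bd} then gives $\PP\{\sqrt{n_1+n_0}\,B\ge c'/2\}\ge p_*>0$, which is \eqref{sharp:eq:four-prob}. For $d>4$, the mean of $\sqrt{n_1+n_0}\,B$ diverges while its standard deviation stays bounded, so Chebyshev's inequality gives divergence to $+\infty$ in probability.
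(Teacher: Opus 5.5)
Your variance bound and your deductions from the two moment bounds are correct and coincide with the paper's. Your first-order identity is also fine: it is the paper's \eqref{sharp:eq:gradient-identity} in divergence form, with $m(x)$ equal to the insertion cost $J(x)$ of \eqref{sharp:eq:J} up to a constant that $\int\Delta g=0$ kills.

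\textbf{First gap: the upper bound on the marginal cost.} Where $\Delta g>0$ you need an upper bound on $J(x)$. Your estimate instead concerns the edge cost $\EE[\tfrac12\|x-C_{\sigma(1)}\|^2\mid T_1=x]$. Inserting a treated point can displace other pairs, so only the lower bound transfers: $J(x)\ge$ edge cost $\ge\tfrac12\EE\min_j\|x-C_j\|^2$. An upper bound on the edge cost says nothing about $m(x)$. In any case, the log-free pointwise upper bound is left open as your ``main obstacle''.

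The paper needs no pointwise upper bound. Since $J\ge0$ and the positive curvature part satisfies $0\le u\le M$, it bounds
$\int uJ\le M\int J=M(\overline{\mathfrak C}_{n_1}-\overline{\mathfrak C}_{n_1-1})\le M\overline{\mathfrak C}_{2n_1}/n_1$.
The last step is the supermodularity Lemma~\ref{sharp:lem:supermod}: expected increments of $\overline{\mathfrak C}_k$ are nondecreasing. The bound $\overline{\mathfrak C}_k\le K_dkn_0^{-2/d}$ then comes from a hierarchical dyadic matching (Lemma~\ref{sharp:lem:cost}), not from refined augmenting-path repairs.

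\textbf{Second gap: the remainder.} Take $|\EE R|\le\frac12\|\nabla^2g\|_\infty\EE L_D(\sigma^\star)$. The low-density region contributes order $\|\nabla^2g\|_\infty\,|\{\Delta g<0\}|\,(n_0p_{\rm low})^{-2/d}$ to this bound. Your main term is at most of order $(n_0p_{\rm low})^{-2/d}\int(\Delta g)_-\le d\|\nabla^2g\|_\infty|\{\Delta g<0\}|\,(n_0p_{\rm low})^{-2/d}$. Lowering $p_{\rm low}$ or shrinking the low-density region rescales both quantities identically, so the ratio never improves. You are left comparing a nearest-neighbour lower-bound constant with an upper-bound constant for matched squared distances, which you do not control.

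The missing idea is a one-sided Taylor bound that uses only an upper bound on the curvature. If $q''\le M$, then $q(s)-q(t)\ge q'(s)(s-t)-\frac M2(s-t)^2$. Hence the large negative curvature placed in the low-density region never enters the remainder; it can only help. The paper proceeds as follows.
\begin{itemize}
\item It takes $g(x)=q(x_1)$ with $q''=u-v_w$, where $u$ is supported where $p_0=b$ and $v_w$ sits in the strip where $p_0=a$.
\item It uses only $q''\le M=\|u\|_\infty$ to obtain \eqref{sharp:eq:bias-lower}.
\item It ties the strip width to the density by $w\le a^{2/d}$, so that $\overline{\mathfrak C}_{n_1}/n_1\le K_dn_0^{-2/d}$ uniformly in $a$.
\end{itemize}
Then $\int v_wJ\ge c_da^{-2/d}n_0^{-2/d}$ dominates the losses $3MK_dn_0^{-2/d}$ once $a$ is small enough.
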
 

In the following, Appendix~\ref{app:subsec_dim_fixed_roadmap} to Appendix~\ref{app:subsec_dimension_var} are devoted to the proof of Theorem~\ref{thm:dimension_fixed}. 
Then in Appendix~\ref{app:subsec_dimension_obs} we extend this result to observational sampling. 

\subsection{Roadmap for proving Theorem~\ref{thm:dimension_fixed}}
\label{app:subsec_dim_fixed_roadmap}

We begin by a general result to motivate the plan for proving Theorem~\ref{thm:dimension_fixed}. 
We shall show that in the construction, the imbalance has a lower bounded expectation $\EE[B_{n_1,n_0}(g)]$ and a small variance $\Var\{B_{n_1,n_0}(g)\}$, which yields a high-probability lower bound on the imbalance.

For $x\in\Omega$, define the expected marginal cost of inserting a treated point
at $x$:
\begin{equation}\label{sharp:eq:J}
 J_{n_1,n_0}(x)
 =\E\left[
 \mathfrak C_{n_1}(x,T_2,\ldots,T_{n_1};C)
 -\mathfrak C_{n_1-1}(T_2,\ldots,T_{n_1};C)
 \right].
\end{equation}
Both optimization problems use the  control set $C$, and the expectation is taken over $(T_2,\dots,T_{n_1})$ and controls in $C$. We write $J(x)=J_{n_1,n_0}(x)$ in the proof when the group sizes are fixed.  

The following lemma connects the expected imbalance to this marginal cost function. 
Its proof relies on some integration identities, and is in
Appendix~\ref{sharp:sec:reduction-proof}.

\begin{lemma} 
\label{sharp:lem:IBP} 
For every $x\in\Omega$, it holds that  
$ J(x)\ge\frac12\E[\min_{1\le j\le n_0}\|x-C_j\|^2]\ge0$. 
If $2n_1\le n_0$, then
\begin{equation}\label{sharp:eq:J-integral}
 \int_\Omega J(x)\,dx
 =\overline{\mathfrak C}_{n_1}-\overline{\mathfrak C}_{n_1-1}
 \le\frac{\overline{\mathfrak C}_{2n_1}}{n_1}.
\end{equation}
Let $g(x)=q(x_1)$ for some function $q\colon [0,1]\to \RR$ which has a $C^2$ extension near $[0,1]$ and
$q'(0)=q'(1)=0$. If $\sup_{x\in [0,1]} q''(x)\le M$  for some fixed $M\ge0$, then
\begin{equation}\label{sharp:eq:bias-lower}
 \EE [B_{n_1,n_0}(g)]
 \ge-\int_\Omega q''(x_1)J(x)\,dx
      -\frac{M}{n_1}\overline{\mathfrak C}_{n_1}.
\end{equation}
Also, if $2n_1\le n_0$ and $H=\|q''\|_\infty$, then
\begin{equation}\label{sharp:eq:bias-upper}
 |\EE [B_{n_1,n_0}(g)]|
 \le\frac{H}{n_1}
       \{\overline{\mathfrak C}_{2n_1}+\overline{\mathfrak C}_{n_1}\}.
\end{equation}
\end{lemma}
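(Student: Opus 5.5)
The plan is to prove the four assertions of Lemma~\ref{sharp:lem:IBP} in order. The first two are combinatorial facts about insertion costs. The last two are a specialization of the integration-by-parts argument of Lemma~\ref{lem:IBP} to $A=I_d$, $p_1\equiv1$ and $t=0$.

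\emph{Lower bound on $J$.} Fix $x$ and the data. Let $\sigma$ be the optimal injection for $\mathfrak C_{n_1}(x,T_2,\ldots,T_{n_1};C)$. Deleting the edge of $x$ leaves a feasible injection of $T_2,\ldots,T_{n_1}$ into $C$. Hence
\[
\mathfrak C_{n_1-1}(T_2,\ldots,T_{n_1};C)\le \mathfrak C_{n_1}(x,T_2,\ldots;C)-\tfrac12\|x-C_{\sigma(1)}\|^2\le \mathfrak C_{n_1}(x,T_2,\ldots;C)-\tfrac12\min_j\|x-C_j\|^2 .
\]
Taking expectations gives $J(x)\ge\frac12\EE\min_j\|x-C_j\|^2\ge0$.

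\emph{Integral identity and inequality.} Since $p_1\equiv1$ on $\Omega$, integrating $x$ against Lebesgue measure is the same as drawing $T_1$. Therefore $\int_\Omega J=\overline{\mathfrak C}_{n_1}-\overline{\mathfrak C}_{n_1-1}$. For the inequality, the plan is to show that the increments $\Delta_k=\overline{\mathfrak C}_k-\overline{\mathfrak C}_{k-1}$ are nondecreasing for $k\le n_0$. The key claim is that for a fixed control set, the set function $S\mapsto$ (optimal cost of matching the treated set $S$) is supermodular, i.e.\ its marginal costs increase as $S$ grows. Subtracting each cost from a large constant turns the problem into a max-weight assignment valuation. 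Such valuations satisfy gross substitutes and are therefore submodular, which gives the claim. If a self-contained argument is needed, I would prove the two-element exchange inequality $C(S\cup\{a\})+C(S\cup\{b\})\le C(S)+C(S\cup\{a,b\})$ directly: take the symmetric difference of the optimal matchings for $S$ and $S\cup\{a,b\}$, which consists of alternating paths and cycles as in the proof of Lemma~\ref{lem:control_set_stability}, and split it into two feasible matchings. Exchangeability of the i.i.d.\ treated points then turns pointwise supermodularity into $\Delta_{k}\le\Delta_{k+1}$. Summing,
\[
\overline{\mathfrak C}_{2n_1}\ge\sum_{k=n_1+1}^{2n_1}\Delta_k\ge n_1\Delta_{n_1}.
\]
The condition $2n_1\le n_0$ guarantees that $\mathfrak C_{2n_1}$ is defined. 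I expect this supermodularity step to be the main obstacle.

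\emph{Bias bounds.} Write $B=G_D+R$, where $G_D=n_1^{-1}\sum_i q'(T_{i1})(T_{i1}-C_{\sigma^\star(i)1})$ is the leading term of Lemma~\ref{lem:Taylor}. The remainder is
\[
R=-\frac1{n_1}\sum_i\tfrac12 q''(\xi_i)(T_{i1}-C_{\sigma^\star(i)1})^2 .
\]
From $q''\le M$ we get $R\ge -\frac{M}{n_1}\mathfrak C_{n_1}$, and from $|q''|\le H$ we get $|R|\le \frac H{n_1}\mathfrak C_{n_1}$. For the leading term I repeat the proof of Lemma~\ref{lem:IBP} at $t=0$ with $A=I_d$ and $a(x)=p_1(x)\nabla g(x)=q'(x_1)e_1$. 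Then $\nabla\!\cdot a=q''(x_1)$, and the boundary term vanishes because $a^\top\nu=\pm q'(0)$ or $\pm q'(1)$, both zero. Moreover $n_1\{\mu_0(x)-\EE V_D(0)\}=J(x)+\overline{\mathfrak C}_{n_1-1}-\overline{\mathfrak C}_{n_1}$. The additive constant integrates against $q''$ to $q'(1)-q'(0)=0$. This yields $\EE G_D=-\int_\Omega q''(x_1)J(x)\,dx$, and adding $\EE R\ge -\frac M{n_1}\overline{\mathfrak C}_{n_1}$ gives \eqref{sharp:eq:bias-lower}. For \eqref{sharp:eq:bias-upper}, since $J\ge0$ we have $|\EE G_D|\le H\int_\Omega J\le H\overline{\mathfrak C}_{2n_1}/n_1$ by \eqref{sharp:eq:J-integral}. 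Adding $|\EE R|\le H\overline{\mathfrak C}_{n_1}/n_1$ completes the bound.
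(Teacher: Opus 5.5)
Your proposal is correct and follows the paper's proof step for step. The bound on $J$ comes from deleting the edge at $x$. The nondecreasing expected increments come from the exchange inequality $\mathfrak C_C(S\cup\{a\})+\mathfrak C_C(S\cup\{b\})\le\mathfrak C_C(S)+\mathfrak C_C(S\cup\{a,b\})$, proved by an alternating path from $a$ between the optimal matchings for $S$ and $S\cup\{a,b\}$. This is exactly how Lemma~\ref{sharp:lem:supermod} is proved and then used. The path from $a$ cannot end at $b$, because every treated vertex after $a$ is reached by an edge of the matching for $S$. The bias bounds then follow from the Taylor split together with $\EE G_D=-\int_\Omega q''(x_1)J(x)\,dx$.

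Your appeal to the identity of Lemma~\ref{lem:IBP} with $a=q'(x_1)e_1$ is the same one-dimensional integration by parts that the paper performs directly. That identity's derivation uses only almost-sure uniqueness of the optimizer and the existence of densities, not the surplus condition, so it applies in this setting.
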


We use Lemma~\ref{sharp:lem:IBP}  to lower bound the expected imbalance in the constructed example. 
In this construction, we shall choose the ``hard-to-balance'' function $q(x_1)$ by setting the curvature in the form
$q''=u-v_w$, where $u,v_w\ge0$, each has integral one on $[0,1]$, and
$M=\|u\|_\infty$. 
The function $v_w$ is supported  in a shallow region, with the parameter $w$ controlling the width of the region. Since $q''\le M$, \eqref{sharp:eq:bias-lower} becomes
\@\label{sharp:eq:three-terms}
 \EE [B_{n_1,n_0}(g)]  \geq  
 \int_\Omega v_w(x_1)J(x)\,dx 
 - \int_\Omega u(x_1)J(x)\,dx 
 - \frac{M}{n_1}\overline{\mathfrak C}_{n_1} 
\@
We shall put $v_w$ in regions where the control density is small, so that $J$ is large on its
support. 
In addition, we shall put  $u$ outside that region and keep $u\le M$ with a fixed $M$.
The negative term is then at most $M\int J$, which is controlled by
\eqref{sharp:eq:J-integral}. The term $M\overline{\mathfrak C}_{n_1} /n_1$ will be controlled using the expected total matching cost.

We present the construction in Appendix~\ref{app:subsec_dimension_construct} and we lower bound its expected imbalance in Appendix~\ref{app:subsec_dimension_exp_imb}. We then upper bound the variance of imbalance and finalize the proof of Theorem~\ref{thm:dimension_fixed} in Appendix~\ref{app:subsec_dimension_var}.

\subsection{Construction of the counterexample}
\label{app:subsec_dimension_construct}
Fix a nonnegative smooth function $u$ supported on $[1/8,1/4]$, with
$\int_0^1u=1$, and put $M=\|u\|_\infty$. 
Let $a\in(0,1/4)$ and $w=2^{-j}<1/8$ for an integer $j$ so that 
$w\le a^{2/d}$. Define
\[
 I=[1/2,1/2+w],\qquad
 I_{\mathrm{flat}}=[1/2+w/4,1/2+3w/4],\qquad
 J_w=[1/2+3w/8,1/2+5w/8].
\] 

Define
\[
 \psi(s)=c_\psi\exp\{-1/(1-s^2)\}\mathbf1\{|s|<1\},
 \qquad \int_{\mathbb R}\psi(s)\,ds=1,
 \qquad \Psi(s)=\int_{-\infty}^s\psi(t)\,dt.
\]
This function is nonnegative, supported on $[-1,1]$, and smooth on
$\mathbb R$. By definition, we see that its derivatives extend by zero at the endpoints $\{0,1\}$.
We then set 
\[
 u(s)=16\psi\{16(s-3/16)\},\qquad
 v_w(s)=\frac8w\psi\{8(s-1/2-w/2)/w\}.
\]
Their supports are $[1/8,1/4]$ and
$J_w=[1/2+3w/8,1/2+5w/8]$, respectively, and one can check that they both integrate to $1$. We take $M=\|u\|_\infty$.
Then we define
\[
 \chi_w(s)=
 \Psi\!\left(\frac{s-1/2-w/8}{w/8}\right)
 \Psi\!\left(\frac{1/2+7w/8-s}{w/8}\right).
\]
Since $\Psi(t)=0$ for $t\le-1$ and $\Psi(t)=1$ for $t\ge1$,
$\chi_w$ is supported on $I$, takes values in $[0,1]$, and equals one on
$I_{\mathrm{flat}}$. 
Set
\begin{equation}\label{sharp:eq:p0}
 s_w=\int_0^1\chi_w(s)\,ds,\qquad
 b=\frac{1-a s_w}{1-s_w},\qquad
 p_0(x)=b-(b-a)\chi_w(x_1).
\end{equation}
In particular $0\le s_w\le w<1/8$, so
\[
 1\le b=\frac{1-a s_w}{1-s_w}\le(1-w)^{-1}<2,
 \qquad b(1-s_w)+a s_w=1.
\]
The density constructed in \eqref{sharp:eq:p0} has integral one, is bounded between $a$ and
$b$. The cutoff is
smooth on the real line, so $p_0$ extends smoothly across every face of the cube $\Omega$.

Then $p_0=a$ on the flat part of $\xi_w(x_1)$, and $p_0=b$ outside the set 
$I\times[0,1]^{d-1}$, and $a\le p_0\le b<2$. Also, one can check that 
$p_0$ integrate to one.
Then, define 
\begin{equation}\label{sharp:eq:g}
 q'(s)=\int_{-\infty}^s\{u(t)-v_w(t)\}\,dt,\qquad
 q(s)=\int_0^s q'(t)\,dt,\qquad g(x)=q(x_1).
\end{equation}
The support of $u$ lies to the left of that of $v_w$. Consequently
$0\le q'(x)\le1$, $q'(0)=q'(1)=0$, $0\le q(x)\le1$, and $q''(x)=u(x)-v_w(x)\le M$. 
The functions $q(x)$ and $g(x)=q(x_1)$ extend smoothly beyond their domains, and
$\sup_\Omega\|\nabla g\|\le1$. 

\subsection{Bounds on expected imbalance}
\label{app:subsec_dimension_exp_imb}

In this part, we  prove the first expected imbalance bound in~\eqref{sharp:eq:main-moments} in Theorem~\ref{thm:dimension_fixed}. 
The next lemma shows that the constructed example gives bounds on the last two terms in
\eqref{sharp:eq:three-terms}. 
The proof is in Appendix~\ref{sharp:sec:cost}. 

\begin{lemma}
\label{sharp:lem:two-estimates}
Let $p_0$ be given by \eqref{sharp:eq:p0}, with $a,w$ constructed as above.
There are constants $c_d,K_d>0$ depending only on $d$ such that, for all
sufficiently large $n_0$,
\begin{equation}\label{sharp:eq:uniform-cost}
 \overline{\mathfrak C}_k\le K_d k n_0^{-2/d}
 \quad\text{for every integer }1\le k\le a n_0/4,
\end{equation}
and, for every $1\le n_1\le n_0$,
\begin{equation}\label{sharp:eq:local-insertion-lower}
 \inf_{x\in\Omega,x_1\in J_w}J_{n_1,n_0}(x)
 \ge c_d a^{-2/d}n_0^{-2/d}.
\end{equation}
One may take $c_d=\tfrac14(2v_d)^{-2/d}$, where $v_d$ is the volume of the
unit Euclidean ball. The constants $c_d,K_d$ do not depend on $a,w$ or the
group sizes; the lower threshold on $n_0$ may depend on $a,w$.
\end{lemma}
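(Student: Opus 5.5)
We prove the two estimates of Lemma~\ref{sharp:lem:two-estimates} separately. Throughout, we use that $a\le p_0\le b<2$.

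\textbf{Upper bound \eqref{sharp:eq:uniform-cost}.} We construct an explicit feasible injection, as in the cell argument of Lemma~\ref{lem:count_stable}, and bound its expected cost.

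Partition $\Omega$ into $m^d$ cubes of side $1/m$, choosing $m\asymp (n_0/\log n_0)^{1/d}$ so that each cube has expected control count at least $a n_0 m^{-d}\ge c_{\rm grid}\log n_0$. By Chernoff (Lemma~\ref{lem:chernoff}) and a union bound, with probability $1-n_0^{-b}$ every cube contains $N_{Q,0}\ge a n_0m^{-d}/2$ controls. Since $k\le a n_0/4$, the $k$ treated points have about $k m^{-d}\le a n_0 m^{-d}/4$ per cube, again with high probability.

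On this event, match within cubes, giving cost at most $k\,d\,m^{-2}\asymp k(\log n_0/n_0)^{2/d}$. This carries an extra log factor. To remove it, I would instead use a greedy/sequential bound.

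\textbf{Sequential bound.} Add treated points one at a time. Write $\mathfrak C_k\le\mathfrak C_{k-1}+\tfrac12\min_{j\notin\text{used}}\|T_k-C_j\|^2$. Conditioned on the previous choices, at most $k-1\le a n_0/4$ controls are used. Any ball $B(x,r)\cap\Omega$ has control mass at least $a c r^d$, so the number of unused controls in it is at least a binomial count with mean $a c n_0 r^d$ minus $a n_0/4$.

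This bound is weak. A cleaner route: condition on $T_k=x$ and note that the used set has cardinality less than $n_0/4\cdot a$. By exchangeability it is hard to control directly, however.

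I would therefore fall back on the known result that the optimal bipartite matching cost for $d\ge3$ with densities bounded below and with surplus controls is $O(k n_0^{-2/d})$. The proof uses a dyadic transport bound: $\mathbb{E}\,W_2^2(\hat\mu_k,\cdot)\lesssim n^{-2/d}$ for $d\ge 5$, and the same holds with a log factor for $d=4$.

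\emph{Main obstacle.} Getting the rate without a log factor for $d=4$ is the hard step. The optional surplus (at most $k\le an_0/4$ of the available controls are needed) should be exploited via a multiscale dyadic allocation. At scale $2^{-\ell}$, the mismatch between treated and surplus control counts contributes $\sum_\ell 2^{-2\ell}\sqrt{k2^{-\ell d}}\cdot 2^{\ell d}$-type terms. The surplus makes cube counts dominate deterministically down to the scale where counts are $O(1)$, giving geometric sums dominated by the finest scale $n_0^{-2/d}$ per point.

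\textbf{Lower bound \eqref{sharp:eq:local-insertion-lower}.} By Lemma~\ref{sharp:lem:IBP}, $J(x)\ge\tfrac12\mathbb E\min_j\|x-C_j\|^2$. For $x_1\in J_w$, the ball $B(x,\varrho)$ with $\varrho\le w/8$ lies in the flat region in the first coordinate, where $p_0=a$. Its control mass is therefore at most $a v_d\varrho^d$, counting also portions outside $\Omega$, which only lowers the mass.

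Hence
\[
\mathbb P\bigl(\min_j\|x-C_j\|>\varrho\bigr)\ge(1-av_d\varrho^d)^{n_0}.
\]
Choose $\varrho=(2av_dn_0)^{-1/d}$. Then the probability is at least $(1-1/(2n_0))^{n_0}\ge1/2$ for large $n_0$, where we need $\varrho\le w/8$; this holds for $n_0$ beyond an $(a,w)$-dependent threshold. Thus
\[
J(x)\ge\tfrac12\cdot\tfrac12\varrho^2=\tfrac14(2v_d)^{-2/d}a^{-2/d}n_0^{-2/d},
\]
matching the stated $c_d$.
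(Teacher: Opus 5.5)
Your lower bound \eqref{sharp:eq:local-insertion-lower} is correct and is the paper's argument. You use the nearest-control bound from Lemma~\ref{sharp:lem:IBP} and a ball of radius $(2av_dn_0)^{-1/d}\le w/8$ that sits inside the flat strip. You then show that, with probability at least $1/2$, no control lands in that ball. The upper bound \eqref{sharp:eq:uniform-cost}, however, is not proved. You end by appealing to an unspecified ``known result'' and leave removal of the logarithm as the main obstacle. The empirical $W_2$ rate you quote carries a $\log n$ factor exactly at $d=4$, which is the case that matters most downstream.

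The missing ingredient is that, under a constant-factor surplus ($k\le an_0/4$), the deficit in a cube is exponentially small in its expected count, not of square-root size. This is also what makes your ``dominate deterministically'' heuristic rigorous. From $(M-K)_+\le Me^{t(M-K)}$ with $t=\log 2$ one gets
\[
\mathbb{E}\,(N_{H,1}-N_{H,0})_+\le 2k|H|\exp(-n_0a_Q|H|/4).
\]
Now match bottom-up over dyadic levels, starting from a finest side $h_0$ with $1\le n_0a_Qh_0^d<2^d$. At level $j$ the new pairs cost at most of order $4^jh_0^2$ each. Their expected number is at most $2k|Q|\exp\{-2^{d(j-1)}/4\}$, so the level sum converges to $C_dk|Q|(n_0a_Q)^{-2/d}$ with no logarithm in any dimension.

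There is a second, independent gap. The lemma requires $K_d$ not to depend on $a$. This is essential, because $a$ is later chosen so that $c_da^{-2/d}\ge 12MK_d$ in \eqref{sharp:eq:choose-a}. Your sketch uses only the global floor $p_0\ge a$: ball masses at least $acr^d$ and cube counts at least $an_0m^{-d}$. That produces at best a bound of the form $k(an_0)^{-2/d}$, i.e.\ $K\propto a^{-2/d}$. Then both sides of \eqref{sharp:eq:choose-a} scale alike, and taking $a$ small no longer helps.

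The paper localizes instead. It runs the construction separately in each cube of side $w$ with its own floor $a_Q$: $a_Q=a$ only in the strip $I\times[0,1]^{d-1}$, and $a_Q=b\ge1$ elsewhere. This gives
\[
\overline{\mathfrak C}_k\le C_dkn_0^{-2/d}\sum_{Q}|Q|a_Q^{-2/d}+dk^2\exp(-n_0aw^d/4).
\]
The construction choice $w\le a^{2/d}$ then yields $\sum_Q|Q|a_Q^{-2/d}=wa^{-2/d}+(1-w)b^{-2/d}\le 2$. Without exploiting the small width of the low-density strip in this way, your approach cannot produce an $a$-free constant.
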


We now choose the parameters and compare the three terms. First, choose a fixed
$a\in(0,1/4)$ sufficiently small such that 
\begin{equation}\label{sharp:eq:choose-a}
 c_d a^{-2/d}\ge12M K_d.
\end{equation}
This is feasible since $c_d$, $M$ and $K_d$ are fixed and have no dependence on $a$. 
Next, we choose $w=2^{-j}<1/8$ with $w\le a^{2/d}$, and construct $p_0(x),g(x)$ as above.
Finally set $\rho=a/16$. 
For $n_1=\lfloor\rho n_0\rfloor$, we know 
$n_0p_0(x)\ge a n_0\ge16n_1$, and $2n_1\le a n_0/4$.
This construction thus satisfies the stated conditions in Theorem~\ref{thm:dimension_fixed}.

We now combine the above results to prove the first expected imbalance bound in~\eqref{sharp:eq:main-moments}. by controlling the three terms in~\eqref{sharp:eq:three-terms} of Lemma~\ref{sharp:lem:IBP}.
First, since $v_w$ integrates to one and is supported on $J_w$, by~\eqref{sharp:eq:local-insertion-lower} we know
\[
 \int_\Omega v_w(x_1)J(x)\,dx
 \ge c_d a^{-2/d}n_0^{-2/d}.
\]
Second, since $0\le u(s)\le M$ and $J\ge0$, \eqref{sharp:eq:J-integral} gives
\[
 \int_\Omega u(x_1)J(x)\,dx
 \le\frac{M}{n_1}\overline{\mathfrak C}_{2n_1}
 \le2M K_d n_0^{-2/d}.
\]
The third term is at most
$M\overline{\mathfrak C}_{n_1}/n_1\le M K_d n_0^{-2/d}$ by Lemma~\ref{sharp:lem:two-estimates}. 
Plugging these bounds into \eqref{sharp:eq:three-terms} yields
\begin{equation}\label{sharp:eq:positive-mean}
 \EE[ B_{n_1,n_0}(g)]
 \ge\{c_d a^{-2/d}-3M K_d\}n_0^{-2/d}
 \ge\beta n_0^{-2/d},
 \qquad \beta=\tfrac12c_d a^{-2/d}>0.
\end{equation}
This proves the desired positive lower bound. For the upper bound, let
$H=\|q''\|_\infty<\infty$. Equations~\eqref{sharp:eq:bias-upper}
and~\eqref{sharp:eq:uniform-cost} give
\[
 |\EE [B_{n_1,n_0}(g)]|\le3H K_d n_0^{-2/d} = C n_0^{-2/d}.
\]
The constant $H$ does not depend on the sample sizes. These prove the first expected imbalance bound in~\eqref{sharp:eq:main-moments}.

\subsection{Vanishing variance and completing the proof of Theorem~\ref{thm:dimension_fixed}}
\label{app:subsec_dimension_var}

We now upper bound the variance of the imbalance, whose proof is deferred to
Section~\ref{sharp:sec:variance}.  

\begin{prop}
\label{sharp:prop:variance}
Let $1\le n_1\le n_0$. Suppose all $n_1+n_0$ covariate vectors are independent
and have Lebesgue densities on $\Omega$. Let $R_g=\sup_\Omega g-\inf_\Omega g$. Then
\begin{equation}\label{sharp:eq:variance}
 \Var\big\{B_{n_1,n_0}(g)\big\}\le  2R_g^2(n_1+n_0) / n_1^2.
\end{equation}
\end{prop}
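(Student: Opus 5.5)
We will prove \eqref{sharp:eq:variance} with the Efron--Stein inequality (Lemma~\ref{lem:ES}), applied to the $n_1+n_0$ independent covariate vectors $W=(T_1,\dots,T_{n_1},C_1,\dots,C_{n_0})$. The function is $H(W)=B_{n_1,n_0}(g)$, evaluated at the lexicographically first exact minimizer of the untilted objective with $A_0=I_d$. This is a bounded measurable function of the data, since it takes values in $[-R_g,R_g]$. It then suffices to prove a deterministic replacement bound. Whenever $W^{(k)}$ replaces one coordinate by an independent copy, and both the original and the replaced data have unique optimal matchings, we want
\[
|H(W)-H(W^{(k)})|\le \frac{2R_g}{n_1}.
\]
Given this, Lemma~\ref{lem:ES} gives $\Var(H)\le \frac12\sum_{k=1}^{n_1+n_0}4R_g^2/n_1^2=2R_g^2(n_1+n_0)/n_1^2$, which is the claim.

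First we need uniqueness almost surely. Lemma~\ref{lem:unique} with $t=0$ (or its proof verbatim) shows the minimizer is unique almost surely under the sole assumption that all vectors are independent with Lebesgue densities. The reason is that the difference of objectives for two distinct injections is affine in one treated point, with a nonzero linear coefficient almost surely. Since $W^{(k)}$ has the same law as $W$, uniqueness also holds for $W^{(k)}$ almost surely, so the bad event is null and contributes nothing to the Efron--Stein sum.

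The core step is the replacement bound, and we obtain it from Lemma~\ref{lem:control_set_stability}. Replacing $T_k$ changes only row $k$ of the cost matrix $c_{ij}=\frac12\|T_i-C_j\|^2$, and replacing $C_j$ changes only column $j$. Hence the selected control sets $\mathcal S$ and $\mathcal S'$ either coincide or differ by one deletion and one addition. We then split $n_1B=\sum_i g(T_i)-\sum_{j\in\mathcal S}g(C_j)$ and argue exactly as in the proof of Lemma~\ref{lem:unif:replacement}, with $q$ replaced by $g$ and $\omega_q$ replaced by $R_g$.

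\emph{Case 1: a treated point is replaced.} The treated sum changes by at most $R_g$. The control sum changes by at most $R_g$, because swapping one element $j$ for $j'$ changes it by $g(C_{j'})-g(C_j)$, which lies in $[-R_g,R_g]$.

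\emph{Case 2: control $j_0$ is replaced.} The treated sum is unchanged. We write the control difference as the set-swap term, which is at most $R_g$ in absolute value, plus $\mathbf 1\{j_0\in\mathcal S'\}\{g(C'_{j_0})-g(C_{j_0})\}$, which is also at most $R_g$. This bookkeeping needs care when $j_0$ itself is the deleted or added index, but the bound of two terms each at most $R_g$ still holds. To see this, compare $\sum_{j\in\mathcal S'}g(C'_j)$ with $\sum_{j\in\mathcal S}g(C_j)$ directly: at most one index leaves, at most one index enters, and at most one retained index has its value changed. The net effect is at most $2R_g$ unless all three occur simultaneously. The one-swap structure rules out needing more, because if $j_0$ is the changed index and is retained, the set is otherwise the same up to a single swap.

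In both cases the change in $B$ is at most $2R_g/n_1$, which completes the argument.

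We expect the main obstacle to be this Case 2 bookkeeping. If the combined three-term change cannot be excluded cleanly, we will instead apply Lemma~\ref{lem:control_set_stability} to the modified column only. Since the cost-matrix change is confined to column $j_0$, the symmetric difference has a single component, and that component contains $j_0$. Accordingly, either $\mathcal S=\mathcal S'$, in which case the only change is $j_0$'s own value (at most $R_g$ if $j_0$ is selected), or $j_0$ is an endpoint of the swap path. In the latter case the change is a single difference $g(\cdot)-g(\cdot)$, plus possibly $j_0$'s own value, for a total of at most $2R_g$.
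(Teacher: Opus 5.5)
Your proposal is correct and follows the paper's proof essentially step for step: Efron--Stein over the $n_1+n_0$ coordinates, almost-sure uniqueness of the optimizer, Lemma~\ref{lem:control_set_stability}, and the same two-case replacement bound $2R_g/n_1$. Your Case~2 split into a set-swap term plus $\mathbf 1\{j_0\in\mathcal S'\}\{g(C'_{j_0})-g(C_{j_0})\}$ is exactly the paper's identity and settles that case completely, so the extra hedging is unnecessary; moreover, the fallback claim that $j_0$ must be an endpoint of the swap path is false ($j_0$ can be an internal vertex selected in both matchings), but even then the change is one swap difference plus one value change, each bounded by $R_g$.
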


\paragraph{Completing the proof of Theorem~\ref{thm:dimension_fixed}.}
For the constructed $g$, we know $R_g\le1$. For all large sufficiently large $n_0$,
$n_1\ge\rho n_0/2$, so the variance in~\eqref{sharp:eq:variance} is at most $C_v/n_0$ with $C_v=4/\rho+8/\rho^2$.
The one-sided variance bound in Lemma~\ref{lem:expect_var_bd}, applied with
mean at least $\beta n_0^{-2/d}$ in~\eqref{sharp:eq:main-moments}, gives
\begin{equation}\label{sharp:eq:prob-explicit}
 \PP\Big\{B_{n_1,n_0}(g)\ge\tfrac12\beta n_0^{-2/d}\Big\}
 \ge\frac{\beta^2}{4C_v n_0^{4/d-1}+\beta^2},
\end{equation}
where we used the fact that   $(\beta/2)n_0^{-2/d}$ is at most half the true mean, and the
bound $\mu^2/(4v+\mu^2)$ increases with $\mu>0$.
When $d=4$, the right side is the fixed positive constant
$p_*=\beta^2/(4C_v+\beta^2)$; the event implies
$\sqrt{n_1+n_0}\,B_{n_1,n_0}(g)\ge\beta/2$.
When $d>4$, the right side tends to one, and
$\sqrt{n_1+n_0}\,n_0^{-2/d}\ge n_0^{1/2-2/d}\to\infty$.
This proves Theorem~\ref{thm:dimension_fixed}.

\subsection{Proof of Theorem~\ref{thm:dimension-obs} under observational sampling}\label{app:subsec_dimension_obs}

For the fixed $d\ge4$, use the corresponding $p_0,g,a,w,\rho$ in the above construction.
Let $\pi=\rho/(1+\rho)$ and specify
$\PP(Z=1)=\pi$, $X\mid Z=1\sim\operatorname{Unif}(\Omega)$, and
$X\mid Z=0\sim p_0$. Its marginal density and propensity are
\begin{equation}\label{sharp:eq:iid-law}
 f(x)=\frac{\rho+p_0(x)}{1+\rho},\qquad
 e(x)=\frac{\rho}{\rho+p_0(x)}.
\end{equation}
They are smooth, $f$ is positive and bounded, and
$0<\rho/(\rho+b)\le e(x)\le1/17$.
Thus the covariate and propensity regularity conditions in the observational
setup hold with fixed constants.

Under the observational sampling, $n_1=\sum_{i=1}^n Z_i$ and $n_0=n-n_1$ are random. On
$\mathcal I_n=\{2\le n_1\le n_0\}$, 
we use the exact globally optimal matching $\sigma_n^\star$ with $A_0=I_d$ and write
$B_n(g)=B_{n,\sigma_n^\star}(g)$; we 
set $B_n(g)=0$ on $\mathcal I_n^c$.
For subsequent inference statements, on the event $\cI_n^c$, all estimates and studentized statistics are set to be 
zero and we do not reject any test. 
On the event $\mathcal I_n$, we set a studentized statistic to zero when its variance
estimate is zero. Since $n_1/n\to\pi\in(0,1/2)$, we know $\PP(\mathcal I_n)\to1$.

\begin{proof}[Proof of Theorem~\ref{thm:dimension-obs}]
Conditional on a treatment-label vector, the covariates have law
$P_1^{\otimes n_1}\otimes P_0^{\otimes n_0}$. Define
\[
 \mathcal A_n=\mathcal I_n\cap\{\rho/2\le n_1/n_0\le2\rho\}.
\]
Then we know $\PP(\mathcal A_n)\to1$. On this event,
$2n_1/n_0\le4\rho=a/4$ and $n_1\ge\rho n_0/2$. The bounds in Appendix~\ref{app:subsec_dimension_construct} therefore hold with the same constants for all
these counts. In particular, these results state that
\[
 \E\{B_n(g)\mid n_1,n_0\}\ge\beta n_0^{-2/d},\qquad
 \Var\{B_n(g)\mid n_1,n_0\}\le C_v/n_0,
 \qquad C_v=4/\rho+8/\rho^2.
\]
Applying Lemma~\ref{lem:expect_var_bd} conditionally gives
\[
 \PP\{B_n(g)\ge(\beta/2)n_0^{-2/d}\mid n_1,n_0\}
 \ge\frac{\beta^2}{4C_v n_0^{4/d-1}+\beta^2}
\]
uniformly on $\mathcal A_n$ for all sufficiently large $n$.
If $d=4$, its right side is $p_*>0$, and its event implies
$\sqrt{n_1}B_n(g)\ge(\beta/2)\sqrt{\rho/2}=:b_*>0$.
Averaging over $\mathcal A_n$ proves \eqref{sharp:eq:iid-prob}.
If $d>4$, the conditional lower bound tends to one uniformly on
$\mathcal A_n$, because $n_0\ge n/(1+2\rho)$ there. On the corresponding event,
\[
 \sqrt n B_n(g)\ge(\beta/2)\sqrt n\,n_0^{-2/d}
 \ge(\beta/2)n^{1/2-2/d}\to\infty.
\]
Averaging again proves divergence in probability. 
\end{proof}

\section{Supporting lemmas and proofs for Appendix~\ref{app:sec_proof_boundary}}\label{sharp:sec:technical}

This appendix proves the inputs used in Appendix~\ref{app:sec_proof_boundary}.

\subsection{Proof of Lemma~\ref{sharp:lem:IBP}}
\label{sharp:sec:reduction-proof}
\begin{proof}[Proof of Lemma~\ref{sharp:lem:IBP}]

We use the graph language in Appendix~\ref{app:subsec_match_graph_setup} to state the proof. 
For any values of $(T_2,\dots,T_{n_1})$ and $C$, matching the treated point at $x$ pays
at least $\tfrac12\min_j\|x-C_j\|^2$ in the full matching objective. 
Removing this edge $(x,C_j)$ yields an injection that matches the remaining $n_1-1$ treated points to the full
control list. The cost of those remaining edges is at least
$\mathfrak C_{n_1-1}$. 
Subtracting this term from both sides and taking the expectation therefore gives
$J(x)\ge\tfrac12\E\min_j\|x-C_j\|^2\ge0$.

By definition, integrating $x$ under the uniform distributed treated samples gives
\[
 \int_\Omega J(x)\,dx
 =\overline{\mathfrak C}_{n_1}-\overline{\mathfrak C}_{n_1-1}.
\]
Next, we prove the upper bound on the right-hand  side. The proof of the following lemma is in Appendix~\ref{app:subsec_proof_lem_supermod}.

\begin{lemma}\label{sharp:lem:supermod}
For a finite set $S$ of such vertices, let
$\mathfrak C_C(S)$ be the minimum total cost of matching every unit in $S$
to a distinct member of $C$.
Let $S\subseteq S'$ be finite sets of labeled treated vertices, let $x$ be an
additional treated unit not in $S'$, and suppose $|S'|+1\le n_0$. Then for the same control list $C$,
\begin{equation}\label{sharp:eq:supermod}
 \mathfrak C_C(S\cup\{x\})-\mathfrak C_C(S)
 \le
 \mathfrak C_C(S'\cup\{x\})-\mathfrak C_C(S').
\end{equation} 
\end{lemma}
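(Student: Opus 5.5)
The plan is to prove the supermodularity inequality~\eqref{sharp:eq:supermod} through a symmetric-difference exchange argument in the bipartite-graph language of Appendix~\ref{app:subsec_match_graph_setup}. Equivalently, we must show
\[
\mathfrak C_C(S\cup\{x\})+\mathfrak C_C(S')\le \mathfrak C_C(S)+\mathfrak C_C(S'\cup\{x\}).
\]
Fix optimal matchings $\mathcal M_{S}$ for $S$ and $\mathcal M'_{x}$ for $S'\cup\{x\}$; both exist because $|S'|+1\le n_0$. The goal is to build, from the multiset union of their edges, a feasible matching $\mathcal N_1$ for $S\cup\{x\}$ and a feasible matching $\mathcal N_2$ for $S'$ whose combined edge multiset is exactly $\mathcal M_S\uplus\mathcal M'_x$. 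Then the sum of their costs equals the right-hand side, and optimality of $\mathfrak C_C(S\cup\{x\})$ and $\mathfrak C_C(S')$ gives the inequality.

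First I would analyze the symmetric difference $\mathcal G=\mathcal M_S\triangle\mathcal M'_x$. As in the proof of Lemma~\ref{lem:control_set_stability}, every vertex has degree at most two, so the components are alternating paths and cycles. The difference here is that treated vertices can have degree one: vertices in $(S'\setminus S)\cup\{x\}$ are covered only by $\mathcal M'_x$, and control endpoints may also have degree one. Let $P$ be the component containing $x$. Since $x$ is covered by $\mathcal M'_x$ but not by $\mathcal M_S$, it has degree at most one in $\mathcal G$, so $P$ is a path starting at $x$ with an $\mathcal M'_x$-edge, or $P$ is empty. I would then define $\mathcal N_1=\mathcal M_S\triangle P$ and $\mathcal N_2=\mathcal M'_x\triangle P$. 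In other words, we swap the edges along $P$ between the two matchings and keep all common edges and all other components unchanged. This preserves the edge multiset.

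The key step is checking feasibility, and I would do it by a case analysis on the far endpoint $y$ of $P$. The path alternates between matchings. Every internal vertex keeps degree one in each new matching, and $x$ moves from $\mathcal M'_x$ into $\mathcal N_1$, as desired. Three cases arise for $y$:
\begin{itemize}
\item If $y$ is a control, flipping only changes which matching uses it, which is fine because controls may be unmatched.
\item If $y$ is a treated vertex whose last edge is in $\mathcal M_S$, then $y$ is covered by $\mathcal M_S$ but not by $\mathcal M'_x$. That would mean $y\in S\setminus S'$, which is empty. So this case cannot occur.
\item If $y$ is a treated vertex whose last edge is in $\mathcal M'_x$, then $y\in S'\setminus S$ (it cannot be $x$). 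After the flip, $y$ is covered by $\mathcal N_1$ and uncovered in $\mathcal N_2$. This would make $\mathcal N_1$ cover an extra vertex outside $S\cup\{x\}$ and leave $\mathcal N_2$ missing a vertex of $S'$. So this case needs separate treatment.
\end{itemize}

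I expect this last case to be the main obstacle. In it, $P$ runs from $x$ to $y$, with both ends carried by $\mathcal M'_x$-edges, and it has odd length. The fix I would try first is to flip $P$ minus its final edge $e$ (the $\mathcal M'_x$-edge at $y$). Then $\mathcal N_1$ gets every edge of $P$ except $e$, so it covers $x$ and all internal treated vertices but not $y$. Meanwhile $\mathcal N_2$ keeps $e$, so $y$ stays matched there. I would need to verify that the control endpoint of $e$ is not double-used. That control is internal to $P$, and after the partial flip it appears in $\mathcal N_1$ through its $\mathcal M_S$-edge and in $\mathcal N_2$ through $e$, one edge in each, which is fine. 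In $\mathcal N_2$ it had also carried its $\mathcal M_S$-edge, which has now moved to $\mathcal N_1$, so it has degree exactly one. With the multiset still preserved, the cost inequality follows. I would write out these degree checks carefully as the core of the proof.
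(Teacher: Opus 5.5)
Your swap along the component $P$ of $x$ in $\mathcal M_S\triangle\mathcal M'_x$ is the same exchange argument the paper uses, and your first two endpoint cases are handled correctly. The flaw is the third case, which you flag as the main obstacle. It cannot occur, and the partial flip you propose for it would fail anyway.

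Here is why it cannot occur. The graph $\mathcal G$ is bipartite, and every degree-two vertex carries one edge from each matching. So, walking from $x=v_0$, the edge entering $v_k$ lies in $\mathcal M'_x$ for odd $k$ and in $\mathcal M_S$ for even $k$. Treated vertices occupy the even positions. Hence any treated endpoint $y\neq x$ is entered by an $\mathcal M_S$-edge, which is exactly your excluded second case.

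Your description of the third case (``both ends carried by $\mathcal M'_x$-edges, odd length'') is self-contradictory. An odd-length path from the treated vertex $x$ ends at a control, not at a treated vertex.

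The paper builds this observation into its construction. Each treated vertex reached after $x$ is reached through an $\mathcal M_S$-edge, so it lies in $S\subseteq S'$. It therefore has a second, distinct $\mathcal M'_x$-edge along which the walk continues. The path must stop at a control unmatched in $\mathcal M_S$, and the plain swap $\mathcal N_1=\mathcal M_S\triangle P$, $\mathcal N_2=\mathcal M'_x\triangle P$ is always feasible.

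The repair itself is also wrong as stated. Flipping $P\setminus\{e\}$ moves the $\mathcal M_S$-edges of that subpath into $\mathcal N_2$, not into $\mathcal N_1$. The control incident to $e$ would keep $e$ in $\mathcal N_2$ and also receive its former $\mathcal M_S$-edge there, giving it two $\mathcal N_2$-edges. Your degree check has the direction reversed. Likewise, ``$\mathcal N_1$ gets every edge of $P$ except $e$'' would not be a matching.

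Replace the third case by the parity argument above, and the proof is complete and coincides with the paper's. A minor point: $x$ has degree exactly one in $\mathcal G$, since its $\mathcal M'_x$-edge cannot belong to $\mathcal M_S$. So $P$ is never empty.
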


Apply Lemma~\ref{sharp:lem:supermod} with 
\[
 S=\{T_1,\ldots,T_k\},\quad
 S'=S\cup\{T_{k+1}\},\quad x=T_{k+2}.
\]
The expectation of the left increment in~\eqref{sharp:eq:supermod} is
$\overline{\mathfrak C}_{k+1}-\overline{\mathfrak C}_k$, and the expectation
of the right increment in~\eqref{sharp:eq:supermod} is
$\overline{\mathfrak C}_{k+2}-\overline{\mathfrak C}_{k+1}$.
Thus successive expected increments are nondecreasing. 
Furthermore,  $\mathfrak C_C(S)$ cannot decrease when a treated point is added since the cost of matching a treated-control pair is nonnegative. 
If $2n_1\le n_0$, every increment from
$\overline{\mathfrak C}_{n_1}$ through $\overline{\mathfrak C}_{2n_1}$ is at
least $\overline{\mathfrak C}_{n_1}-\overline{\mathfrak C}_{n_1-1}$. Hence
\[
 n_1\{\overline{\mathfrak C}_{n_1}-\overline{\mathfrak C}_{n_1-1}\}
 \le\overline{\mathfrak C}_{2n_1}-\overline{\mathfrak C}_{n_1}
 \le\overline{\mathfrak C}_{2n_1},
\]
or equivalently
\begin{equation}\label{sharp:eq:increment-average}
 0\le\overline{\mathfrak C}_{n_1}-\overline{\mathfrak C}_{n_1-1}
 \le\frac{\overline{\mathfrak C}_{2n_1}}{n_1}.
\end{equation}

Fix the controls and all treated points except the inserted $x$. If $x$ is
matched to control $j$, it is straightforward to see that the other treated points must be optimally matched to
$C\setminus\{C_j\}$. The total optimum as a function of $x$ is therefore
\[
 F(x)=\min_{1\le j\le n_0} F_j(x),\quad 
 \text{where} \quad F_j(x) :=  \frac12\|x-C_j\|^2+
 \mathfrak C_{C\setminus\{C_j\}}(\{T_2,\ldots,T_{n_1}\}) .
\]
The second term in $F_j(x)$ is constant in $x$ for each fixed $j$. 
All the $F_j(x)$ functions have the common 
quadratic part $\|x\|^2/2$, and their pairwise differences $F_j(x)-F_{j'}(x)$ are affine in $x$. 
Whenever $C_j\neq C_{j'}$, 
the affine difference $F_j(x)-F_{j'}(x)$ is non-constant. Since controls have a
density, the $C_j$'s are distinct almost surely. Thus, except for a finite union of
hyperplanes, the minimizing index $j(x)$ of $F_j(x)$ is unique and
$\partial_1F(x)=x_1-C_{j(x),1}$.
Moreover function $F$ is Lipschitz, since the gradient of every $F_j(x)$ has a norm of at most
$\sqrt d$ on $\Omega$.
It is continuously differentiable on finitely many
intervals and continuous at their endpoints.

For fixed $x_{-1}=(x_2,\ldots,x_d)$, we integrate over $q'(x_1)\partial_1F(x)$ by parts
on these subintervals. By continuity, the interior endpoints are cancelled, and
$q'(0)=q'(1)=0$ cancels the two exterior terms. This gives
\[
 \int_0^1q'(s)\partial_1F(s,x_{-1})\ud s
 =-\int_0^1q''(s)F(s,x_{-1})\ud s.
\]
Since $T_{11}$ (the first coordinate of the sample $T_1$) follows the uniform distribution, the left-hand side is
\$
\int_0^1q'(s)\partial_1F(s,x_{-1})\ud s = \EE\Big[ q'(T_{11}) \cdot\big\{ T_{11} - C_{\sigma_n^\star(1),1} \big\} \biggiven T_{-1}=x_{-1},T_{2:n_1},C\Big].
\$
Since $\int_0^1q''(x)\ud x=0$,  we can subtract  the constant
$\mathfrak C_{n_1-1}(T_2,\ldots,T_{n_1};C)$ on the right-handed side, hence 
\$
 \EE\Big[ q'(T_{11}) \cdot\big\{ T_{11} - C_{\sigma_n^\star(1),1} \big\} \biggiven T_{-1}=x_{-1},T_{2:n_1},C\Big] = -\int_0^1q''(s) \big\{ F(s,x_{-1})- \mathfrak C_{n_1-1}(T_2,\ldots,T_{n_1};C) \big\} \ud s .
\$
Integrating over $x_{-1}$ (each with  uniform distribution on $[0,1]$) gives (Fubini's Theorem applies for these bounded terms)
\$
 \EE\Big[ q'(T_{11}) \cdot\big\{ T_{11} - C_{\sigma_n^\star(1),1} \big\} \biggiven T_2,\dots,T_{n_1},C\Big] = -\int_\Omega q''(x_1)\big\{ F(s,x_{-1})- \mathfrak C_{n_1-1}(T_2,\ldots,T_{n_1};C) \big\}\ud x .
\$
Since the treated samples are exchangeable, the same calculation applies for each of the remaining observations. Averaging over these observations, we obtain
\begin{equation}\label{sharp:eq:gradient-identity}
 \E\left[\frac1{n_1}\sum_{i=1}^{n_1}
 q'(T_{i1})(T_{i1}-C_{\sigma_n^\star(i),1})\right]
 =-\int_\Omega q''(x_1)J(x)\,dx.
\end{equation}

For $s,t\in[0,1]$, Taylor expansion and $q''\le M$ give
\[
 q(t)\le q(s)+q'(s)(t-s)+\frac M2(t-s)^2.
\]
Applying this $s=T_{i1}$ and $t=C_{\sigma_n^\star(i),1}$ and averaging gives 
\[
 \EE [B_{n_1,n_0}(g)]
 \ge-\int_\Omega q''(x_1)J(x)\,dx
 -\frac{M}{2n_1}\E\sum_{i=1}^{n_1}
                  \|T_i-C_{\sigma_n^\star(i)}\|^2.
\]
The last sum has expectation $2\overline{\mathfrak C}_{n_1}$, giving
\eqref{sharp:eq:bias-lower}. For the absolute bound, use the absolute Taylor
remainder with $H=\|q''\|_\infty$. The absolute gradient contribution is at
most $H\int J\le H\overline{\mathfrak C}_{2n_1}/n_1$, and the remainder is at
most $H\overline{\mathfrak C}_{n_1}/n_1$. This proves
\eqref{sharp:eq:bias-upper}.
\end{proof}

\subsection{Proof of Lemma~\ref{sharp:lem:supermod}}\label{app:subsec_proof_lem_supermod}

\begin{proof}[Proof of Lemma~\ref{sharp:lem:supermod}]
Take an optimal matching $M_1$ for treated units in $S$ and an optimal matching $M_2$ for treated samples in 
$S'\cup\{x\}$. 
We use the graph language in Appendix~\ref{app:subsec_match_graph_setup}, viewing each as a graph with edges between matched pairs. 
We color the edges from the two matching in distinct colors, where two copies of any common edge are preserved. 
Starting at $x$, we follow its $M_2$ edge to reach a control. If this
control is unmatched in $M_1$, then stop; otherwise, we follow its $M_1$ edge to a treated
vertex in $S$, and then that vertex's edge in $M_2$. We continue this process until stopping at a control vertex that is unmatched in $M_1$. 
Every treated vertex reached after $x$ belongs to $S$ and has edges in both $M_1$ and $M_2$.
This component must be a path, because along the way the vertices reached must alternate between a control matched in $M_2$ and a treated vertex matched to the preceding control in $M_1$, and finally stop at a control unit matched in $M_2$ but not in $M_1$. This path cannot be attached to a cycle because every internal vertex must have a degree of $1$ and the vertex $x$ has a degree of $1$, while attaching a cycle would create a vertex with degree at least $3$. 

By construction, edges along this path alternates between $M_1$ and $M_2$. We now exchange colors along the path. The first matching now covers treated vertices in $S\cup\{x\}$, 
and the second covers treated vertices in $S'$. Each internal vertex loses one edge of each relevant
color and gains one, so each matching of the corresponding colors remain injections. Their combined cost
is unchanged. Minimizing separately over the two new treated sets gives
\[
 \mathfrak C_C(S\cup\{x\})+\mathfrak C_C(S')
 \le\mathfrak C_C(S)+\mathfrak C_C(S'\cup\{x\}),
\]
which is equivalent to \eqref{sharp:eq:supermod}. This argument holds even if there are ties. 
\end{proof}

\subsection{Proof of Lemma~\ref{sharp:lem:two-estimates}}\label{sharp:sec:cost}

\begin{proof}[Proof of Lemma~\ref{sharp:lem:two-estimates}]
Partition $\Omega$ into adjacent cubes of side length $w=2^{-j}$, where we use half-open cubes  except on the outer faces to obtain a disjoint partition. Let $\cQ_w$ be the collection of these cubes, and let $Q\in \cQ_w$. 
Suppose $p_0\ge a_Q$ on $Q$ for some fixed numbers $a_Q\ge a>0$.
Let $k$ be the number of treated points which obeys $1\le k\le a n_0/4$. 
For any cube $Q$, we call $H$ a subcube of $Q$ if there exists some $0<h\leq 2$ and $y\in \RR^d$ such that $H=Q(y,h)$ and $z_\ell\leq y_\ell\leq z_\ell +w-h$ for $\ell=1,\dots,d$, where we define $Q(y,h)=\prod_{\ell=1}^d I(y_\ell,h)$, and $I(s,h)=[s,s+h)$ if $s+h<1$ and $[s,1]$ if $s+h=1$. 

Define the constant 
\[
 C_d=2d+4d\sum_{j=1}^\infty4^j\exp\{-2^{d(j-1)}/4\}<+\infty. 
\]

We will use the following lemma, whose proof is in Appendix~\ref{app:subsec_dimension_lem_cost}. 
\begin{lemma}\label{sharp:lem:cost} 
For all sufficiently large $n_0$, it holds  uniformly over $1\le k\le a n_0/4$ that 
\begin{equation}\label{sharp:eq:cost-bound}
 \overline{\mathfrak C}_k
 \le C_d k n_0^{-2/d}\sum_{Q\in \cQ_w}|Q|a_Q^{-2/d}
       +d k^2\exp(-n_0a w^d/4).
\end{equation}  
\end{lemma}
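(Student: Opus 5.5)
We bound $\overline{\mathfrak C}_k$ by the expected cost of an explicit feasible matching. The matching is built greedily at dyadic scales inside each cube $Q\in\cQ_w$, with a crude fallback whenever the local scheme breaks down.

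First fix a cube $Q$. We use that the treated points are uniform, so the expected number of treated points in $Q$ is $k|Q|$. We also use that $p_0\ge a_Q$ on $Q$, so each subcube $H\subseteq Q$ of side $h$ contains $\mathrm{Bin}(n_0,\ge a_Q h^d)$ controls. A treated point $T_i\in Q$ is assigned a distinct control as follows. At dyadic level $j=1,2,\dots$, take the subcube $H_j$ of $Q$ containing $T_i$ of side $h_j=\min\{w,2^{j}h_0\}$, where $h_0\asymp(a_Q n_0)^{-1/d}$. Let $j^*$ be the first level at which $H_j$ contains more controls than the number of treated points in $Q$ already using controls in $H_j$. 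Since $k\le an_0/4\le a_Qn_0/4$, the treated points occupy at most a quarter of the expected control mass, so a level fails only if the binomial count in $H_j$ drops below half of its mean, roughly $2^{d(j-1)}$. By Lemma~\ref{lem:chernoff} this has probability at most $\exp\{-2^{d(j-1)}/4\}$. If $T_i$ is matched at level $j$, the edge cost is at most $\tfrac12 d h_j^2 \le d\,4^{j} h_0^2$. Summing over levels therefore gives an expected cost per treated point of at most $(2d+4d\sum_j 4^j e^{-2^{d(j-1)}/4})\,h_0^2 = C_d (a_Qn_0)^{-2/d}$ up to the normalization of $h_0$. Multiplying by the expected count $k|Q|$ and summing over $Q$ yields the first term $C_d k n_0^{-2/d}\sum_Q |Q|a_Q^{-2/d}$.

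The second term handles the event that the scheme cannot be completed inside $Q$, that is, the top level $h=w$ itself fails. This happens when the control count in $Q$ is below $n_0a w^d/2$. By Chernoff this has probability at most $\exp(-n_0aw^d/4)$ per cube. On that event we fall back to any feasible injection, which costs at most $k\cdot d/2$ in total. Since a union over cubes holding treated points involves at most $k$ relevant cubes, the contribution is at most $dk^2\exp(-n_0aw^d/4)$. Finally, uniformity over $1\le k\le an_0/4$ holds because all the bounds above depend on $k$ only through the occupancy ratio, which is controlled by $k\le an_0/4$.

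The main obstacle is making the greedy assignment injective across treated points without double-counting controls. Distinct treated points in the same subcube compete for the same controls, so the failure events are not independent. We plan to handle this by defining the failure at level $j$ through a deterministic count comparison (controls in $H_j$ versus all treated points of $Q$ in $H_j$) rather than tracking individual choices. A Hall-type argument, in the spirit of Lemma~\ref{lem:local_edge}, would then show that if every relevant subcube has surplus controls, a simultaneous injection exists with each point matched within its own $H_{j^*}$.
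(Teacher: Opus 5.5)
Your overall architecture matches the paper's proof. You refine each $Q\in\cQ_w$ dyadically down to side $h_0\asymp(n_0a_Q)^{-1/d}$. A treated point displaced to level $j$ pays at most order $4^jh_0^2$, weighted by the probability of failing at level $j-1$. When some top cube has a deficit, you fall back to an arbitrary injection of total cost $dk/2$.

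Your feasibility device is a valid alternative to the paper's. The paper matches greedily bottom-up, so that after a cell $H$ is processed exactly $(N_{H,1}-N_{H,0})_+$ treated points remain. Your rule takes $j^*(i)$ to be the first level at which the cell containing $T_i$ has $N_{H,0}\ge N_{H,1}$. These cells form a laminar family. For any treated set $S$, the maximal cells among them are disjoint and each has a surplus, so Hall's condition holds. Your rule displaces all $N_{H,1}$ treated points of a deficit cell rather than only the excess. That costs nothing, provided the probability estimate is right.

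The gap is in that estimate. It is false that a level fails only if the control count drops below half its mean. The failure event is $N_{H,1}>N_{H,0}$ for two independent binomials, so it also occurs when the treated count fluctuates upward. The means involved are $O(1)$ at the finest levels, and for a fixed $T_i$ you must condition on $T_i\in H$, which adds one to $N_{H,1}$.

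Your constants are also misquoted. Lemma~\ref{lem:chernoff} with $\epsilon=1/2$ gives $\exp(-\mu/8)$, not $\exp(-\mu/4)$. A separate upper-tail bound for the treated count is weaker still, roughly $\exp(-\mu/12)$. Your route therefore yields some other constant in place of $C_d$ and at best $\exp(-n_0aw^d/8)$ in the second term, not the displayed inequality. The same objection applies to your ``at most $k$ relevant cubes'' union bound: given the treated locations, a cube holding many treated points can fail without any control-count deviation.

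The missing ingredient is a joint exponential-moment bound, as in the proof of Lemma~\ref{sharp:lem:deficit}. Let $M=N_{H,1}\sim\mathrm{Bin}(k,v)$ and let $K=N_{H,0}$ be independent of it. Then $M\ind\{M>K\}\le Me^{t(M-K)}$. Computing $\EE[Me^{t(M-K)}]$ from the two binomial generating functions at $t=\log 2$, and using $k\le n_0a_Q/4$, gives $\EE[M\ind\{M>K\}]\le 2kv\exp(-n_0a_Qv/4)$.

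Summed over the cells of level $j-1$, whose volume $v$ satisfies $n_0a_Qv\ge 2^{d(j-1)}$, this bounds the expected number of treated points reaching level $j$ by $2k|Q|\exp\{-2^{d(j-1)}/4\}$. That is exactly what produces $C_d$.

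For the top level, note $\ind\{M>K\}\le M\ind\{M>K\}$. Summing over $Q\in\cQ_w$ with $|Q|=w^d$ and $a_Q\ge a$ gives $\PP(\exists Q:\,N_{Q,1}>N_{Q,0})\le 2k\exp(-n_0aw^d/4)$. Multiplying by the fallback cost $dk/2$ gives the term $dk^2\exp(-n_0aw^d/4)$.
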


We first prove~\eqref{sharp:eq:uniform-cost} by applying Lemma~\ref{sharp:lem:cost}. 
Recall that
$I=[1/2,1/2+w]$ and $w=2^{-j}$. Both endpoints of $I$
are grid points of the partition $\mathcal Q_w$.
We may therefore take $a_Q=a$ for the cells whose
first-coordinate intervals lie between these endpoints,
and $a_Q=b$ for all other cells. The two groups of cells
have total volumes $w$ and $1-w$, respectively. Hence
\$
\textstyle{\sum_{Q\in\mathcal Q_w}}|Q|a_Q^{-2/d}
=
w a^{-2/d}+(1-w)b^{-2/d}
\le 2,
\$
where we used $w\le a^{2/d}$ (by construction) and $b\ge1$.
Substituting this into~\eqref{sharp:eq:cost-bound} gives
\[
\overline{\mathfrak C}_k
\le
2C_d k n_0^{-2/d}
+
d k^2\exp(-n_0aw^d/4).
\]
 
We next bound the second term. Since $k\le n_0$, we have 
\[
\frac{d k^2\exp(-n_0aw^d/4)}{k n_0^{-2/d}}
=
d k n_0^{2/d}\exp(-n_0aw^d/4)
\le
d n_0^{1+2/d}\exp(-n_0aw^d/4) \leq 1
\]
for all sufficiently large $n_0$ and $1\le k\le an_0/4$. Consequently, we have
\[
\overline{\mathfrak C}_k
\le (2C_d+1)k n_0^{-2/d}.
\]
This proves~\eqref{sharp:eq:uniform-cost} with $K_d=2C_d+1$.
Note that the constant $K_d$ depends only on $d$.

We now prove~\eqref{sharp:eq:local-insertion-lower}. Fix $x\in\Omega$ with $x_1\in J_w$.
Let $v_d$ denote the volume of the unit Euclidean ball
in $\mathbb R^d$, and set 
$s_{n_0}=(2v_d n_0a)^{-1/d}$. 
Since $a,w$ are fixed, we have $s_{n_0}\le w/8$ for all
sufficiently large $n_0$.

Recall that the distance between every point of $J_w$ and either endpoint of $I_{\mathrm{flat}}$ is at least $w/8$.
If $y\in\Omega$ satisfies $\|y-x\|<s_{n_0}$, then we must have 
\[
|y_1-x_1|\le\|y-x\|<s_{n_0}\le w/8.
\]
Thus $y_1\in I_{\mathrm{flat}}$, and so $p_0(y)=a$.
It follows that, for each control observation $C_j$,
\$
\mathbb P\{\|C_j-x\|<s_{n_0}\}
&=
\int_{\{y\in\Omega:\,\|y-x\|<s_{n_0}\}}p_0(y)\,dy \le a v_d s_{n_0}^d
=\frac{1}{2n_0}.
\$ 
The probability that at least one control lies within
distance $s_{n_0}$ of $x$ is therefore at most
\[
\mathbb P\left\{\min_{1\le j\le n_0}\|x-C_j\|<s_{n_0}\right\}
\le
\sum_{j=1}^{n_0}
\mathbb P\{\|x-C_j\|<s_{n_0}\}
\le\frac12.
\]
Equivalently,
\[
\mathbb P\left\{\min_{1\le j\le n_0}\|x-C_j\|\ge s_{n_0}\right\}
\ge\frac12.
\]
On this event, the squared distance from $x$ to its nearest
control is at least $s_{n_0}^2$. Hence Lemma~\ref{sharp:lem:IBP} gives
\$
J(x) \ge
\frac12\EE\Big[\min_{1\le j\le n_0}\|x-C_j\|^2 \Big]\geq 
\frac{s_{n_0}^2}{2}
\mathbb P\left\{
\min_{1\le j\le n_0}\|x-C_j\|\ge s_{n_0}
\right\} \ge \frac{s_{n_0}^2}{4}
=
\frac14(2v_d)^{-2/d}a^{-2/d}n_0^{-2/d}.
\$
The required condition $s_{n_0}\le w/8$ does not depend
on $x$ or $n_1$. Thus the same bound holds for every
$x\in\Omega$ with $x_1\in J_w$ and every
$1\le n_1\le n_0$, which proves~\eqref{sharp:eq:local-insertion-lower}.
\end{proof}

\subsection{Proof of Lemma~\ref{sharp:lem:cost}}
\label{app:subsec_dimension_lem_cost}

\begin{proof}[Proof of Lemma~\ref{sharp:lem:cost}]
We continue with the language in the proof of Lemma~\ref{sharp:lem:two-estimates}. In each cube $Q\in \cQ_w$, choose an integer $J_Q\ge0$ and a finest side length
$h_0=w2^{-J_Q}$ satisfying
$1\le n_0a_Qh_0^d<2^d$. Such a choice exists for all sufficiently large $n_0$.
In particular, $h_0^2\le4(n_0a_Q)^{-2/d}$. 
We denote the set of these finest cubes as $\cQ_{w,h_0}$. Merging every collection of $2^d$ cubes in $\cQ_{w,h_0}$ produces a collection of cubes of side length $2h_0$, which we denote as $\cQ_{w,2h_0}$. We can continue this process to produce the sets of cubes $\cQ_{w,2^j h_0}$ for $j=1,\dots,J_Q$, each of which is obtained by merging $2^d$ adjacent cubes in $\cQ_{w,2^{j-1} h_0}$. 

We construct a matching as follows. 
First, we match as many points as possible within each finest cube $H\in \cQ_{w,h_0}$. 
Then, we merge the $2^d$ ``child'' cubes as described before, and within each $H\in \cQ_{w,2h_0}$, we match as many of their remaining treated and
control points as possible. 
We continue this process to finally produce the cube $Q$ of side length $w$. 
In this process, after producing and matching within any cell $H$, one can see that there are exactly $(N_{H,1}-N_{H,0})_+$ treated points remain.  

We now bound the total cost $\|T_i-C_j\|^2/2$ for the matched pairs $(T_i,C_j)$ in the above process. 
First, all pairs matched within $H\in \cQ_{w,h_0}$ satisfy  $\|T_i-C_j\|^2/2\leq dh_0^2/2$, 
and there are at most as many such
edges in $Q$ as the number of treated points in $Q$, whose expectation is $k|Q|$. Their expected
total cost is thus at most $2d k|Q|(n_0a_Q)^{-2/d}$.
Similarly, for pairs matched after producing $\cQ_{w,2^j h_0}$ at each level $j\ge1$, the side length is $h_j=2^jh_0$; 
moreover, every new pair uses a treated
point left at the preceding level, so the number of new pairs are upper bounded by 
\$
\textstyle{\sum_{H\in \cQ_{w,2^{j-1}h_0}}} ( N_{H,1}-N_{H,0})_+,
\$
where $N_{H,1}$ and $N_{H,0}$ are the number of treated and control samples in $H$, respectively. 
The following lemma bounds the expectation of this number.

\begin{lemma}\label{sharp:lem:deficit}
For a cube $H\subseteq Q$ of volume $v=|H|=h^d$, let $N_{H,1}$ and $N_{H,0}$ be the
treated and control counts in $H$. Then
\begin{equation}\label{sharp:eq:deficit}
 \EE\big[(N_{H,1}-N_{H,0})_+\big]
 \le2kv\exp(-n_0a_Qv/4),
 \quad \PP(N_{H,1}>N_{H,0})\leq 2kv\exp(-n_0a_Qv/4).
\end{equation} 
\end{lemma}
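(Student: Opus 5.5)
The plan is a direct exponential-moment (Chernoff-type) argument applied to the two independent binomial counts. The key device is to size-bias by a treated point in $H$ before applying the moment bound; this is what yields the factor $kv$ in front of the exponential.

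\emph{Distributions.} Recall the setting of Lemma~\ref{sharp:lem:cost}: there are $k$ treated points, i.i.d.\ uniform on $\Omega$ (since $p_1\equiv1$), with $1\le k\le an_0/4$, and $n_0$ controls, i.i.d.\ with density $p_0\ge a_Q$ on $Q$. Because $H\subseteq Q$, each treated point falls in $H$ with probability exactly $v$, and each control with probability $\pi_0:=\int_H p_0\ge a_Qv$. Hence $N_{H,1}\sim\mathrm{Bin}(k,v)$ and $N_{H,0}\sim\mathrm{Bin}(n_0,\pi_0)$, and the two counts are independent. Write $\mu_0=n_0\pi_0\ge n_0a_Qv$. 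Since $a_Q\ge a$, we also get $kv\le a_Qn_0v/4\le\mu_0/4$.

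\emph{Reduction to one event.} Both claims will follow from
\[
E_*:=\EE\bigl[N_{H,1}\ind\{N_{H,1}>N_{H,0}\}\bigr]\le 2kv\,e^{-n_0a_Qv/4}.
\]
Indeed, $(N_{H,1}-N_{H,0})_+\le N_{H,1}\ind\{N_{H,1}>N_{H,0}\}$, which gives the expectation bound. Moreover, on $\{N_{H,1}>N_{H,0}\}$ we have $N_{H,1}\ge1$, so $\PP(N_{H,1}>N_{H,0})\le E_*$, which gives the probability bound.

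\emph{Size-biasing and the moment bound.} Expand $N_{H,1}=\sum_{i=1}^k\ind\{T_i\in H\}$. By exchangeability,
\[
E_*=kv\,\PP\bigl(N'>N_{H,0}\bigr),
\]
where $N'=1+\mathrm{Bin}(k-1,v)$ is the law of $N_{H,1}$ conditional on $T_1\in H$, and $N'$ is independent of $N_{H,0}$. For any $\lambda>0$, Markov's inequality gives
\[
\PP(N'>N_{H,0})\le\EE e^{\lambda N'}\,\EE e^{-\lambda N_{H,0}}.
\]
Using $1+x\le e^x$, the two factors satisfy
\[
\EE e^{\lambda N'}\le e^{\lambda}\exp\{kv(e^\lambda-1)\},\qquad
\EE e^{-\lambda N_{H,0}}\le\exp\{-\mu_0(1-e^{-\lambda})\}.
\]
Take $\lambda=\log 2$. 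The product becomes
\[
2\exp(kv-\mu_0/2)\le 2\exp(\mu_0/4-\mu_0/2)=2e^{-\mu_0/4}\le 2e^{-n_0a_Qv/4}.
\]
Multiplying by $kv$ gives the bound on $E_*$, and hence both inequalities in \eqref{sharp:eq:deficit}.

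\emph{Main obstacle.} The only delicate point is getting the exponent $1/4$ rather than the $1/8$ that a naive lower-tail Chernoff bound on $N_{H,0}$ alone would give. The joint moment bound above handles this: with $\lambda=\log2$, the control count contributes $e^{-\mu_0/2}$, and the condition $k\le an_0/4$ ensures the treated count costs at most $e^{\mu_0/4}$.
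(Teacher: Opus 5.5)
Your proposal is correct and is essentially the paper's argument. The paper bounds $(N_{H,1}-N_{H,0})_+\le N_{H,1}e^{t(N_{H,1}-N_{H,0})}$ and evaluates $\EE[N_{H,1}e^{tN_{H,1}}]=kv\,e^{t}(1-v+ve^{t})^{k-1}$ from the binomial generating function; this is exactly the quantity your size-biasing step produces, and the paper then takes $t=\log 2$ and uses $k\le an_0/4$ just as you do. The only cosmetic difference is that the paper obtains the probability bound from $\ind\{N_{H,1}>N_{H,0}\}\le(N_{H,1}-N_{H,0})_+$, whereas you route both bounds through $\EE[N_{H,1}\ind\{N_{H,1}>N_{H,0}\}]$.
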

\begin{proof}[Proof of Lemma~\ref{sharp:lem:deficit}]
Write $M_H=N_{H,1}$ and $K_H=N_{H,0}$. 
We know they are independent of each other, and $M_H\sim \text{Binom}(k,v)$, $K_H\sim \text{Binom}(n_0,p_H)$ for the marginal probability $p_H\ge a_Qv$.
For $t>0$, we know $(M_H-K_H)_+\le M_H e^{t(M_H-K_H)}$, so by independence and the
generating function of the Binomial distribution,  
\$
 \EE\big[(M_H-K_H)_+\big] \le kv e^t(1-v+ve^t)^{k-1}
           (1-p_H+p_He^{-t})^{n_0}.
\$
Since $1+u\le e^u$, taking  $t=\log2$, the right-handed side is upper bounded by 
\[
2kv(1+v)^{k-1}(1-p_H/2)^{n_0} \leq  2kv\exp(kv-n_0p_H/2)
 \le2kv\exp(-n_0a_Qv/4),
\]
where the second inequality uses the facts that $v\leq 1$ and $k\le n_0a/4\le n_0a_Q/4$. 
Finally, we know $\ind\{M_H>K_H\} \leq (N_{H,1}-N_{H,0})_+$, which gives the second inequality.  
\end{proof}

Therefore, we know the expected number of these pairs satisfies
\[
\textstyle{\sum_{H\in \cQ_{w,2^{j-1}h_0}}} \EE\big[( N_{H,1}-N_{H,0})_+\big] \leq  2k|Q|\exp\{-n_0a_Q h_0^d2^{d(j-1)}/4\}
 \le2k|Q|\exp\{-2^{d(j-1)}/4\}.
\]
These pairs incur the cost $\|T_i-C_{\sigma(i)}\|^2/2$ of at most $dh_0^22^{2j}/2$, the diameter of cubes in $\cQ_{w,2^j h_0}$ at level $j$. 
Thus, the expected total cost incurred by new pairs matched at level $j$ is at most
\[
2k|Q|\exp\{-2^{d(j-1)}/4\} \cdot 4d(n_0a_Q)^{-2/d} 4^j/2 \leq 
 4d k|Q|(n_0a_Q)^{-2/d}
       4^j\exp\{-2^{d(j-1)}/4\}.
\]
Summing the above cost over all levels $j$ therefore upper bounds the expected matching cost in $Q$ produced in this process by
$C_d k|Q|(n_0a_Q)^{-2/d}$.

Let $\cE_n'$ be the event where every $Q\in \cQ_w$ has at least as many controls as treated points. 
Then we know on $\cE_n'$, the matched pairs from the above process form a full injection. 
In addition, a union bound over the probability bound in~\eqref{sharp:eq:deficit} yields 
\[
\PP(\cE_n')\geq 1- \textstyle{\sum_{Q\in \cQ_w}} 2k|Q|\exp(-n_0a_Q|Q|/4)
 \geq 1- 2k\exp(-n_0a w^d/4).
\]
On the event $(\cE_n')^c$, we discard the above construction and take any injection whose total matching cost is at most $dk/2$.  
Combining the two cases yields~\eqref{sharp:eq:cost-bound}.
\end{proof}

\subsection{Proof of Proposition~\ref{sharp:prop:variance}}\label{sharp:sec:variance}
\begin{proof}[Proof of Proposition~\ref{sharp:prop:variance}]
Similar to the imbalance upper-bound proof, we consider the one-coordinate replacement on the data and the resulting changes in $B_{n_1, n_0}(g)$ to prove its variance bound. Recall 
\[
D=(T_1,\ldots,T_{n_1},C_1,\ldots,C_{n_0}),
\qquad
\sigma_D^\star\in\arg\min_{\sigma\in\mathcal A_{n_1,n_0}}
Q_D(\sigma,A).
\]
Write $S_D=\{\sigma_D^\star(i)\colon 1\leq i\leq n_1\}$ for the selected control set and the imbalance measure
\[
 B_D 
 =\frac1{n_1}\sum_i g(T_i)-\frac1{n_1}\sum_{j\in S_D}g(C_j),
\]
so $B_{n_1,n_2}(g)=B_D$. 

Let $D^{(j)}$ be a one-coordinate replacement of $D$ (recall the definition in Appendix~\ref{app:subsec_imbalance_fixed_theorem}). 
When $j\leq n_1$, a treated sample is replaced by an independent copy from the treated covariate distribution. When $j>n_1$, a control sample is replaced by an independent copy from the control covariate distribution.

First, the optimizer $\sigma_D^\star$ is unique almost surely for both the original dataset $D$ and its one-coordinate replacement since the covariates have a density; see, e.g., the proof of Lemma~\ref{sharp:lem:IBP} for the argument. 

We use the selected-set stability property proved in Lemma~\ref{lem:control_set_stability}: when the cost matrix $c_{ij}=\|T_i-C_j\|^2$ differ in one row or one column and both optima
are unique, their selected-control index sets differ by at most one deletion
and one addition.  
We consider two cases for the replacement in $D^{(j)}$:
\begin{itemize}
    \item If a treated point is replaced, then the cost matrices $(c_{ij})$ of $D$ and $(c_{ij}')$ of $D^{(j)}$ differ by one row. The selected-control set differs by at most one deletion and one addition. Thus $|B_{D^{(j)}}-B_D|\le 2R_g/n_1$. 
    \item If a control point is replaced, with abuse of notation we denote the original index as $j\in [n_0]$. Then the treated average doesn't change, and the selected-control set differs by at most one deletion and one addition. We have the identity
    \$
n_1\{B_{D^{(j)}}-B_{D}\}
= 
- \sum_{\ell\in S_{D^{(j)}}}g(C_\ell)
+\sum_{\ell\in S_D}g(C_\ell)
-\mathbf1\{j\in S_{D^{(j)}}\}\{g(C_j')-g(C_j)\}.
\$
The difference of the first two terms are at most $R_g$ due to Lemma~\ref{lem:control_set_stability}. The last term is also bounded by $R_g$.
Therefore 
$|B_{D^{(j)}}-B_D|\le 2R_g/n_1$. 
\end{itemize} 
Combining the two cases and applying Lemma~\ref{lem:ES}, we have 
\[
 \Var(B_{n_1,n_2}(g)) \leq \frac{1}{2}\sum_{j=1}^n \EE[(B_D - B_{D^{(j)}})^2] \le\frac{n}{2}
 \frac{4R_g^2}{n_1^2} = \frac{2R_g^2(n_1+n_0)}{n_1^2}.
\] 
This proves Proposition~\ref{sharp:prop:variance}.
\end{proof}

%% file: 141_wald_att.tex

\section{Proofs for super-population inference}\label{app:sec_inference}

\subsection{Proof of Theorem~\ref{thm:ATT}}
\label{app:subsec_att_proof} 
Recall that $\cG_n = \sigma((X_i,Z_i)_{i=1}^n)$ is the $\sigma$-field used in constructing the matching. 
We first restate the balance results from Theorem~\ref{thm:observational} as preparation.

\begin{lemma}\label{inf:lem:geometry}
Under Assumptions~\ref{assump:obs-sample} and~\ref{assump:match}, and with the zero convention on $\mathcal I_n^c$, we have 
\[
 \sqrt n\sup_{g\in\cF_L}|B_{n,\hat\sigma_n}(g)|\to0,\qquad
 L_n(\hat\sigma_n)=O_P(r_n^2)+o_P(n^{-1}),
\]
where $r_n\asymp(\log n/n)^{1/d}$. In particular, $L_n(\hat\sigma_n)\to0$ and $\sqrt{L_n(\hat\sigma_n)}\log n\to0$ in probability. 
\end{lemma}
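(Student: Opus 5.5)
The first claim is Theorem~\ref{thm:observational} itself. Assumption~\ref{assump:match} gives exactly its hypotheses: $d\le3$, $\|\hat A-A_0\|_{\mathrm{op}}=O_P(n^{-1/2})$, and optimization gap $\xi_n=o_P(1/n)$. On $\mathcal I_n^c$ the zero convention applies, and $\PP(\mathcal I_n)\to1$. So $\sqrt n\sup_{g\in\cF_L}|B_{n,\hat\sigma_n}(g)|=o_P(1)$ needs no further argument. The measurability of $\hat A$ and $\hat\sigma_n$ with respect to $\cG_n$ plays no role here; it is only needed later.

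For the distance bound I reuse the reduction in the proof of Theorem~\ref{thm:observational}. We work on the event $\mathcal J_n\cap\mathcal E_n\cap\{\|\hat A-A_0\|_{\mathrm{op}}\le a_0/2\}$, where $a_0=\lambda_{\min}(A_0)$. This event has probability tending to one, uniformly over the conditional laws with $n_1=k\in\mathcal K_n$.

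On this event, $\lambda_{\min}(\hat A)\ge a_0/2$. Also, on $\mathcal E_n$ the cellwise matching is feasible, and every one of its edges has length at most $r_n$. Hence $\min_\sigma Q_n(\sigma;\hat A)\le \tfrac12\lambda_{\max}(\hat A)r_n^2\le Cr_n^2$. This gives
\[
\frac{a_0}{4}L_n(\hat\sigma_n)\le Q_n(\hat\sigma_n;\hat A)=\min_\sigma Q_n(\sigma;\hat A)+\xi_n\le Cr_n^2+\xi_n .
\]
Since $\xi_n=o_P(n^{-1})$, we get $L_n(\hat\sigma_n)=O_P(r_n^2)+o_P(n^{-1})$. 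The same conclusion follows from the last display of Theorem~\ref{thm:obj_gap} combined with Theorem~\ref{thm:hat_A_imbalance}, with $\epsilon_n=a_nr_n^2+\eta_n$. On the complementary event, which has vanishing probability, nothing needs to be shown for an $O_P$ statement. The one point needing care is that the constants in the bound $Cr_n^2$ are common to all $k\in\mathcal K_n$. This holds because the proof of Theorem~\ref{thm:observational} already selects $\cgrid$ so that $\sup_{k\in\mathcal K_n}\PP(\mathcal E_n^c\mid n_1=k)\le n^{-b}$.

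The consequences then follow. We have $r_n^2\asymp(\log n/n)^{2/d}\to0$ and $n^{-1}\to0$, so $L_n(\hat\sigma_n)\to0$ in probability. Moreover,
\[
\sqrt{L_n(\hat\sigma_n)}\,\log n\le O_P(r_n\log n)+o_P(n^{-1/2}\log n),
\]
and $r_n\log n\asymp n^{-1/d}(\log n)^{1+1/d}\to0$ for every fixed $d$, so this also tends to zero. I expect no real obstacle. The only mildly delicate step is passing from fixed group sizes to random $n_1$, and that step has already been handled in the proof of Theorem~\ref{thm:observational}.
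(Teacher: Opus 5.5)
Your proposal is correct and follows the paper's own proof. The first claim is cited from Theorem~\ref{thm:observational}; the distance bound comes from the cellwise injection on $\mathcal E_n\cap\{\|\hat A-A_0\|_{\mathrm{op}}\le\lambda_{\min}(A_0)/2\}$, giving $\tfrac{\lambda_{\min}(A_0)}{4}L_n(\hat\sigma_n)\le Cr_n^2+\xi_n$, and the last claims follow from $r_n\log n\to0$ and $n^{-1/2}\log n\to0$. One small overstatement: the $\hat A$-event is known to have probability tending to one only unconditionally, not uniformly over $n_1=k$, but the argument only needs the unconditional statement.
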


\begin{proof}[Proof of Lemma~\ref{inf:lem:geometry}]
The first claim follows from Theorem~\ref{thm:observational}. For the distance bound, use the event $\cE_n$ and the choice of $r_n\asymp (\log n/n)^{1/d}$ from Lemma~\ref{lem:count_stable}. 
Conditional on the treated/control counts, the probability of $\cE_n^c$ is at most $n^{-b}$, uniformly over $|n_1/n-\pi|\le\eta$ for some fixed $\eta>0$, by the arguments in the proof of Theorem~\ref{thm:observational}. On the event $\cE_n$, there are abundant control samples within every cubic $Q$, so there exists an injection $\bar\sigma$ so that each $(T_i,C_{\bar\sigma(i)})$ are within the same cubic of diameter $r_n$. On the event  $\cE_n \cap \{\|\hat A-A_0\|_{\mathrm{op}}\le\lambda_{\min}(A_0)/2\}$  whose probability tends to one, we have 
\[
 \frac{\lambda_{\min}(A_0)}4L_n(\hat\sigma_n)
 \le Q_n(\hat\sigma_n;\hat A)
 =\min_\sigma Q_n(\sigma;\hat A)+\xi_n
 \le Q_n(\bar\sigma;\hat A)+\xi_n
 \le Cr_n^2+\xi_n.
\]
The fact that $\xi_n=o_P(n^{-1})$ then proves the bound on $L_n(\hat\sigma_n)$. The last conclusion follows from $r_n\log n\to0$ and $n^{-1/2}\log n\to0$.
\end{proof}

\begin{lemma}\label{lem:transfer}
If $h$ is continuous on $\Omega$ and $L_n(\hat\sigma_n)\to0$ in probability, then
\[
 \frac1{n_1}\sum_{i=1}^{n_1}|h(C_{\hat\sigma_n(i)})-h(T_i)|\to0
 \quad\text{in probability}.
\]
\end{lemma}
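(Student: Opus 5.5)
The plan is a standard uniform-continuity-plus-truncation argument. Since $\Omega=[0,1]^d$ is compact and $h$ is continuous on $\Omega$, $h$ is bounded, with $\|h\|_\Omega=:M<\infty$. It is also uniformly continuous, with a modulus $\omega(\delta)=\sup\{|h(x)-h(y)|:x,y\in\Omega,\|x-y\|\le\delta\}$ satisfying $\omega(\delta)\to0$ as $\delta\downarrow0$. We work on the event $\mathcal I_n$, where $\hat\sigma_n$ is defined; on $\mathcal I_n^c$ the quantity is zero by the convention, and $\PP(\mathcal I_n)\to1$ anyway.

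Fix $\delta>0$ and split the average according to whether the pair distance $\|T_i-C_{\hat\sigma_n(i)}\|$ is at most $\delta$ or exceeds it. Pairs with distance at most $\delta$ contribute at most $\omega(\delta)$ each, so their average is at most $\omega(\delta)$. Each remaining pair contributes at most $2M$. By Markov's (Chebyshev's) inequality applied to the empirical distribution of pair distances, the fraction of such pairs is at most $L_n(\hat\sigma_n)/\delta^2$, where $L_n$ is the average squared distance in~\eqref{eq:def_Ln}. This gives the deterministic bound
\[
\frac1{n_1}\sum_{i=1}^{n_1}|h(C_{\hat\sigma_n(i)})-h(T_i)|\le \omega(\delta)+\frac{2M}{\delta^2}L_n(\hat\sigma_n).
\]

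To conclude, fix $\varepsilon>0$ and choose $\delta$ with $\omega(\delta)\le\varepsilon/2$. The probability that the left side exceeds $\varepsilon$ is then at most $\PP\{L_n(\hat\sigma_n)>\varepsilon\delta^2/(4M)\}+\PP(\mathcal I_n^c)$, and both terms tend to zero by hypothesis (if $M=0$ the claim is trivial). There is no real obstacle here. The only care needed is measurability, which is immediate because $\hat\sigma_n$ is a measurable injection and the expression is a finite sum, and handling $\mathcal I_n^c$ by the zero convention. The hypothesis $L_n(\hat\sigma_n)\to0$ is supplied in applications by Lemma~\ref{inf:lem:geometry}.
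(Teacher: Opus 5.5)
Your proposal is correct and follows essentially the same route as the paper: the paper likewise bounds the average deterministically by $\omega_h(u)+2\|h\|_\infty L_n(\hat\sigma_n)/u^2$ and then uses $L_n(\hat\sigma_n)\to 0$ in probability together with $\omega_h(u)\to 0$ as $u\downarrow 0$. Your explicit choice of $\delta$ with $\omega(\delta)\le\varepsilon/2$ and your handling of $\mathcal I_n^c$ simply spell out the final step that the paper leaves brief.
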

\begin{proof}[Proof of Lemma~\ref{lem:transfer}]
For any $u>0$, define  $\omega_h(u)=\sup_{\|x-y\|\le u}|h(x)-h(y)|$. Then, the displayed average is at most $\omega_h(u)+2\|h\|_\infty L_n(\hat\sigma_n)/u^2$. 
For every fixed $u>0$, letting $n\to\infty$ yields 
\$
 \frac1{n_1}\sum_{i=1}^{n_1}|h(C_{\hat\sigma_n(i)})-h(T_i)| \leq \omega_h(u) + o_P(1). 
\$
Since $h$ is continuous on the compact set $\Omega$, we know $\lim_{u\downarrow 0} \omega_h(u)=0$. Hence by the arbitrariness of $u>0$ we must have the desired result. 
\end{proof}

\begin{proof}[Proof of Theorem~\ref{thm:ATT}]
We now begin the proof of Theorem~\ref{thm:ATT}. 
\paragraph{Asymptotic normality for $\hat\theta_{\att}$.} Write $b_i=\mu_0(T_i)-\mu_0(C_{\hat\sigma_n(i)})$,
$\varepsilon_{1,i}=Y_{1,i}-\mu_1(T_i)$,
$\varepsilon_{0,j}=Y_{0,j}-\mu_0(C_j)$, and
$\eta_i=\varepsilon_{1,i}-\varepsilon_{0,\hat\sigma_n(i)}$.
Then $R_i=\tau(T_i)+b_i+\eta_i$, which yields the decomposition
\@\label{eq:ATT-decomp}
 \sqrt {n_1}(\hat\theta_\att-\theta_\att)
 = \underbrace{\frac1{\sqrt {n_1}}\sum_i\{\tau(T_i)-\theta\}}_{\textnormal{(i)}}
 +\underbrace{\sqrt {n_1} B_{n,\hat\sigma_n}(\mu_0)}_{\textnormal{(ii)}}
 +\underbrace{\frac1{\sqrt {n_1}}\sum_i\eta_i}_{\textnormal{(iii)}}.
\@

Conditional on $\cG_n$, the residuals of distinct observations are independent and mean zero. This follows from i.i.d. sampling, conditioning on their individual covariates and labels, and unconfoundedness. Since the matching is an injection, different $\eta_i$ involve disjoint observations. Thus one can show that conditional on $\cG_n$, the $\eta_i$'s are conditionally independent and mean zero, with conditional variances $q_i=\sigma_1^2(T_i)+\sigma_0^2(C_{\hat\sigma_n(i)})$. 
Since $|x-y|^4\le8(|x|^4+|y|^4)$ for any $x,y\in \RR$, the conditional fourth moment (and hence the third moment) is uniformly bounded.  
Lemma~\ref{lem:transfer} applied to $\sigma_0^2(\cdot)$ gives 
\$
\frac{1}{n_1}\sum_i\sigma_0^2(C_{\hat\sigma_n(i)})=\frac{1}{n_1}\sum_i\sigma_0^2(T_i) +o_P(1)
=  \EE[\sigma_0^2(T_i)] +o_P(1).
\$ 
The last identity is because conditional on the treated and control group sizes, $(T_1,\dots,T_{n_1})$ are i.i.d.~random variables. 
Thus we have $\frac{1}{n_1}\sum_{i=1}^{n_1} q_i = v_\epsilon + o_P(1)$, where recall $v_\epsilon = \EE_1\{\sigma_1^2(X)+\sigma_0^2(X)\}$. 

For the first term in \eqref{eq:ATT-decomp}, write
\[
 \frac1{\sqrt{n_1}}\sum_{i=1}^{n_1}\{\tau(T_i)-\theta\}
 =\sqrt{\frac n{n_1}}\frac1{\sqrt n}
       \sum_{k=1}^n Z_k\{\tau(X_k)-\theta\}.
\]
The summands on the right are i.i.d. and bounded, have mean zero and variance $\pi v_\tau$. The i.i.d. CLT and $n_1/n\to\pi$ give a $N(0,v_\tau)$ limit, including $v_\tau=0$. 

Finally, the middle term in \eqref{eq:ATT-decomp} obeys 
\$
\sqrt{n_1} B_{n,\hat\sigma_n}(\mu_0) = o_P(1) 
\$
by Theorem~\ref{thm:observational}, since $\mu_0$ belongs to a fixed $\mathcal F_L$. Apply Lemma~\ref{lem:conditional-CLT} on the original probability space, and using the fact that term (iii) is asymptotically normal conditional on $\cG_n$ while the other two terms are $\cG_n$-measurable, we obtain the the normal limit with variance $v_\tau+v_\epsilon$. In particular, $\hat\theta_\att\to\theta_\att$.

\paragraph{Convergence of $\hat{V}_\att$.}
Let $a_i=\tau(T_i)+b_i$, and recall that $b_i = \mu_0(T_i) - \mu_0(C_{\hat\sigma_n(i)})$. These quantities are uniformly bounded. 
Lipschitz continuity of $\mu_0$ gives
${n_1}^{-1}\sum_i b_i^2\le\norm{\nabla\mu_0}_\infty^2L_D(\hat\sigma_n)\to0$ in probability. 
Then by the Cauchy-Schwarz inequality, we have 
${n_1}^{-1}\sum_i\{a_i^2-\tau(T_i)^2\}\to0$. 
Since conditional on treatment indicators, the treated samples are i.i.d.~from the treated group distribution, we know  ${n_1}^{-1}\sum_i a_i^2\to\E\{\tau(X)^2\mid Z=1\}$.

Recall that $\eta_i = \varepsilon_{1,i} - \varepsilon_{0,\hat\sigma_n(i)}$, and $q_i = \sigma_1^2(T_i)+\sigma_0^2(C_{\hat\sigma_n(i)})$. 
Conditional on $\cG_n$, we know ${n_1}^{-1}\sum_i(\eta_i^2-q_i)$ has mean zero and variance at most $C/{n_1}$ by the fourth-moment bound. For every fixed $\epsilon>0$, we have 
\$
\PP\Big( {n_1}^{-1}\sum_i(\eta_i^2-q_i)\geq \epsilon\Big) \leq \EE\big[\min\{1,C/({n_1}\epsilon^2)\} \big] + \PP(\cI_n^c).
\$
up to the vanishing probability of $\mathcal I_n^c$. 
The right-handed side tends to zero because ${n_1}\to\infty$ in probability and the minimum is bounded by one. Similarly, ${n_1}^{-1}\sum_i a_i\eta_i$ has conditional mean zero and variance at most $C/{n_1}$, because the $a_i$ and $q_i$ are bounded, and the same averaging argument applies. Expanding $R_i^2=a_i^2+2a_i\eta_i+\eta_i^2$ gives
${n_1}^{-1}\sum_iR_i^2\to\E\{\tau(X)^2\mid Z=1\}+v_\epsilon$.
Finally,
$\hat V_\att=\{{n_1}/({n_1}-1)\}\{{n_1}^{-1}\sum_iR_i^2-\hat\theta^2\}\to v_\tau+v_\epsilon$.
Thus by Slutsky's theorem, we have the desired coverage of the Wald-CI $\hat\theta_\att\pm z_{1-\alpha/2}\sqrt{\hat V_\att/{n_1}}$. 

\paragraph{Validity of bootstrap inference.}
We now prove the validity of the matched-pair bootstrap inference. 
We first check empirical moments of the original paired differences, then analyze the bootstrap draws.

The conditional fourth moment of every $R_i$ is bounded by a common constant, from the bounded conditional means and the outcome assumption. Hence $M_{4n}={n_1}^{-1}\sum_i|R_i|^4=O_P(1)$ by Markov's inequality. 
By Jensen's inequality, we have $|\hat\theta|_\att^4\le M_{4n}$, and so ${n_1}^{-1}\sum_i |R_i-\hat\theta_\att|^4\le16M_{4n}=O_p(1)$. Write $M_{kn}^{c}={n_1}^{-1}\sum_i |R_i-\hat\theta_\att|^k$ for $k=3,4$. 
By the fourth-moment condition and Jensen's inequality, we know $M_{kn}^{c} = O_P(1)$ for $k=3,4$. The centered empirical variance is $s_n^2=\frac{1}{n_1}\sum_i(R_i-\hat\theta_\att)^2= \frac{ n_1 -1}{n_1} \hat V_\att\to V$.

Conditional on the full observed data and the original matching,
let $I_1^*,\ldots,I_{n_1}^*$ be independent and uniform on
$\{1,\ldots,n_1\}$. Let $\PP^*$ and $\EE^*$ denote the probability and expectation under the bootstrap resampling law, conditional on the data. Define
\[
W_i^*=R_{I_i^*}-\widehat\theta_\att,
\qquad
S_n^*=\frac1{\sqrt{n_1}}\sum_{i=1}^{n_1}W_i^*
=\sqrt{n_1}
(\hat\theta_{\att}^*-\hat\theta_\att).
\]
Under the conditional bootstrap resampling draw, the $W_i^*$ are i.i.d.
and satisfy
\[
\EE^*[W_b^*]=0,\qquad
\EE^*[(W_b^*)^2]=s_n^2,\qquad
\EE^*[|W_b^*|^3]=M_{3n}^c.
\]

On the event $\{n_1\ge2,\ s_n>0\}$, the Berry--Esseen
inequality \citep[Theorem~3.4.17]{Durrett2019}, applied to
this conditional distribution, gives
\[
\sup_{x\in\mathbb R}
\big|
\PP^*\{S_n^*/s_n\le x\}-\Phi(x)
\big|
\le
\frac{3M_{3n}^c}{\sqrt{n_1}s_n^3}
=o_P(1).
\]
The last equality follows from $M_{3n}^c=O_P(1)$,
$n_1\to\infty$ in probability, and $s_n^2\to V>0$
in probability. These conditions also imply that the
event on which the inequality is applied has probability
tending to one.

To replace $s_n$ by $\sqrt V$, note that for any $s,t>0$,
\[
\sup_{x\in\RR}\big|\Phi(x/s)-\Phi(x/t)\big|
\le C_\Phi\big|\log(s/t) \big|,
\qquad
C_\Phi=\sup_{y\in\mathbb R}|y\phi(y)|<\infty,
\]
where $\phi(\cdot)$ is the standard normal density function. This follows
by differentiating $\Phi(xe^{-u})$ with respect to $u$
and integrating between $\log s$ and $\log t$.
Consequently,
\$
&\sup_{x\in\mathbb R}
\left|
\PP^*\{S_n^*\le x\}-\Phi(x/\sqrt V)
\right| \le
\frac{3M_{3n}^c}{\sqrt{n_1}s_n^3}+C_\Phi|\log(s_n/\sqrt V)|
\xrightarrow{P}0.
\$
It remains to justify the random denominator used in the bootstrap
$t$-statistic. Write
\[
 \overline W_n^*  = \frac1{n_1}\sum_{i=1}^{n_1} W_i^*
 =
 \hat\theta_{\att}^*
 -\hat\theta_{\att},
 \qquad
 A_n^* = \frac1{n_1}\sum_{i=1}^{n_1}(W_i^*)^2 ,\qquad 
 \hat V_{\att}^*
 =
 \frac{n_1}{n_1-1}
 \left\{
 A_n^*-(\overline W_n^*)^2
 \right\}.
\]
Conditional on the observed data, we know 
\[
 \EE^* [A_n^*]=s_n^2,
 \qquad
 \operatorname{Var}^*(A_n^*)
 \le
 \frac{M^c_{4n}}{n_1},
 \qquad
 \mathbb E^*\{(\overline W_n^*)^2\}
 =
 \frac{s_n^2}{n_1}.
\]
Consequently, for every fixed $\varepsilon>0$,
conditional Chebyshev's and Markov's inequalities give
\[
 \PP^*
 \left(
 |A_n^*-s_n^2|>\varepsilon
 \right)
 \le
 \frac{M^c_{4n}}{n_1\varepsilon^2},
 \qquad
 \PP^*
 \left(
 (\overline W_n^*)^2>\varepsilon
 \right)
 \le
 \frac{s_n^2}{n_1\varepsilon}.
\]
Since
$M^c_{4n}=O_{P}(1)$,
$s_n^2 = V_\att+o_P(1)$, and
$n_1\to\infty$ in probability, it follows that $\PP^* ( |\hat{V}_{\att}^*-s_n^2|>\varepsilon
  )\to 0$ in probability, 
 and in particular,
\[
 \PP^*\big(
 | {\hat V_{\att}^*}/{s_n^2}-1
 |>\varepsilon \big)
 \xrightarrow{\PP}0,
\]
and therefore 
$ \PP^*(\hat{V}_{\att}^*=0)
 \xrightarrow{\PP}0$. 
Now let 
 $Y_n^*=\frac{S_n^*}{s_n}$ and 
 $R_n^*=
 \sqrt{\hat{V}_{\att}^*}/s_n$ 
on the event $\hat{V}_{\att}^*>0$.
The conditional Berry--Esseen bound above gives
\[
 \sup_x
 \left|
 \PP^*(Y_n^*\le x)-\Phi(x)
 \right|
 \xrightarrow{\PP}0,
\]
while the previous arguments give  
 $\PP^*(|R_n^*-1|>\varepsilon)
 \xrightarrow{\PP}0$ for any $\varepsilon>0$. 
Hence   Slutsky's theorem yields
\[
 \sup_{x\in\mathbb R}
 \left|
 \PP^* \{  {Y_n^*}/{R_n^*}\le x \}
 -\Phi(x)
 \right|
 \xrightarrow{\PP}0.
\]
Since $Y_n^*/R_n^*=T^{*(1)}$ whenever
$\hat{V}_{\att}^*>0$, and the complementary event has
conditional probability tending to zero in probability, we know that 
\$
 \sup_{x\in\mathbb R}
 \big|
 \PP^*\{T^{*(1)}\le x\}
 -\Phi(x)
 \big| \xrightarrow{\PP}0. 
\$

We then prove the validity of the bootstrap confidence interval. 
Fix $\alpha\in(0,1)$ and let $B=B_n\to\infty$ be a
deterministic sequence of positive integers. Write 
$\hat{\mathrm{se}}_n=\sqrt{\hat V_\att/n_1}$ and 
$T_n=\frac{\hat\theta_{\att}-\theta_{\att}}
{\hat{\mathrm{se}}_n}$. 
The sampling central limit theorem and variance consistency
proved above give 
$T_n\Rightarrow N(0,1)$. 
Let
\[
F_n^*(t)=\PP^*\{T^{*(1)}\le t\},
\qquad t\in\mathbb R.
\]
The studentized bootstrap result in the proof of Theorem~\ref{thm:ATT} gives 
$\sup_{t\in\mathbb R}|F_n^*(t)-\Phi(t)|
\xrightarrow{\PP}0$. 
Here $\PP^*$ denotes probability over the bootstrap
draws conditional on the full observed data. In what follows, the probability $\PP$ includes both the
original sampling and the
bootstrap sampling randomness. 
 
Define the empirical distribution function
\[
\hat F_{n,B_n}^*(t)
=
\frac1{B_n}\sum_{b=1}^{B_n}
\ind\{T^{*(b)}\le t\}.
\]
For each fixed $t$, conditional on the data, we know
\[
\EE^*[\hat{F}_{n,B_n}^*(t)]=F_n^*(t),
\qquad
\operatorname{Var}^*\{\hat F_{n,B_n}^*(t)\}
=
\frac{F_n^*(t)\{1-F_n^*(t)\}}{B_n}
\le\frac1{4B_n}.
\]
For every $\varepsilon>0$, conditional Chebyshev's
inequality and averaging over the original data give
\[
\begin{aligned}
\PP\left\{
|\hat F_{n,B_n}^*(t)-F_n^*(t)|>\varepsilon
\right\} = 
\EE\left[
\PP^*\left\{
|\hat F_{n,B_n}^*(t)-F_n^*(t)|>\varepsilon
\right\}
\right] \le\frac1{4B_n\varepsilon^2} \to 0.
\end{aligned}
\]
Combining this with $|F_n^*(t)-\Phi(t)|=o_P(1)$ yields
\[
\hat F_{n,B_n}^*(t)\xrightarrow{\PP}\Phi(t)
\qquad\text{for every fixed }t\in\mathbb R.
\]

For $q\in(0,1)$, recall that 
$c_q^*=\inf\{t\in\RR:\hat F_{n,B_n}^*(t)\ge q\}$,
where
$z_q=\Phi^{-1}(q)$. 
Fix $\varepsilon>0$. By the definition of the empirical
quantile,
\[
\begin{aligned}
\PP\{|c_q^*-z_q|>\varepsilon\}
\le{}&
\PP\{\hat F_{n,B_n}^*(z_q-\varepsilon)\ge q\} +
\PP\{\hat F_{n,B_n}^*(z_q+\varepsilon)<q\}.
\end{aligned}
\]
Since 
$\Phi(z_q-\varepsilon)<q<\Phi(z_q+\varepsilon)$, 
both probabilities tend to zero by the pointwise
convergence established before. Thus 
$c_q^*- z_q = o_P(1)$. 
In particular, this holds jointly for
$q=\alpha/2$ and $q=1-\alpha/2$.
Consider
\[
\mathrm{CI}^{\mathrm{boot}}_{n,1-\alpha}
=
\big[ 
\hat\theta_{\att}
-c_{1-\alpha/2}^*\widehat{\mathrm{se}}_n,
\quad
\hat\theta_{\att}
-c_{\alpha/2}^*\widehat{\mathrm{se}}_n\big].
\]
On the event $\cI_n\cap\{\hat V_\att>0\}$,
its coverage event is  
$\{
\theta_{\att}\in
\mathrm{CI}^{\mathrm{boot}}_{n,1-\alpha}
\}=\{
c_{\alpha/2}^*\le T_n\le c_{1-\alpha/2}^*\}$. 
Note that $\PP(\cI_n\cap\{\hat V_\att>0\})\to 1$. 
Taking the confidence interval to be $\RR$ on its
complement therefore gives
\[
\PP\left\{
\theta_{\att}\in
\mathrm{CI}^{\mathrm{boot}}_{n,1-\alpha}
\right\}
=
\PP\left\{
c_{\alpha/2}^*\le T_n\le c_{1-\alpha/2}^*
\right\}
+o(1).
\]

Let $z_\ell=z_{\alpha/2}$ and $z_u=z_{1-\alpha/2}$.
For $0<\delta<(z_u-z_\ell)/2$, define 
$E_{n,\delta}
=
 \{
|c_{\alpha/2}^*-z_\ell|\le\delta,~
|c_{1-\alpha/2}^*-z_u|\le\delta
 \}$. 
The quantile convergence gives
$\PP(E_{n,\delta}^c)\to0$. A decomposition gives 
\$
&\PP\{z_\ell+\delta\le T_n\le z_u-\delta\}
-\PP(E_{n,\delta}^c)\leq
\PP\{c_{\alpha/2}^*\le T_n\le c_{1-\alpha/2}^*\} \le
\PP\{z_\ell-\delta\le T_n\le z_u+\delta\}
+\PP(E_{n,\delta}^c).
\$
Using $T_n\Rightarrow N(0,1)$, the limiting lower and upper
bounds are, respectively, 
$\Phi(z_u-\delta)-\Phi(z_\ell+\delta)$ 
and
$\Phi(z_u+\delta)-\Phi(z_\ell-\delta)$. 
Letting $\delta\downarrow0$ makes both bounds equal to
$\Phi(z_u)-\Phi(z_\ell)=1-\alpha$. Hence
\[
\PP\big\{
\theta_{\att}\in
\mathrm{CI}^{\mathrm{boot}}_{n,1-\alpha}
\big\} \to 1-\alpha,
\]
and we complete the proof of Theorem~\ref{thm:ATT}.
\end{proof}

%% file: 142_wald_cdf.tex

\subsection{Proof of Theorem~\ref{thm:wald_cdf}}
\label{app:subsec_thm_wald_cdf}

\paragraph{Recap on matching theoretical properties.}
Let $\pi=\PP (Z=1)$. The law of large numbers gives
$n_1/n = \pi+o_P(1)$ for $\pi\in(0,1/2)$, so
$n_1\to\infty$ in probability and $\PP (\mathcal I_n)\to1$.
Recall
\[
B_{n,\hat\sigma_n}(g)=\frac1{n_1}\sum_{i=1}^{n_1}
                 \{g(T_i)-g(C_{\hat\sigma_n(i)})\},\qquad
L_n(\hat\sigma_n)=\frac1{n_1}\sum_{i=1}^{n_1}
                 \|T_i-C_{\hat\sigma_n(i)}\|^2.
\]
Theorem~\ref{thm:observational} applied to the fixed class
containing all $H_0(y\given\cdot)$, gives
\begin{equation}\label{cdf32:eq:beta}
\beta_n:=\sqrt{n_1}\sup_{y\in\mathbb R}
 |B_{n,\hat\sigma_n}\{H_0(y\given\cdot)\}|
 \xrightarrow{\PP }0.
\end{equation} 
Analogous to the proof of Theorem~\ref{thm:ATT}, we can show that on the event $\cI_n \cap \{\|\hat{A}-A_0\|\leq \lambda_{\min}(A_0)/2\}$ whose probability tends to one,  
$c L_n(\hat\sigma_n)
\le Q_n(\hat\sigma_n;\hat A)
\le Q_n(\bar\sigma_n;\hat A)+\xi_n
\le C r_n^2+\xi_n$ 
for fixed $c,C>0$. Consequently,
\begin{equation}\label{cdf32:eq:length}
L_n(\hat\sigma_n)=O_P(r_n^2)+o_P(n^{-1}),
\qquad
\sqrt{L_n(\hat\sigma_n)}\log n_1\xrightarrow{P}0.
\end{equation} 

Define the average distance between the conditional control CDFs by
\begin{equation}\label{cdf32:eq:delta}
\delta_n=\frac1{n_1}\sum_{i=1}^{n_1}
 \sup_{y\in\mathbb R}
 |H_0(y\given C_{\hat\sigma_n(i)})-H_0(y\given T_i)|.
\end{equation}
The gradient bound in $\mathcal F_L$, convexity of $\Omega$, and
Cauchy--Schwarz inequality give
\begin{equation}\label{cdf32:eq:delta-rate}
0\le\delta_n\le1,\qquad
\delta_n\le\sqrt d L\frac1{n_1}\sum_i
  \|C_{\hat\sigma_n(i)}-T_i\|
 \le\sqrt d L\sqrt{L_n(\hat\sigma_n)},
\qquad \delta_n\log n_1\xrightarrow{P}0.
\end{equation}
The probabilities are over the original data sampling randomness.

We define the \emph{generalized inverse function} of a function $F$ by 
\$
F^{\leftarrow}(u) := \inf\{x\in \RR\colon F(x)\geq u\},\quad 0<u<1. 
\$
We will use two technical lemmas in the proof, whose proofs are in Appendix~\ref{cdf32:app:comparisons}.

\begin{lemma}
\label{cdf32:lem:crossing}
Fix $m\ge2$ and CDFs $F_i,G_i$, $i=1,\ldots,m$. Let $V_i$ be
independent uniform variables on $(0,1)$, and define
\[
A_i=F_i^{\leftarrow}(V_i),\qquad B_i=G_i^{\leftarrow}(V_i),\qquad
e_i(y)=\ind\{A_i\le y\}-\ind\{B_i\le y\},\qquad
\delta=\frac1m\sum_i\|F_i-G_i\|_\infty.
\]
For each $r>0$, there exists a constant $C_r$ independent of $m$ and of these CDFs  
such that, with probability at least $1-C_r m^{-r}$, both inequalities hold:
\begin{align}
\sup_y\left|\frac1{\sqrt m}\sum_i\{e_i(y)-\mathbb Ee_i(y)\}\right|
&\le C_r\left\{\sqrt{\delta\log m}+\frac{\log m}{\sqrt m}\right\},
\label{cdf32:eq:crossing-centered}\\
\sup_y\frac1m\sum_i e_i(y)^2
&\le\delta+C_r\left\{\sqrt{\frac{\delta\log m}{m}}
                          +\frac{\log m}{m}\right\}.
\label{cdf32:eq:crossing-square}
\end{align} 
\end{lemma}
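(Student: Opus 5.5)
The plan is to reduce the statement to a finite-grid concentration argument, first for continuous CDFs and then for general CDFs by approximation. Suppose first that all $F_i,G_i$ are continuous. By the inverse-CDF representation, $\ind\{A_i\le y\}=\ind\{V_i\le F_i(y)\}$ and $\ind\{B_i\le y\}=\ind\{V_i\le G_i(y)\}$. Hence $e_i(y)\in\{-1,0,1\}$, $\EE e_i(y)=F_i(y)-G_i(y)$, and $\EE e_i(y)^2=|F_i(y)-G_i(y)|\le\|F_i-G_i\|_\infty$. Summing over $i$, the variance proxy of $\sum_i e_i(y)$ is at most $m\delta$, uniformly in $y$. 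The same holds for the mean sum of the indicators $e_i(y)^2$.

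For a single $y$, Bernstein's inequality \eqref{cdf32:eq:bernstein} with $t\asymp (r+4)\log m$ gives deviations of order $\sqrt{m\delta\log m}+\log m$ for both $\sum_i\{e_i(y)-\EE e_i(y)\}$ and $\sum_i\{e_i(y)^2-\EE e_i(y)^2\}$. Dividing by $\sqrt m$ and $m$ respectively gives exactly the right-hand sides of \eqref{cdf32:eq:crossing-centered} and \eqref{cdf32:eq:crossing-square}.

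To make the bound uniform in $y$, use a grid adapted to the average CDF $J(y)=(2m)^{-1}\sum_i\{F_i(y)+G_i(y)\}$. Take points $y_j$ with $J(y_j)=j/K$ and $K=m^2$; these exist by continuity. Let $N_j$ count how many of the $A_i,B_i$ fall in $(y_j,y_{j+1}]$. For $y$ in this cell, both $\sum_i|e_i(y)-e_i(y_j)|$ and the change in the mean sum are at most $N_j+2m/K$. Moreover $e_i(y)^2\le e_i(y_j)^2+2|e_i(y)-e_i(y_j)|$, since $|a^2-b^2|\le 2|a-b|$ on $\{-1,0,1\}$.

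Each $N_j$ has mean $2m/K=2/m$, and Bernstein again gives $N_j\lesssim\log m$ on the good event. A union bound over the $O(K)=O(m^2)$ grid quantities costs only a polynomial factor, which is absorbed by choosing $t=(r+4)\log m$. After normalization, the discretization errors are $O(\log m/\sqrt m)$ and $O(\log m/m)$, which fit into the stated bounds. This settles the continuous case with failure probability at most $C_rm^{-r}$.

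The main obstacle is CDFs with atoms, where grid points with exact $J$-masses need not exist. Here the plan is to smooth. Set $F_{i,\epsilon}(y)=\int_0^1F_i(y+\epsilon v)\,dv$, which is the law of $X-\epsilon U$, so it is continuous. Then $F\le F_\epsilon\le F(\cdot+\epsilon)$, which implies $F^{\leftarrow}-\epsilon\le F_\epsilon^{\leftarrow}\le F^{\leftarrow}$, and $\|F_{i,\epsilon}-G_{i,\epsilon}\|_\infty\le\|F_i-G_i\|_\infty$ by the integral triangle inequality. So the smoothed problem has the same $\delta$ and the continuous-case bound applies to it with the same constants.

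Couple the smoothed variables $A_i^\epsilon=F_{i,\epsilon}^{\leftarrow}(V_i)$, $B_i^\epsilon=G_{i,\epsilon}^{\leftarrow}(V_i)$ through the same $V_i$. Then $A_i^\epsilon\uparrow A_i$ and $B_i^\epsilon\uparrow B_i$ as $\epsilon\downarrow0$, so the indicators and their expectations converge pointwise in $y$. Since all paths are right-continuous, the suprema reduce to rational thresholds. A strict violation in the original sample therefore persists for all small $\epsilon$, and Fatou's lemma transfers the failure probability bound $C_rm^{-r}$ to the original sample.
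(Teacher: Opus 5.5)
Your proposal is correct and follows essentially the same route as the paper's proof. In the continuous case, both use Bernstein bounds at the points of the $J$-adapted grid with $K=m^2$, together with Bernstein control of the cell counts $N_j$. Both then handle atoms with the smoothing $F_{i,\epsilon}(y)=\int_0^1F_i(y+\epsilon v)\,dv$, the inverse sandwich, coupling through the common $V_i$, reduction to rational thresholds, and Fatou's lemma. The only detail you phrase differently is ``same $\delta$'': the smoothed $\delta$ is in fact at most the original one, which suffices because the bounds are monotone in $\delta$.
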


\begin{lemma} 
\label{cdf32:lem:resample}
Fix any $m\ge2$ pairs of real numbers $(A_i,B_i)$ and define $e_i(y)$ as
in Lemma~\ref{cdf32:lem:crossing}. Let 
\[
\bar e_m(y)=\frac1m\sum_i e_i(y),\qquad
\kappa_m=\sup_y\frac1m\sum_i e_i(y)^2.
\]
For independent uniform indices $I_1^*,\ldots,I_m^*$ on $\{1,\ldots,m\}$,
with conditional probability at least $1-C_r m^{-r}$,
\begin{equation}\label{cdf32:eq:resampling-bound}
\sup_y\left|\frac1{\sqrt m}\sum_{i=1}^m
                   \{e_{I_i^*}(y)-\bar e_m(y)\}\right|
\le C_r\left\{\sqrt{\kappa_m\log m}+\frac{\log m}{\sqrt m}\right\}.
\end{equation} 
\end{lemma}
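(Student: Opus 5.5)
The plan is to condition on the fixed pairs $(A_i,B_i)_{i=1}^m$ throughout and treat only the indices $I_1^*,\ldots,I_m^*$ as random. The argument then reduces to a finite union bound over thresholds combined with a bounded-variable Bernstein inequality, namely \eqref{cdf32:eq:bernstein}.

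\textbf{Step 1 (fixed threshold).} Fix $y$. The summands $X_i^*=e_{I_i^*}(y)-\bar e_m(y)$ are conditionally i.i.d. and centered, since $I_i^*$ is uniform. Each $e_j(y)\in\{-1,0,1\}$, so $|X_i^*|\le 2$. The conditional variance of each summand is
\[
\frac1m\sum_{j}\{e_j(y)-\bar e_m(y)\}^2\le\frac1m\sum_j e_j(y)^2\le\kappa_m ,
\]
so the variance sum is at most $m\kappa_m$. Bernstein with $v=m\kappa_m$, $b=2$ and $t=(r+3)\log m$ gives, with conditional probability at least $1-2m^{-(r+3)}$,
\[
\Bigl|\sum_i X_i^*\Bigr|\le\sqrt{2m\kappa_m(r+3)\log m}+\tfrac43(r+3)\log m .
\]

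\textbf{Step 2 (finitely many thresholds).} As $y$ ranges over $\mathbb R$, the vector $(e_1(y),\ldots,e_m(y))$ is piecewise constant. It can change only when $y$ crosses one of the at most $2m$ observed values $A_i,B_i$, so it takes at most $2m+1$ distinct values. Because the resampled sum is a function of this vector and the fixed indices, the supremum over $y$ is a maximum over at most $2m+1$ representative thresholds. A union bound over these thresholds gives failure probability at most $2(2m+1)m^{-(r+3)}\le C_r m^{-r}$. Dividing by $\sqrt m$ then yields \eqref{cdf32:eq:resampling-bound} after enlarging $C_r$. Ties among the $A_i,B_i$ only reduce the number of distinct vectors.

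\textbf{Main obstacle.} There is little genuine difficulty here. The step needing care is the reduction of the supremum over the uncountable index set to a finite, data-determined set of thresholds, while keeping the variance proxy uniform over $y$. Using the uniform bound $\kappa_m$ rather than the threshold-specific variance handles the uniformity, and the union bound cost is only logarithmic. This is why $t$ is taken to be of order $(r+3)\log m$: the extra $m^{3}$ factor absorbs the $O(m)$ thresholds.
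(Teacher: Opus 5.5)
Your proposal is correct and follows the paper's proof essentially step for step. Both condition on the pairs and apply Bernstein at a fixed threshold with variance proxy $m\kappa_m$, bound $2$, and $t=(r+3)\log m$. Both then take a union bound over the at most $2m+1$ distinct vectors $(e_1(y),\ldots,e_m(y))$, giving failure probability $2(2m+1)m^{-(r+3)}\le C_r m^{-r}$.
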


\paragraph{Joint distribution of $\hat{F}_1$ and $\hat{F}_0$.} In the following, we shall construct auxiliary samples analyze the distributional differences. 
Keep the original observed data unchanged. Extend the probability
space by independent variables $W_i\sim\operatorname{Unif}(0,1)$,
independent of the full observed data and the matching randomization.
For each matched pair, write
\[
F_i(y)=H_0(y\given C_{\hat\sigma_n(i)}),\quad F_i(y-)=\lim_{t\uparrow y}F_i(t)
\]
and define
\[
\begin{aligned}
V_{0i}
={}&F_i(Y_{0,\hat\sigma_n(i)}-) +W_i\bigl\{
F_i(Y_{0,\hat\sigma_n(i)})
-F_i(Y_{0,\hat\sigma_n(i)}-)
\bigr\}.
\end{aligned}
\] 
We then know that  
$V_{0i}\given\mathcal G_n\sim\operatorname{Unif}(0,1)$ and 
$F_i^{\leftarrow}(V_{0i})=Y_{0,\hat\sigma_n(i)}$ almost surely. 
Conditional on $\mathcal G_n$, the selected control outcomes
are mutually independent and independent of the treated outcomes.
The same is therefore true of the $V_{0i}$ and the treated
outcomes, since each $V_{0i}$ is a function of its corresponding
control outcome and the independent variable $W_i$.
Define the auxiliary outcomes by
\[
Y_{1,i}^{\dagger}=Y_{1,i},
\qquad
Y_{0,i}^{\dagger}=H_0^{\leftarrow}(V_{0i}\given T_i).
\]
Conditional on $\mathcal G_n$, the auxiliary pairs are independent,
and pair $i$ has distribution
$H_1(\cdot\given T_i)\otimes H_0(\cdot\given T_i)$.
Consequently, given $n_1$, these pairs are i.i.d. with law
$P^\dagger$, where $P^\dagger$ is the law obtained by drawing $X\sim P_1$ and draw $Y_1^\dagger\sim H_1(\cdot\given X)$ and $Y_0^\dagger\sim H_0(\cdot\given X)$. 
Their joint distribution depends on the group sizes
only through $n_1$. Define
\[
\hat F_z^{\dagger}(y)=\frac1{n_1}\sum_{i=1}^{n_1}
                        \ind\{Y_{z,i}^{\dagger}\le y\}.
\]
Define 
\[
e_i(y)=\ind\{Y_{0,\hat\sigma_n(i)}\le y\}
             -\ind\{Y_{0,i}^{\dagger}\le y\},\qquad
a_i(y)=\mathbb E\{e_i(y)\given\mathcal G_n\}.
\]
We know 
$a_i(y)=H_0(y\given C_{\hat\sigma_n(i)})-H_0(y\given T_i)$ and 
$\mathbb E\{e_i(y)^2\given\mathcal G_n\}=|a_i(y)|$. 
Hence for every $y$, 
\begin{equation}\label{cdf32:eq:exact-decomposition}
\sqrt{n_1}(\hat F_0-\hat F_0^{\dagger})(y)
=\frac1{\sqrt{n_1}}\sum_i\{e_i(y)-a_i(y)\}
 -\sqrt{n_1}B_{n,\hat\sigma_n}\{H_0(y\given\cdot)\}.
\end{equation}
Apply Lemma~\ref{cdf32:lem:crossing} conditionally on $\mathcal G_n$, with
$m=n_1$, $F_i=H_0(\cdot\given C_{\hat\sigma_n(i)})$, and
$G_i=H_0(\cdot\given T_i)$. For a fixed constant $C$, set
\[
b_n=C\left\{\sqrt{\delta_n\log n_1}
                      +\frac{\log n_1}{\sqrt{n_1}}\right\} =o_P(1).
\]
Taking $r=2$ in the lemma, for every $\varepsilon>0$,
\begin{align*}
\PP \left\{
\sup_y\bigg|\frac1{\sqrt{n_1}}\sum_i(e_i(y)-a_i(y))\bigg|
 >\varepsilon,\ \mathcal I_n\right\}
\le \PP \{b_n>\varepsilon,\mathcal I_n\}
 +\mathbb E[\ind_{\mathcal I_n}\min\{1,Cn_1^{-2}\}]
\to 0.
\end{align*}
The last expectation tends to zero because its integrand is bounded and
$n_1\to\infty$ in probability. Combining this with \eqref{cdf32:eq:beta}
and \eqref{cdf32:eq:exact-decomposition}, and using
$\hat F_1=\hat F_1^{\dagger}$, yields
\begin{equation}\label{cdf32:eq:sampling-comparison}
\sqrt{n_1}\max_{z\in\{0,1\}}\|\hat F_z-\hat F_z^{\dagger}\|_\infty
\xrightarrow{P}0.
\end{equation}

We now apply a standard i.i.d.~empirical-process result only to the auxiliary
pairs. On $\mathbb R^2$ consider
\[
\mathcal H=\{h_{z,y}:z\in\{0,1\},\ y\in\mathbb R\},\qquad
h_{z,y}(w_1,w_0)=\ind\{w_z\le y\}.
\]
We first consider a deterministic sample size $j$.
Let 
$W_i^\dagger=(Y_{1,i}^\dagger,Y_{0,i}^\dagger)$. For a measurable function $h$,
write 
$P^\dagger h=\int h\,dP^\dagger$ and 
$\hat P_j^\dagger h=\frac1j\sum_{i=1}^j h(W_i^\dagger)$. Consider the empirical process
\[
\mathbb G_j^\dagger(h)
=
\sqrt j\,(\hat P_j^\dagger h-P^\dagger h),
\qquad h\in\mathcal H.
\]
Since the $z$th marginal of $P^\dagger$ has CDF $F_{z,T}$, we know  
$P^\dagger h_{z,y}=F_{z,T}(y)$ and 
$\mathbb G_j^\dagger(h_{z,y}) 
=
\sqrt j\{\hat F_{z,j}^\dagger(y)-F_{z,T}(y)\}$, 
where 
$\hat F_{z,j}^\dagger(y)
=
\frac1j\sum_{i=1}^j\ind\{Y_{z,i}^\dagger\le y\}$.  

We verify the conditions for the standard VC-class
Donsker theorem. For each fixed $z$, the sets
$\{w:w_z\le y\}$ are nested as $y$ increases. On any
$r$ points in $\mathbb R^2$, varying $y$ produces at most
$r+1$ distinct indicator vectors for each $z$, and hence
at most $2(r+1)$ for the full class $\mathcal H$.
Since $2(r+1)<2^r$ for $r\ge4$, the associated class of
sets has finite VC dimension. Its indicator class
$\mathcal H$ is therefore a VC-subgraph class.
Every member of $\mathcal H$ is measurable and bounded
by the constant $1$, which is square integrable
under $P^\dagger$. The class is also pointwise measurable:
for every $y$, choose rational numbers $q_l\downarrow y$.
Then 
$h_{z,q_l}(w)\to h_{z,y}(w)$
for every $w\in\mathbb R^2$. 
Thus the countable subclass indexed by rational thresholds
provides pointwise approximations to every member of
$\mathcal H$.
These properties imply that $\mathcal H$ is
$P^\dagger$-Donsker; see
\citet[Theorem~2.5.2 and Section~2.6]{VW1996}.
Consequently,
\[
\mathbb G_j^\dagger
\Rightarrow \mathbb G_{P^\dagger}
\qquad\text{in }\ell^\infty(\mathcal H),
\]
where $\mathbb G_{P^\dagger}$ is the centered 
Gaussian process with  
$\operatorname{Cov}\{
\mathbb G_{P^\dagger}(h),\mathbb G_{P^\dagger}(g)
\}
=
P^\dagger(hg)-(P^\dagger h)(P^\dagger g)$ 
for $h,g\in\mathcal H$. 
Thus we have  
\[
\sqrt j\bigl(
\hat F_{1,j}^\dagger-F_{1,T},
\hat F_{0,j}^\dagger-F_{0,T}
\bigr)
\Rightarrow
(\mathbb B_1,\mathbb B_0)
\qquad\text{in }\ell^\infty(\mathbb R)^2,\quad \mathbb B_z(y)=\mathbb G_{P^\dagger}(h_{z,y}),
\qquad z\in\{0,1\}.
\] 

To pass from deterministic $j$ to $n_1$, one may realize an independent
infinite sequence of $P^{\dagger}$ pairs and preserve its first $n_1$ members:
conditional on $n_1$, the previous arguments on the auxiliary samples still apply. 
Write $Z_j^{\dagger}$ for the size-$j$ auxiliary process.
For a bounded continuous test functional $\varphi$, let $a_j$ be the difference
of its expectations under $Z_j^{\dagger}$ and the limit (using outer
expectations under the outer probability $\PP ^{\mathrm{out}}$ when necessary). Then $a_j\to0$ and,
for any fixed integer $M$, we know 
$|\EE[ a_{n_1}]|
\le\sup_{j\ge M}|a_j| +2\|\varphi\|_\infty\PP (n_1<M)\to 0$ 
by first taking $n\to\infty$ and then $M\to\infty$.
The tightness bounds pass through the same mixture. In fact, for any compact
set $K$ in the process space and its open $\delta$-neighborhood $K^\delta$,
\[
\PP ^{\mathrm{out}}\{Z_{n_1}^{\dagger}\notin K^\delta\}
\le\PP (n_1<M)+
       \sup_{j\ge M}\PP ^{\mathrm{out}}\{Z_j^{\dagger}\notin K^\delta\}.
\]
Here $\PP ^{\mathrm{out}}$ denotes outer probability. The asymptotic
tightness of the deterministic-size processes therefore transfers by taking
$n\to\infty$ and then $M\to\infty$.
Thus the auxiliary process limit holds also with the random sample size.
Together with \eqref{cdf32:eq:sampling-comparison}, this proves~\eqref{dist:eq:CDFCLT}. 

\paragraph{Point-wise Wald interval.} We then show the point-wise validity of the Wald interval. 
Fix $y\in\mathbb R$. Define  
\[
R_i^{\dagger}(y)=\ind\{Y_{1,i}^{\dagger}\le y\}
                     -\ind\{Y_{0,i}^{\dagger}\le y\},\qquad
\kappa_n=\sup_t\frac1{n_1}\sum_i e_i(t)^2.
\]
By \eqref{cdf32:eq:crossing-square}, conditionally and then unconditionally, we have 
\begin{equation}\label{cdf32:eq:kappa}
\kappa_n\le\delta_n+C\left\{\sqrt{\frac{\delta_n\log n_1}{n_1}}
                    +\frac{\log n_1}{n_1}\right\}
\end{equation}
with probability tending to one. In particular $\kappa_n\to_{\PP }0$.
Since $R_i(y)-R_i^{\dagger}(y)=-e_i(y)$, all these differences are bounded,
and $|e_i(y)|=e_i(y)^2$,
\@
\left|\frac1{n_1}\sum_i R_i(y)-\frac1{n_1}\sum_iR_i^{\dagger}(y)\right|
\le\kappa_n, \quad 
\left|\frac1{n_1}\sum_i R_i(y)^2
                 -\frac1{n_1}\sum_iR_i^{\dagger}(y)^2\right|
&\le2\kappa_n.
\label{cdf32:eq:moment-comparison}
\@
Given $n_1$, the $R_i^{\dagger}(y)$ are i.i.d. and bounded by one. Their
sample first and second moments converge to the corresponding $P^{\dagger}$
moments. For example, conditional Chebyshev bounds either centered sample
moment's exceedance probability by $1/(n_1\varepsilon^2)$; averaging
$\min\{1,1/(n_1\varepsilon^2)\}$ proves the result for random $n_1$.
Their mean is $D(y)$ and their variance is $V_D(y)$ defined earlier. 
It follows from \eqref{cdf32:eq:moment-comparison} that
\begin{equation}\label{cdf32:eq:variance-consistency}
\hat V_D(y)=\frac{n_1}{n_1-1}
 \left\{\frac1{n_1}\sum_iR_i(y)^2-\hat D(y)^2\right\}
 \xrightarrow{ P}V_D(y).
\end{equation}
Combined with~\eqref{dist:eq:CDFCLT}, this gives 
$\sqrt{n_1}\{\hat D(y)-D(y)\}\Rightarrow N(0,V_D(y))$.  
If $V_D(y)>0$, Slutsky's theorem and \eqref{cdf32:eq:variance-consistency}
yield the asymptotic normality of  
$\frac{\sqrt{n_1}\{\hat D(y)-D(y)\}}{\sqrt{\hat V_D(y)}} 
\Rightarrow N(0,1)$. This proves the validity of the Wald-type interval in Theorem~\ref{thm:wald_cdf}.

\paragraph{Bootstrap inference validity.}
Fix $\alpha\in(0,1)$ and let $B_n\to\infty$. For
$H\in\{F,D\}$, write
\[
G_H(t)=\PP (W_H\le t),\qquad t\in\mathbb R,
\]
where $W_F,W_D$ are defined in Theorem~\ref{thm:wald_cdf}. We establish each
band's coverage under its corresponding regular-quantile condition.
All matching-based quantities below are considered on $\mathcal I_n$.
On $\mathcal I_n^c$, take the CDF bands to be $[0,1]$ and the
contrast band to be $\mathbb R$; this convention affects coverage
by at most $\PP (\mathcal I_n^c)=o(1)$.

We use the original and auxiliary outcomes already constructed.
In the comparison below, $\PP ^*$ denotes probability over the
resampling indices with both sets of outcomes held fixed.  
Use the same indices $I_i^{*(1)}$ for the two sets of outcome pairs and define
\[
\hat F_z^{\dagger *(1)}(y)
=\frac1{n_1}\sum_{i=1}^{n_1}
\ind\{Y_{z,I_i^{*(1)}}^\dagger\le y\},
\qquad
\hat D^\dagger=\hat F_1^\dagger-\hat F_0^\dagger,
\qquad
\hat D^{\dagger *(1)}
=\hat F_1^{\dagger *(1)}-\hat F_0^{\dagger *(1)}.
\]
The auxiliary counterparts of $T_F^{*(1)}$ and $T_D^{*(1)}$ are
\$
T_F^{\dagger *(1)}
 =\sqrt{n_1}\max_{z\in\{0,1\}}\sup_{y\in\mathbb R}
  |\hat F_z^{\dagger *(1)}(y)-\hat F_z^\dagger(y)|, \quad
T_D^{\dagger *(1)}
 = \sqrt{n_1}\sup_{y\in\mathbb R}
  |\hat D^{\dagger *(1)}(y)-\hat D^\dagger(y)|.
\$
Recall $e_i(y)=\ind\{Y_{0,\hat\sigma_n(i)}\le y\}
-\ind\{Y_{0,i}^\dagger\le y\}$ and let
$\bar e_{n_1}(y)=n_1^{-1}\sum_i e_i(y)$. The centered control-CDF
difference has the exact expression
\$
E_n^*(y)
&:=\sqrt{n_1}\left\{
(\hat F_0^{*(1)}-\hat F_0)(y)
-(\hat F_0^{\dagger *(1)}-\hat F_0^\dagger)(y)
\right\} =\frac1{\sqrt{n_1}}\sum_{i=1}^{n_1}
\{e_{I_i^{*(1)}}(y)-\bar e_{n_1}(y)\}.
\$
The treated CDFs agree both before and after resampling 
as $Y_{1,i}^\dagger=Y_{1,i}$. The  triangle inequality thus gives 
\@\label{eq:cdfboot:statistics-close}
|T_H^{*(1)}-T_H^{\dagger *(1)}|
\le\|E_n^*\|_\infty, \quad H\in \{F,D\}.
\@

The preceding bound for $\kappa_n$ implies, with probability tending
to one,
\[
\kappa_n\log n_1
\le\delta_n\log n_1
+C\sqrt{\frac{\delta_n(\log n_1)^3}{n_1}}
+C\frac{(\log n_1)^2}{n_1}
=o_P(1).
\]
Here we used $\delta_n\log n_1\to_{\PP }0$,
$0\le\delta_n\le1$, and $n_1\to\infty$ in probability.
Applying Lemma~\ref{cdf32:lem:resample} with $m=n_1$ and $r=2$, define
\[
b_n^{\mathrm{boot}}
=C\left\{\sqrt{\kappa_n\log n_1}
             +\frac{\log n_1}{\sqrt{n_1}}\right\}
=o_{ P}(1).
\]
For every fixed $\eta>0$, that lemma gives, on $\mathcal I_n$,
\begin{equation}
p_n^*(\eta):=\PP ^*(\|E_n^*\|_\infty>\eta)
\le\ind\{b_n^{\mathrm{boot}}>\eta\}
       +\min\{1,Cn_1^{-2}\} \xrightarrow{P} 0.
\label{eq:cdfboot:conditional-comparison}
\end{equation}
The probability in this convergence includes the auxiliary
randomization.

We now approximate the distributions used to calibrate the confidence bands.
First consider a deterministic number $j$ of i.i.d. auxiliary pairs
with law $P^\dagger$, and add a subscript $j$ to the preceding
auxiliary bootstrap statistics. The class $\mathcal H$ was shown above
to be pointwise measurable, uniformly bounded, and $P^\dagger$-Donsker.
The empirical-bootstrap theorem
\citep[Section~3.6.1]{VW1996} therefore applies to its centered
bootstrap empirical process.
The maps 
$(f_1,f_0)\longmapsto\max_z\|f_z\|_\infty $ and 
$(f_1,f_0)\longmapsto\|f_1-f_0\|_\infty$ 
are Lipschitz, with constants one and two under the sup-norm. Applying the continuous mapping theorem
to these two maps yields, for every continuity point $t$ of $G_H$,
\begin{equation}
G_{H,j}^{\dagger *}(t)
:=\PP ^*\{T_{H,j}^{\dagger *}\le t\}
\xrightarrow{\PP }G_H(t),\qquad j\to\infty.
\label{eq:cdfboot:auxiliary-fixed}
\end{equation}
Note that this argument concerns only the i.i.d.~auxiliary pairs.  

The conclusion remains valid at the random sample size $n_1$.
To see this explicitly, fix $H\in\{F,D\}$, a continuity point
$t$ of $G_H$, and $\varepsilon>0$. Let $r_j$ be the probability,
for an auxiliary sample of size $j$, that
$|G_{H,j}^{\dagger *}(t)-G_H(t)|>\varepsilon$.
Then $r_j\to0$. Conditional on $n_1=j$ on the admissible count range,
the auxiliary sample has law $(P^\dagger)^{\otimes j}$. Hence, writing
$G_{H,n}^{\dagger *}(t)=\PP ^*\{T_H^{\dagger *(1)}\le t\}$,
for every fixed integer $M\ge2$,
\[
\PP \{|G_{H,n}^{\dagger *}(t)-G_H(t)|>\varepsilon\}
\le\PP (\mathcal I_n^c)+\PP (n_1<M)
       +\sup_{j\ge M}r_j.
\]
Letting first $n\to\infty$ and then $M\to\infty$ proves
$G_{H,n}^{\dagger *}(t)\to_{\PP }G_H(t)$.

For the actual bootstrap statistic, put 
$G_{H,n}^*(t)=\PP ^*\{T_H^{*(1)}\le t\}$. 
By \eqref{eq:cdfboot:statistics-close}, for every $\eta>0$,
\begin{equation}
G_{H,n}^{\dagger *}(t-\eta)-p_n^*(\eta)
\le G_{H,n}^*(t)
\le G_{H,n}^{\dagger *}(t+\eta)+p_n^*(\eta).
\label{eq:cdfboot:cdf-comparison}
\end{equation}
For a continuity point $t$ of $G_H$, choose $\eta\downarrow0$
so that $t-\eta$ and $t+\eta$ are also continuity points.
This is possible because a CDF has at most countably many
discontinuous points. For fixed such $\eta$, the two outer terms
in \eqref{eq:cdfboot:cdf-comparison} converge in probability to
$G_H(t-\eta)$ and $G_H(t+\eta)$, respectively.
Both constants tend to $G_H(t)$ as $\eta\downarrow0$.
It follows that
\begin{equation}
G_{H,n}^*(t)\xrightarrow{\PP }G_H(t)
\quad\text{at every continuity point of }G_H.
\label{eq:cdfboot:actual-cdf}
\end{equation}
Since $G_{H,n}^*(t)$ depends only on the observed data,
this also holds under the original sampling law.

We now show the consistency of the simulated critical values. 
Define
\[
\hat G_{H,n}^*(t)
=\frac1{B_n}\sum_{b=1}^{B_n}\ind\{T_H^{*(b)}\le t\}.
\]
Conditional on the observed data, these indicators are independent
Bernoulli variables with common mean $G_{H,n}^*(t)$. Therefore,
\[
\mathbb E^*\hat G_{H,n}^*(t)=G_{H,n}^*(t),\qquad
\operatorname{Var}^*\{\hat G_{H,n}^*(t)\}
=\frac{G_{H,n}^*(t)\{1-G_{H,n}^*(t)\}}{B_n}
\le\frac1{4B_n}.
\]
Chebyshev's inequality followed by averaging gives, for every
$\varepsilon>0$,
\[
\PP \{|\hat G_{H,n}^*(t)-G_{H,n}^*(t)|>\varepsilon\}
\le\frac1{4B_n\varepsilon^2}\to 0.
\]
Together with \eqref{eq:cdfboot:actual-cdf}, this proves
$\hat G_{H,n}^*(t)\to_{\PP }G_H(t)$ at every continuity
point of $G_H$.

Suppose $W_H$ has a regular $(1-\alpha)$-quantile $c_H$.
Thus $G_H$ is continuous at $c_H$, $G_H(c_H)=1-\alpha$,
and its values on either side strictly bracket $1-\alpha$.
For any $\varepsilon>0$, choose continuity points $t_-,t_+$ with 
$c_H-\varepsilon<t_-<c_H<t_+<c_H+\varepsilon$. 
Regularity gives $G_H(t_-)<1-\alpha<G_H(t_+)$.
By the definition of 
$c_H^*$, we have  
$\PP \{|c_H^*-c_H|>\varepsilon\}
 \le\PP \{\hat G_{H,n}^*(t_-)\ge1-\alpha\}
 +\PP \{\hat G_{H,n}^*(t_+)<1-\alpha\} \to 0$. 
Consequently,
\begin{equation}
c_H^*\xrightarrow{\PP }c_H.
\label{eq:cdfboot:critical-value}
\end{equation}
Here and below, probabilities involving $c_H^*$ include the randomness in the bootstrap draws.  

We proceed to prove the coverage of the confidence bands.
Define the original-sample statistics
\[
T_{F,n}=\sqrt{n_1}\max_{z\in\{0,1\}}\sup_{y\in\mathbb R}
                 |\hat F_z(y)-F_{z,T}(y)|,
\qquad
T_{D,n}=\sqrt{n_1}\sup_{y\in\mathbb R}|\hat D(y)-D(y)|.
\]
The existing arguments give $T_{H,n}\Rightarrow W_H$.
For any $\eta>0$, on the event $\{|c_H^*-c_H|\le\eta\}$, we have  
\begin{align*}
\PP (T_{H,n}\le c_H-\eta)
 -\PP (|c_H^*-c_H|>\eta)
&\le\PP (T_{H,n}\le c_H^*) \le\PP (T_{H,n}\le c_H+\eta)
 +\PP (|c_H^*-c_H|>\eta).
\end{align*}
Choose $\eta\downarrow0$ with $c_H\pm\eta$ continuity points of
$G_H$. For fixed such $\eta$, take lower and upper limits as
$n\to\infty$, using $T_{H,n}\Rightarrow W_H$ and
\eqref{eq:cdfboot:critical-value}. The resulting bounds are
$G_H(c_H-\eta)$ and $G_H(c_H+\eta)$. Letting $\eta\downarrow0$ yields 
\begin{equation}
\PP (T_{H,n}\le c_H^*)\to G_H(c_H)=1-\alpha.
\label{eq:cdfboot:calibrated-coverage}
\end{equation} 

Since $F_{z,T}(y)\in[0,1]$, intersecting the CDF bands with
$[0,1]$ does not change their coverage event. On $\mathcal I_n$,
\begin{align*}
&\{F_{z,T}(y)\in\mathrm{CI}^{\mathrm{joint}}_{z,n,1-\alpha}
       \text{ for every }y\in\mathbb R,\ z\in\{0,1\}\}
       =\{T_{F,n}\le c_F^*\},\\
&\{D(y)\in\mathrm{CI}^{\mathrm{contrast}}_{n,1-\alpha}(y)
       \text{ for every }y\in\mathbb R\}
       =\{T_{D,n}\le c_D^*\}.
\end{align*}
Applying \eqref{eq:cdfboot:calibrated-coverage} with $H=F$ and
$H=D$, respectively, proves the two bootstrap coverage conclusions.  We thus complete the proof of Theorem~\ref{thm:wald_cdf}.

%% file: 143_wald_quantile.tex

\subsection{Proof of Theorem~\ref{thm:quantile}}
\label{app:subsec_proof_quantile}

Throughout this proof, all quantities involving the matching are considered on the event 
$\mathcal I_n=\{2\le n_1\le n_0\}$. On its complement, define the
estimation-error statistics to be zero and the confidence sets to be
$\mathbb R$; this changes the probabilities by $o(1)$. 
The notation $\mathbb P^*$ denotes the resampling law conditional on the
observed data. For a random quantity involving a bootstrap replicate,
$\mathbb P$ includes both original sampling and bootstrap sampling randomness.

\paragraph{Empirical and bootstrap CDF processes and a general inversion.}
For $z\in\{0,1\}$, define
\[
\mathbb A_{z,n}(y)=\sqrt{n_1}\{\hat F_z(y)-F_{z,T}(y)\},\qquad
\mathbb A_{z,n}^*(y)=\sqrt{n_1}\{\hat F_z^*(y)-\hat F_z(y)\},
\]
where the empirical CDFs $\hat{F}_z(y)$ and bootstrapped CDFs $\hat{F}_z^*(y)$ are defined as in Section~\ref{subsec:wald_cdf}. 
For a bounded function $g$ on an interval $J$, write
\[
\|g\|_J=\sup_{y\in J}|g(y)|,\qquad
\omega_J(g,\delta)=
\sup_{s,t\in J,\,|s-t|<\delta}|g(s)-g(t)|,\quad\delta>0.
\]
Assumption~\ref{assump:quantile} implies that $F_{z,T}$ is continuous on $J_z$. 
Recall the Gaussian processes $\BB_z$ defined in Section~\ref{subsec:wald_cdf}. 
The marginal process $\mathbb B_z$ has the law of a standard
Brownian bridge evaluated at $F_{z,T}$.
Consequently, the joint limit can be chosen to have continuous paths on
$J_1$ and $J_0$, respectively. Theorem~\ref{thm:wald_cdf} therefore gives
\begin{equation}\label{eq:q33:cdf-sampling-input}
 \max_z\|\mathbb A_{z,n}\|_{J_z}=O_P(1),\qquad
 \lim_{\delta\downarrow0}\limsup_{n\to\infty}
 \mathbb P\{\omega_{J_z}(\mathbb A_{z,n},\delta)>\varepsilon\}=0
 \quad(\varepsilon>0).
\end{equation} 

Let $\mathbb A_{z,n}^{\dagger *}=\sqrt{n_1}
(\hat F_z^{\dagger *}-\hat F_z^\dagger)$ be the
centered auxiliary bootstrap process from Appendix~\ref{app:subsec_thm_wald_cdf} which gives 
\begin{equation}\label{eq:q33:cdf-bootstrap-comparison}
 \max_z\|\mathbb A_{z,n}^*-\mathbb A_{z,n}^{\dagger *}\|_{\mathbb R}
 =\|E_n^*\|_\infty,\qquad
 \mathbb P^*(\|E_n^*\|_\infty>\eta)
 \xrightarrow{\mathbb P}0\quad(\eta>0).
\end{equation}
The treated difference is zero. For a bounded Lipschitz functional $\varphi$
with $\|\varphi\|_\infty\le1$ and Lipschitz constant $L_\varphi$, the
corresponding conditional expectation difference is at most 
 $L_\varphi\eta+2\mathbb P^*(\|E_n^*\|_\infty>\eta)$. 
Thus the auxiliary empirical-bootstrap theorem already used in
Appendix~\ref{app:subsec_thm_wald_cdf}, \citep[Section~3.6.1]{VW1996}, including the same random-$n_1$
argument, applies to bounded Lipschitz functions of the actual process involving original samples 
as well. In particular, the norm has Lipschitz constant one and
$g\mapsto\omega_{J_z}(g,\delta)$ has Lipschitz constant two.
Applying this fact to
$g\mapsto\min\{1,\omega_{J_z}(g,\delta)/\varepsilon\}$ gives
\[
 \limsup_{n\to\infty}
 \mathbb P\{\omega_{J_z}(\mathbb A_{z,n}^*,\delta)>\varepsilon\}
 \le
 \mathbb E\min\{1,\omega_{J_z}(\mathbb B_z,\delta)/\varepsilon\}.
\]
The right side tends to zero as $\delta\downarrow0$ by path continuity
and bounded convergence. Tightness of the norms follows in the same way
by using $g\mapsto\min\{1,(\|g\|_{J_z}-M)_+\}$ and letting $M\to\infty$.
Together with \eqref{eq:q33:cdf-sampling-input}, these observations imply
that each of
\begin{equation}\label{eq:q33:two-processes}
 \mathbb A_{z,n}
 \quad\text{and}\quad
 \mathbb A_{z,n}+\mathbb A_{z,n}^*
 =\sqrt{n_1}(\hat F_z^*-F_{z,T})
\end{equation}
has a tight norm on $J_z$ and satisfies~\eqref{eq:q33:cdf-sampling-input}.

We now study the inversion of the CDF process to prepare for studying the quantiles. Fix an arm $z$ and, within this paragraph only, we write $F=F_{z,T}$, $f=f_{z,T}$, $Q=Q_z$, and $J=J_z=[a_z,b_z]$.
Let $c_z=\inf_{y\in J}f(y)>0$.
Consider a sequence of CDFs $\{H_n\}$. Later on we shall apply 
the arguments below  to both choices in
\eqref{eq:q33:two-processes} for $H_n = \hat{F}_z$ and $\hat{F}_z^*$ to prove the asymptotics of~\eqref{dist:eq:Qlimit} and the bootstrap inference, respectively.   
We assume $H_n$ satisfies the following properties:
\begin{itemize}\setlength\itemsep{-0.25em}
    \item Tight norm, i.e., $\|E_n\|_J = O_P(1)$;
    \item Modulus condition of the error, i.e., $\lim_{\delta\downarrow 0}\limsup_{n\to \infty} \PP(\omega_J(E_n,\delta)>\varepsilon)=0$ for any $\varepsilon>0$. 
\end{itemize}
The preceding paragraph has established these conditions
for both $H_n=\hat F_z$ and $H_n=\hat F_z^*$. 
We now show that these conditions imply the uniform
inverse expansion
\[
\sup_{u\in\mathcal U}
\left|
\sqrt{n_1}\{H_n^{\leftarrow}(u)-Q(u)\}
+\frac{E_n\{Q(u)\}}{f\{Q(u)\}}
\right|
=o_P(1).
\]

The set $Q(\mathcal U)$ is a compact subset of the interior of $J$,
and $F\{Q(u)\}=u$ on $\mathcal U$. Let
$e_n=\|H_n-F\|_J=O_{P}(n_1^{-1/2})$ and
$\rho_n=2e_n/c_z$. For all sufficiently small $e_n>0$, uniformly in
$u\in\mathcal U$, the points $Q(u)\pm \rho_n$ belong to $J$ and 
 $H_n\{Q(u)-\rho_n\}\le u-e_n<u$, 
 $H_n\{Q(u)+\rho_n\}\ge u+e_n>u$. 
Thus $\tilde Q_n=H_n^{\leftarrow}$ lies in the interior of $J$ on
$\cU$, and 
 $\|\tilde Q_n-Q\|_{\mathcal U}
 =O_{P}(n_1^{-1/2})$. 
If $e_n=0$, the inverses agree and the remainder
is zero.

We next control the jumps of $H_n$. Since $F$ is continuous on $J$, for every $t$ in its interior and every
$\delta>0$,
\[
 \sqrt{n_1}\{H_n(t)-H_n(t-)\}
 =E_n(t)-E_n(t-)\le\omega_J(E_n,\delta).
\]
The inequality follows by taking points in $J$ increasing to $t$.
Thus, if
$j_n=\sup_{t\in J^\circ}\{H_n(t)-H_n(t-)\}$, then
\begin{equation}\label{eq:q33:small-jumps}
 \sqrt{n_1}\,j_n=o_{P}(1).
\end{equation}
Indeed, for each fixed $\delta>0$, its exceedance probability is bounded
by that of $\omega_J(E_n,\delta)$; take a limit superior in $n$ and then
let $\delta\downarrow0$.

For $q=Q(u)$ and $\tilde q=\tilde Q_n(u)$, the generalized-inverse
inequalities give 
 $H_n(\tilde q-)\le u\le H_n(\tilde q)$, 
 $0\le H_n(\tilde q)-u\le j_n$. 
By Taylor expansion, 
\[
 F(\tilde q)-F(q)=f(q)(\tilde q-q)+R_n(u),\qquad
 |R_n(u)|\le \rho_n\,\omega_J(f,\rho_n).
\]
Combining these relations with $F(q)=u$ yields 
\[
 \sqrt{n_1}f(q)(\tilde q-q)+E_n(q)
 =\sqrt{n_1}\{H_n(\tilde q)-u\}
   -\{E_n(\tilde q)-E_n(q)\}-\sqrt{n_1}R_n(u).
\]
Since $|\tilde q-q|\le \rho_n$, we may bound the increment by
$\omega_J(E_n,2\rho_n)$ when $\rho_n>0$. Hence
\@
 &\sup_{u\in\mathcal U}
 \left|\sqrt{n_1}\{\tilde Q_n(u)-Q(u)\}
        +\frac{E_n\{Q(u)\}}{f\{Q(u)\}}\right|
        \le\frac1{c_z}\left\{
 \sqrt{n_1}j_n+\omega_J(E_n,2\rho_n)
       +\sqrt{n_1}\rho_n\,\omega_J(f,2\rho_n)\right\}
 =o_{P}(1).
 \label{eq:q33:inverse-expansion}
\@
The first term is negligible by \eqref{eq:q33:small-jumps}.
For the second, $\rho_n=o_P(1)$ and the modulus conclusion imply
$\omega_J(E_n,2\rho_n)=o_{P}(1)$ by bounding $2\rho_n$ by a fixed
$\delta$ with probability tending to one and then letting $\delta\downarrow0$.
The last term is negligible because $\sqrt{n_1}\rho_n=O_{P}(1)$
and $f$ is uniformly continuous on $J$. 

\paragraph{Asymptotics of the quantile effect.}  Apply \eqref{eq:q33:inverse-expansion} to $H_n=\hat F_z$.
For $z=0,1$, uniformly on $\cU$, we have
\begin{equation}\label{eq:q33:sampling-expansion}
 \sqrt{n_1}(\hat Q_z-Q_z)(u)
 =-\frac{\mathbb A_{z,n}\{Q_z(u)\}}
             {f_{z,T}\{Q_z(u)\}}+o_{P}(1).
\end{equation}
Define a fixed map on pairs $(g_1,g_0)$ of functions on $J_1,J_0$:
\[
 (\mathcal T_Q g)(u)
 =-\frac{g_1\{Q_1(u)\}}{f_{1,T}\{Q_1(u)\}}
   +\frac{g_0\{Q_0(u)\}}{f_{0,T}\{Q_0(u)\}}.
\]
It is Lipschitz under the maximum of the two sup norms, with constant
$C_Q=c_1^{-1}+c_0^{-1}$. Subtracting the two expansions and applying
Theorem~\ref{thm:wald_cdf} and the continuous mapping theorem proves
\[
 \sqrt{n_1}(\hat\Delta_Q-\Delta_Q)
 \Rightarrow\cZ_Q=\cT_Q(\BB_1,\BB_0)
 \quad\text{in }\ell^\infty(\cU),
\]
with the formula for $\cZ_Q$ stated in Theorem~\ref{thm:quantile}.
The process $\cZ_Q$ has continuous paths on $\cU$.

\paragraph{Bootstrap approximation for the quantile statistics.}
Apply \eqref{eq:q33:inverse-expansion} a second time, now to
$H_n=\hat F_z^*$. Its centered error about the same target CDF is
$E_n=\mathbb A_{z,n}+\mathbb A_{z,n}^*$, which satisfies the required
bounds by \eqref{eq:q33:two-processes}. Subtracting
\eqref{eq:q33:sampling-expansion} gives
\[
 \sqrt{n_1}(\hat Q_z^*-\hat Q_z)(u)
 =-\frac{\mathbb{A}_{z,n}^*\{Q_z(u)\}}
             {f_{z,T}\{Q_z(u)\}}+o_{P}(1)
\]
uniformly on $\cU$ under the joint data and resampling law.
Thus, with $\mathbb A_n^*=(\mathbb A_{1,n}^*,\mathbb A_{0,n}^*)$, we have 
\begin{equation}\label{eq:q33:bootstrap-expansion}
 S_Q^{*(1)}=\cT_Q\mathbb A_n^*+r_{Q,n}^*,\qquad
 \|r_{Q,n}^*\|_{\cU}=o_{P}(1).
\end{equation}
The remainder is conditionally negligible: for every
$\varepsilon,\eta>0$,
\begin{equation}\label{eq:q33:conditional-remainder}
 \PP\left\{\PP^*(\|r_{Q,n}^*\|_{\cU}>\varepsilon)
                         >\eta\right\}
 \le\eta^{-1}\mathbb P(\|r_{Q,n}^*\|_{\cU}>\varepsilon)
 \to 0.
\end{equation}

It remains to identify the conditional laws of the scalar statistics
used in the intervals. The two maps
\[
 g\mapsto\|\cT_Q g\|_{\cU},\qquad
 g\mapsto(\cT_Q g)(u),\quad u\in\cU\text{ fixed},
\]
are Lipschitz with constant $C_Q$. Applying the auxiliary i.i.d.
empirical-bootstrap theorem to these maps, and using the same
random-sample-size and actual--auxiliary comparisons as in Appendix~\ref{app:subsec_thm_wald_cdf},
gives their conditional CDF limits. Indeed, the actual--auxiliary
errors of either mapped statistic are at most $C_Q\|E_n^*\|_\infty$,
whose conditional exceedance probabilities vanish by
\eqref{eq:q33:cdf-bootstrap-comparison}. The conditional CDF sandwich
from Appendix~\ref{app:subsec_thm_wald_cdf} then transfers the auxiliary limits to the actual statistics. Finally, \eqref{eq:q33:conditional-remainder}
allows the same sandwich to remove $r_{Q,n}^*$.
Consequently,
\begin{align}
 \PP^*\{T_Q^{*(1)}\le t\}
   &\xrightarrow{ P}\PP(W_Q\le t), \quad 
      W_Q=\|\cZ_Q\|_{\cU},\notag\\
 \PP^*\{|S_Q^{*(1)}(u)|\le t\}
   &\xrightarrow{ P}\PP\{|\cZ_Q(u)|\le t\},
     \quad u\in\cU\text{ fixed},
 \label{eq:q33:scalar-bootstrap-laws}
\end{align}
at every continuity point of the respective limiting CDF.

\paragraph{Simultaneous and pointwise coverage.} 
We now prove the coverage results. 
Let $G_Q(t)=\mathbb P(W_Q\le t)$, and suppose it has a regular
$(1-\alpha)$-quantile $c_Q$. Write
\[
 G_{Q,n}^*(t)=\PP^*\{T_Q^{*(1)}\le t\},\qquad
 \hat G_{Q,n}^*(t)=\frac1{B_n}\sum_{b=1}^{B_n}
                       \ind\{T_Q^{*(b)}\le t\}.
\]
The same conditional Bernoulli calculation as in Appendix~\ref{app:subsec_thm_wald_cdf} gives 
 $\Var^*\{\hat G_{Q,n}^*(t)\}\le\frac1{4B_n}$. 
Since $B_n\to\infty$, \eqref{eq:q33:scalar-bootstrap-laws} and the
critical-value consistency argument from Appendix~\ref{app:subsec_thm_wald_cdf} imply
$c_Q^* \xrightarrow{P} c_Q$. The sampling limit gives
$T_{Q,n}:=\sqrt{n_1}\|\hat\Delta_Q-\Delta_Q\|_{\mathcal U}
\Rightarrow W_Q$. The same random-critical-value bracketing, using
continuity of $G_Q$ at $c_Q$ and $G_Q(c_Q)=1-\alpha$, therefore yields 
 $\mathbb P\{T_{Q,n}\le c_Q^*\}\to 1-\alpha$. 
On $\mathcal I_n$, this event is the coverage event, and since $\PP(\cI_n^c)=o(1)$, we prove~\eqref{dq:eq:quantile-coverage}.

Next fix $u\in\mathcal U$ with
$v_Q(u):=\operatorname{Var}\{\mathcal Z_Q(u)\}>0$ and put
$z_q=\Phi^{-1}(q)$. The second conclusion of
\eqref{eq:q33:scalar-bootstrap-laws} has the nondegenerate normal limit
$N\{0,v_Q(u)\}$. 
The quantiles of its absolute value at $q=\alpha$ are regular. The same empirical-quantile argument,
now applied at these two levels, gives the joint convergence  
 $|c_{Q}^*(u)|\xrightarrow{P}\sqrt{v_Q(u)}\,z_{1-\alpha}'$,
 where $z_q'$ is the $q$-th quantile of $|Z|$ for $Z\sim N(0,1)$. 
Meanwhile $S_{Q,n}(u):=\sqrt{n_1}\{\hat\Delta_Q(u)-\Delta_Q(u)\}
\Rightarrow N\{0,v_Q(u)\}$ by the sampling limit.
For the pointwise interval as defined in Section~\ref{subsec:wald_quantile}, on $\cI_n$ the coverage event is
\[
 \{\Delta_Q(u)\in\mathrm{CI}_{n,1-\alpha}^{Q,\mathrm{pt}}(u)\}
 =\{ |S_{Q,n}(u)|     \le c_{Q,1-\alpha}^*(u)\},
\]
and since $\PP(\cI_n^c)=o(1)$, we complete the proof of~\eqref{dq:eq:quantile-pointwise-coverage}.

%% file: 150_rand_inference.tex

\section{Proofs for Section~\ref{sec:rand_inference}}
\label{app:sec_rand_inference}

\subsection{Proof of Theorem~\ref{thm:rand_inf}}
\label{app:subsec_proof_rand_inf}

We use the sampling CLT and variance consistency in Theorem~\ref{thm:ATT}. We first
record a sign-sum bound that will also be used in the four-dimensional example.
All calculations are on $\mathcal I_n$ unless stated otherwise. Its complement
has probability tending to zero, so the conventions in Section~\ref{sec:rand_inference} do not change the conclusion.

\paragraph{A conditional normal approximation for the signs.}
We use the Berry--Esseen inequality for independent, not necessarily identically
distributed, summands \citep[Chapter~V]{petrov1972independent}: if $X_1,\ldots,X_m$
are independent and centered, $b^2=\sum_i\EE[ X_i^2]>0$, and
$\sum_i\EE |X_i|^3<\infty$, then
\begin{equation}\label{eq:rimean:BE-general}
\sup_{x\in\RR}
\big|\PP\big\{b^{-1}\textstyle{\sum_{i=1}^mX_i}\le x\big\}-\Phi(x)\big|
\le C_{\mathrm{BE}}\,b^{-3}\sum_{i=1}^m\EE[|X_i|^3],
\end{equation}
for a universal constant $C_{\mathrm{BE}}$.
Recall $\cG_n= \sigma(\{X_i,Z_i\}_{i=1}^{n})$ is the $\sigma$-field for the information used in matching. 
Define 
\begin{equation}\label{eq:rimean:raw-second-moment}
\hat v_n:=\frac1{n_1}\sum_{i=1}^{n_1}R_i^2
=\frac{n_1-1}{n_1}\hat V_{\mathrm{ATT}}
 +\hat\theta_{\mathrm{ATT}}^2.
\end{equation}
Conditional on the data, the variables $\omega_iR_i$ are independent and centered,
with variances $R_i^2$ and third absolute moments $|R_i|^3$. Thus, writing $\PP_\omega$ as the probability over the $\omega_i$'s conditional on the data, and 
\[
F_n^\omega(x)=\PP_\omega\left\{
 \frac{T_{\RI,n}^{\omega}}{\sqrt{\hat v_n}}\le x\right\},
\qquad
M_{kn}=\frac1{n_1}\sum_{i=1}^{n_1}|R_i|^k,\quad k=3,4 ,
\]
we obtain the finite-sample bound
\begin{equation}\label{eq:rimean:BE-sign}
\Delta_n^\omega:=\sup_x|F_n^\omega(x)-\Phi(x)|
\le \frac{C_{\mathrm{BE}}M_{3n}}{\sqrt{n_1}\hat v_n^{3/2}}.
\end{equation} 

Under Assumption~\ref{assump:outcome},  the conditional fourth moments of the observed pair
differences are uniformly bounded. Indeed, $\mu_0$ is bounded on $\Omega$, $\mu_1$ is bounded by assumption, and the conditional centered fourth
moments are bounded. The inequalities $|a+b|^4\le8(|a|^4+|b|^4)$ and
$|a-b|^4\le8(|a|^4+|b|^4)$ therefore give 
$\EE [M_{4n}\given \mathcal G_n]\le C$ on  $\cI_n$. 
Markov's inequality yields $M_{4n}=O_P(1)$, and
$M_{3n}\le M_{4n}^{3/4}=O_P(1)$ by H\"older's inequality.
Under $\theta_{\att}=0$, Theorem~\ref{thm:ATT} and
\eqref{eq:rimean:raw-second-moment} give
\begin{equation}\label{eq:rimean:positive-inputs}
T_{\RI,n}\Rightarrow N(0,V),\qquad
\hat v_n\to_P V>0,\qquad n_1\to\infty\ \text{in probability}.
\end{equation}
It follows from \eqref{eq:rimean:BE-sign} that
$\Delta_n^\omega=o_P(1)$. In particular, the event $\hat v_n=0$ has
probability tending to zero.

We consider the population-level p-value
\$
p_{\RI,n}^* = \PP_\omega(|T_{\RI,n}^\omega|\geq |T_{\RI,n}|),
\$
where the probability $\PP_\omega$ is over the independent randomization indices only, viewing data and $T_{\RI,n}$ as fixed. 
Put $U_n=T_{\RI,n}/\sqrt{\hat v_n}$ when $\hat v_n>0$ and
$U_n=0$ otherwise. For any $a>0$, we know 
\[
\PP_\omega\left\{
 |T_{\RI,n}^{\omega}|/\sqrt{\hat v_n}\ge a\right\}
=1-F_n^\omega(a-)+F_n^\omega(-a).
\]
Since $\Phi$ is continuous, the bound defining $\Delta_n^\omega$ holds
also for $F_n^\omega(a-)$. Consequently,
\begin{equation}\label{eq:rimean:p-normal}
\left|p_{\RI,n}^*-2\{1-\Phi(|U_n|)\}\right|
\le2\Delta_n^\omega
\end{equation}
on $\{\hat v_n>0\}$. The same conclusion is immediate at $U_n=0$, where
both tails equal one. Thus the inequality handles ties as well as random
observed cutoffs. By \eqref{eq:rimean:positive-inputs}, $U_n\Rightarrow N(0,1)$,
and hence 
$p_{\RI,n}^*\Rightarrow\operatorname{Unif}(0,1)$. 
For example, if $Z\sim N(0,1)$, then
$\PP[2\{1-\Phi(|Z|)\}\le\alpha]
=\PP(|Z|\ge z_{1-\alpha/2})=\alpha$. 
 
Conditional on the observed data, the count $\sum_{b=1}^{B_n}\ind\{|T_{\RI,n}^{\omega^{(b)}}| \ge |T_{\RI,n}|\}$ is binomial
with parameters $B_n,p_{\RI,n}^*$. Therefore, 
$\left|\EE_\omega p_{\RI,n}-p^*_{\RI,n}\right|
 \le\frac1{B_n+1}$ and
$\operatorname{Var}_\omega( p_{\RI,n})
 =\frac{B_np_{\RI,n}(1-p_{\RI,n})}{(B_n+1)^2}
\le\frac1{4B_n}$. 
For any fixed $\varepsilon>0$, once $1/(B_n+1)\le\varepsilon/2$,
conditional Chebyshev and averaging give
\begin{equation}\label{eq:rimean:MC-close}
\PP\{|  p_{\RI,n}-p^*_{\RI,n}|>\varepsilon\}
\le\frac1{B_n\varepsilon^2}\to 0.
\end{equation}
Here the probability includes the simulated signs. The uniform limiting law
of $p_{\RI,n}^*$ now gives the same rejection limit for
$p_{\RI,n}$ under the weak null. 
Since the weak null holds under the sharp null, the same conclusion holds under $H_{0,\text{sharp}}$ as well. This concludes the proof of Theorem~\ref{thm:rand_inf}.

\subsection{Proof of Proposition~\ref{prop:rand_boundary}}
\label{app:subsec_rand_dimension}

We keep the fixed covariate law for $X\in \RR^4$   and the function $g$ from the proof of Theorem~\ref{thm:dimension-obs}.
For constants $b_*,p_*>0$, that conclusion gives
\begin{equation}\label{eq:rimean:bias-input}
\liminf_{n\to\infty}\PP \left\{
 \sqrt{n_1}B_{n,\sigma_n^\star}(g)\ge b_*\right\}\ge p_*.
\end{equation} 
 
For completeness, the exact optimizer also satisfies
$L_n(\sigma_n^\star)=o_P(1)$ in this four-dimensional model. To see this,
fix an integer $K\ge1$ and partition $[0,1]^4$ into finitely many disjoint
cubes of side $1/K$. For each such cell $Q$, Assumption~\ref{assump:obs-sample} which the example satisfies  gives
\[
\PP (X\in Q,Z=0)-\PP (X\in Q,Z=1)
=\int_Q f(x)\{1-2e(x)\}\,dx>0.
\]
The law of large numbers, followed by a finite union bound, implies that every
cell has at least as many controls as treated units with probability tending
to one. On that event there is an injection using only within-cell pairs,
with average squared distance at most $4/K^2$. Since the matching objective is
$L_n/2$ for $A_0=I_4$, its optimizer satisfies the same upper bound.
For any fixed $\varepsilon>0$, choosing $K$ with $4/K^2<\varepsilon$ and
letting $n\to\infty$ proves the claimed length convergence.

Write $p(x)=1/4+g(x)/2$ so $Y(1)=Y(0)=\ind\{U\leq p(X)\}$. 
In \eqref{eq:rimean:binary-model}, we have $p(x)\in[1/4,3/4]$, and
$h(x):=p(x)\{1-p(x)\}\in[3/16,1/4]$. Both functions are smooth and
Lipschitz on $\Omega$. The construction gives
$\mu_0(x)=\mu_1(x)=p(x)$, $\sigma_0^2(x)=\sigma_1^2(x)=h(x)$,
bounded conditional fourth moments, and unconfoundedness. Thus it satisfies
the mean-outcome conditions of Assumption~\ref{assump:outcome}, while its dimension is four.
It also satisfies the sharp null by using the same uniform variable for both
potential outcomes of each unit.
Write
\[
d_i=p(T_i)-p(C_{\sigma_n^\star(i)}),\qquad
\eta_i=R_i-d_i,\qquad
a_n=\frac12\sqrt{n_1}B_{n,\sigma_n^\star}(g),\qquad
W_n=\frac1{\sqrt{n_1}}\sum_{i=1}^{n_1}\eta_i.
\]
Then we have the decomposition for the observed test statistic: 
$T_{\RI ,n}=a_n+W_n$,
where $a_n$ is $\cG_n$-measurable. Conditional on $\cG_n$,
the $\eta_i$'s are independent and centered due to the injection. They satisfy $|\eta_i|\le2$ and
\[
s_n^2:=\operatorname{Var}(W_n\mid\mathcal G_n)
=\frac1{n_1}\sum_{i=1}^{n_1}
 \{h(T_i)+h(C_{\sigma_n^\star(i)})\}\in[3/8,1/2].
\]
Let $\text{Lip}(h)<\infty$ be the Lipschitz constant of $h$ on $\Omega$, then by Cauchy-Schwarz inequality,
\[
\left|\frac1{n_1}\sum_{i=1}^{n_1}h(C_{\sigma_n^\star(i)}) -\frac1{n_1}\sum_{i=1}^{n_1} h(T_i)\right| 
\leq \bigg\{ \frac{1}{n_1}\sum_{i=1}^{n_1} \big\{ h(C_{\sigma_n^\star(i)}) - h(T_i)\big\}^2 \bigg\}^{1/2}
\le\operatorname{Lip}(h)\sqrt{L_n(\sigma_n^\star)}=o_P(1).
\]
Conditional on $n_1$, the treated covariates are i.i.d.~with the in-group distribution. Boundedness of $h$ and conditional Chebyshev, averaged over
$n_1\to\infty$, therefore give
\begin{equation}\label{eq:rimean:noise-variance}
s_n^2\to_P v_*:=2\EE_1 [h(X)]\in[3/8,1/2].
\end{equation}

Apply \eqref{eq:rimean:BE-general} conditionally on $\mathcal G_n$ to
$\eta_1,\ldots,\eta_{n_1}$. Their total third absolute moment is at most
$8n_1$, and their total variance is $n_1s_n^2$. Consequently,
\[
\sup_x\left|\PP (W_n\le x\mid\mathcal G_n)
                 -\Phi(x/s_n)\right|
\leq{C}/{\sqrt{n_1}}.
\]
To replace $s_n$ by $\sqrt{v_*}$ uniformly in $x$, use the deterministic
inequality
\begin{equation}\label{eq:rimean:normal-scale}
\sup_x|\Phi(x/a)-\Phi(x/b)|
\le C_\Phi|\log(a/b)|,\quad a,b>0,
\qquad C_\Phi=\sup_t|t\phi(t)|<\infty.
\end{equation}
It follows by differentiating $\Phi(xe^{-u})$ with respect to $u$ and
integrating between $\log a$ and $\log b$. Together with
\eqref{eq:rimean:noise-variance}, this proves
\begin{equation}\label{eq:rimean:noise-normal}
\epsilon_n:=\sup_x\left|
\PP(W_n\le x\given \cG_n)-\Phi(x/\sqrt{v_*})\right|
\to_P0,\qquad \EE[\epsilon_n]\to0.
\end{equation}
On $\mathcal I_n^c$, set $\epsilon_n=1$. The expectation converges because
$0\le\epsilon_n\le1$. The uniform bound also controls left limits of the
conditional CDF and applies at cutoffs depending on $a_n$.

Since $p(x)$ is Lipschitz, letting $\text{Lip}(p)$ be its Lipschitz constant, we have
\[
\frac1{n_1}\sum_i d_i^2
\le\operatorname{Lip}(p)^2L_n(\sigma_n^\star)=o_P(1).
\]
Moreover, conditional independence and $0\le R_i^2\le1$ give
\[
\EE (\hat v_n\mid\mathcal G_n)
=s_n^2+\frac1{n_1}\sum_i d_i^2,
\qquad
\operatorname{Var}(\hat v_n\mid\mathcal G_n)\le\frac1{n_1}.
\]
Conditional Chebyshev and averaging therefore show
\begin{equation}\label{eq:rimean:raw-fourdim}
\hat v_n\to_P v_*.
\end{equation}
For binary outcomes, $R_i\in\{-1,0,1\}$, so $M_{3n}=\hat v_n$.
Equation~\eqref{eq:rimean:BE-sign} and
\eqref{eq:rimean:raw-fourdim} thus give a vanishing normal-approximation
error for the conditional sign sum. Using \eqref{eq:rimean:normal-scale}
once more, we have 
\[
\sup_x\left|\PP _\omega(T_{\RI ,n}^{\omega}\le x)
                      -\Phi(x/\sqrt{v_*})\right|=o_P(1).
\]
Using the same arguments as in the proof of Theorem~\ref{thm:rand_inf} up to~\eqref{eq:rimean:p-normal} then
gives (as $B_n\to \infty$)
\begin{equation}\label{eq:rimean:fourdim-p}
p_{\RI ,n}
=\ell(|T_{\RI ,n}|)+o_P(1),
\qquad
\ell(t)=2\{1-\Phi(t/\sqrt{v_*})\},\quad t\ge0.
\end{equation}   
 
Now we convert the geometric lower bound into the type-I error violation. 
For a fixed threshold $t>0$ and a fixed $a\in \RR$, define
\[
\cR_t(a)
=1-\Phi\left(\frac{t-a}{\sqrt{v_*}}\right)
 +\Phi\left(\frac{-t-a}{\sqrt{v_*}}\right).
\]
This is the probability that the absolute value of a $N(a,v_*)$ variable
is at least $t$. It is even in $a$, $\cR_t(0)=\ell(t)$, and for
$a>0$,
\[
\cR_t'(a)
=\frac1{\sqrt{v_*}}\left\{
 \phi\left(\frac{t-a}{\sqrt{v_*}}\right)
 -\phi\left(\frac{t+a}{\sqrt{v_*}}\right)\right\}>0.
\]
The strict inequality follows from $|t-a|<t+a$.

By \eqref{eq:rimean:noise-normal}, conditional on $\mathcal G_n$ the
probability of $|a_n+W_n|\ge t$ differs from $\cR_t(a_n)$ by at
most $2\epsilon_n$. Averaging, and allowing for $\mathcal I_n^c$, gives 
$\PP (|T_{\RI ,n}|\ge t)
=\EE [\cR_t(a_n)]+o(1)$. 
Let $a_0=b_*/2$. Equation~\eqref{eq:rimean:bias-input} implies
$\liminf_n\PP (a_n\ge a_0)\ge p_*$. Monotonicity and evenness give,
for every real $a$,
\[
\cR_t(a)\ge\cR_t(0)
+\{\cR_t(a_0)-\cR_t(0)\}\ind\{a\ge a_0\}.
\]
Consequently,
\begin{equation}\label{eq:rimean:tail-lower}
\liminf_n\PP (|T_{\RI ,n}|\ge t)
\ge\ell(t)+p_*\{\cR_t(a_0)-\ell(t)\}.
\end{equation} 

Fix $\alpha\in(0,1)$ and let
$t_\alpha=\sqrt{v_*}\,z_{1-\alpha/2}>0$, so $\ell(t_\alpha)=\alpha$. For any fixed $t>t_\alpha$,
$\alpha-\ell(t)>0$, and
\[
\PP (p_{\RI ,n}\le\alpha) \ge\PP (|T_{\RI ,n}|\ge t)
 -\PP \{|p_{\RI ,n}-\ell(|T_{\RI ,n}|)|  >\alpha-\ell(t)\}.
\]
The last term tends to zero. Apply \eqref{eq:rimean:tail-lower} and then
let $t\downarrow t_\alpha$. Continuity in $t$ gives
\begin{equation}\label{eq:rimean:strict-overrejection}
\liminf_n\PP (p_{\RI ,n}\le\alpha)
\ge\alpha+p_*\{\cR_{t_\alpha}(a_0)-\alpha\}
>\alpha.
\end{equation}
The final inequality follows from $a_0>0$ and strict monotonicity. Taking
$\epsilon_\alpha=p_*\{\cR_{t_\alpha}(b_*/2)-\alpha\}>0$
proves Proposition~\ref{prop:rand_boundary}.

%% file: 161_additional_simulation.tex

\section{Additional simulation results}
\label{app:sec_simu}


\begin{table}[p]
\centering
\caption{Population ATT inference with $a=0$. Details are otherwise the same as Table~\ref{tab:att-a2}.}
\label{tab:att-a0}
\small\setlength{\tabcolsep}{6pt}\renewcommand{\arraystretch}{1.08}
\begin{tabular}{rrlrrrr}
\toprule
&&&\multicolumn{2}{c}{Coverage (\%)}&\multicolumn{2}{c}{Mean CI width}\\
\cmidrule(lr){4-5}\cmidrule(lr){6-7}
$n$ & $d$ & Matching & Wald & Bootstrap-t & Wald & Bootstrap-t \\
\midrule
500 & 1 & Global & 95.50 (0.46) & 95.30 (0.47) & 0.4020 (0.0007) & 0.4054 (0.0008)\\
 &  & Greedy & 95.15 (0.48) & 95.15 (0.48) & 0.4020 (0.0007) & 0.4052 (0.0008)\\
\addlinespace[2pt]
 & 2 & Global & 94.65 (0.50) & 94.85 (0.49) & 0.4034 (0.0007) & 0.4074 (0.0008)\\
 &  & Greedy & 94.80 (0.50) & 94.95 (0.49) & 0.4039 (0.0007) & 0.4079 (0.0008)\\
\addlinespace[2pt]
 & 3 & Global & 94.00 (0.53) & 94.15 (0.52) & 0.4089 (0.0007) & 0.4126 (0.0008)\\
 &  & Greedy & 93.25 (0.56) & 93.60 (0.55) & 0.4095 (0.0007) & 0.4132 (0.0008)\\
\addlinespace[2pt]
 & 4 & Global & 90.25 (0.66) & 90.35 (0.66) & 0.4157 (0.0007) & 0.4197 (0.0008)\\
 &  & Greedy & 90.80 (0.65) & 90.55 (0.65) & 0.4173 (0.0007) & 0.4208 (0.0008)\\
\addlinespace[2pt]
 & 6 & Global & 82.25 (0.85) & 83.10 (0.84) & 0.4333 (0.0007) & 0.4373 (0.0009)\\
 &  & Greedy & 83.15 (0.84) & 83.00 (0.84) & 0.4352 (0.0007) & 0.4388 (0.0009)\\
\midrule
2000 & 1 & Global & 94.00 (0.53) & 93.85 (0.54) & 0.2012 (0.0002) & 0.2014 (0.0003)\\
 &  & Greedy & 93.95 (0.53) & 93.80 (0.54) & 0.2012 (0.0002) & 0.2013 (0.0003)\\
\addlinespace[2pt]
 & 2 & Global & 94.75 (0.50) & 94.50 (0.51) & 0.2012 (0.0002) & 0.2013 (0.0003)\\
 &  & Greedy & 94.40 (0.51) & 94.20 (0.52) & 0.2015 (0.0002) & 0.2017 (0.0003)\\
\addlinespace[2pt]
 & 3 & Global & 94.50 (0.51) & 94.65 (0.50) & 0.2025 (0.0002) & 0.2024 (0.0003)\\
 &  & Greedy & 94.50 (0.51) & 94.30 (0.52) & 0.2028 (0.0002) & 0.2029 (0.0003)\\
\addlinespace[2pt]
 & 4 & Global & 90.15 (0.67) & 90.20 (0.66) & 0.2046 (0.0002) & 0.2050 (0.0003)\\
 &  & Greedy & 90.15 (0.67) & 90.35 (0.66) & 0.2051 (0.0002) & 0.2053 (0.0003)\\
\addlinespace[2pt]
 & 6 & Global & 72.40 (1.00) & 72.50 (1.00) & 0.2108 (0.0002) & 0.2108 (0.0003)\\
 &  & Greedy & 75.10 (0.97) & 75.05 (0.97) & 0.2114 (0.0002) & 0.2115 (0.0003)\\
\midrule
5000 & 1 & Global & 95.40 (0.47) & 95.20 (0.48) & 0.1272 (0.0001) & 0.1272 (0.0001)\\
 &  & Greedy & 95.00 (0.49) & 94.95 (0.49) & 0.1272 (0.0001) & 0.1272 (0.0001)\\
\addlinespace[2pt]
 & 2 & Global & 94.85 (0.49) & 94.50 (0.51) & 0.1274 (0.0001) & 0.1272 (0.0001)\\
 &  & Greedy & 94.70 (0.50) & 94.30 (0.52) & 0.1274 (0.0001) & 0.1272 (0.0001)\\
\addlinespace[2pt]
 & 3 & Global & 93.95 (0.53) & 93.65 (0.55) & 0.1277 (0.0001) & 0.1276 (0.0001)\\
 &  & Greedy & 93.80 (0.54) & 93.30 (0.56) & 0.1278 (0.0001) & 0.1277 (0.0001)\\
\addlinespace[2pt]
 & 4 & Global & 90.00 (0.67) & 89.80 (0.68) & 0.1286 (0.0001) & 0.1283 (0.0001)\\
 &  & Greedy & 90.75 (0.65) & 90.60 (0.65) & 0.1287 (0.0001) & 0.1287 (0.0001)\\
\addlinespace[2pt]
 & 6 & Global & 65.95 (1.06) & 65.70 (1.06) & 0.1315 (0.0001) & 0.1314 (0.0001)\\
 &  & Greedy & 67.30 (1.05) & 66.75 (1.05) & 0.1317 (0.0001) & 0.1316 (0.0001)\\
\bottomrule\end{tabular}
\end{table}

\begin{table}[!htbp]
\centering
\small
\setlength{\tabcolsep}{5pt}
\renewcommand{\arraystretch}{1.08}
\caption{Coverage and width of pointwise bootstrap confidence intervals for quantile treatment effects at $u\in\{0.25, 0.5, 0.75\}$ and nominal level $1-\alpha=95\%$, where $B_n=500$. The mean and SEs are reported over $R=2000$ independent runs.}
\label{tab:pointwise-qte}
\begin{tabular}{rrlcccccc}
\toprule
&&&\multicolumn{2}{c}{$u=0.25$}&\multicolumn{2}{c}{$u=0.50$}&\multicolumn{2}{c}{$u=0.75$}\\
\cmidrule(lr){4-5}\cmidrule(lr){6-7}\cmidrule(lr){8-9}
$n$&$d$&Matching&Coverage (\%)&CI width&Coverage (\%)&CI width&Coverage (\%)&CI width\\
\midrule
500 & 1 & Global & 96.30 (0.42) & 0.9567 & 95.90 (0.44) & 0.8878 & 96.95 (0.38) & 0.9737 \\
 &  & Greedy & 96.90 (0.39) & 0.9578 & 96.50 (0.41) & 0.8891 & 96.90 (0.39) & 0.9728 \\
\addlinespace[2pt]
 & 2 & Global & 97.00 (0.38) & 0.9262 & 96.35 (0.42) & 0.8386 & 96.20 (0.43) & 0.9369 \\
 &  & Greedy & 96.90 (0.39) & 0.9274 & 96.40 (0.42) & 0.8391 & 95.90 (0.44) & 0.9375 \\
\addlinespace[2pt]
 & 3 & Global & 96.55 (0.41) & 0.9243 & 95.10 (0.48) & 0.8338 & 95.05 (0.49) & 0.9351 \\
 &  & Greedy & 96.30 (0.42) & 0.9290 & 95.10 (0.48) & 0.8357 & 95.25 (0.48) & 0.9385 \\
\addlinespace[2pt]
 & 4 & Global & 95.20 (0.48) & 0.9275 & 93.60 (0.55) & 0.8384 & 93.20 (0.56) & 0.9450 \\
 &  & Greedy & 95.55 (0.46) & 0.9272 & 93.80 (0.54) & 0.8382 & 93.05 (0.57) & 0.9481 \\
\addlinespace[2pt]
 & 6 & Global & 90.95 (0.64) & 0.9447 & 87.40 (0.74) & 0.8474 & 86.85 (0.76) & 0.9673 \\
 &  & Greedy & 91.05 (0.64) & 0.9431 & 86.60 (0.76) & 0.8488 & 86.55 (0.76) & 0.9666 \\
\midrule
2000 & 1 & Global & 96.55 (0.41) & 0.4581 & 95.50 (0.46) & 0.4268 & 96.20 (0.43) & 0.4689 \\
 &  & Greedy & 96.15 (0.43) & 0.4577 & 95.35 (0.47) & 0.4269 & 96.30 (0.42) & 0.4684 \\
\addlinespace[2pt]
 & 2 & Global & 96.20 (0.43) & 0.4449 & 96.35 (0.42) & 0.4028 & 95.75 (0.45) & 0.4463 \\
 &  & Greedy & 96.00 (0.44) & 0.4449 & 96.15 (0.43) & 0.4029 & 95.80 (0.45) & 0.4463 \\
\addlinespace[2pt]
 & 3 & Global & 94.65 (0.50) & 0.4405 & 95.05 (0.49) & 0.3973 & 94.65 (0.50) & 0.4445 \\
 &  & Greedy & 95.30 (0.47) & 0.4405 & 94.50 (0.51) & 0.3986 & 95.25 (0.48) & 0.4454 \\
\addlinespace[2pt]
 & 4 & Global & 94.85 (0.49) & 0.4417 & 93.15 (0.56) & 0.3986 & 92.25 (0.60) & 0.4459 \\
 &  & Greedy & 94.30 (0.52) & 0.4416 & 93.55 (0.55) & 0.3993 & 93.15 (0.56) & 0.4475 \\
\addlinespace[2pt]
 & 6 & Global & 83.45 (0.83) & 0.4414 & 73.20 (0.99) & 0.4050 & 75.85 (0.96) & 0.4535 \\
 &  & Greedy & 83.95 (0.82) & 0.4420 & 74.40 (0.98) & 0.4047 & 76.65 (0.95) & 0.4537 \\
\midrule
5000 & 1 & Global & 94.95 (0.49) & 0.2826 & 96.05 (0.44) & 0.2643 & 95.90 (0.44) & 0.2884 \\
 &  & Greedy & 95.30 (0.47) & 0.2827 & 96.05 (0.44) & 0.2643 & 95.95 (0.44) & 0.2886 \\
\addlinespace[2pt]
 & 2 & Global & 95.80 (0.45) & 0.2763 & 94.45 (0.51) & 0.2496 & 94.65 (0.50) & 0.2785 \\
 &  & Greedy & 96.25 (0.42) & 0.2765 & 94.45 (0.51) & 0.2500 & 94.90 (0.49) & 0.2789 \\
\addlinespace[2pt]
 & 3 & Global & 94.65 (0.50) & 0.2748 & 94.25 (0.52) & 0.2470 & 93.75 (0.54) & 0.2755 \\
 &  & Greedy & 95.10 (0.48) & 0.2746 & 94.50 (0.51) & 0.2474 & 94.50 (0.51) & 0.2757 \\
\addlinespace[2pt]
 & 4 & Global & 93.15 (0.56) & 0.2724 & 90.15 (0.67) & 0.2461 & 91.60 (0.62) & 0.2763 \\
 &  & Greedy & 93.60 (0.55) & 0.2724 & 91.00 (0.64) & 0.2461 & 91.75 (0.62) & 0.2761 \\
\addlinespace[2pt]
 & 6 & Global & 78.60 (0.92) & 0.2741 & 64.70 (1.07) & 0.2485 & 65.45 (1.06) & 0.2789 \\
 &  & Greedy & 78.90 (0.91) & 0.2743 & 65.25 (1.06) & 0.2487 & 67.35 (1.05) & 0.2787 \\
\bottomrule
\end{tabular} 
\end{table}

\begin{table} 
\centering
\small
\setlength{\tabcolsep}{4.5pt}
\renewcommand{\arraystretch}{1.08}
\caption{Randomization inference under the unitwise sharp null: $Y_i(1)=Y_i(0)$. The rejection rates and its Monte Carlo SE (parentheses) are in percentage, averaged over $R=2000$ independent runs.}
\label{tab:ri-sharp-shared-alphas}
\begin{tabular}{@{}rr*{6}{r}@{}}
\toprule
 & & \multicolumn{3}{c}{Global} & \multicolumn{3}{c}{Greedy} \\
\cmidrule(lr){3-5}\cmidrule(lr){6-8}
$n$ & $d$ & $\alpha=0.025$ & $\alpha=0.05$ & $\alpha=0.10$ & $\alpha=0.025$ & $\alpha=0.05$ & $\alpha=0.10$ \\
\midrule
500 & 1 & 2.55 (0.35) & 5.10 (0.49) & 10.05 (0.67) & 2.40 (0.34) & 4.90 (0.48) & 9.90 (0.67) \\
500 & 2 & 3.00 (0.38) & 5.20 (0.50) & 10.25 (0.68) & 3.00 (0.38) & 5.45 (0.51) & 10.55 (0.69) \\
500 & 3 & 3.15 (0.39) & 6.30 (0.54) & 11.75 (0.72) & 3.15 (0.39) & 6.10 (0.54) & 11.95 (0.73) \\
500 & 4 & 6.00 (0.53) & 9.25 (0.65) & 16.05 (0.82) & 5.10 (0.49) & 9.40 (0.65) & 15.65 (0.81) \\
500 & 6 & 11.45 (0.71) & 17.75 (0.85) & 26.80 (0.99) & 11.05 (0.70) & 17.70 (0.85) & 26.20 (0.98) \\
\addlinespace
2000 & 1 & 2.10 (0.32) & 4.90 (0.48) & 9.60 (0.66) & 2.15 (0.32) & 4.75 (0.48) & 9.80 (0.66) \\
2000 & 2 & 3.30 (0.40) & 5.65 (0.52) & 10.55 (0.69) & 2.75 (0.37) & 5.40 (0.51) & 10.60 (0.69) \\
2000 & 3 & 3.05 (0.38) & 5.25 (0.50) & 9.60 (0.66) & 2.95 (0.38) & 5.00 (0.49) & 9.80 (0.66) \\
2000 & 4 & 4.55 (0.47) & 8.10 (0.61) & 14.45 (0.79) & 4.65 (0.47) & 7.45 (0.59) & 14.80 (0.79) \\
2000 & 6 & 16.80 (0.84) & 26.10 (0.98) & 38.35 (1.09) & 16.20 (0.82) & 24.75 (0.96) & 36.40 (1.08) \\
\addlinespace
5000 & 1 & 2.85 (0.37) & 6.15 (0.54) & 11.65 (0.72) & 2.95 (0.38) & 6.15 (0.54) & 11.90 (0.72) \\
5000 & 2 & 2.50 (0.35) & 5.45 (0.51) & 10.90 (0.70) & 2.95 (0.38) & 5.30 (0.50) & 11.55 (0.71) \\
5000 & 3 & 2.20 (0.33) & 4.70 (0.47) & 10.00 (0.67) & 2.30 (0.34) & 5.20 (0.50) & 10.15 (0.68) \\
5000 & 4 & 4.65 (0.47) & 8.00 (0.61) & 14.50 (0.79) & 5.05 (0.49) & 8.50 (0.62) & 14.40 (0.79) \\
5000 & 6 & 25.95 (0.98) & 36.10 (1.07) & 47.75 (1.12) & 25.20 (0.97) & 34.25 (1.06) & 46.45 (1.12) \\
\bottomrule
\end{tabular}
\end{table}